\DeclarePairedDelimiter\floor{\lfloor}{\rfloor}
\def\ca{{\mathcal A}}
\def\cb{{\mathcal B}}
\def\cc{{\mathcal C}}
\def\ce{{\mathcal E}}
\def\cf{{\mathcal F}}
\def\ci{{\mathcal I}}
\def\cl{{\mathcal L}}
\def\cn{{\mathcal N}}
\def\cp{{\mathcal P}}
\def\cu{{\mathcal U}}
\def\cv{{\mathcal V}}
\def\Br{\mathbb R}
\def\Bz{\mathbb Z}
\def\Bn{\mathbb N}
\def\sys{(X,\cb,\mu,T)}
\DeclareMathOperator{\Int}{Int}
\DeclareMathOperator{\Asc}{Asc}
\DeclareMathOperator{\Alt}{Alt}
\newcommand{\htop}{h_{\text{\normalfont top}}}
\DeclareMathOperator{\topo}{\text{\normalfont top}}
\DeclareMathOperator{\card}{card}
\DeclareMathOperator{\Pal}{Pal}
\DeclareMathOperator{\Freq}{Freq}
\DeclareMathOperator{\Sep}{Sep}
\DeclareMathOperator{\mdim}{mdim}
\def\be {\begin{equation}}
\def\en{\end{equation}}
\numberwithin{equation}{section}
\newtheorem{prop}{Proposition}[section]
\newtheorem{thm}[prop]{Theorem}
\newtheorem*{thm*}{Theorem}          
\newtheorem*{cor*}{Corollary}        
\newtheorem*{lem*}{Lemma}
\newtheorem{cor}[prop]{Corollary}
\theoremstyle{definition} 
\newtheorem{defn}[prop]{Definition}
\newtheorem{conj}[prop]{Conjecture}
\newtheorem{exer}{Exercise}[section]
\newtheorem{example}[prop]{Example}
\newtheorem{rem}[prop]{Remark}
\theoremstyle{remark}
\newtheorem*{ack*}{Acknowledgment}
\renewcommand\@biblabel[1]{#1.}
\title{Measuring Complexity in Cantor Dynamics}
\author{Karl Petersen\\
	{cantorsalta2015: Dynamics on Cantor Sets}\\
    CIMPA Research School, 2-13 November 2015}
\begin{document}
	
		\maketitle
		
		\newpage

	

\tableofcontents
\newpage

\chapter{Introduction}

\section{Preface}

  In these notes we discuss several quantitative definitions of the broad and vague notion of complexity, especially from the viewpoint of dynamical systems, focusing on transformations on the Cantor set, in particular shift dynamical systems. After this introduction, the second part reviews dynamical  entropy, the asymptotic exponential growth rate of the number of patterns or available information, uncertainty, or randomness in a system as the window size grows. The third part treats the more precise complexity function, which counts the actual number of patterns of each size, and then several of its variations. In the fourth part we present a new quantity that measures the balance within a system between coherent action of the whole and independence of its parts. There is a vast literature on these matters, new papers full of new ideas are appearing all the time, and there are plenty of questions to be asked and investigated with a variety of approaches. (Our list of references is in no way complete.) Some of the attractiveness of the subject is due to the many kinds of mathematics that it involves: combinatorics, number theory, probability, and real analysis, as well as dynamics. For general background and useful surveys, see for example \cite{Fogg,Berthe,Ferenczi1999,allouche94,Lothaire,LM,petersen1989ergodic,waltersergodic}.

  Parts of these notes are drawn from earlier writings by the author or from theses of his students Kathleen Carroll and Benjamin Wilson (see also \cite{CarrollPetersen2015,PetersenWilson2015}). Thanks to Val\'{e}rie Berth\'{e}, Francesco Dolce, and Bryna Kra, among others, for pointing out references. I thank the participants, organizers, and supporters of the CIMPA Research School cantorsalta2015, Dynamics on Cantor Sets, for the opportunity to participate and the incentive to produce these notes.
\section{Complexity and entropy}

An elementary question about any phenomenon under observation is, how many possibilities are there. A system that can be in one of a large but finite number of states may be thought to be more complex than one that has a choice among only a few. Then consider a system that changes state from time to time, and suppose we note the state of the system at each time. How many possible histories, or trajectories, can there be in a time interval of fixed length? This is the {\em complexity function}, and it provides a quantitative way to distinguish relatively simple systems (for example periodic motions) from more complicated (for example ``chaotic") ones. In systems with a lot of freedom of motion the number of possible histories may grow very rapidly as the length of time it is observed increases. The exponential growth rate of the number of histories is the {\em entropy}. While it may seem to be a very crude measure of the complexity of a system, entropy has turned out to be the single most important and useful number that one can attach to a dynamical system.

\section{Some definitions and notation}\label{sec:defs}

\indent  A \emph{topological dynamical system} is a pair $(X,T)$, where $X$ is a compact Hausdorff space (usually metric) and $T: X \to X$ is a continuous mapping. In these notes $X$ is usually the Cantor set, often in a specific representation as a subshift or as the set of infinite paths starting at the root in a Bratteli diagram---see below. A {\em measure-preserving system} $(X, \cb, T, \mu)$ consists of a measure space $(X, \cb, \mu)$ and a measure-preserving transformation $T: X \to X$. Often no generality is lost in assuming that $(X, \cb, \mu)$ is the Lebesgue measure space of the unit interval, and in any case usually we assume that $\mu (X)=1$. 
A {\em standard} measure space is one that is measure-theoretically isomorphic to a Borel subset of a complete metric space with its $\sigma$-algebra of Borel sets and a Borel measure. 
$T$ is assumed to be defined and one-to-one a.e., with $T^{-1}\cb \subset \cb$ and $\mu T^{-1}=\mu$. The system is called {\em ergodic} if for every invariant measurable set (every $B \in \cb$ satisfying $\mu(B \triangle T^{-1}B) = 0$) either $\mu(B)=0$ or $\mu(X \setminus B) =0$. 

A \emph{homomorphism} or \emph{factor mapping} between topological dynamical systems $(X,T)$ and $(Y,S)$ is a continuous onto map $\phi : X \to Y$ such that $\phi T = S\phi$. We say Y is a \emph{factor} of X, and X is an \emph{extension} of Y. If $\phi$ is also one-to-one, then $(X,T)$ and $(Y,M)$ are \emph{topologically conjugate} and $\phi$ is a \emph{topological conjugacy}. A homomorphism or factor mapping  between measure-preserving systems $(X, \cb, T, \mu)$ and $(Y,\cb,S,\nu)$ is a map $\phi: X \to Y$ such that $\phi^{-1}\cc \subset \cb$, $\phi T = S \phi$ a.e., and $\mu \phi^{-1}=\nu$. If in addition $\phi$ is one-to-one a.e., equivalently $\phi^{-1}\cc=\cb$ up to sets of measure 0, then $\phi$ is an {\em isomorphism}.

We focus in these notes especially on topological dynamical systems which are \emph{shift dynamical systems}. Let $\ca$ be a finite set called an \emph{alphabet}. The elements of this set are \emph{letters} and shall be denoted by digits. A \emph{sequence} is a one-sided infinite string of letters and a \emph{bisequence} is an infinite string of letters that extends in two directions.
The \emph{full $\ca$-shift}, $\Sigma(\ca)$, is the collection of all bisequences of symbols from $\ca$. If $\ca$ has $n$  elements,
$$\Sigma(\ca) = \Sigma_n= \ca^{\Bz} = \{ x = (x_i)_{i \in \Bz}\hspace{2mm} | \hspace{2mm}x_i \in \ca \text{ for all } i \in \Bz \}.$$

The \emph{one-sided full $\ca$-shift} is the collection of all infinite sequences of symbols from $\ca$ and is  denoted
$$\Sigma^+(\ca) =\Sigma_n^+ = \ca^{\Bn} = \{ x = (x_i)_{i \in \Bn}\hspace{2mm} | \hspace{2mm} x_i \in \ca \text{ for all } i \in \Bn \}.$$
We also define the \emph{shift transformation} $\sigma : \Sigma(\ca) \to \Sigma(\ca)$ and $\Sigma^+(\ca) \to \Sigma^+(\ca)$ by
$$(\sigma x)_i = x_{i+1} \hspace{3mm} \text{for all } i.$$ The pair $(\Sigma_n, \sigma)$ is called the $n$-\emph{shift dynamical system}.

We give $\ca$ the discrete topology and  $\Sigma(\ca)$ and $\Sigma^+(\ca)$ the product topology. Furthermore, the topologies on $\Sigma(\ca)$ and $\Sigma^+(\ca)$ are compatible with the metric $d(x,y) = 1/2^n$, where $n = \inf \{|k|\hspace{2mm} |\hspace{2mm} x_k \neq y_k \}$ \cite{LM}. Thus two elements of $\Sigma(\ca)$ are close if and only if they agree on a long central block. In a one-sided shift, two elements are close if and only if they agree on a long initial block.

A \emph{subshift} is a pair $(X, \sigma)$ (or $(X^+, \sigma)$),  where $X \subset \Sigma_n$ (or $X^+ \subset \Sigma_n^+$) is a nonempty, closed, shift-invariant set. We will be concerned primarily with subshifts of the 2-shift dynamical system.

A finite string of letters from $\ca$ is called a \emph{block} and the length of a block $B$ is denoted $|B|$. Furthermore, a block of length $n$ is an $n$-block. A {\em formal language} is a set $\cl$ of blocks, possibly including the empty block $\epsilon$, on a fixed finite alphabet $\ca$. The set of all blocks on $\ca$, including the empty block $\epsilon$, is denoted by $\ca^*$. Given a subshift $(X, \sigma)$ of a full shift, let $\cl_n(X)$ denote the set of all $n$-blocks that occur in points in $X$. The \emph{language} of $X$ is the collection
\begin{center}
	$\displaystyle \cl(X) = \bigcup_{n=0}^{\infty}\cl_n(X).$
\end{center}
A {\em shift of finite type} (SFT) is a subshift determined by excluding a finite set of blocks.

Let $\ca$ be a finite alphabet. A map $\theta:\ca \to \ca^*$ is called a {\em substitution}.
A substitution $\theta$ is extended to $\ca^*$ and $\ca^\Bn$ by
$\theta(b_1b_2\dots) = \theta(b_1)\theta(b_2)\dots$.
A substitution $\theta$ is called {\em primitive} if there is $m$ such that for all $a \in \ca$ the block $\theta^m(a)$ contains every element of $\ca$.

There is natural dynamical system associated with any sequence. Given a one-sided sequence $u$, we let $X_u^+$ be the closure of $\{\sigma^nu | n \in \Bn\}$, where $\sigma$ is the usual shift. Then $(X_u^+, \sigma)$ is the dynamical system associated with $u$. For any sequence $u$, denote by $\cl(u)$ the family of all subblocks of $u$.

\begin{exer}
	Show that $X_u^+$ consists of all the one-sided sequences on $\ca$ all of whose subblocks are subblocks of $u$: $X_u^+=\{x: \cl(x) \subset \cl(u)\}$.
\end{exer}

In a topological dynamical system $(X,T)$, a point $x \in X$ is called \emph{almost periodic} or \emph{syndetically recurrent} if for every $\epsilon >0$ there is some $N=N(\epsilon)$ such that the set $\{n\geq 0: d(T^nx,x) < \epsilon \}$ has gaps of size at most $N.$ If $X$ is a subshift, then $x\in X$ is almost periodic if and only if every allowed block in $x$ appears in $x$ with bounded gaps.

A topological dynamical system $(X,T)$ is \emph{minimal} if one of the following equivalent properties holds:
\begin{enumerate}[1.]
	\item  $X$ contains no proper closed invariant set;
	\item	  $X$ is the orbit closure of an almost periodic point;
	\item	  every $x \in X$ has a dense orbit in $X$.
\end{enumerate}

The {\em complexity function} of a language $\cl$ is the function $p_\cl (n) = \card (\cl \cap \ca^n), n \geq 0$. This is an elementary, although possibly complicated and informative, measure of the size or complexity of a language and, if the language is that of a subshift, of the complexity of the associated symbolic dynamical system. Properties of this function (for example its asymptotic growth rate, which is the topological entropy of the associated subshift) and extensions and variations comprise most of the subject matter of these notes.

A {\em Bratteli diagram} is a graded graph
whose set $\cv$ of vertices is the disjoint union of finite sets $\cv_n, n=0,1,2,\dots$;
$\cv_0$ consists of a single vertex $v_0$, called the {\em root};
and the set $\ce$ of edges is also the disjoint union of sets $\ce_n, n=1,2,\dots$
such that the source vertex of each edge $e \in \ce_n$ is in $\cv_{n-1}$ and its range vertex is in $\cv_n$.
Denote by $X$ the set of all infinite directed paths$x=(x_n), x_n \in \ce_n$ for all $n \geq 1$, in this graph that begin at the root.
$X$ is a compact metric space when we agree that two paths are close if they agree on a long initial segment.
Except in some degenerate situations the space $X$ is infinite, indeed uncountable, and homeomorphic to the Cantor set.

Suppose we fix a linear order on the set of	edges into each vertex.
Then the set of paths $X$ is partially ordered as follows:
two paths
$x$ and $y$ are {\em comparable} if they agree from some point on, in
which case we say that $x<y$ if at the last level $n$ where they
are different, the edge  $x_n$ of $x$ is smaller than the edge
$y_n$ of $y.$ A map $T$, called the {\em Vershik map}, is defined by letting $Tx$ be the smallest
$y$ that is larger than $x,$ if there is one.
There may be maximal paths $x$ for which $Tx$ is not defined, as well as minimal paths.
In nice situations,
$T$ is a homeomorphism after the deletion of perhaps countably many
maximal and minimal paths and their orbits.
If the diagram is {\em simple}---which means that for every $n$ there is $m>n$ such that there is a path in the graph from every $v \in \cv_n$ to every $w \in \cv_m$---and if there are exactly one maximal path $x_{\max}$ and exactly one minimal path $x_{\min}$, then one may define $Tx_{\max} = x_{\min}$ and arrive at a minimal homeomorphism $T: X \to X$. See \cite{Durand2010, BezuglyiKarpel2015} for surveys on Bratteli-Vershik systems.

\section{Realizations of systems}

There are several results concerning the realization of measure-preserving systems as topological dynamical systems up to measure-theoretic isomorphism.

1. The Jewett-Krieger Theorem states that every non-atomic ergodic measure-preserving system on a Lebesgue space  is measure-theoretically isomorphic to a system $(X,\cb,\mu,T)$ in which $X$ is the Cantor set, $\cb$ is the completion of the Borel $\sigma$-algebra of $X$, $T$ is a minimal homeomorphism (every orbit is dense), and $\mu$ is a unique $T$-invariant Borel probability measure on $X$.

2. The Krieger Generator Theorem says that every ergodic measure-preserving system $\sys$ of finite entropy (see below) is measure-theoretically isomorphic to a subsystem of any full shift which has strictly larger (topological) entropy---see below---with a shift-invariant Borel probability measure. Thus full shifts are ``universal" in this sense. The proof is accomplished by producing a finite measurable partition of  $X$ such that coding orbits according to visits of the members of the partition produces a map to the full shift that is one-to-one a.e.
Recently Seward \cite{Seward2015a,Seward2015b} has extended the theorem to actions of countable groups, using his definition of Rokhlin entropy (see Section \ref{sec:soficent}).

3. Krieger proved also that such an embedding is possible into any mixing shift of finite type (see below) that has strictly larger topological entropy than the measure-theoretic entropy of $\sys$. Moreover, he gave necessary and sufficient conditions that an expansive homeomorphism of the Cantor set be topologically conjugate to s subshift of a given mixing shift of finite type.
``There is a version of the finite generator theorem for ergodic measure preserving
transformations of finite entropy, that realizes such a transformation by means of an
invariant probability measure of any irreducible and aperiodic topological Markov
chain, whose topological entropy exceeds the entropy of the transformation
([4], [2 § 28]). One can say that a corollary of theorem 3 achieves for minimal
expansive homeomorphisms of the Cantor discontinuum what the finite generator
theorem does for measure preserving transformations." \cite{Krieger1982}

4. Lind and Thouvenot \cite{LindThouvenot1977} proved that hyperbolic toral automorphisms (the matrix has no eigenvalue of modulus 1) are universal.
This was extended by Quas and Soo \cite{QuasSoo2014} to quasi-hyperbolic toral automorphisms (no roots of unity among the eigenvalues), and they also showed that the time-1 map of the geodesic flow on a compact surface of constant negative curvature is universal \cite{QuasSoo2012}.

5. Every minimal homeomorphism of the Cantor set is topologically conjugate to the Vershik map on a simple Bratteli diagram with unique maximal and minimal paths \cite{HermanPutnamSkau1992, GiordanoPutnamSkau1995}.

6. Every ergodic measure-preserving system is measure-theoretically isomorphic to a minimal Bratteli-Vershik system with a unique invariant Borel probability measure \cite{Vershik1982, Vershik1985}.

\chapter{Asymptotic exponential growth rate}

\section{Topological entropy}\label{sec:topent}

Let $X$ be a compact metric
space and $T:X\rightarrow X$ a homeomorphism.

{\em First definition} \cite{AdlerKonheimMcAndrew1965}: For an open
cover ${\mathcal U}$ of $X,$ let $N({\mathcal U})$ denote the minimum number
of elements in a subcover of ${\mathcal U},$ $H({\mathcal U})=\log N({\mathcal
	U}),$
\begin{equation*}
h({\mathcal U},T)= \lim_{ n\rightarrow \infty}
{\frac{1}{n}}H({\mathcal U}\vee T^{-1}{\mathcal U}\vee \ldots \vee T^{-n+1}{
	\mathcal U}) ,
\end{equation*}
and
\begin{equation*}
h(T)=\sup_{\mathcal U} h({\mathcal U},T) .
\end{equation*}

{\em Second definition}\label{def:bowenent} \cite{Bowen1971}: For $n\in {\mathbb N}$ and $
\epsilon >0,$ a subset $A\subset X$ is called $n,\epsilon -${\em separated
}  if given $a,b\in A$ with $a\not= b,$ there is $k\in \{
0,\ldots ,n-1\} $ with $d(T^ka,T^kb)\geq \epsilon .$ We
let $S(n,\epsilon )$ denote the {\em maximum}  possible cardinality
of an $n,\epsilon$-separated set. Then
\begin{equation*}
h(T)= \lim_{\epsilon \rightarrow 0^+}
\limsup_{n\rightarrow \infty }{\frac{1}{n}} \log S(n,
\epsilon ) .
\end{equation*}

{\em Third definition} \cite{Bowen1971}: For $n\in {\mathbb N}$ and $\epsilon
>0,$ a subset $A\subset X$ is called $n,\epsilon$-{\em spanning
}  if given $x\in X$ there is $a\in A$ with $d(T^ka,T^kx)\leq
\epsilon $ for all $k=0,\ldots ,n-1.$ We let $R(n,\epsilon )$ denote
the {\em minimum}  possible cardinality of an $n,\epsilon$-spanning set. Then
\begin{equation*}
h(T)= \lim_{\epsilon \rightarrow 0^+}
\limsup_{n\rightarrow \infty }{\frac{1}{n}} \log R(n,
\epsilon ) .
\end{equation*}

\begin{exer}If $(X,T)$ is a subshift ($X=$ a closed shift-invariant
	subset of the set of all doubly infinite sequences on a finite alphabet,
	$T=\sigma=$ shift transformation), then
	\begin{equation*}
	h(\sigma )= \lim_{n\rightarrow \infty }
	\frac{\log(\text{number of }n\text{-blocks seen in sequences in }X)}{n} .
	\end{equation*}
\end{exer}

\begin{thm}[``Variational Principle''] $h(T)=\sup\{
	h_\mu (T):$ $\mu $ is an invariant (ergodic) Borel probability measure
	on $X\} .$
\end{thm}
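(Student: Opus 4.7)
The plan is to establish the two inequalities $\sup_\mu h_\mu(T)\le h(T)$ and $\sup_\mu h_\mu(T)\ge h(T)$ separately, then reduce from invariant to ergodic measures via the ergodic decomposition (so that the supremum over ergodic measures agrees with the supremum over all invariant measures, using $h_\mu(T)=\int h_\nu(T)\,d\tau(\nu)$ for the decomposition $\mu=\int\nu\,d\tau(\nu)$).

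For the upper bound, I would fix an invariant Borel probability measure $\mu$ and a finite measurable partition $\alpha=\{A_1,\dots,A_k\}$. The idea is to approximate $\alpha$ by a partition $\beta$ whose atoms each sit inside an open set and whose boundaries have $\mu$-measure $0$, at a cost in entropy controlled by Rokhlin's inequality $|h_\mu(\alpha,T)-h_\mu(\beta,T)|\le H_\mu(\alpha\mid\beta)+H_\mu(\beta\mid\alpha)$. For such a $\beta$, cover each atom $B_i$ by an open set $U_i$ with $\mu(U_i\setminus B_i)$ small, producing an open cover $\mathcal{U}=\{U_1,\dots,U_k\}$. A careful combinatorial count (each atom of $\bigvee_{i=0}^{n-1}T^{-i}\beta$ is contained in at most $2^n$ atoms of $\bigvee_{i=0}^{n-1}T^{-i}\mathcal{U}$, and using that $\log x$ is concave) gives $H_\mu\!\left(\bigvee_{i=0}^{n-1}T^{-i}\beta\right)\le \log N\!\left(\bigvee_{i=0}^{n-1}T^{-i}\mathcal{U}\right)+n\log 2$; dividing by $n$ and letting $n\to\infty$ yields $h_\mu(\beta,T)\le h(\mathcal{U},T)+\log 2$, and a doubling trick (or a more refined cover argument) removes the extra $\log 2$. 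Taking suprema over $\alpha$ gives $h_\mu(T)\le h(T)$.

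For the lower bound (the Misiurewicz argument), fix $\epsilon>0$ and, for each $n$, a maximal $(n,\epsilon)$-separated set $E_n$ with $|E_n|=S(n,\epsilon)$. Set $\sigma_n=\frac{1}{|E_n|}\sum_{x\in E_n}\delta_x$ and $\mu_n=\frac{1}{n}\sum_{k=0}^{n-1}T^k_*\sigma_n$. By Banach--Alaoglu, a subsequence $\mu_{n_j}$ converges weak-$*$ to a $T$-invariant probability measure $\mu$. Choose a finite partition $\xi$ of $X$ with $\mathrm{diam}(\xi)<\epsilon$ and $\mu(\partial\xi)=0$ (possible by shrinking balls around each point). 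Because points of $E_n$ lie in distinct atoms of $\bigvee_{i=0}^{n-1}T^{-i}\xi$, one has $H_{\sigma_n}\!\left(\bigvee_{i=0}^{n-1}T^{-i}\xi\right)=\log|E_n|$. The standard concavity-of-entropy trick (applied to the average defining $\mu_n$, cutting $[0,n-1]$ into blocks of length $q$) yields
\begin{equation*}
\frac{1}{q}H_{\mu_n}\!\left(\bigvee_{i=0}^{q-1}T^{-i}\xi\right)\ge \frac{1}{n}\log|E_n|-\frac{2q\log\#\xi}{n}.
\end{equation*}
Letting $n=n_j\to\infty$ (using weak-$*$ convergence, which preserves entropies of partitions with null boundary), then $q\to\infty$, gives $h_\mu(\xi,T)\ge\limsup_n\frac{1}{n}\log S(n,\epsilon)$, hence $h_\mu(T)\ge h(T)$ by Bowen's definition.

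The main obstacle is the lower bound: one must convert the combinatorial quantity $S(n,\epsilon)$ into the entropy of an actual invariant measure, and the passage from $H_{\sigma_n}$ to $H_{\mu_n}$ and then to $h_\mu$ requires the concavity-plus-blocking estimate and the choice of a partition whose boundary is null for the (so far unknown) limit measure $\mu$. Handling these interchanges of limits carefully, and verifying that the reduction to ergodic measures via ergodic decomposition preserves the supremum, are the technical heart of the argument.
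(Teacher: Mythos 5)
The paper states this Variational Principle without proof---it appears in these lecture notes as a quoted classical result (due to Goodwyn, Dinaburg, and Goodman, with the now-standard proof by Misiurewicz)---so there is no internal argument to compare yours against. Your proposal is exactly the standard Misiurewicz proof: compact-inner-approximation of a partition plus the $2^n$-overlap count and a $T^N$ rescaling to kill the $\log 2$ for the inequality $h_\mu(T)\le h(T)$, and the separated-set/empirical-measure/concavity-blocking argument for the reverse inequality, with ergodic decomposition and affinity of entropy handling the reduction to ergodic measures. The outline is correct; the only presentational slip is at the end of the lower bound, where the limit measure $\mu$ depends on $\epsilon$, so the conclusion should read $\sup_\mu h_\mu(T)\ge\lim_{\epsilon\to 0^+}\limsup_n\frac{1}{n}\log S(n,\epsilon)=h(T)$ rather than asserting the bound for a single $\mu$.
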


\section{Ergodic-theoretic entropy}

\subsection{Definition} A finite (or sometimes countable)
measurable partition
\begin{equation*}
\alpha =\{ A_1,\ldots ,A_r\}
\end{equation*}
of $X$ is thought of as the set of possible outcomes of an experiment
(performed at time 0) or as an alphabet of symbols used to form
messages (the experiment could consist of receiving and reading
one symbol). The {\em entropy}  of the partition is
\begin{equation*}
H_\mu (\alpha )=\sum \limits_{A\in \alpha }-\mu (A)\log\mu
(A) \quad\text{ (the logs can be base } e, 2, \text{ or }r);
\end{equation*}
it represents the amount of information gained$=$amount of uncertainty
removed when the experiment is performed or one symbol is received
(averaged over all possible states of the world---the amount of
information gained if the outcome is $A$ (i.e., we learn to which
cell of $\alpha $ the world actually belongs) is $-\log\mu (A))$.
(Note that this is large when $\mu(A)$ is small.)
Notice that the information gained when we learn that an event $A$
occurred is additive for independent events.

The partition
\begin{equation*}
T^{-1}\alpha =\{ T^{-1}A:A\in \alpha \}
\end{equation*}
represents performing the experiment $\alpha $ (or reading a symbol)
at time 1, and $\alpha \vee T^{-1}\alpha \vee \ldots \vee T^{-n+1}
\alpha $ represents the result of $n$ repetitions of the experiment
(or the reception of a string of $n$ symbols). Then $H(\alpha \vee
T^{-1}\alpha \vee \ldots \vee T^{-n+1}\alpha )/n$ is the average
information gain per repetition (or per symbol received), and
\begin{equation*}
h(\alpha ,T)=\lim_{n\rightarrow \infty}
{\frac{1}{n}} H(\alpha \vee T^{-1}\alpha \vee \ldots \vee
T^{-n+1}\alpha )
\end{equation*}
is the long-term time average of the information gained per unit
time.
(This limit exists because of the {\em subadditivity} of $H$:
$H(\alpha \vee \beta) \leq H(\alpha) + H(\beta)$.)

The {\em entropy of the system}  $(X,{\mathcal B},\mu ,T)$ is
defined to be
\begin{equation*}
h_\mu (T)=\sup_{\alpha } h(\alpha,T) ,
\end{equation*}
the maximum information per unit time available from any finite-
(or countable-) state stationary process generated by the system.

\begin{thm}[Kolmogorov-Sinai] If $T$ has a finite {\em generator
	}   $\alpha $---a partition $\alpha $ such that the smallest
	$\sigma$-algebra that contains all $T^j\alpha ,j\in {\mathbb Z},$
	is ${\mathcal B}$---then $h_\mu (T)=h(\alpha ,T).$ (Similarly if $T$
	has a countable generator with finite entropy.)
\end{thm}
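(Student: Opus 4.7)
The inequality $h(\alpha, T) \leq h_\mu(T)$ is immediate from the definition of $h_\mu(T)$ as a supremum, so the real content is the reverse inequality: for every finite partition $\beta$ we must show $h(\beta, T) \leq h(\alpha, T)$. The plan is to combine three ingredients: (i) invariance of the dynamical entropy of a partition under refinement by finitely many shifts of itself, (ii) a quantitative comparison $h(\beta, T) \leq h(\gamma, T) + H(\beta \mid \gamma)$ valid for any two finite partitions, and (iii) an approximation argument, using that $\alpha$ is a generator, showing that $\beta$ can be arbitrarily well approximated conditionally by partitions built from finitely many $T^{j}\alpha$.

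First I would establish the algebraic identity
\[
h\!\left(\bigvee_{j=-k}^{k} T^{-j}\alpha,\,T\right) = h(\alpha, T)
\]
by observing that the $n$-fold refinement of $\bigvee_{j=-k}^{k} T^{-j}\alpha$ under $T$ is $\bigvee_{j=-k}^{n+k-1} T^{-j}\alpha$, so that after dividing by $n$ and passing to the limit, the extra $2k$ coordinates contribute nothing. Next I would prove the key subadditivity-type inequality
\[
h(\beta, T) \leq h(\gamma, T) + H(\beta \mid \gamma),
\]
by using $H(\beta_1 \vee \beta_2) \leq H(\beta_1) + H(\beta_2 \mid \beta_1)$, the $T$-invariance of $\mu$ (so that $H(T^{-j}\beta \mid T^{-j}\gamma) = H(\beta \mid \gamma)$), and the fact that conditional entropy is subadditive in its first argument. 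Dividing $H\!\left(\bigvee_{j=0}^{n-1} T^{-j}\beta\right) \leq H\!\left(\bigvee_{j=0}^{n-1}T^{-j}\gamma\right) + \sum_{j=0}^{n-1} H(T^{-j}\beta \mid T^{-j}\gamma)$ by $n$ and letting $n \to \infty$ gives the bound.

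Now set $\alpha_k = \bigvee_{j=-k}^{k} T^{-j}\alpha$. Since $\alpha$ is a generator, the $\sigma$-algebras $\mathcal{B}_k$ generated by $\alpha_k$ increase to $\mathcal{B}$, so each atom of the finite partition $\beta$ is measurable with respect to $\mathcal{B} = \bigvee_k \mathcal{B}_k$. By the increasing martingale theorem applied to the indicator functions of the atoms of $\beta$, the conditional probabilities $\mathbb{E}[\mathbf{1}_B \mid \mathcal{B}_k]$ converge to $\mathbf{1}_B$ in $L^1$, and a standard continuity argument for the function $-x\log x$ yields $H(\beta \mid \alpha_k) \to 0$ as $k \to \infty$. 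Combining this with the two previous steps,
\[
h(\beta, T) \leq h(\alpha_k, T) + H(\beta \mid \alpha_k) = h(\alpha, T) + H(\beta \mid \alpha_k),
\]
and letting $k \to \infty$ gives $h(\beta, T) \leq h(\alpha, T)$. Taking the supremum over $\beta$ yields $h_\mu(T) \leq h(\alpha, T)$.

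\textbf{Main obstacle.} The subtle point is the approximation in step (iii): turning the set-theoretic statement that $\alpha$ generates $\mathcal{B}$ into the quantitative statement $H(\beta \mid \alpha_k) \to 0$. This is where the hypothesis $\mu(X) < \infty$ and the martingale (or equivalently the $L^1$-density of simple functions measurable with respect to $\bigcup_k \mathcal{B}_k$) really enters, and it is what distinguishes a true generator from a merely separating family. The countable-generator-of-finite-entropy variant follows the same blueprint, with the added care that the initial partition itself has finite entropy so that $H(\beta \mid \alpha_k)$ stays well-defined and the continuity of $-x\log x$ gives uniform control.
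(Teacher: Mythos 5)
The paper states the Kolmogorov--Sinai theorem as a known classical result and gives no proof of its own, so there is nothing to compare against. Your argument is the standard textbook proof (as in Walters or Petersen's \emph{Ergodic Theory}): the identity $h(\bigvee_{j=-k}^{k}T^{-j}\alpha,T)=h(\alpha,T)$, the inequality $h(\beta,T)\le h(\gamma,T)+H(\beta\mid\gamma)$, and the martingale argument giving $H(\beta\mid\alpha_k)\to 0$ for a generator are exactly the three lemmas one needs, and you combine them correctly; your closing remarks on where generation and finite entropy enter are also accurate.
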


\begin{thm} If $\{ \alpha _k\} $ is
	an increasing sequence of finite partitions which generates ${\mathcal
		B}$ up to sets of measure $0$, then $h(\alpha _k,T)\rightarrow h(T)$
	as $k\rightarrow \infty .$
\end{thm}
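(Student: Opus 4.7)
The plan is to sandwich $h_\mu(T)$ between $\lim_k h(\alpha_k, T)$ from both sides. The easy direction is monotonicity: since $\alpha_{k+1}$ refines $\alpha_k$, for every $n$ the join $(\alpha_{k+1})_0^{n-1}$ refines $(\alpha_k)_0^{n-1}$, so $H_\mu$ is nondecreasing in $k$, whence $k \mapsto h(\alpha_k, T)$ is nondecreasing. Since each $h(\alpha_k,T) \leq h_\mu(T)$ by definition of the supremum, the limit $\ell := \lim_k h(\alpha_k, T)$ exists and satisfies $\ell \leq h_\mu(T)$.

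For the reverse inequality, I would fix an arbitrary finite partition $\beta$ and reduce to comparing $h(\beta,T)$ with $h(\alpha_k,T)$ via two standard estimates: the refinement monotonicity $h(\beta, T) \leq h(\alpha_k \vee \beta, T)$, and the cocycle bound
\begin{equation*}
h(\alpha \vee \beta, T) \leq h(\alpha, T) + H_\mu(\beta \mid \alpha).
\end{equation*}
The second is obtained from the chain rule $H_\mu((\alpha \vee \beta)_0^{n-1}) = H_\mu(\alpha_0^{n-1}) + H_\mu(\beta_0^{n-1} \mid \alpha_0^{n-1})$ together with the subadditivity estimate $H_\mu(\beta_0^{n-1} \mid \alpha_0^{n-1}) \leq \sum_{i=0}^{n-1} H_\mu(T^{-i}\beta \mid T^{-i}\alpha) = n H_\mu(\beta \mid \alpha)$, which uses that conditioning on a larger $\sigma$-algebra decreases entropy and that $\mu$ is $T$-invariant. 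Dividing by $n$ and letting $n \to \infty$ gives the cocycle bound, so altogether
\begin{equation*}
h(\beta, T) \leq h(\alpha_k, T) + H_\mu(\beta \mid \alpha_k).
\end{equation*}

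The main obstacle is then the claim $H_\mu(\beta \mid \alpha_k) \to 0$. I would handle this by martingale convergence: the sub-$\sigma$-algebras $\cf_k := \sigma(\alpha_k)$ increase and $\bigvee_k \cf_k = \cb$ modulo null sets, so for each atom $B \in \beta$ the conditional expectations $\Be(\one_B \mid \cf_k)$ converge to $\one_B$ almost everywhere and in $L^1$. Because the function $\phi(x) = -x\log x$ is continuous and bounded on $[0,1]$, dominated convergence applied to
\begin{equation*}
H_\mu(\beta \mid \cf_k) = \sum_{B \in \beta} \int_X \phi\bigl(\Be(\one_B \mid \cf_k)\bigr)\, d\mu
\end{equation*}
drives the right-hand side to $\sum_{B \in \beta} \int_X \phi(\one_B)\, d\mu = 0$. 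Combining with the previous inequality, $h(\beta, T) \leq \ell$ for every finite partition $\beta$; taking the supremum over $\beta$ gives $h_\mu(T) \leq \ell$, and together with the first paragraph this yields $\lim_k h(\alpha_k, T) = h_\mu(T)$. The only genuinely analytic point is the conditional-entropy convergence, which is packaged cleanly by martingale convergence plus the boundedness of $\phi$; everything else is a rearrangement of standard entropy inequalities.
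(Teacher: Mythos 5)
Your argument is correct and is the standard one: monotonicity gives $\ell=\lim_k h(\alpha_k,T)\le h_\mu(T)$, the inequality $h(\beta,T)\le h(\alpha_k,T)+H_\mu(\beta\mid\alpha_k)$ reduces everything to showing $H_\mu(\beta\mid\alpha_k)\to 0$, and the increasing martingale convergence theorem plus boundedness of $-x\log x$ finishes it. The paper states this theorem without proof, so there is no authorial argument to compare against; your proof is the one found in the standard references and has no gaps.
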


\begin{rem}
	Entropy comes from {\em coarse-graining}. We admit that we do not have complete information about the state $x$ of a system $X$ but know for a partition $\alpha$ or open cover $\mathcal U$ of $X$ only which cell of $\alpha$ or member of $\mathcal U$ the point $x$ is in. 
	As time clicks forward, there is uncertainty about which cell or partition member the points $Tx, T^2x, \dots$ will be in. 
	Entropy measures the exponential rate at which this uncertainty grows. 
	Because we lack exact information, chance can be present even in completely deterministic systems such as automorphisms of the torus and time-one maps of geodesic flows, which are measure-theoretically isomorphic to Bernoulli shifts. 
\end{rem}

\subsection{Conditioning}  For a countable measurable partition
$\alpha $ and sub-$\sigma$-algebra ${\mathcal F}$ of ${\mathcal B},$ we
define the {\em conditional information function}  of $\alpha
$ given ${\mathcal F}$ by
\begin{equation*}
I_{\alpha \vert {\mathcal F}}(x)=-\sum \limits_{A\in \alpha
}\log\mu (A\vert {\mathcal F})(x)\chi _A(x);
\end{equation*}
this represents the information gained by performing the experiment
$\alpha $ (if the world is in state $x)$ after we already know for
each member of ${\mathcal F}$ whether or not it contains the point $x.$
The {\em conditional entropy} of $\alpha $ given ${\mathcal
	F}$ is
\begin{equation*}
H(\alpha \vert {\mathcal F})=\int _XI_{\alpha \vert {\mathcal
		F}}(x)d\mu (x);
\end{equation*}
this is the average over all possible states $x$ of the information
gained from the experiment $\alpha .$ When ${\mathcal F}$ is the
$\sigma$-algebra generated by a partition $\beta ,$ we often just write
$\beta $ in place of ${\mathcal F}.$

\begin{prop}
	\begin{enumerate}[1.]
	\item	 $H(\alpha \vee \beta \vert {\mathcal F})=H(
		\alpha \vert {\mathcal F})+H(\beta \vert {\mathcal B}(\alpha )\vee {\mathcal
			F}).$
		\item		 $H(\alpha \vert {\mathcal F})$ is increasing in its
		first variable and decreasing in its second.
	\end{enumerate}
\end{prop}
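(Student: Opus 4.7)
The plan is to derive the chain rule (1) from a pointwise identity for conditional information functions, and then to deduce the two monotonicity statements in (2) using, respectively, the chain rule itself and Jensen's inequality applied to the concave function $\phi(t) = -t\log t$ (with the convention $\phi(0)=0$).

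For (1), I would start by establishing the fundamental multiplicative identity
$$\mu(A \cap B \mid \cf) = \mu(A \mid \cf) \cdot \mu(B \mid \cb(\alpha) \vee \cf) \quad \text{a.e.\ on } A$$
for each $A \in \alpha$ and $B \in \beta$. The key observation is that $\chi_A$ is $\cb(\alpha) \vee \cf$-measurable, so the tower property gives $\Be[\chi_A \chi_B \mid \cf] = \Be[\chi_A \, \Be[\chi_B \mid \cb(\alpha) \vee \cf] \mid \cf]$, and restricted to the set $A$ (where $\mu(A \mid \cf) > 0$ a.s.) the inner conditional expectation equals $\mu(A \cap B \mid \cf)/\mu(A \mid \cf)$. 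Taking $-\log$, weighting by $\chi_{A \cap B}$, and summing over pairs $(A,B) \in \alpha \times \beta$ gives the pointwise identity
$$I_{\alpha \vee \beta \mid \cf}(x) = I_{\alpha \mid \cf}(x) + I_{\beta \mid \cb(\alpha) \vee \cf}(x).$$
Integrating against $\mu$ yields (1).

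For the first half of (2), I interpret ``increasing in the first variable'' as monotonicity under refinement: if $\alpha'$ refines $\alpha$, then $\alpha \vee \alpha' = \alpha'$, so the chain rule from (1) gives
$$H(\alpha' \mid \cf) = H(\alpha \mid \cf) + H(\alpha' \mid \cb(\alpha) \vee \cf) \geq H(\alpha \mid \cf),$$
since conditional entropy is nonnegative. For the second half, suppose $\cf \subset \cg$. The concavity of $\phi$, Jensen's inequality for conditional expectations, and the identity $\Be[\mu(A \mid \cg) \mid \cf] = \mu(A \mid \cf)$ yield
$$\Be\!\left[\phi(\mu(A \mid \cg)) \,\middle|\, \cf\right] \leq \phi(\mu(A \mid \cf))$$
for each $A \in \alpha$. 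Integrating and summing over $A$, using the representation $H(\alpha \mid \ch) = \sum_{A \in \alpha} \int \phi(\mu(A \mid \ch))\,d\mu$ (which follows from replacing $\chi_A$ by its $\ch$-conditional expectation in the definition of $I_{\alpha \mid \ch}$), produces $H(\alpha \mid \cg) \leq H(\alpha \mid \cf)$.

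The main obstacle is the measure-theoretic bookkeeping in (1): stating and proving the a.e.\ factorization $\mu(A \cap B \mid \cf) = \mu(A \mid \cf) \cdot \mu(B \mid \cb(\alpha) \vee \cf)$ carefully, handling the null set on which $\mu(A \mid \cf)$ vanishes, and justifying that the $0\log 0 = 0$ convention keeps the information-function identity valid pointwise a.e. Once this pointwise identity is in hand, the remaining steps---integration and the Jensen inequality argument---are formal.
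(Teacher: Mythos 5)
Your proof is correct, and it is the standard argument: the paper states this proposition without proof (it is listed among basic facts about conditional information in these lecture notes), so there is no proof of record to compare against. Your route---deriving the pointwise identity $I_{\alpha\vee\beta\mid\cf}=I_{\alpha\mid\cf}+I_{\beta\mid\cb(\alpha)\vee\cf}$ from the a.e.\ factorization $\mu(A\cap B\mid\cf)=\mu(A\mid\cf)\,\mu(B\mid\cb(\alpha)\vee\cf)$ on $A$, then getting monotonicity under refinement from the chain rule and antitonicity in the conditioning $\sigma$-algebra from Jensen applied to $\phi(t)=-t\log t$ together with the tower property---is exactly the textbook treatment, and you correctly flag the only delicate point, namely that $\mu(A\mid\cf)>0$ a.e.\ on $A$ so the division and the $0\log 0$ convention cause no trouble.
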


\begin{thm}  For any finite (or countable finite-entropy)
	partition $\alpha ,$
	\begin{equation*}
	h(\alpha ,T)=H(\alpha \vert {\mathcal B}(T^{-1}\alpha \vee
	T^{-2}\alpha \vee \ldots )).
	\end{equation*}
\end{thm}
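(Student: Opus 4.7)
The plan is to reduce the theorem to a Cesàro-average identity plus a convergence-of-conditional-entropy statement that boils down to the martingale convergence theorem.

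First I would set up the chain-rule decomposition. Write $\alpha_0^{n-1} = \alpha \vee T^{-1}\alpha \vee \cdots \vee T^{-n+1}\alpha$ and iterate part 1 of the preceding Proposition together with $T$-invariance of $\mu$, which gives $H(T^{-1}\alpha_0^{n-2}) = H(\alpha_0^{n-2})$. The result is the telescoping identity
\begin{equation*}
H(\alpha_0^{n-1}) \;=\; H(\alpha) + \sum_{k=1}^{n-1} H\!\left(\alpha \,\big|\, T^{-1}\alpha \vee T^{-2}\alpha \vee \cdots \vee T^{-k}\alpha\right).
\end{equation*}

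Next, set $c_k = H(\alpha \mid T^{-1}\alpha \vee \cdots \vee T^{-k}\alpha)$ and $c_0 = H(\alpha)$. By part 2 of the Proposition, $c_k$ is monotone non-increasing in $k$, because the conditioning $\sigma$-algebras grow. Dividing the displayed identity by $n$ and applying the Cesàro lemma for monotone sequences shows
\begin{equation*}
h(\alpha,T) \;=\; \lim_{n\to\infty}\frac{1}{n}H(\alpha_0^{n-1}) \;=\; \lim_{k\to\infty} c_k.
\end{equation*}

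The final step is to identify this limit with $H(\alpha \mid \mathcal{B}(T^{-1}\alpha \vee T^{-2}\alpha \vee \cdots))$. Let $\mathcal{F}_k$ be the $\sigma$-algebra generated by $T^{-1}\alpha \vee \cdots \vee T^{-k}\alpha$ and $\mathcal{F}_\infty = \mathcal{B}(\bigvee_{k\ge1}T^{-k}\alpha)$. For each $A \in \alpha$, the sequence $\mu(A \mid \mathcal{F}_k)$ is a uniformly integrable martingale converging $\mu$-a.e. and in $L^1$ to $\mu(A \mid \mathcal{F}_\infty)$ by Doob's increasing martingale theorem. Hence $-\chi_A(x)\log\mu(A \mid \mathcal{F}_k)(x) \to -\chi_A(x)\log\mu(A \mid \mathcal{F}_\infty)(x)$ pointwise a.e. To pass this into the integral defining $H(\alpha \mid \mathcal{F}_k)$, one invokes dominated convergence after showing that $\sup_k\bigl(-\chi_A\log\mu(A\mid\mathcal{F}_k)\bigr)$ is integrable; this uses Doob's maximal inequality applied to the martingale $\mu(A\mid\mathcal{F}_k)^{-1}\chi_A$, which shows that the $L^1$ norm of the supremum is controlled by $H(\alpha)+1$ (or an analogous bound in the countable finite-entropy case). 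Summing over the finite or countable family $A\in\alpha$ gives $c_k\to H(\alpha\mid\mathcal{F}_\infty)$, completing the proof.

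The main obstacle is this last convergence: the pointwise martingale convergence is immediate, but dominating the conditional information function uniformly in $k$ so as to apply the dominated convergence theorem is the technical heart of the argument, and it is exactly the place where the finite-entropy hypothesis on $\alpha$ is used when $\alpha$ is countable rather than finite.
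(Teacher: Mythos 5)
The paper states this theorem without proof, so there is no in-text argument to compare against. Your proposal is the standard textbook proof (as in Walters or Parry), and its architecture is correct: the chain-rule telescoping $H(\alpha_0^{n-1})=H(\alpha)+\sum_{k=1}^{n-1}c_k$, the monotonicity of $c_k=H(\alpha\mid T^{-1}\alpha\vee\cdots\vee T^{-k}\alpha)$ from part 2 of the Proposition, the Ces\`aro step giving $h(\alpha,T)=\lim_k c_k$, and the reduction of $\lim_k c_k=H(\alpha\mid\mathcal F_\infty)$ to martingale convergence plus dominated convergence are all exactly right, and you correctly identify that the only real work is producing an integrable dominating function for the conditional information functions, which is where finite entropy is needed in the countable case.

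One technical justification should be repaired. The process $\chi_A\,\mu(A\mid\mathcal F_k)^{-1}$ is not a martingale: conditioning on $\mathcal F_{k+1}$ gives $E\bigl[\chi_A/\mu(A\mid\mathcal F_{k+1})\mid\mathcal F_{k+1}\bigr]=1$ a.e., so by the tower property $E\bigl[\chi_A/\mu(A\mid\mathcal F_{k+1})\mid\mathcal F_k\bigr]=1$ rather than $\chi_A/\mu(A\mid\mathcal F_k)$, and Doob's maximal inequality cannot be invoked for it. The bound you actually need, $\int_X\sup_k I_{\alpha\mid\mathcal F_k}\,d\mu\le H(\alpha)+\log e$ (the Chung--Neveu maximal inequality for the information function), is proved by a first-entry decomposition: for $A\in\alpha$ and $t>0$, partition $A\cap\{\sup_k(-\log\mu(A\mid\mathcal F_k))>t\}$ according to the first index $k$ at which $\mu(A\mid\mathcal F_k)<e^{-t}$; each piece has measure at most $e^{-t}$ times the measure of the corresponding $\mathcal F_k$-measurable event, yielding $\mu\bigl(A\cap\{\sup_k(-\log\mu(A\mid\mathcal F_k))>t\}\bigr)\le\min\bigl(\mu(A),e^{-t}\bigr)$, and integrating over $t\in(0,\infty)$ and summing over $A\in\alpha$ gives the stated $L^1$ bound (finite precisely when $H(\alpha)<\infty$). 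With that lemma substituted for the appeal to Doob, your proof is complete.
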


\subsection{Examples}
\begin{enumerate}

\item{Bernoulli shifts:}\label{item:BS} $h=-\sum p_i\log p_i$ .
Consequently $\mathcal B(1/2,1/2)$ is not isomorphic to $\mathcal
B(1/3,1/3,1/3)$.

\item{Markov shifts:}\label{item:MS} $h=-\sum p_i\sum P_{ij}\log P_{ij}$ .

\item{Discrete spectrum (the span of the eigenfunctions is dense in $L^2$):}\label{item:discspec} $h=0$ . (Similarly for {\em rigid
}  systems---ones for which there is a sequence $n_k\rightarrow
\infty $ with $T^{n_k}f\rightarrow f$ for all $f\in L^2.)$ Similarly
for any system with a {\em one-sided generator}, for then
$h(\alpha ,T)=H(\alpha \vert \alpha _1^\infty )=H(\alpha \vert {
	\mathcal B})=0.$ It's especially easy to see for an irrational
rotation of the circle, for if $\alpha$ is the partition into two
disjoint arcs, then $\alpha_0^n$ only has $2(n+1)$ sets in it.

\item{Products:}\label{item:prod} $h(T_1\times T_2)=h(T_1)+h(T_2)$ .

\item{Factors:}\label{item:factor} If $\pi :T\rightarrow S,$ then $h(T)\geq h(S)$ .

\item{Bounded-to-one factors:} $h(T)=h(S)$. See \cite[p. 56]{ParryTuncel1982}.

\item{Skew products:} $h(T\times \{ S_x\}
)=h(T)+h_T(S)$ . Here the action is $(x,y)\rightarrow (Tx,S_xy),$
with each $S_x$ a m.p.t. on $Y,$ and the second term is the {\em fiber
	entropy}
\begin{equation*}
h_T(S)=\sup\{\int _XH(\beta \vert S_x^{-1}
\beta \vee S_x^{-1}S_{Tx}^{-1}\beta \vee \ldots )d\mu (x):\beta
\text{ is a finite partition of }Y\} .
\end{equation*}

\item{Automorphism of the torus:} $h=\sum \limits_{{
		\left|{\lambda _i}\right|}>1}\log {\left|{\lambda _i}\right|}$
(the $\lambda _i$ are the eigenvalues of the integer matrix with
determinant $\pm 1).$

\item{Pesin's Formula:}
If $\mu \ll m$ (Lebesgue measure on the manifold), then
\begin{equation*}
h_{\mu}(f) = \int \sum_{\lambda_k(x)>0} q_k(x)\lambda_k(x) \, d\mu
(x),
\end{equation*}
where the $\lambda_k(x)$ are the {\em Lyapunov exponents} and
$q_k(x)=\dim (V_k(x) \setminus V_{k-1}(x))$ are the dimensions of the
corresponding subspaces.

\item{Induced transformation (first-return map):} For $A\subset
X,$ $h(T_A)=h(T)/\mu (A)$.

\item{Finite rank ergodic:} $h=0$ .
\begin{proof} Suppose rank $=1,$ let $P$ be a
	partition into two sets (labels 0 and 1), let $\epsilon >0.$ Take
	a tower of height $L$ with levels approximately $P$-constant (possible
	by rank 1; we could even take them $P$-constant)
	and $\mu (\text{junk})<\epsilon .$ Suppose we follow the orbit
	of a point $N\gg L$ steps; how many different $P,N$-names can we
	see? Except for a set of measure $<\epsilon ,$ we hit the junk $n
	\sim \epsilon N$ times. There are $L$ starting places (levels of
	the tower); $C(N,n)$ places with uncertain choices of 0, 1; and
	$2^n$ ways to choose 0 or 1 for these places. So the sum of $\mu
	(A)\log\mu (A)$ over $A$ in $P_0^{n-1}$ is $\leq $ the log of the
	number of names seen in the good part minus the log of $2^N(\epsilon
	/2^N)\log (\epsilon /2^N),$ and dividing by $N$ gives
	\begin{equation*}
	{\frac{\log L}{N}}+NH(\epsilon ,1-\epsilon )+{\frac{N\epsilon
		}{N}}+{\frac{\epsilon (-\log \epsilon +N)}{N}}\sim 0 .
	\end{equation*}
	Similarly for any finite partition $P.$ Also for rank $r$---then
	we have to take care (not easily) about the different possible ways
	to switch columns when spilling over the top.
\end{proof}
\end{enumerate}

\begin{exer}
	Prove the statements in \ref{item:BS}--\ref{item:factor} above.
\end{exer}

\subsection{Ornstein's Isomorphism Theorem}  Two Bernoulli
shifts are isomorphic if and only if they have the same entropy.

\subsection{Shannon-McMillan-Breiman Theorem}  For a finite
measurable partition $\alpha ,$ and $x\in X,$ let $\alpha (x)$ denote
the member of $\alpha $ to which $x$ belongs, and let $\alpha _0^{n-1}=
\alpha \vee T^{-1}\alpha \vee \ldots \vee T^{-n+1}\alpha .$ If $T$
is an ergodic m.p.t. on $X,$ then
\begin{equation*}
{\frac{-\log\mu (\alpha _0^{n-1}(x))}{n}} \rightarrow
h(\alpha ,T) \text{  a.e. and in } L^1.
\end{equation*}

\subsection{Brin-Katok local entropy formula \cite{BrinKatok1983}} 
Let $(X,T)$ be a topological dynamical system (with $X$ compact metric as usual). 
For $x \in X, \delta >0, n \in \Bn$, define
\begin{equation}
\begin{gathered}
B(x,\delta ,n)=\{y: d(T^jx,T^jy)<\delta \text{ for all } j=1,\dots,n\},\\
h^+(x)=\lim_{\delta \to 0}\limsup_{n \to \infty}- \frac{\log \mu(B(x,\delta,n)}{n}, \text{ and }\\
h^-(x)=\lim_{\delta \to 0}\liminf_{n \to \infty}- \frac{\log \mu(B(x,\delta,n)}{n}.
\end{gathered}
\end{equation}
Then $h^+(x)=h^-(x)$ a.e., and
\begin{equation}
h_\mu(T) = \int_X h^+(x)\,d\mu.
\end{equation}
If $\sys$ is ergodic, then $h^+(x)=h^-(x)=h_\mu(T)$ a.e. with respect to $\mu$. 
Thus the Bowen-Dinaburg ball $B(x,\delta,n)$ plays the same role as the partition $\alpha_0^{n-1}$ in the Shannon-McMillan-Breiman Theorem. 
The functions $h^+$ and $h^-$ may be called the {\em upper and lower local entropies of $\sys$}. 

\subsection{Wyner-Ziv-Ornstein-Weiss entropy calculation algorithm}
 For a stationary ergodic sequence $\{ \omega _1,\omega
_2,\ldots \} $ on a finite alphabet and $n\geq 1,$ let
$R_n(\omega )=$ the first place to the right of 1 at which the initial
$n$-block of $\omega $ reappears (not overlapping its first appearance). Then
\begin{equation*}
\frac{\log R_n(\omega )}{n}\rightarrow h \text{ a.e..}
\end{equation*}
This is also related to the {\em Lempel-Ziv parsing algorithm},
in which a comma is inserted in a string $\omega $ each time a block
is completed (beginning at the preceding comma) which has not yet
been seen in the sequence.

\section{Measure-theoretic sequence entropy}\label{sec:mtseqent}

A. G. Kushnirenko \cite{Kushnirenko1967} defined the {\em sequence entropy} of a measure-preserving system $\sys$ with respect to a sequence  $S = (t_i)$ of integers  as follows. Given a (finite or countable) measurable partition $\alpha$ of $X$, as usual $H(\alpha) = - \sum_{A \in \alpha} \mu(A) \log \mu (A)$. Then we let
\be
h_\mu^S(\alpha,T)= \limsup \frac{1}{n}H(\bigvee_{i=1}^nT^{-t_i}\alpha) \quad\text{ and } \quad h_\mu^S(T)=\sup_\alpha h_\mu^S(\alpha,T),
\en
the supremum being taken over all countable measurable partitions of finite entropy. $h_\mu^S(\alpha,T)$ is the asymptotic rate of information gain if a measurement $\alpha$ is made on a dynamical system at the times in the sequence $S$. $h_\mu^S(T)$ is an isomorphism invariant. Kushnirenko
\begin{enumerate}
	\item calculated the $(2^n)$ entropy for some skew product maps on the torus; 
	\item used $(2^n)$ entropy to prove that the time-$1$ map of the horocycle flow on a two-dimensional orientable
manifold of constant negative curvature is not isomorphic to its
Cartesian square, although both have countable Lebesgue spectrum and entropy zero; and 
\item proved that a system $\sys$ has purely discrete spectrum if and only if $h_\mu^S(T)=0$ for every sequence $S$.
\end{enumerate}

D. Newton \cite{Newton1970} (see also \cite{KrugNewton1972}) showed that for each sequence $S$ there is a constant $K(S)$ such that for every system $\sys$, $h_\mu^S(T)=K(S)h(T)$, unless $K(S)=\infty$ and $h(T)=0$; if $K(S)=0$ and $h(T)=\infty$, then $h_\mu^S(T)=0$. thus sequence entropy is no better than ordinary entropy for distinguishing positive entropy systems up to isomorphism.  The constant $K(S)$ is defined as follows:
\be
K(S)= \lim_{k \to \infty}
\limsup_{n \to \infty} \frac{1}{n} \card \bigcup_{i=1}^n[-k+t_i,k+t_i].
\en

\begin{exer}
	Calculate $K(S)$ for the sequences $(i), (2^i), (3i),  (i^2), (p_i)$ ($p_i=i$'th prime).
\end{exer}

Saleski  \cite{Saleski1977, Saleski1979} established connections among sequence entropies and various mixing properties. In particular, he showed that (1) $\sys$ is strongly mixing if and only if for every (countable) measurable partition $\alpha$ with $H(\alpha) < \infty$, and every increasing sequence $S$ in $\Bn$, there is a subsequence $S_0$ for which $h_{S_0}(\alpha,T)=H(\alpha)$; and (2) $\sys$ is weakly mixing if and only if
for every countable measurable partition of finite entropy there is an increasing sequence $S$ in $\Bn$ for which $h_\mu^S(\alpha,T)=H(\alpha)$.

These results were improved by Hulse \cite{Hulse1982} as follows.
(1) $\sys$ is strongly mixing if and only if
every infinite sequence $S$ has a subsequence $S_0$
such that for every measurable partition $\alpha$ of finite entropy, $h_{S_0}(\alpha,T) = H(\alpha )$.
(2) $\sys$ is weakly mixing if and only if
there exists an increasing sequence $S$ in $\Bn$ such that for every measurable partition $\alpha$ of finite entropy, $h_\mu^S(\alpha,T) = H(\alpha)$.

Garcia-Ramos \cite{Garcia-Ramos2015} has recently clarified, in both the measure-theoretic and topological contexts, the connections among discrete spectrum, zero sequence entropy, and some new concepts of equicontinuity.

\section{Topological sequence entropy}\label{sec:topseqent}

T. N. T. Goodman  \cite{Goodman1974} defined sequence entropy for topological dynamical systems $(X,T)$, where $X$ is a compact Hausdorff space and $T:X \to X$ is a continuous map. As in the definition of topological entropy, for any open cover $\mathscr U$ of $X$, $N(\mathscr U)$ denotes the minimum cardinality of subcovers of $\mathscr U$ and $H(\mathscr U)= \log N(\mathscr U)$. For an open cover $\mathscr U$ and sequence $S=(t_i)$ of integers,
\be
\htop ^S(\mathscr U,T)=\lim_{n\to\infty}\frac{1}{n}H(\bigvee_{i=1}^nT^{-t_i}\alpha),
\en
and $\htop ^S (T)=\sup_{\mathscr U}\htop ^S (\mathscr U,T)$. Goodman gives equivalent Bowen-type definitions and establishes some basic properties. He proves the topological  analogue of Newton's result with the same constant $K(S)$ in case $X$ has finite covering dimension and shows that topological sequence entropy does not satisfy the variational principle---it is possible that $\htop ^S(T)>\sup_\mu h_\mu ^S\sys$.

Huang, Li, Shao, and Ye \cite{HuangLiShaoYe2003} studied ``sequence entropy pairs" and ``weak mixing pairs", in the spirit of the study of entropy pairs initiated by F. Blanchard in his investigation of topological analogues of the $K$ property \cite{Blanchard1992, BlanchardEtAl1995, BlanchardHostMaass2000}.
For sets $U, V \subset X$ in a topological dynamical system $(X,T)$, we define $\cn(U,V)=\{ n \geq 0: U \cap T^{-n}V \neq \emptyset\}$.
Then $(x_1, x_2) \in  X \times X$  is defined to be a {\em weak mixing pair}
if for any open neighborhoods $U_1, U_2$ of $x_1,x_2$, respectively, $\cn (U_1, U_1) \cap \cn (U_1, U_2) \neq \emptyset$  (cf. \cite{Petersen1970}).
Denote the set of weak mixing pairs by $WM(X, T )$.

We say that $(x_1, x_2) \in X \times X \setminus \triangle $ is a {\em sequence entropy pair} if whenever
$U_1,U_2$ are closed mutually disjoint neighborhoods of $x_1,x_2$, respectively, there exists a
sequence $S$ in $\Bn$  such that $h_\mu^S( \{U_1^c,U_2^c\},T)>0.$
The system $(X, T )$ is said to have {\em uniform positive sequence entropy (for short s.u.p.e.)} if every
pair $(x_1, x_2) \in X \times X \setminus \triangle$ is a sequence
entropy pair.

\begin{exer}
	$(X, T )$ has s.u.p.e. if and only if for any cover $\mathscr U$ of X by two non-dense open sets, there is a sequence $S$ in $\Bn$ such that $h_\mu^S(\mathscr U,T) > 0$.
\end{exer}

Huang, Li, Shao, and Ye show that $(X, T )$ is topologically weakly mixing if and only if
$WM(X, T ) = X \times X \setminus \triangle$, and that topological weak mixing is also equivalent to s.u.p.e.

Huang, Shao and Ye \cite{HuangShaoYe2005} gave new proofs of many known results about measure-theoretic and topological sequence entropy and characterized mild mixing and rigidity in terms of sequence entropy.
Recall that a measure-preserving system is weakly mixing if and only if its product with any finite measure-preserving system is ergodic. Furstenberg and Weiss \cite{FurstenbergWeiss1978} defined the stronger property of {\em mild mixing}: a finite measure-preserving system is mildly mixing if and only if its product with any ergodic system, even one preserving an infinite measure, is ergodic. A system is mildly mixing if and only if it has no proper rigid factors, i.e. no proper factors for which there exists a sequence $(n_i)$ such that $T^{n_i}f \to f$ in $L^2$ for all $f \in L^2$. Zhang \cite{Zhang1992, Zhang1993} provided sequence entropy characterizations of mild mixing and relative mild mixing.
An {\em IP set} is the set of all finite sums of a sequence of natural numbers. Huang, Shao, and Ye prove that an invertible measure-preserving system $\sys$ (with $\mu(X)=1$) is mildly mixing if and only if for every measurable set $A$ with measure strictly between 0 and 1 and every IP set $F$ there is an infinite sequence $S \subset F$ such that $h_\mu^S(\{B,B^c\},T)>0$; and that it is rigid if and only if there is an IP set $F$ such that $h_\mu^S(T)=0$ for every infinite $S \subset F$.

A topological dynamical system $(X,T)$ is called {\em topologically mildly mixing} if for every transitive system $(X',T')$ the product system $(X \times X', T \times T')$ is transitive \cite{HuangYe2004,GlasnerWeiss2006}. Huang, Shao, and Ye show that a topological dynamical system $(X,T)$ is topologically weakly mixing if and only if for each finite cover $\mathscr U$ of $X$ consisting of non-dense open sets there is an infinite sequence $S$ in $\Bn$ such that $\htop^S(\mathscr U, T)>0$; and that it is topologically mildly mixing if and only if  for each finite cover $\mathscr U$ of $X$ consisting of non-dense open sets and IP set $F$ there is an infinite sequence $S \subset F$ such that $\htop^S(\mathscr U, T)>0$.

\section{Slow entropy}\label{sec:slow}

There have been several efforts to attach to a dynamical system  measures of the rate of growth of $H_\mu(\alpha_0^{n-1})$, in the measure-preserving case, or $N(\cu_0^{n-1})$, in the topological case, that are finer than the exponential growth rate, which gives the ordinary entropies. The quantities may be defined so as to be isomorphism invariants, and then they can be used to try to distinguish systems of zero or infinite entropy up to isomorphism. It seems that more has been done in the zero entropy case, where there are many classes of familiar and interesting examples still requiring further study. The definitions of {\em entropy dimension} and {\em power entropy} are designed especially to detect polynomial growth rates. See \cite[p. 37 ff., p. 92 ff.]{HK2002} and \cite{KongChen2014}.

F. Blume \cite{Blume1995,Blume1997,Blume1998,Blume2000} defined a variety of rates of entropy convergence as follows. Let $\sys$ be a measure-preserving system and $\cp$ a fixed class of finite measurable partitions of $X$. 
Let $a=(a_n)$ be an increasing sequence of positive numbers with limit $\infty$, and let $c >0$. Then $\sys$ is said to be {\em of type $LS \geq c$ for $(a,P)$} if
\be
\limsup_{n \to \infty} \frac{H_\mu(\alpha_0^{n-1})}{a_n} \geq c \text { for all } \alpha \in \cp,
\en
and {\em of type $LI \geq c$ for $(a,P)$} if
\be
\liminf_{n \to \infty} \frac{H_\mu(\alpha_0^{n-1})}{a_n} \geq c \text { for all } \alpha \in \cp.
\en
The types $LS\leq c, LI \leq c, LS <\infty, LS=\infty, LI < \infty, LI=\infty, LS>0, LI>0$ are defined analogously. 

When focusing on zero-entropy systems, it is natural to consider the class $\cp$ of all partitions of $X$ into {\em two} sets of positive measure, since every zero-entropy system has a two-set generator \cite{Krieger1970}. 
Then the quantities defined above are invariants of measure-theoretic isomorphism. 
If $h_\mu(X,T,\alpha)=\lim_{n \to \infty} H_\mu(\alpha_0^{n-1})/n=0$, one should restrict attention to sequences $a$ for whch $a_n/n \to 0$. 

Pointwise entropy convergence rates can be defined by using the {\em information function}
\be
I_\alpha(x)=-\log_2\mu(\alpha(x)),
\en
where $\alpha(x)$ denotes the cell of $\alpha$ to which the point $x$ belongs. Then $\sys$ is defined to be {\em of pointwise type $LS \geq c$ for $(a,\cp)$} if 
\be
\limsup_{n \to \infty} \frac{I_{\alpha_0^{n-1}}(x)}{a_n} \geq c \text{ a.e. for all } \alpha \in \cp,
\en
and the other pointwise types are defined analogously.

Here are some of Blume's results in this area.

\begin{enumerate}
	\item If $a_n/n \to 0$ and $\cp$ is the class of two-set partitions, then there are no aperiodic measure-preserving systems of types $LI < \infty, LS < \infty, LS \leq c$, or $LI \leq c$.
	\item Suppose that $g: [0,\infty) \to \Br$ is a monotone increasing function for which
	\be
	\int_1^\infty \frac{g(x)}{x^2} dx < \infty
	\en
	and $a_n=g(\log_2n)$ for all $n=1,2,\dots$. If $T^k$ is ergodic for all $k \in \Bn$, then $\sys$ is of type $LS = \infty$ for $a,\cp$. 
	\item Every rank-one mixing transformation is of type $LS>0$ for $a_n=\log_2n$ and $\cp$.
	\item No rank-one system can have a convergence rate of type $LI \geq c$ for any sequence $a_n$ that grows faster than $\log_2n$: If $\sys$ is rank one, then there is a partition $\alpha \in \cp$ for which
	\be
	\liminf_{n \to \infty} \frac{H_\mu(\alpha_0^{n-1})}{\log_2n} \leq 2.
	\en
	\item Every totally ergodic system is of pointwise type $LS \geq 1$ for $((\log_2n),\cp)$.
	\end{enumerate}

\begin{exer}
	Show that every measure-preserving system is of pointwise type $LI< \infty$ and $LS<\infty$ for $((n),\cp)$. ({\em Hint}: SMB.)
\end{exer}

\begin{exer}
	Show that every {\em K-system} (all nontrivial factors have positive entropy) is of pointwise type $LS>0$ and $LI>0$ for $((n),\cp)$.
\end{exer}

In order to study the problem of the realizability of measure-preserving actions of amenable groups by diffeomorphisms of compact manifolds, Katok and Thouvenot \cite{KatokThouvenot1997} defined entropy-like measure-theoretic isomorphism invariants in terms of subexponential growth rates. 
One fixes a sequence $(a_n)$ of positive numbers increasing to infinity, or a family $(a_n(t)), 0<t<\infty$, such as $a_n(t)=n^t$ or $a_n(t)=e^{nt}$ (the latter will produce the usual entropy). 
We describe the case when the acting group is $\Bz^2$, i.e. the action of two commuting measure-preserving transformations $S$ and $T$ on a probability space $(X,\cb,\mu)$. Let $\alpha$ be a finite measurable partition of $X$ and $\epsilon > 0$. Let
\be
\alpha_n=\bigvee_{i=0}^{n-1}\bigvee_{j=0}^{n-1}S^{-i}\alpha \vee T^{-j}\alpha
\en
denote the refinement of $\alpha$ by the action of $[0,n-1]^2\subset \Bz^2$. 
The elements of each $\alpha_n$ are given a fixed order, so that they become vectors of length $n^2$, and the $\overline d$ distance between two of them is defined to be the proportion of places at which they disagree. 
We define $S(\alpha,n,\epsilon)$ to be the minimum number of elements of $\alpha_n$ such that the union of $\overline d, \epsilon$ balls centered at them has measure at least $1-\epsilon$. 

Denote by $A(\alpha,\epsilon)$ any one of 
\be
\begin{gathered}
\limsup_{n\to\infty}\frac{S(\alpha,n,\epsilon)}{a_n}, \qquad  \liminf_{n\to\infty}\frac{S(\alpha,n,\epsilon)}{a_n}, \\
\sup\{t: \limsup_{n \to \infty} S(\alpha,n,\epsilon) a_n(t)>0\}, \text{ or } \sup\{t: \liminf_{n \to \infty} S(\alpha,n,\epsilon) a_n(t)>0\},
\end{gathered}
\en
and then define
\be
A(\alpha) = \lim_{\epsilon \to 0} A(\alpha,\epsilon) \text{ and } A= \sup_\alpha A(\alpha),
\en
so that $A$ is an isomorphism invariant.

The authors show that for an ergodic measure-preserving action by diffeomorphisms of $\Bz^2$ on a compact manifold (or even bi-Lipschitz homeomorphisms on a compact metric space) there is a sequence $(a_n)$ for which 
$S(\alpha,n,\epsilon)$ grows at most exponentially. Then they give a cutting and stacking construction that alternates periodic and independent concatenations to produce an action which violates this growth condition and therefore cannot be realized as a smooth action preserving a Borel (not necessarily smooth) measure.

Hochman \cite{Hochman2012} used this type of argument to produce examples of infinite measure-preserving $\Bz^d$ actions for $d>1$ that are not isomorphic to actions by diffeomorphisms on a compact manifold preserving a $\sigma$-finite Borel measure. (Because of Krengel's theorem \cite{Krengel1970} stating that every $\sigma$-finite measure-preserving action of $\Bz$ has a two-set generator, there cannot be such examples when $d=1$---the given measure can be carried to a horseshoe inside a differentiable system.)

{\em Power entropy} is useful as a measure of complexity in zero-entropy systems if the relevant growth rate is polynomial. For example, if, as in Section \ref{sec:topent}, for a topological dynamical system $(X,T)$ one denotes by $S(n,\epsilon)$ the maximum possible cardinality of an $(n,\epsilon)$-separated set, one may define
\be
\begin{aligned}
h^+_\text{pow}(X,T) &= \lim_{\epsilon \to 0}\limsup_{n \to \infty}\frac{\log S(n,\epsilon)}{\log n} \text{ and }\\
h^-_\text{pow}(X,T) &= \lim_{\epsilon \to 0}\liminf_{n \to \infty}\frac{\log S(n,\epsilon)}{\log n},
\end{aligned}
\en
and define $h_\text{pow}$ to be their common value when they are equal.
In this case
\be
S(n,\epsilon) \sim n^{h_\text{pow}} \text{ for small } \epsilon.
	\en
	
	If one replaces in the preceding definition the Bowen-Dinaburg metric
	\be
	d_n(x,y)=\sup \{d(T^kx,T^ky): k=0,1,\dots,n-1\}
	\en
	($d$ denotes the metric on $X$) by the Hamming metric
	\be
\overline{d}_n(x,y)=\frac{1}{n}\sum_{k=0}^{n-1}d(T^kx,T^ky),
	\en
	the resulting entropies are called {\em modified power entropies}. The recent paper \cite{GrogerJager2015} shows the following (see also Section \ref{sec:amorphic}).
	\begin{enumerate}
		\item Neither power entropy nor modified power entropy satisfies a variational principle with respect to any real-valued isomorphism invariant defined for measure-preserving systems.
		\item Recall that a minimal system $(X,T)$ is called {\em almost automorphic} if it is an almost one-to-one extension of a minimal equicontinuous system $(Y,S)$: there is a factor map $\pi:X \to Y$ such that $\Omega=\{y\in Y: \card \pi^{-1}\{y\}=1\}$ is nonempty (in which case it is residual \cite{Veech1970}---see Exercise \ref{exer:residual}). Such a system $(X,T)$ has a unique invariant measure, $\mu$. If $\mu(\Omega)=1$, the extension is called {\em regular}. It is proved in \cite{GrogerJager2015} that the modified power entropy cannot tell the difference between equicontinuous and almost automorphic systems, since it is $0$ for both---and in fact $\sup_n\overline{S}(n,\epsilon)< \infty$ for all $\epsilon$. 
		\item If $(X,T)$ is almost automorphic (but not equicontinuous), then there is $\epsilon>0$ such that $\sup_nS(n,\epsilon)=\infty$. (Cf. Theorem \ref{th:BHM}.) Consequently ``power entropy" redefined for some other growth scale {\em can} distinguish between equicontinuous and almost automorphic systems. 
	\end{enumerate}
	
	\begin{exer}\label{exer:residual}
		Prove the statement from above that if $\pi:X \to Y$ is a factor map between minimal systems for which there exists a singleton fiber, then $\{ y \in Y: \card \pi^{-1}y =1\}$ is residual. ({\em Hint}: Show that the function $g(y)=\sup \{d_X(x,x'):\pi x=\pi x'\}$ is upper semicontinuous on $Y$.)
	\end{exer}

\section{Entropy dimension}\label{sec:entropydimension}
 In 1997 de Carvalho \cite{DeCarvalho1997} proposed an invariant for topological dynamical systems that is possibly finer than topological entropy. For a topological dynamical system $(X,T)$,  define
 \be
 d_\text{top}(X,T)= \inf\{s>0: \sup_\cu \limsup_{n \to \infty} \frac{1}{n^s}\log N(\cu_0^{n-1})=0\},
 \en
 the supremum being taken over all open covers $\cu$ of $X$. 
 Recall that $\cu_0^{n-1}=\cu \vee T^{-1}\cu \vee \dots \vee T^{-n+1}\cu$, and $N(\cu)$ denotes the minimal number of elements in any subcover of $\cu$. 
 
 The author also defines a version for measure-theoretic systems $\sys$:
 \be
  d_\text{mt}(X,T)= \inf\{s>0: \sup_\alpha \limsup_{n \to \infty} \frac{1}{n^s} H_\mu(\alpha_0^{n-1})=0\},
  \en
  the supremum being taken over all finite measurable partitions $\alpha$ of $X$. 
  
 The author shows that if the topological entropy is finite, then the topological entropy dimension is less than or equal to $1$; while if the topological entropy is finite and positive, then the topological  entropy dimension equals 1. Thus $d_\text{top}(X,T)$ is of interest mainly for zero-entropy systems and could perhaps be used to show that some zero-entropy systems are not topologically conjugate. 
 
 Related definitions were given by Dou, Huang, and Park in 2011 \cite{DouHuangPark2011}. The {\em upper entropy dimension} of $(X,T)$ with respect to an open cover is 
\be
\begin{aligned}
\overline{D}(\cu)&=\inf\{s\geq 0:  \limsup_{n \to \infty} \frac{1}{n^s}\log N(\cu_0^{n-1})=0\}\\
&=\sup \{s\geq 0:  \limsup_{n \to \infty} \frac{1}{n^s}\log N(\cu_0^{n-1})=\infty\}.
\end{aligned}
\en
The {\em lower entropy dimension} of $(X,T)$ with respect to an open cover $\cu$ is defined by replacing $\limsup$ by $\liminf$. The {\em upper and lower entropy dimensions} of $(X,T)$ are defined to be 
\be
\overline{D}(X,T)=\sup_\cu \overline{D}(\cu) \text{ and } \underline{D}(X,T)=\sup_\cu \underline{D}(\cu),
\en
respectively. These are invariants of topological conjugacy. When they are equal, they are denoted by $D(X,T)$ and called the {\em entropy dimension} of $(X,T)$. If $\htop (X,T)>0$, then again $D(X,T)=1$.

 Ferenczi and Park \cite{FerencziPark2007} defined another concept of measure-theoretic entropy dimension. Let $\sys$ be a measure-preserving system, $\alpha$ a finite measurable partition of $X$, and $\epsilon >0$. The 
$(\overline{d},n,\epsilon)$-ball around a point $x \in X$ is 
\be
B(x,n,\epsilon)=\{ y \in X: \overline{d}(\alpha_0^{n-1}(y),\alpha_0^{n-1}(x))<\epsilon\}, \quad\text{and}
\en
$K(n,\epsilon)$ is the minimum cardinality of a set of $(\overline{d},n,\epsilon)$-balls that covers a set of measure at least $1-\epsilon$.
Because of the Shannon-McMillan-Breiman Theorem, the entropy of the system coded by the partition $\alpha$ is given by
\be
h_\mu(\alpha,T)=\lim_{\epsilon \to 0^+} \lim _{n \to \infty} \frac{\log K(n,\epsilon)}{n};
\en
see \cite{Katok1980} for the statement in metric spaces.

Thinking in terms of dimension theory suggests the definitions
\be
\begin{gathered}
\overline{D}(\alpha,\epsilon)= \sup\{s\in [0,1]: \limsup_{n \to \infty} \frac{\log K(n,\epsilon)}{n^s} >0\},\\
\overline{D}(\alpha)=\lim_{\epsilon \to 0} \overline{D}(\alpha,\epsilon), \text{ and } \overline{D}\sys=\sup_\alpha \overline{D}(\alpha).
\end{gathered}
\en
The versions with $\underline{D}$ use $\liminf$ instead of $\limsup$. 

$\overline{D}$ and $\underline{D}$ are called the {\em upper and lower entropy dimensions} of $\sys$ If they are equal, they are denoted by $D\sys$ and called the {\em entropy dimension}. The definitions extend easily to measure-preserving actions of other groups, especially $\Bz^d$. 

Ahn, Dou, and Park \cite{AhnDouPark2010} showed that while the topological entropy dimension of a topological dynamical system $(X,T)$ always dominates its measure-theoretic entropy dimension for any invariant measure, the variational principle does not hold: For each $t \in (0,1)$ they construct a uniquely ergodic system $(X,T)$ with the measure-theoretic entropy dimension $D\sys=0$ for the unique invariant measure $\mu$, while the topological entropy dimension $D(X,T)=t$.

See \cite{KuangChengMaLi2014} and its references for some further developments about entropy dimensions.

\section{Permutation entropy}
The permutation entropy of a real-valued time series was introduced in \cite{BandtPompe2002} and studied in the dynamical setting, especially for interval maps, in \cite{BandtKellerPompe2002}. 
It may be regarded as a measure of the amount of information obtained by observing the ordering of an initial orbit piece, and thus of the complexity of the orderings of orbits (or sequences of measurements). 
Instead of counting the number of $n$-blocks that arise from coding orbits with respect to a partition, one counts the number of different order patterns that can appear (or computes the Shannon entropy, with respect to an invariant measure, of the partition according to order patterns). 

Let $T: X \to X$ be a continuous map on a compact interval $X \subset \Br$. For a fixed $n \in \Bn$ one partitions $X$ according to the order patterns of the first $n$ points in the orbit as follows. 
Denote $x_0=T^kx$ for $k=0,1,\dots,n-1$. We define a permutation $\tau (x)=(k_1k_2\dots  k_{n-1})$  of $(0,1,\dots,n-1)$ by 
\be
\begin{gathered}
 x_{k_1} \leq x_{k_2} \leq \dots \leq x_{k_{n-1}} \\
 	\text{and, to break ties, } \tau(i)<\tau(i+1) \text{ in case } x_i=x_{i+1}.
 	\end{gathered}
\en
Denote by $\rho_n$ the partition of $X$ into the nonempty sets $X_\tau$ among all the $n!$ permutations $\tau$. 

Let $\mu$ be an invariant probability measure for $(X,T)$.
The {\em topological and measure-theoretic permutation entropies of order $n$ are defined to be}
\be\begin{aligned}
	\htop^P(n)&=\frac{1}{n-1}\log \card \rho_n \quad\text{ and }\\
	h_\mu^P(n)&=\frac{1}{n-1}\sum_{A \in \rho_n}-\mu(A)\log\mu(A).
\end{aligned}
\en

\begin{thm} \textnormal{\cite{BandtKellerPompe2002}} If $T: X\to X$ is a piecewise monotone map on an interval $X \subset \Br$, then the limits of the permutation entropies of order $n$ exist as $n$ tends to infinity and equal the usual entropies:
	\be
	\lim_{n \to \infty} \htop^P(n)=\htop \quad\text{ and }\quad \lim_{n \to \infty} h_\mu^P(n)=h_\mu(X,T).
	\en
\end{thm}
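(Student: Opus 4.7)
Let $\beta=\{J_1,\dots,J_K\}$ be the partition of $X$ into the maximal intervals of strict monotonicity of $T$, and set $\beta_0^{n-1}=\beta\vee T^{-1}\beta\vee\cdots\vee T^{-(n-1)}\beta$. For piecewise monotone interval maps the classical Misiurewicz--Szlenk theory gives
\[
\htop(T)=\lim_{n\to\infty}\frac{1}{n}\log\card\beta_0^{n-1},
\]
and, since $\beta$ is a one-sided generator for any invariant Borel probability measure $\mu$, the Kolmogorov--Sinai theorem gives $h_\mu(T)=\lim_{n\to\infty}(1/n)H_\mu(\beta_0^{n-1})$. Because the normalization $1/n$ versus $1/(n-1)$ is irrelevant in the limit, the task reduces to comparing the permutation partition $\rho_n$ with $\beta_0^{n-1}$ and showing they agree in log-cardinality (respectively, in entropy) up to an additive $O(\log n)$.

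\textbf{The geometric picture.} Both $\rho_n$ and $\beta_0^{n-1}$ are finite partitions of $X$ into subintervals. The key observation is that on each atom $J$ of $\beta_0^{n-1}$ every iterate $T^k|_J$, $0\le k\le n-1$, is continuous and strictly monotone, so as $x$ ranges over $J$ the orbit tuple $(x,Tx,\dots,T^{n-1}x)$ traces a curve in $X^n$ that is monotone in every coordinate. The permutation $\tau(x)$ is exactly the chamber of the diagonal arrangement $\bigcup_{i<j}\{y_i=y_j\}\subset X^n$ that contains this curve, so the number of $\tau$-values realized on $J$ equals one plus the total number of sign changes on $J$ of the $\binom{n}{2}$ differences $T^i-T^j$. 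I would argue, using piecewise monotonicity of $T$ together with an induction on $n$, that the sign-change count aggregated over all atoms of $\beta_0^{n-1}$ is at most $p(n)\card\beta_0^{n-1}$ for some polynomial $p$, and symmetrically that every $\rho_n$-cell meets at most $p(n)$ atoms of $\beta_0^{n-1}$.

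\textbf{From the comparison to the theorem.} The two-sided polynomial bound yields
\[
\frac{1}{p(n)}\card\beta_0^{n-1}\;\le\;\card\rho_n\;\le\;p(n)\card\beta_0^{n-1},
\]
and, via the elementary inequality $H_\mu(\alpha\mid\gamma)\le\log k$ whenever every $\gamma$-atom contains at most $k$ $\alpha$-cells,
\[
|H_\mu(\rho_n)-H_\mu(\beta_0^{n-1})|\;\le\;\log p(n)\;=\;O(\log n).
\]
Taking logarithms in the first display, dividing both by $n-1$, and letting $n\to\infty$ then gives $\htop^P(n)\to\htop(T)$ and $h_\mu^P(n)\to h_\mu(X,T)$.

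\textbf{Main obstacle.} The hard part is the polynomial bound on sign changes of $T^i-T^j$. A priori two strictly monotone continuous functions can differ by a wildly oscillating continuous function, so the monotonicity of the $T^k|_J$ alone is not enough. The piecewise-monotone hypothesis is essential, and the natural route is an induction in $n$: each zero of $T^i-T^j$ on $J$ is either inherited from a zero of a lower-iterate difference, or arises at a lap boundary of $T^{\max(i,j)}$ and so is accounted for by the (already exponentially growing) lap count that becomes the main term. Summing over the $\binom{n}{2}$ pairs then produces the polynomial-in-$n$ factor per atom. Carrying out this bookkeeping cleanly is the combinatorial heart of the Bandt--Keller--Pompe argument.
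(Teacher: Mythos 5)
The paper itself gives no proof of this theorem \textemdash{} it only quotes it from Bandt--Keller--Pompe \textemdash{} so your attempt has to be judged on its own merits. Your reduction is the natural starting point (compare the order-pattern partition $\rho_n$ with the lap partition $\beta_0^{n-1}$, invoke Misiurewicz--Szlenk for $\htop$ and the monotonicity partition for $h_\mu$, and win if the two partitions agree cell-by-cell up to a subexponential factor), but the central comparison lemma is exactly where the whole proof lives, and the mechanism you propose for it fails. An atom $J$ of $\beta_0^{n-1}$ contains no lap boundary of $T^{j}$ for any $j\le n-1$ in its interior, so in your dichotomy every zero of $T^i-T^j$ on $J$ must be ``inherited''; unwinding $T^jx=T^ix$ to $T^{m}(y)=y$ with $y=T^ix$ and $m=j-i$, the induction bottoms out at the zeros of $T^{m}-\mathrm{id}$ on a single monotone lap of $T^{m}$. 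That is a difference of two increasing continuous functions and can change sign arbitrarily often (already $T(y)=y+\varepsilon\sin(Ny)/N$ on one increasing lap gives of order $N$ sign changes with $N$ unrelated to $n$ or to any lap count, and $\mathrm{Fix}(T^m)$ restricted to one lap can even be an infinite closed set). So the claimed bound ``aggregate sign-change count $\le p(n)\,\card\beta_0^{n-1}$'' is false, and the crossings are not ``accounted for by the lap count''. The theorem survives because many crossings of a single pair contribute only two distinct patterns; a correct argument must count \emph{distinct} patterns rather than crossings, which is a genuinely different bookkeeping.

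The other half of your comparison \textemdash{} every $\rho_n$-cell meets at most $p(n)$ atoms of $\beta_0^{n-1}$ \textemdash{} is not obtained ``symmetrically'' from the first half, and you offer no mechanism for it; yet it is what you need both for $\card\rho_n\ge\card\beta_0^{n-1}/p(n)$ and for $H_\mu(\beta_0^{n-1})\le H_\mu(\rho_n)+O(\log n)$, and there is no a priori reason a single order pattern cannot be realized on exponentially many laps. Two further soft spots: the cells of $\rho_n$ are generally not intervals (each condition $T^ix<T^jx$ cuts out a possibly infinite union of intervals), so the ``chamber of the diagonal arrangement'' picture needs care; and the monotonicity partition $\beta$ need not be a generator for every invariant measure (homtervals, attracting periodic orbits), so even the identity $h_\mu(T)=\lim_n H_\mu(\beta_0^{n-1})/n$ requires the standard but nontrivial fact that such degenerate pieces carry no entropy. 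In the published treatments the upper bound is obtained by counting patterns globally against periodic-point and lap data (or by passing to high iterates of $T$), and the lower bound by exhibiting horseshoes, which force many distinct order patterns; neither direction reduces to the two-sided per-atom polynomial bound you posit.
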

		This theorem, along with its extensions mentioned just below, is analogous to Theorems \ref{thm:PWTop} and \ref{thm:PWErg}, providing further evidence for a topological version of Theorem \ref{thm:OW}.
	
	The definitions extend to arbitrary systems once one assigns to the elements of a finite partition distinct real numbers or fixes a real or vector-valued function (observable) on $X$ to follow along orbits. To obtain complete information (and reach the ordinary entropy) one should consider a generating sequence of partitions or observables. See \cite{Amigo2012,Keller2012,KellerU2012,HarunaNakajima2011} and their references. 
	
	The assignment of order patterns to strings from a totally ordered set is a potentially many-to-one function. For example, the observation strings $321233$ and $421333$ both determine the permutation $\tau = (324561)$. Haruma and Nakajima \cite{HarunaNakajima2011} clarified and extended previous results by showing (by counting) that for finite-state stationary ergodic processes the permutation entropy coincides with the Kolmogorov-Sinai entropy.

	 For a recent survey of permutation entropies and their relation to ordinary entropies, see \cite{KellerMS2015}. These quantities may be useful in numerical studies of actual systems, since they can be calculated from observed orbits, their behavior as a parameter determining the system changes may reflect the actual entropy even if slow to approach the limit, and since one observes only order relations they may reduce computation time.

\section{Independence entropy}

Louidor, Marcus, and Pavlov \cite{louidor2013independence} introduce a measure of how freely one may change symbols at various places in sequences or higher-dimensional configurations in a symbolic dynamical system and still remain in the system. Part of the motivation comes from practical questions in coding, where one may want to insert check bits without violating desirable constraints on the structure of signals, and another part from questions about the entropy of higher-dimensional subshifts, even shifts of finite type. 

We give the definitions for a one-dimensional subshift $(X,\sigma)$ over a finite alphabet $\ca$; the extensions to $\Bz^d$ subshifts are immediate. The set of nonempty subsets of $\ca$ is denoted by $\hat \ca$. For a configuration $\hat x \in \hat\ca^\Bz$, the {\em set of fillings} of $\hat x$ is 
\be
\Phi(\hat x) = \{x \in \ca^\Bz : \text{ for all } k \in \Bz, x_k \in \hat x_k\},
\en
and the {\em multi-choice shift corresponding to $(X,\sigma)$} is 
\be
\hat X=\{\hat x\in \hat\ca^\Bz : \Phi(\hat x) \subset X\}.
	\en
	Thus $\hat X$ consists of all the sequences on $\hat \ca$ that display the freedom to flex sequences in $X$: for those $k \in \Bz$ for which  $\hat x_k$ is not a singleton, we may freely choose symbols from the sets $\hat x_k$ and generate sequences that are always in the original subshift $X$.
	
	\begin{exer}
		Describe $\hat X$ when $X$ is the golden mean shift of finite type (no adjacent $1$s).
	\end{exer}
	
\begin{exer}
	Show that if $X$ is a shift of finite type, then so is $\hat X$. 
\end{exer}

It can be shown that in dimension $d=1$, if $X$ is sofic, then so is $\hat X$ \cite{PooMarcus2006}; but in higher dimensions the question is open. 

To define independence entropy, we consider finite blocks $\hat B$ on the alphabet $\hat \ca$, considered as elements of $\hat \ca ^{|\hat B|}$, and their fillings
\be
\Phi(\hat B) = \{ x \in \ca^{|\hat B|} : x_k \in \hat x_k \text{ for all } k=0,1,\dots, |\hat B|-1\},
\en
and define the {\em independence entropy of $(X,\sigma)$} to be the exponential growth rate 
\be
h_{\text{ind}}(X,\sigma) = \lim_{n \to \infty} \frac{\log\max\{\card \Phi(\hat B): \hat B \in \cl_n(\hat X,\sigma)\}}{n} 
\en
of the maximal  number of different fillings of $n$-blocks on the alphabet $\hat \ca$. It is a measure of the maximal independence among entries in $n$-blocks, rather than of the number of possible $n$-blocks. 

\begin{exer}
	Show that $h_\text{ind}(X) \leq \htop (X)$ for every subshift $(X,\sigma)$.
\end{exer}

\begin{exer}
	Show that the independence entropy of the golden mean shift of finite type is $\log 2/2$.
\end{exer}

Louidor and Marcus \cite{LouidorMarcus2010} described a natural way to form the {\em axial product} of $d$ one-dimensional shift spaces $X_1, \dots, X_d$ on a common alphabet $\ca$. It is the set $X_1 \otimes \dots  \otimes X_d$ of all configurations $x \in \ca^{\Bz^d}$ such that for every $i=1,\dots,d$ and every choice of $n_1,\dots , n_{i-1},n_{i+1}, \dots ,n_d \in \Bz$, the ``row"
\be 
y_k=x_{(n_1,\dots,n_{i-1},k,n_{i+1},\dots,n_d)}
\en
is in $X_i$, together with the $\Bz^d$ action by the shifts in each coordinate. If all $X_i=X$, the resulting product is the {\em $d$-fold axial power} $X^{\otimes d}$ of $X$.

\begin{thm}\textnormal{\cite{louidor2013independence}} For one-dimensional subshifts $(X,\sigma)$, 
	\begin{enumerate}
		\item
	 $h_\text{ind}(X^{\otimes d})=h_\text{ind}(X)$
\item  $\htop (X^{\otimes d})$ is nonincreasing in $d$ and hence has a limit, $h_\infty(X,\sigma) \geq h_\text{ind}(X,\sigma)$.
\end{enumerate}
\end{thm}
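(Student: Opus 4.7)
The plan is to dispatch part~(2) first, reducing it to~(1) and an easy monotonicity. Any $n$-cube block of $X^{\otimes(d+1)}$ on $[0,n-1]^{d+1}$ is a stack of $n$ parallel slices in the new coordinate direction, and each slice is an $n$-cube block of $X^{\otimes d}$ (its rows in the first $d$ directions, being rows of the ambient configuration, lie in $X$); hence the block counts satisfy $N_{d+1}(n)\leq N_d(n)^n$, and dividing the inequality $\log N_{d+1}(n)\leq n\log N_d(n)$ by $n^{d+1}$ yields $\htop(X^{\otimes(d+1)})\leq \htop(X^{\otimes d})$. The resulting sequence is nonincreasing and bounded below by zero, so $h_\infty(X,\sigma)$ exists. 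Granting~(1), the preceding exercise $h_\text{ind}(Y)\leq \htop(Y)$ applied to $Y=X^{\otimes d}$ gives $h_\text{ind}(X)=h_\text{ind}(X^{\otimes d})\leq \htop(X^{\otimes d})$ for every $d$, and letting $d\to\infty$ delivers $h_\infty(X,\sigma)\geq h_\text{ind}(X,\sigma)$.

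For the upper bound in~(1), I fix a multi-choice block $\widehat{B}$ on the $n$-cube $[0,n-1]^d$ in $\widehat{X^{\otimes d}}$ and slice it into its $n^{d-1}$ rows in the first coordinate direction. Since a filling of $\widehat{B}$ is an independent choice of symbol at each of the $n^d$ sites, $\card\Phi(\widehat{B})=\prod_{(k_2,\ldots,k_d)}\card\Phi(\widehat{B}_{k_2,\ldots,k_d})$. Each slice $\widehat{B}_{k_2,\ldots,k_d}$ lies in $\cl_n(\hat X)$: if $\widehat{B}$ appears in $\widehat{Y}\in\widehat{X^{\otimes d}}$, then every filling of the first-coordinate row of $\widehat{Y}$ through that slice arises as a row of some element of $\Phi(\widehat{Y})\subset X^{\otimes d}$, hence lies in $X$, so the row is in $\hat X$. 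Setting $M_n:=\max_{\hat C\in\cl_n(\hat X)}\card\Phi(\hat C)$, this yields $\card\Phi(\widehat{B})\leq M_n^{n^{d-1}}$, and $\frac{1}{n^d}\log M_n^{n^{d-1}}=\frac{\log M_n}{n}\to h_\text{ind}(X)$.

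For the lower bound I propose a \emph{sum-of-coordinates} construction: given $\hat x\in\hat X$, define $\hat y\in\hat\ca^{\Bz^d}$ by $\hat y(k_1,\ldots,k_d)=\hat x(k_1+\cdots+k_d)$. Every row of $\hat y$ in any coordinate direction is an ordinary shift of $\hat x$ and so lies in $\hat X$ by shift-invariance, hence $\hat y\in\widehat{X^{\otimes d}}$. The filling count on the cube $[0,N-1]^d$ equals $\prod_s|\hat x(s)|^{c_s^{(d,N)}}$ with $c_s^{(d,N)}=\#\{(k_1,\ldots,k_d)\in[0,N-1]^d:k_1+\cdots+k_d=s\}$ and $\sum_s c_s^{(d,N)}=N^d$, so $(1/N^d)\log\card\Phi$ is a probability-weighted average of $\log|\hat x(s)|$. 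If $\hat x$ is $p$-periodic with per-period average $\geq h_\text{ind}(X)-\epsilon$, this weighted average converges as $N\to\infty$ to the per-period average, giving the required lower bound. The main obstacle is precisely producing such a periodic approximant: starting from a near-optimal $\widehat{B}_n\in\cl_n(\hat X)$ with $\card\Phi(\widehat{B}_n)\geq\exp(n(h_\text{ind}(X)-\epsilon))$, one must close $\widehat{B}_n$ up into a periodic point of $\hat X$. For mixing shifts of finite type this is immediate by inserting a bounded buffer between copies; for a general subshift, one would either pass to a minimal subsystem inside the orbit closure of a near-optimal multi-choice point, or replace the cubical region in the sum construction by an appropriately shaped (e.g.\ rhomboidal) region concentrated on the window where $\hat x$ is genuinely multi-valued, so that no periodic extension is needed.
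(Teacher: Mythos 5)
The paper states this theorem only as a citation to Louidor--Marcus--Pavlov and gives no proof, so your argument has to stand on its own. Most of it does: part (2) is correct (the slicing bound $N_{d+1}(n)\le N_d(n)^n$ gives monotonicity, and $h_\infty\ge h_\text{ind}$ follows from part (1) together with the exercise $h_\text{ind}\le\htop$), and so is the upper bound in part (1) --- the factorization $\card\Phi(\widehat{B})=\prod\card\Phi(\widehat{B}_{k_2,\dots,k_d})$ over rows and the verification that each row of an element of $\widehat{X^{\otimes d}}$ lies in $\hat X$ are both sound. The genuine gap is the one you flag yourself: the lower bound $h_\text{ind}(X^{\otimes d})\ge h_\text{ind}(X)$. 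None of your proposed repairs closes it. Periodization fails for arbitrary subshifts: in the golden mean shift the one-letter block $\{0,1\}$ lies in $\cl_1(\hat X)$, but $(\{0,1\})^\infty$ admits the filling $\dots 111\dots\notin X$, and in general there is no reason $\hat X$ should contain periodic points with near-optimal per-period average. Passing to a minimal subsystem of the orbit closure of a near-optimal $\hat x$ can destroy the filling counts, since the good window may recur with frequency zero. And replacing the cube by a rhomboidal region changes the quantity being computed unless you separately prove that the $\Bz^d$ independence entropy can be evaluated along such regions.

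The construction can be completed without any extra hypotheses by an ergodic-optimization argument. The function $f(\hat x)=\log\card\hat x_0$ is continuous on the subshift $\hat X$, and $\sup_{\hat x\in\hat X}\sum_{k=0}^{n-1}f(\sigma^k\hat x)=\log M_n$ in your notation; by the standard identity $\lim_n\frac1n\sup_{\hat x}\sum_{k=0}^{n-1}f(\sigma^k\hat x)=\max\{\int f\,d\nu:\nu\ \sigma\text{-invariant on }\hat X\}$ (the maximum being attained by weak* compactness), there is an invariant measure $\nu$ on $\hat X$ with $\int f\,d\nu=h_\text{ind}(X)$. By invariance, for any probability weights $(w_s)$ one has $\int\sum_s w_s\,f(\sigma^s\hat x)\,d\nu(\hat x)=\int f\,d\nu=h_\text{ind}(X)$, so some $\hat x\in\hat X$ satisfies $\sum_s w_s\log\card\hat x_s\ge h_\text{ind}(X)$. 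Taking $w_s=c_s^{(d,N)}/N^d$ and feeding this $\hat x$ into your sum-of-coordinates configuration produces, for every $N$, a block of $\widehat{X^{\otimes d}}$ on $[0,N-1]^d$ with $\frac{1}{N^d}\log\card\Phi\ge h_\text{ind}(X)$, which is exactly the missing lower bound --- no periodicity, mixing assumption, or reshaping of the region required.
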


\begin{thm}\textnormal{\cite{MeyerovitchPavlov2014}} For every one-dimensional sushift $(X,\sigma)$, $h_\infty(X,\sigma) = h_\text{ind}(X,\sigma)$.
\end{thm}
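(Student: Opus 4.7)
The plan is to prove the missing inequality $h_\infty(X,\sigma) \leq h_\text{ind}(X,\sigma)$, since the reverse is given by the preceding theorem. Set $M_n = \max\{|\Phi(\hat B)| : \hat B \in \cl_n(\hat X)\}$, so that $h_\text{ind}(X) = \lim_{n\to\infty}(\log M_n)/n$. The strategy is to bound $|\cl_{[0,n-1]^d}(X^{\otimes d})|$ above in terms of $M_n$ when $d$ is large relative to $n$.

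The natural first attempt is to assign to each $B \in \cl_{[0,n-1]^d}(X^{\otimes d})$ its coordinate-wise union $\hat S(B) = (S_0,\dots,S_{n-1}) \in \hat\ca^n$, where $S_{i_1} = \{B(i_1,i_2,\dots,i_d) : (i_2,\dots,i_d) \in [0,n-1]^{d-1}\}$. Each axis-$1$ row of $B$ is then a filling of $\hat S(B)$, so if it were always true that $\hat S(B) \in \cl_n(\hat X)$, fibering over $\hat B$ would give
$$|\cl_{[0,n-1]^d}(X^{\otimes d})| \leq 2^{|\ca|n} \cdot M_n^{n^{d-1}},$$
whence $h_\text{top}(X^{\otimes d}) \leq (\log M_n)/n$ and the theorem. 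Unfortunately $\hat S(B)$ need not lie in $\cl_n(\hat X)$: for the golden mean shift the $3\times 3$ block with rows $010,\,101,\,000$ lies in $X^{\otimes 2}$, yet $\hat S(B) = (\{0,1\},\{0,1\},\{0,1\})$ admits the forbidden filling $011$.

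Repairing this count is the core of the proof and must exploit the axial constraints in all $d$ directions rather than only the $d-1$ transverse to axis $1$. I would attempt to group the $n^{d-1}$ axis-$1$ rows of $B$ into bundles whose per-coordinate unions do lie in $\cl_n(\hat X)$, and then argue via a Ramsey/pigeonhole argument that for $d$ large relative to $n$ there is a single dominant bundle $\hat B_* \in \cl_n(\hat X)$ accounting for all but a subexponentially (in $n^{d-1}$) small fraction of rows. The constraints along axes $2,\dots,d$ would need to be invoked twice: first, to certify the existence of a valid $\hat B_* \in \cl_n(\hat X)$ (since, as the counterexample shows, the naive coordinate-wise union of an arbitrary row set may fail to be multi-choice valid), and second, to encode the exceptional rows and the placement of the dominant bundle within at most $\exp(o(n^d))$ additional possibilities.

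Granting such an encoding, one obtains $|\cl_{[0,n-1]^d}(X^{\otimes d})| \leq M_n^{n^{d-1}} \exp(o(n^d))$ as $d \to \infty$, and hence $h_\text{top}(X^{\otimes d}) \leq (\log M_n)/n + o(1)$. Letting first $d \to \infty$ (for each fixed $n$) and then $n \to \infty$ yields $h_\infty(X,\sigma) \leq h_\text{ind}(X,\sigma)$, completing the proof. The hard part, as the counterexample above already indicates, is the bundling-and-propagation step: no one-axis projection can suffice on its own, and it is only the joint use of all $d$ axial constraints in the limit $d \to \infty$ that can force genuine multi-choice structure from $\cl_n(\hat X)$ to emerge.
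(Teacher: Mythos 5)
The statement you are proving is quoted in these notes from \cite{MeyerovitchPavlov2014} without proof, so there is no in-paper argument to compare against; your proposal has to stand on its own, and as written it does not. Your framing is correct: $h_\infty(X,\sigma)\geq h_{\text{ind}}(X,\sigma)$ does follow from the preceding theorem, the reduction of the remaining inequality to a bound of the form $|\cl_{[0,n-1]^d}(X^{\otimes d})|\leq M_n^{n^{d-1}}\exp(o(n^d))$ is sound (and your limit bookkeeping, first $d\to\infty$ for fixed $n$ and then $n\to\infty$, would finish the job), and your golden-mean counterexample correctly shows that the coordinate-wise union of the axis-$1$ rows need not lie in $\cl_n(\hat X)$, so the naive fibering fails --- indeed it must fail, since it would give $\htop(X^{\otimes 2})\leq h_{\text{ind}}(X)$, which is false for the golden mean shift.

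The repair, however, is where the entire content of the theorem lives, and you have not supplied it. ``Group the rows into bundles whose per-coordinate unions lie in $\cl_n(\hat X)$ and argue via Ramsey/pigeonhole that a single dominant bundle accounts for all but a subexponentially small fraction of rows'' is a statement of what you would need, not an argument for it: you give no rule for forming the bundles, no mechanism by which the constraints along axes $2,\dots,d$ certify that a bundle's union is multi-choice valid (your own counterexample shows that even mutually compatible rows can have an invalid union), and no estimate showing that the exceptional rows and the placement data cost only $\exp(o(n^d))$. The dominant-bundle claim is also suspect as stated: for the golden mean shift the maximizing block $(\{0,1\},\{0\},\{0,1\},\dots)$ has a nontrivial translate, and configurations in high-dimensional boxes can be assembled from large regions in either phase, so at minimum you must allow several competing bundles and control the interfaces between them --- which is essentially the phase-analysis that makes this theorem hard. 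Extracting genuine combinatorial-cube structure (a valid $\hat B\in\cl_n(\hat X)$ whose fillings cover most rows) from mere abundance of configurations is not something a Sauer--Shelah-type shattering lemma provides, and it is exactly the step the Meyerovitch--Pavlov argument is built to accomplish. As it stands, the proposal identifies the obstruction accurately but leaves the theorem unproved.
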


\section{Sofic, Rokhlin, na\"{i}ve entropies}\label{sec:soficent}

Lewis Bowen \cite{Bowen2010a,Bowen2010b,Bowen2012Amenable} investigated the possibility of extending the Kolmogorov-Ornstein results (two Bernoulli shifts are measure-theoretically isomorphic if and only if they have the same entropy) to possibly infinite alphabets and possibly nonamenable acting groups. Let $(\Omega,\cf,P)$ be a standard Borel probability space and $G$ a countable discrete group. $G$ acts on the product probability space $(\Omega,\cf,P)^G$ by $gx(h)=x(g^{-1}h)$ for all $x \in \Omega^G$ and $g,h \in G$. First one needs a definition of entropy. If there is a finite or countable subset $\Omega_0 \subset \Omega$ of full measure, then the entropy of the measure $P$ is defined to be
\be
H(P)= -\sum_{\omega \in \Omega_0} P(\omega)\log P(\omega),
\en
and otherwise $H(P)=\infty$. The theorems of Kolmogorov and Ornstein state that two Bernoulli shifts $(\Omega,\cf,P)^\Bz$ and $(\Omega',\cf',P')^\Bz$ are measure-theoretically isomorphic if and only if $H(P)=H(P')$. 

Ornstein and Weiss \cite{OrnsteinWeiss1987} extended these results to countably infinite amenable groups. They also noted that there were some potential problems for nonamenable groups, by showing that for Bernoulli actions of the free group on two generators the 2-shift factors onto the 4-shift (each with the product measure of its uniform measure). 
 
L. Bowen  \cite{Bowen2010b} gave a definition of entropy for actions by sofic groups 
(defined by Gromov \cite{Gromov1999Surjunctive} and Weiss \cite{Weiss2000SoficGroups})
and showed that it is an isomorphism invariant for Bernoulli actions of sofic groups for countable alphabets with finite entropies, and a complete isomorphism invariant for such Bernoulli actions by countably infinite linear groups (groups of invertible matrices with entries from a field). In \cite{Bowen2010a}, using his $f$-invariant (a special case of the sofic entropy), he showed that two Bernoulli actions by a free group on a finite number of generators on countable alphabets of finite entropy are isomorphic if and only if the alphabet entropies are equal. 

A {\em sofic group} \cite{Gromov1999Surjunctive,Weiss2000SoficGroups} is a countable group $G$ that can be approximated by finitely many permutations acting on subsets, as described in the following. (Recall that an amenable group is one that can be approximated by nearly invariant subsets). 
For $\epsilon >0$ and $F \subset G$, a map $\phi: G \to S_n=$ the group of permutations of $\{1,\dots,n\}$ is called an {\em $(F,\epsilon)$-approximation to $G$} if the set 
\be
\begin{gathered}
V(F)=\{v \in \{1,\dots,n\}: \phi(f_1)\phi(f_2)v=\phi(f_1f_2)v \text{ for all } f_1,f_2 \in F \\
\text{ and } f_1 \neq f_2 \text{ implies } \phi(f_1)v \neq \phi(f_2)v\}
\end{gathered}
\en
satisfies 
\be
\card V(F) \geq (1-\epsilon) n.
\en
Then $G$ is defined to be {\em sofic} if there is a sequence of $(F_k,\epsilon_k)$-approximations to $G$ such that $\epsilon_k \to 0$, $F_k \subset F_{k+1}$ for all $k$, and $\cup_{k=1}^\infty F_k =G$. 
Amenable groups and residually finite groups (the intersection of all the finite index nomal subgroups is trivial) are sofic. It seems to be an open question whether every countable group is sofic. 

The sofic entropy of a measure-preserving action of a sofic group $G$ on a probability space $(X,\cb,\mu)$ is defined with respect to a fixed sofic approximation $\Phi=\{\phi_k\}$ of $G$ and a generating finite or countable ordered measurable partition $\alpha$ of $X$. According to L. Bowen, it is a measure of the exponential growth rate of the number of partitions of $\{1,\dots,n_k\}$ that approximate $\alpha$. A main theorem is that the definition does not depend on the choice of the partition $\alpha$.

Let $\alpha=\{A_1,A_2,\dots\}$ be an ordered finite measurable partition of $X$, $\phi:G \to S_n$ a map, 
$\beta=\{B_1,B_2,\dots\}$ a partition of $\{1,\dots,n\}$, $\nu$ the uniform probability measure on $\{1,\dots,n\}$, and $F \subset G$ a finite subset. For any function $g:F \to \Bn$, let
\be
A_g = \cap_{f \in F} fA_{g(f)} \quad\text{ and } B_g=\cap_{f \in F} \phi(f)B_{g(f)}.
\en
Define the {\em $F$-distance} between $\alpha$ and $\beta$ to be
\be
d_F(\alpha, \beta) = \sum_{g:F \to \Bn} |\mu(A_g) - \nu(B_g)|.
\en
Then for $\epsilon >0$ $AP(\phi,\alpha;F,\epsilon)$ 
is defined to be the set of all  ordered partitions $\beta$ of $\{1,\dots,n\}$ with the same number of atoms as $\alpha$ for which $d_F(\alpha,\beta)\leq \epsilon$.

Fix the sofic approximation $\Phi=\{\phi_k\}$ of $G$. For a finite ordered measurable partition $\alpha$ of $X$, $\epsilon >0$, and finite $F \subset G$, define
\be
\begin{aligned}
H(\Phi,\alpha;F,\epsilon) &= \limsup_{k \to \infty} \frac{1}{n_k} \log \card AP(\phi_k,\alpha;F,\epsilon),\\
H(\Phi,\alpha;F)&=\lim_{\epsilon \to 0}H(\Phi,\alpha;F,\epsilon),\\
h(\Phi,\alpha)&=\inf_{\text{finite }F\subset G} H(\Phi,\alpha;F).
\end{aligned}
\en
The definitions extend also to countable partitions $\alpha$.

\begin{thm}	\textnormal{\cite{Bowen2010b}}
 If $G$ is a countable sofic group with sofic approximation $\Phi=\{\phi_k\}$ acting by measure-preserving transformations on a probability space $(X,\cb,\mu)$, then $h(\Phi,\alpha)=h(\Phi,\beta)$ for any two generating partitions $\alpha, \beta$ of $X$.
\end{thm}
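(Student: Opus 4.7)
The plan is to mirror the Kolmogorov--Sinai strategy, replacing the usual conditional entropy tools with direct combinatorial comparisons of the approximation sets $AP(\phi_k,\alpha;F,\epsilon)$.

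First I would prove monotonicity under coarsening: if $\beta$ is coarser than $\alpha$ (every atom of $\alpha$ lying in a unique atom of $\beta$), then $h(\Phi,\beta) \leq h(\Phi,\alpha)$. Every ordered partition in $AP(\phi_k,\alpha;F,\epsilon)$ can be pushed to an ordered partition in $AP(\phi_k,\beta;F,\epsilon')$ by merging cells according to the fixed coarsening map; since this is a single-valued map and $|\beta| \leq |\alpha|$, the image has cardinality at most $\card AP(\phi_k,\alpha;F,\epsilon)$, which yields the inequality after dividing by $n_k$ and taking limits.

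Next I would establish the key invariance: for any finite $K \subset G$,
\[ h\!\left(\Phi,\; \bigvee_{g \in K} g^{-1}\alpha\right) = h(\Phi,\alpha). \]
The $\geq$ direction is immediate from coarsening. For $\leq$, one exploits that when $F$ contains $K \cdot K \cup K$, any $\beta \in AP(\phi_k,\alpha;F,\epsilon)$ automatically yields the refined ordered partition $\bigvee_{g \in K}\phi_k(g)^{-1}\beta$, and a calculation using the near-homomorphism property of $\phi_k$ on $V(F)$ shows this refined partition lies in $AP(\phi_k, \bigvee_{g \in K} g^{-1}\alpha; F', \epsilon'')$ for suitable $F', \epsilon''$. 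The map $\beta \mapsto \bigvee_{g \in K}\phi_k(g)^{-1}\beta$ being injective, the approximation count of the $K$-refined target is bounded by that of $\alpha$.

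Third, I would establish continuity of $h(\Phi, \cdot)$ in the Rokhlin distance $d_R(\alpha,\alpha') = \sum_i \mu(A_i \triangle A_i')$ on ordered partitions of a fixed size: nearby targets have nearly identical approximation sets, since the triangle inequality for $d_F$ transfers an approximation of $\alpha$ into an approximation of $\alpha'$ at slightly worse parameters. Finally, since $\alpha$ is a generator, any finite ordered partition $\beta$ can be approximated arbitrarily well in $d_R$ by partitions $\beta_n$ measurable with respect to $\bigvee_{g \in K_n} g^{-1}\alpha$ for finite $K_n \uparrow G$; then Steps 1--3 give
\[
h(\Phi, \beta) \;=\; \lim_n h(\Phi, \beta_n) \;\leq\; \lim_n h\!\left(\Phi, \bigvee_{g \in K_n} g^{-1}\alpha\right) \;=\; h(\Phi, \alpha),
\]
and swapping $\alpha$ and $\beta$ gives equality. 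The main obstacle is Step~2: unlike the amenable setting where $n^{-1} H_\mu(\alpha_0^{n-1})$ is subadditive and controlled via Shannon--McMillan--Breiman, here one must quantitatively show that the combinatorial cost of refining approximating partitions along $K$ does not blow up exponentially in $n_k$, which requires careful bookkeeping of the sofic-approximation defect on $K$ together with the fact that Bowen's $AP$-sets consist of \emph{ordered} partitions with a fixed number of atoms (so reorderings contribute only an $O(|K|!)$ factor that is killed by the $\log/n_k$ normalization).
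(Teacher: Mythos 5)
These notes state the theorem as an imported result of L.~Bowen \cite{Bowen2010b} and give no proof, so your proposal can only be measured against Bowen's argument. Your overall architecture --- reduce to refinements $\bigvee_{g\in K}g^{-1}\alpha$ over finite $K\subset G$, then combine a continuity estimate in the Rokhlin metric with the generating hypothesis --- is the right skeleton. The difficulty is that both of your counting steps are justified by maps that yield inequalities in the wrong direction. In your first step, merging cells sends $AP(\phi_k,\alpha;F,\epsilon)$ \emph{into} the set of ordered partitions whose $F$-statistics approximate those of $\beta$; this bounds the cardinality of the \emph{image} of the merge map, not of $AP(\phi_k,\beta;F,\epsilon')$. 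Nothing shows that every good model for $\beta$ arises by merging a good model for $\alpha$, and as a blanket statement this fails: if $AP(\phi_k,\alpha;F,\epsilon)=\emptyset$ for all large $k$ while $\beta$ is the trivial partition, then $h(\Phi,\alpha)=-\infty<0=h(\Phi,\beta)$. In your second step, injectivity of $\beta\mapsto\bigvee_{g\in K}\phi_k(g)^{-1}\beta$ gives $\card AP(\phi_k,\alpha;F,\epsilon)\leq\card AP\bigl(\phi_k,\bigvee_{g\in K}g^{-1}\alpha;F',\epsilon''\bigr)$, which is the $\geq$ direction you already declared immediate; it cannot bound the refined count from above.

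What is actually needed, and what constitutes the real content of Bowen's proof, is the reverse control: one must show that every element of $AP\bigl(\phi_k,\bigvee_{g\in K}g^{-1}\alpha;F',\epsilon''\bigr)$ lies within normalized Hamming distance $\delta(\epsilon'')$ (with $\delta\to 0$ as $\epsilon''\to 0$) of a partition of the special form $\bigvee_{g\in K}\phi_k(g)^{-1}\beta$ with $\beta\in AP(\phi_k,\alpha;F,\epsilon)$, and that the number of ordered partitions in such a Hamming ball, roughly $\binom{n_k}{\delta n_k}r^{\delta n_k}$, contributes an exponential rate that vanishes in the limit $\epsilon\to 0$ taken in the definition of $H(\Phi,\alpha;F)$. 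This two-sided ``surjectivity up to subexponential multiplicity'' of the merge and restriction maps between model spaces is precisely where the near-homomorphism property of $\phi_k$ on $V(F)$ and the generating hypothesis do genuine work, and it is the step your closing paragraph defers to ``careful bookkeeping.'' Until it is supplied, your first two steps are assertions rather than proofs, and the final chain of inequalities does not close.
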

If $G,(X,\cb,\mu),\Phi$ are as in the Theorem and there is a generating partition $\alpha$ (the smallest $\sigma$-algebra containing all sets $g\alpha, g \in G$ is, up to sets of measure $0$, the full $\sigma$-algebra $\cb$) with $H_\mu(\alpha)<\infty$, then $h_\text{sofic}(X,\cb,\mu,G,\Phi)$ is defined to be $h(\Phi,\alpha$); otherwise it is undefined.
In principle $h_\text{sofic}(X,\cb,\mu,G,\Phi) \in [-\infty,\infty]$. Notice that there is not a single sofic entropy of the system, but a family depending on the sofic approximation $\Phi$ to $G$. 

L. Bowen has also proved \cite{Bowen2012Amenable} that if $G$ is amenable, then for every sofic approximation $\Phi$ the sofic entropy $h_\text{sofic}(X,\cb,\mu,G,\Phi)$ coincides with the ordinary entropy \cite{OrnsteinWeiss1987}. 
Greatly generalizing the Ornstein-Weiss example mentioned above, L. Bowen also showed \cite{Bowen2011Weak} that {\em any} two nontrivial Bernoulli actions by a countable group that contains a nonabelian free subgroup are {\em weakly isomorphic}---each is a measure-preserving factor of the other.

Kerr and Li (see \cite{KerrLi2013} and its references) have studied sofic entropy for both measure-preserving and topological sofic group actions as well as the relation of a kind of combinatorial independence with mixing properties and positive entropy. (Maybe there is a connection with the ideas in \cite{LouidorMarcus2010}?)

P. Burton \cite{Burton2015} picked up on a suggestion of L. Bowen and pursued a definition of entropy for actions of nonamenable groups that at first seems mybe useless, because for such group actions it is always either $0$ or $\infty$. However, the quantity does thus serve to divide systems into two disjoint classes, and its relation to sofic entropy is of interest. 
There are both measure-theoretic (already defined by L. Bowen) and topological (due to P. Burton) versions.

Let $(X,\cb,\mu,G)$ be a measure-preserving system, with $(X,\cb,\mu)$ a standard probability space and $G$ a countable discrete group. 
For a finite measurable partition $\alpha$ of $X$ and finite $F \subset G$, $\alpha^F = \vee_{g \in F} (g\alpha)$.
Then one may define
\be
\begin{aligned}
h_\text{nv}(X,\mu,G,\alpha) &= \inf \left\{ \frac{H_\mu (\alpha^F)}{\card F}:F \text{ a finite nonempty subset of }G\right\} \quad\text{and}\\
h_\text{nv}(X,\mu,G) &= \sup \{h_\text{nv}(X,\mu,G,\alpha): \alpha \text{ a finite partition of } X\}.
\end{aligned}
\en

Similarly in the topological case. If $G$ is a countable group acting by homeomorphisms on a compact metric space $X$, then for an open cover $\cu$ of $X$ and finite $F \subset G$ one defines $\cu^F=\vee_{g \in F}(g\cu)$,
\be
\begin{aligned}
	h_\text{tnv}(X,G,\cu)&= \inf\left\{ \frac{\log N(\cu^F)}{\card F}:  F \text{ a finite nonempty subset of }G\right\} \quad\text{and}\\
	h_\text{tnv}(X,G)&=  \sup \{h_\text{tnv}(X,G,\cu): \cu \text{ an open cover of } X\}.
\end{aligned}
\en
L. Bowen proved that if $G$ is nonamenable, then $h_\text{nv}(X,\mu,G)$ is always either $0$ or $
\infty$, and P. Burton noted that for $G$ nonamenable also always $h_\text{tnv}(X,G)$ is $0$ or $\infty$.

Verifying conjectures of L. Bowen, P. Burton proved that for finitely generated $G$, if $h_\text{tnv}(X,G)=0$, then the topological sofic entropy \cite{KerrLi2011} of $(X,G)$ is nonnegative for every sofic approximation $\Phi$ for $G$. The analogous statement in the measure-preserving case has been proved by Abert, Austin, Seward, and Weiss (see \cite{Burton2015}).

Seward \cite{Seward2015a,Seward2015b} defined the {\em Rokhlin entropy} of a measure-preserving system $(X,\cb,\mu,G)$, with $(X,\cb,\mu)$ a standard probability space and $G$ a countably infinite group, to be
\be
h_\text{Rok}(X,\cb,\mu,G) = \inf\{H_\mu(\alpha): \alpha \text{ a countable Borel generating partition}\}.
\en
 Rokhlin \cite{Rohlin1967} proved that for a measure-preserving $\Bz$ action the Rokhlin entropy coincides with the ordinary Kolmogorov-Sinai entropy: $h_\text{Rok}(X,\cb,\mu,G)=h(X,\cb,\mu,\Bz)$. Now it is known that the Rokhlin entropy and ordinary entropy coincide for all free ergodic measure-preserving actions of amenable groups. 
With this definition Seward was able 
 to extend Krieger's finite generator theorem to actions of arbitrary countable groups.
 \begin{thm}\textnormal{\cite{Seward2015a,Seward2015b}} Let $G$ be a countably infinite group, suppose that the system $(X,\cb,\mu,G)$, with $(X,\cb,\mu)$ a non-atomic standard probability space, is ergodic, and $p=(p_i)$ is a finite or countable probability vector. If 
 	\be
 	h_\text{Rok}(X,\cb,\mu,G) < H(p) = \sum -p_i \log p_i,
 	\en
 	then there is a generating partition $\alpha=\{A_i\}$ with $\mu(A_i)=p_i$ for all $i$. 
 	In particular, if $h_\text{Rok}(X,\cb,\mu,G) < \log k$ for some integer $k$, then there is a generator $\alpha$ with $k$ elements, and $(X,\cb,\mu,G)$ embeds in the $k$-shift $(\{0,1,\dots,k-1\}^G,G)$.
 \end{thm}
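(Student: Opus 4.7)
The plan is to exhibit the desired generator as a point of a comeager subset of a Polish space of candidate partitions. By the definition of $h_\text{Rok}$ as an infimum, first fix a countable Borel generator $\beta = \{B_j\}$ with $H_\mu(\beta) < H(p)$. Let $\mathcal{P}_p$ denote the space of ordered measurable partitions $\alpha = \{A_i\}$ of $X$ with $\mu(A_i) = p_i$, equipped with the Polish topology induced by the metric $d(\alpha,\alpha') = \sum_i 2^{-i} \mu(A_i \triangle A_i')$. Any $\alpha \in \mathcal{P}_p$ with the property that every $B_j$ lies in the $G$-invariant $\sigma$-algebra generated by $\alpha$ will automatically be a generator, so it suffices to exhibit a nonempty subfamily $\mathcal{G} \subset \mathcal{P}_p$ of such partitions.

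To do so I would show $\mathcal{G}$ is a dense $G_\delta$ in $\mathcal{P}_p$ and invoke the Baire category theorem. The $G_\delta$ statement is routine because ``$B_j$ is approximated to within $\varepsilon$ by a set in the finite $\sigma$-algebra generated by $\{g\alpha : g \in F\}$'' is an open condition on $\alpha$ for each fixed finite $F \subset G$ and $\varepsilon > 0$, and one intersects over countably many $B_j$, $F$, $\varepsilon$. Density is the heart of the matter: given $\alpha_0 \in \mathcal{P}_p$, a finite window $F \subset G$, and a finite subpartition of $\beta$ to encode, one must perturb $\alpha_0$ to an $\alpha$ arbitrarily close in $d$ whose $F$-translates resolve that subpartition, all while maintaining the prescribed marginals $p_i$. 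Here the entropy inequality $H(p) > H_\mu(\beta) \geq h_\text{Rok}$ enters: on a large finite $E \subset G$, an elementary Shannon count gives roughly $e^{|E| H(p)}$ admissible $\alpha$-names against only $e^{|E| H_\mu(\beta)}$ many $\beta$-names, leaving exponential room to inject the $\beta$-information into $\alpha$-names without disturbing the overall distribution.

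The main obstacle, and the reason the theorem required work beyond Krieger's original $\mathbb Z$ argument, is executing this local encoding when $G$ has no F\o lner sequence. Seward's substitute is a measurable approximate tiling of $X$ by finite ``Rokhlin pieces'' indexed by a suitable finite set in $G$, available for every free ergodic action of a countable group; one distributes the encoding piece by piece against the tiling and controls the measure distortion by taking the pieces large and nearly disjoint. Once density of $\mathcal{G}$ is established, Baire's theorem produces a generator $\alpha \in \mathcal{P}_p$ with $\mu(A_i) = p_i$ exactly, proving the main assertion. For the ``in particular'' clause, take $p$ uniform on $\{0,1,\ldots,k-1\}$, so that $H(p) = \log k$; the resulting $k$-element generator $\alpha = \{A_0,\ldots,A_{k-1}\}$ yields a measurable $G$-equivariant map $\Phi: X \to \{0,1,\ldots,k-1\}^G$ defined by $\Phi(x)(g) = i$ when $g^{-1}x \in A_i$, which is one-to-one a.e.\ precisely because $\alpha$ generates, giving the desired embedding into the $k$-shift.
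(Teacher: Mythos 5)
First, a point of reference: the paper you are working from does not prove this statement at all --- it is quoted as a result of Seward, with citations, in a survey of realization theorems. So there is no in-paper argument to compare against, and your proposal has to be judged on its own as a sketch of Seward's theorem.

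Your skeleton (fix a countable generator $\beta$ with $H_\mu(\beta)<H(p)$, work in the Polish space $\mathcal P_p$ of partitions with prescribed distribution, show the generators form a dense $G_\delta$, conclude by Baire category) is a legitimate framework --- it is essentially how the classical Rokhlin and Krieger generator theorems for $\mathbb Z$ can be organized --- and your $G_\delta$ observation and the ``in particular'' deduction (uniform $p$, equivariant coding map $\Phi$, injectivity a.e.\ from generation) are correct. The gap is exactly where you locate the heart of the matter, and it is not a gap that the stated tools can fill. (i) The ``elementary Shannon count'' of roughly $e^{|E|H_\mu(\beta)}$ $\beta$-names over a large finite $E\subset G$ is not available for a general countable group: subadditivity gives only $H_\mu(\beta_E)\le |E|H_\mu(\beta)$, and a bound on the entropy of a partition does not bound by $e^{|E|(H_\mu(\beta)+\epsilon)}$ the number of cells needed to cover measure $1-\delta$ (one only gets something like $e^{|E|H_\mu(\beta)/\delta}$, which destroys the comparison with $e^{|E|H(p)}$). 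The count you want is a Shannon--McMillan statement, and that requires an averaging (F\o lner) structure that nonamenable groups lack. (ii) The ``measurable approximate tiling by finite Rokhlin pieces'' that you invoke to distribute the encoding is precisely the Ornstein--Weiss quasi-tiling machinery, which is an amenability phenomenon; no such tiling by translates of prescribed finite shapes is available for arbitrary countable groups, and moreover the theorem as stated does not assume the action is free, so even weaker Rokhlin-type statements for free actions are not directly applicable. Since the entire content of the theorem sits in the density step, and both mechanisms you propose for it fail outside the amenable/free setting, the proposal does not constitute a proof. Seward's actual argument is a direct iterative construction: one encodes a countable exhaustion of the information in $\beta$ into a sequence of ``pre-partitions'' with distribution controlled by $p$, spending at each stage only a small part of the entropy gap $H(p)-h_{\text{Rok}}$, and the counting is arranged so as to avoid any appeal to F\o lner sets or tilings; recovering that mechanism is the real work you would need to supply.
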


\section {Kolmogorov complexity}
The {\em (Kolmogorov) complexity
} $K(w)$ of a finite sequence $w$ on a finite alphabet is defined
to be the length of the shortest program that when input to a fixed
universal Turing machine produces output $w$ (or at least a coding
of $w$ by a block of 0's and 1's). For a topological dynamical system
$(X,T)$, open cover ${\mathcal U}=\{ U_0,\ldots ,U_{r-1}
\} $ of $X,$  $x\in X$, and $n\geq 1$, we consider the set of codings of the initial $n$ points in the orbit of $x$ according to the cover $\mathcal U$:  let ${\mathcal C}(x,n)=$ the
set of $n$-blocks $w$ on $\{ 0,\ldots ,r-1\} $
such that $T^jx\in U_{w_j},$ $j=0,\ldots ,n.$ Then we define the
{\em upper and lower complexity of the orbit}  of a point
$x\in X$ to be
\begin{equation}
 K^+(x,T)=\sup_{{\mathcal U}} \limsup_{n\rightarrow \infty }
\min \{{\frac{K(w)}{n}}:
w\in {\mathcal C}(x,n)\}
\end{equation}
and
\begin{equation}
 K^-(x,T)=\sup_{{\mathcal U}} \liminf_{n\rightarrow \infty }
\min \{{\frac{K(w)}{n}}:
w\in {\mathcal C}(x,n)\}.
\end{equation}

\begin{thm}[Brudno \cite{Brudno1982}, White \cite{White1991Thesis,White1993}] If $\mu $ is an ergodic invariant
	measure on $(X,T),$ then
	\begin{equation}
	K^+(x,T)=K^-(x,T)=h_\mu (X,T) \text{ a.e. }d\mu (x).
	\end{equation}
\end{thm}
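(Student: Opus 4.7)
The plan is to prove the two inequalities $K^+(x,T)\le h_\mu(X,T)$ and $K^-(x,T)\ge h_\mu(X,T)$ for $\mu$-a.e.\ $x$; combined with the obvious $K^-\le K^+$, these yield the theorem. For the upper bound I would fix an open cover $\cu=\{U_0,\dots,U_{r-1}\}$ and refine it to a finite Borel partition $\alpha=\{A_0,\dots,A_{r-1}\}$ with $A_i\subset U_i$, so that the $\alpha$-name $w_n(x)$ automatically lies in $\cc(x,n)$. The Shannon--McMillan--Breiman theorem gives, for a.e.\ $x$ and every $\epsilon>0$, $\mu(\alpha_0^{n-1}(x))\ge 2^{-n(h(\alpha,T)+\epsilon)}$ eventually, so $w_n(x)$ belongs to the list $L_n$ of names of atoms of measure at least $2^{-n(h(\alpha,T)+\epsilon)}$, which has cardinality at most $2^{n(h(\alpha,T)+\epsilon)}$. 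A fixed universal Turing machine, fed a prefix-free encoding of $n$ and the index of $w_n(x)$ in a canonical enumeration of $L_n$, reconstructs $w_n(x)$ from $n(h(\alpha,T)+\epsilon)+O(\log n)$ bits. Dividing by $n$, taking $\limsup$, sending $\epsilon\to 0$, and letting $\alpha$ refine to a generator (or invoking Kolmogorov--Sinai) yields $K^+(x,T)\le h_\mu(X,T)$ a.e.

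For the lower bound, fix $\epsilon>0$ and a finite Borel partition $\alpha$ with $h(\alpha,T)>h_\mu(X,T)-\epsilon$. Since $\cc(x,n)$ is defined through an \emph{open} cover, I would manufacture a compatible one as follows: by regularity of $\mu$, pick compact $F_i\subset A_i$ with $\sum_i\mu(A_i\setminus F_i)<\delta$, then pairwise disjoint open $V_i\supset F_i$, and set $V_r=X\setminus\bigcup_i F_i$. Let $\cu=\{V_0,\dots,V_r\}$ and $\alpha'=\{F_0,\dots,F_{r-1},V_r\}$; partition continuity of entropy gives $h(\alpha',T)\ge h(\alpha,T)-\eta(\delta)$ with $\eta(\delta)\to 0$. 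The disjointness of $V_0,\dots,V_{r-1}$ forces any $w\in\cc(x,n)$ to coincide with the $\alpha'$-name of $x$ at every coordinate with $w_j<r$, and Birkhoff's theorem applied to $\chi_{V_r}$ shows that the coordinates with $w_j=r$ have asymptotic density at most $\mu(V_r)<\delta$ for a.e.\ $x$.

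Suppose now, toward contradiction, that $K^-(x,T)<h(\alpha',T)-3\epsilon$ on a set $E$ with $\mu(E)>0$. For $x\in E$ there are infinitely many $n$ for which some $w\in\cc(x,n)$ is produced by a program of length at most $n(h(\alpha',T)-3\epsilon)$; at most $2^{n(h(\alpha',T)-3\epsilon)+1}$ such programs exist, and each $w$ is consistent with at most $(r+1)^{\delta n}\le 2^{\epsilon n}$ different $\alpha'$-names once $\delta$ is small enough, yielding a candidate list of $\alpha'$-names of size $N_n\le 2^{n(h(\alpha',T)-2\epsilon)+1}$. Intersecting with the Shannon--McMillan--Breiman typical set, on which every $\alpha'$-atom has measure at most $2^{-n(h(\alpha',T)-\epsilon)}$, bounds the $\mu$-measure of the bad event at time $n$ by $O(2^{-n\epsilon})$, which is summable. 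Borel--Cantelli then kills the $\limsup$ event, contradicting $\mu(E)>0$. Letting $\epsilon\to 0$ and taking the supremum over $\alpha$ gives $K^-(x,T)\ge h_\mu(X,T)$ a.e.

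The hard part is the lower bound, specifically engineering the open cover $\cu$ faithfully enough that a Kolmogorov complexity bound on $\cu$-codings can be transferred to an SMB measure bound on $\alpha'$-atoms. The two independent sources of slack --- the ambiguity at ``slack'' coordinates where $w_j=r$, tamed by Birkhoff, and the entropy loss $h(\alpha,T)-h(\alpha',T)$, tamed by partition continuity --- must both be squeezed (via $\delta$) so that the counting estimate beats the SMB decay by a genuine exponential factor in $n$. The remainder is bookkeeping: prefix-free encoding of $n$, $\alpha$, and the enumeration convention for the universal machine, making sure the $O(\log n)$ overheads stay below $\epsilon n$.
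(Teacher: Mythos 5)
The paper states this theorem without proof, citing Brudno and White, so the only question is whether your argument stands on its own. Your lower bound is essentially the standard counting argument and is sound in outline: shrink a near-generating partition to disjoint compact cores $F_i$, take disjoint open neighborhoods $V_i$ to form the cover, use Birkhoff to control the density of ambiguous coordinates, count the candidate $\alpha'$-names reachable from programs of length $n(h(\alpha',T)-3\epsilon)$, intersect with the Shannon--McMillan--Breiman typical set, and apply Borel--Cantelli. Two small repairs: a coordinate $j$ with $w_j=i<r$ can still have $\alpha'$-name $r$ (when $T^jx\in V_i\setminus F_i$), so the ambiguous set is $\{j:T^jx\notin\bigcup_iF_i\}$ rather than $\{j:w_j=r\}$, and the number of names compatible with a given $w$ is then $\sum_{k\le\delta n}\binom{n}{k}(r+1)^{\delta n}=2^{o_\delta(1)n}$ rather than $(r+1)^{\delta n}$; neither change hurts the estimate. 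You should also reduce the supremum over the uncountably many open covers to a countable family cofinal under refinement, so that the almost-everywhere statements can be combined.

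The genuine gap is in the upper bound. You describe $w_n(x)$ by its index in ``a canonical enumeration of $L_n$,'' where $L_n$ is the set of atoms of $\alpha_0^{n-1}$ of measure at least $2^{-n(h(\alpha,T)+\epsilon)}$. Kolmogorov complexity is description length relative to a \emph{fixed universal Turing machine}, so an index into $L_n$ bounds $K(w_n(x))$ only if the machine can itself generate the enumeration of $L_n$ from $n$; but $L_n$ is defined through $\mu$, an arbitrary ergodic measure on an arbitrary compact system, which is not computable. If membership in an arbitrary set of cardinality $2^{nh}$ implied complexity at most $nh+O(\log n)$, a singleton containing an incompressible string would already be a counterexample, so no purely cardinality-based argument can work here. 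This is precisely where Brudno's and White's proofs do real work: one must exhibit an \emph{effective} code, for instance a two-part code recording the empirical $k$-block distribution of $w_n(x)$ (costing $O(\log n)$ bits for fixed $k$) followed by the index of $w_n(x)$ within the computably enumerable type class, whose cardinality is about $2^{nH_k/k}$ with $H_k/k$ decreasing to $h(\alpha,T)$ by the ergodic theorem; alternatively one invokes a universal compressor such as Lempel--Ziv, whose per-symbol code length converges almost everywhere to the entropy (as the paper notes in the surrounding discussion of universal coding). With that replacement, the rest of your architecture for the upper bound goes through.
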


Brudno showed that 
$K^+(x,T)=h_\mu(X,T)$ a.e. for every ergodic measure $\mu$ on $(X,T)$. He also noted that attempting to use finite measurable partitions $\alpha$ instead of open covers,
\be
\sup_\alpha \limsup_{n \to \infty} \frac{K(\alpha_0^{n-1}(x))}{n},
\en
gives $\infty$ for all nonperiodic points $x \in X$.

White also explains how ingredients of the proofs, some going back to Brudno and even Kolmogorov, produce a universal coding scheme: an effective (achievable by a Turing machine) algorithm for coding any finite-state ergodic stationary source that almost surely achieves a long-term data compression rate equal to the entropy of the source (the best possible). Lempel-Ziv, Wyner-Ziv, Ornstein-Shields, etc. have also produced universal coding schemes.  

\chapter{Counting patterns}

  A {\em complexity function} $p_x(n)$ counts the number of patterns of ``size" $n$ that appear in an object $x$ under investigation. One of the simplest situations (one might suppose) is that of a one-dimensional sequence $u$ on a finite alphabet $\ca$.
If $u$ is a sequence or bisequence, the \emph{complexity function} of $u$, denoted $p_u$, maps $n$ to the number of blocks of length $n$ that appear in $u$.
If $X$ is a subshift, then $p_X(n)$ is the number of blocks of length $n$ that appear in $\cl(X).$
In higher-dimensional symbolic dynamical systems one may count the number of configurations seen in rectangular regions, and in tilings one may count the number of patches of tiles of a fixed size that are equivalent under translations, or, if preferred, under translations and rotations. The asymptotic exponential growth rate of the complexity function,
\be
\limsup_{n \to \infty} \frac{\log p_x(n)}{n},
\en
is a single number that measures the complexity of $x$ in one sense, while the function $p_x$ itself is a precise measurement of how the complexity or richness of the object grows with size. There is a huge literature on complexity functions of various kinds for various structures; see for example \cite{Lothaire,Fogg,Berthe,allouche94, Ferenczi1999}. Here we look at a few representative examples.

\begin{exer}\label{exer:compequiv} \cite{Ferenczi1999} Show that the growth rate of the complexity function $p_X$ of a subshift $(X,\sigma)$ is an invariant of topological conjugacy by proving that if $(X,\sigma)$ and $(Y,\sigma)$ are topologically conjugate subshifts on finite alphabets, then there is a constant $c$ such that 
	\be
	p_X(n-c) \leq p_Y(n) \leq p_X(n+c) \text{ for all } n > c.
		\en
		\end{exer}

\section{The complexity function in one-dimensional symbolic dynamics}\label{sec:complexityfunction}
 Let $u$ be a one or two-sided infinite sequence on a finite alphabet $\ca$, and let $p_u(n)$ denote the number of $n$-blocks in $u$.
Since every block continues in at least one direction, $p_u(n+1) \geq p_u(n)$ for all $n$.

\begin{exer}
	Find the complexity functions of the bisequence $u=\dots 121232121\dots$ and the one-sided sequence $v=321212121\dots$.
\end{exer}

Hedlund and Morse \cite[Theorems 7.3 and 7.4]{MorseHedlund1938SymbolicDynamics} showed that a two-sided sequence $u$ is periodic if and only if there is a $k$ such that $p_u(k+1)=p_u(k)$, equivalently if and only if there is an $n$ such that $p_u(n) \leq n$.

\begin{exer}\label{exer:evper}
	Show that for one-sided sequences $u$ the following conditions are equivalent:
	(1) there is an $n$ such that $p_u(n) \leq n$;\\	
	(2)  there is a $k$ such that $p_u(k+1)=p_u(k)$.\\	
	(3) $u$ is eventually periodic;\\	
	(4) $p_u$ is bounded. \\
	(Hint: For (2) implies (3), note that each $k$-block in $u$ must have a unique right extension to a $(k+1)$-block, and that some $k$-block must appear at least twice in $u$.)
\end{exer}

\begin{exer}
	Show that for a two-sided sequence $u$, if there is an $n$ such that $p_u(n) \leq n$, then $u$ is periodic.
\end{exer}

\section{Sturmian sequences}
Hedlund and Morse \cite{MorseHedlund1940Sturmian} defined Sturmian sequences as those that have the smallest possible complexity among non-eventually-periodic sequences.
\begin{defn}
	A sequence $u$ is called \emph{Sturmian} if it has complexity $p_u(n) = n+1$ for all $n$.
\end{defn}
If $u$ is Sturmian, then $p_u(1) = 2$. This implies that Sturmian sequences are over a two-letter alphabet. For the duration of this discussion on Sturmian systems, we fix the alphabet $\ca = \{0,1\}$.

\begin{exer}
	\label{fibex}
	The Fibonacci substitution is defined by: 
	\begin{align*}
	\phi: 0 & \mapsto 01
	\\1 & \mapsto 0.
	\end{align*}
	The fixed point  $f = 0100101001001010010100100101...$ of the Fibonacci substitution is called the {\em Fibonacci sequence}. Show that $f$ is a Sturmian sequence.
\end{exer}

\begin{defn}
	A set $S$ of blocks is \emph{balanced} if for any pair of blocks $u$, $v$ of the same length in $S$, $||u|_1 - |v|_1| \leq 1$, where $|u|_1$ is the number of occurrences of 1 in $u$ and $|v|_1$ is the number of occurences of 1 in $v$.
\end{defn}

It immediately follows that if a sequence $u$ is balanced and not eventually periodic then it is Sturmian. This is a result of the fact that if $u$ is aperiodic, then $p_u(n) \geq n+1$ for all $n$, and if $u$ is balanced then $p_u(n) \leq n+1$ for all $n$. In fact, it can be proved that a sequence $u$ is balanced and aperiodic if and only if it is Sturmian \cite{Lothaire}. Furthermore, it immediately follows that any shift of a Sturmian sequence is also Sturmian.

Sturmian sequences also have a natural association to lines with irrational slope. To see this, we introduce the following definitions.
\begin{defn}
	Let $\alpha$ and $\beta$ be real numbers with $0 \leq \alpha, \beta \leq 1$. We define two infinite sequences $x_{\alpha,\beta}$ and ${x'}_{\alpha,\beta}$ by
	\begin{eqnarray*}
		(x_{\alpha,\beta})_n&=& \lfloor \alpha(n+1) + \beta \rfloor - \lfloor \alpha n + \beta \rfloor    \nonumber \\
		({x'}_{\alpha,\beta})_n&=& \lceil \alpha(n+1) + \beta \rceil - \lceil \alpha n + \beta \rceil    \nonumber \\
	\end{eqnarray*}
	for all $n\geq 0$.
	The sequence $x_{\alpha,\beta}$ is the \emph{lower mechanical sequence} and ${x'}_{\alpha,\beta}$ is the \emph{upper mechanical sequence} with slope $\alpha$ and intercept $\beta$.
\end{defn}

The use of the words slope and intercept in the above definitions stems from the following graphical interpretation.
Consider the line $y= \alpha x + \beta$. The points with integer coordinates that sit just below this line are $F_n = (n, \lfloor \alpha n + \beta \rfloor).$ The straight line segment connecting two consecutive points $F_n$ and $F_{n+1}$  is horizontal if  $x_{\alpha,\beta} = 0$ and diagonal if  $x_{\alpha,\beta} = 1$. Hence, the lower mechanical sequence can be considered a coding of the line $y= \alpha x + \beta$ by assigning to each line segment connecting $F_n$ and $F_{n+1}$ a 0 if the segment is horizontal and a 1 if the segment is diagonal. Similarly, the points with integer coordinates that sit just above this line are $F'_n = (n, \lceil \alpha n + \beta \rceil)$. Again, we can code the line  $y= \alpha x + \beta$ by assigning to each line segment connecting $F'_n$ and $F'_{n+1}$ a 0 if the segment is horizontal and a 1 if the segment is diagonal. This coding yields the upper mechanical sequence \cite{Lothaire}.

The sequence $x_{\alpha,\beta}$ codes the forward orbit of $\beta \in [0,1)$ under the translation $x \to x+\alpha \mod 1$ with respect to the partition $A=[0,1-\alpha), A^c=[1-\alpha,1)$: $x_{\alpha,\beta}=0$ if $\beta + n\alpha \in A$, $x_{\alpha,\beta}=1$ if $\beta +n\alpha \in A^c$; and the sequence $x'_{\alpha,\beta}$ codes the orbit of $\beta$ with respect to the partition $B=(0,1-\alpha], B^c=(1-\alpha,1]$. 

A mechanical sequence is $rational$ if the line $y= \alpha x + \beta$ has rational slope and  $irrational$ if $y= \alpha x + \beta$ has irrational slope. In \cite{Lothaire} it is proved that a sequence $u$ is Sturmian if and only if $u$ is irrational mechanical. In the following example we construct a lower mechanical sequence with irrational slope, thus producing a Sturmian sequence.

\begin{example}
	Let $\alpha = 1/\tau^2$, where $\tau = (1+ \sqrt{5})/2$ is the golden mean, and $\beta = 0$. The lower mechanical sequence $x_{\alpha,\beta}$ is constructed as follows:
	\begin{align*}
	(x_{\alpha, \beta})_0 = & \lfloor 1/\tau^2 \rfloor = 0  \\
	(x_{\alpha, \beta})_1= & \lfloor 2/\tau^2  \rfloor - \lfloor 1/\tau^2  \rfloor = 0 \\
	(x_{\alpha, \beta})_2= & \lfloor 3/\tau^2  \rfloor - \lfloor 2/\tau^2  \rfloor = 1 \\
	(x_{\alpha, \beta})_3= & \lfloor 4/\tau^2  \rfloor - \lfloor 3/\tau^2  \rfloor =  0\\
	(x_{\alpha, \beta})_4= & \lfloor 5/\tau^2  \rfloor - \lfloor 4/\tau^2  \rfloor =  0\\
	(x_{\alpha, \beta})_5= & \lfloor 6/\tau^2  \rfloor - \lfloor 5/\tau^2  \rfloor =  1 \\
	\vdots
	\end{align*}
	Further calculation shows that $x_{\alpha,\beta} = 0010010100... = 0f$. Note that a similar calculation gives ${x'}_{\alpha,\beta} = 1010010100...= 1f,$ hence the Fibonacci sequence is a shift of the lower and upper mechanical sequences with slope $1/\tau^2$ and intercept $0$.
\end{example}

\begin{exer}
	 Show that while Sturmian sequences are aperiodic, they are {\em syndetically recurrent}: every block that occurs in a Sturmian sequence occurs an infinite number of times with bounded gaps.
\end{exer}

As a result of the preceding Exercise, any block in $\cl_n(u)$ appears past the initial position and can thus be extended on the left.  Since there are $n+1$ blocks of length $n$, it must be that exactly one of them can be extended to the left in two ways.
\begin{defn}
	In a Sturmian sequence $u$, the unique block of length $n$ that can be extended to the left in two different ways is called a \emph{left special block}, and is denoted $L_n(u)$. The sequence $l(u)$ which has the $L_n(u)$'s as prefixes is called the \emph{left special sequence} or \emph{characteristic word} of $X_u^+$ \cite{Fogg, Lothaire}.
\end{defn}
In a similar fashion, we define the right special blocks of $\cl_n(u)$.
\begin{defn}
	In a Sturmian sequence $u$, the unique block of length $n$ that can be extended to the right in two different ways is called a \emph{right special block}, and is denoted $R_n(u)$. The block $R_n(u)$ is precisely the reverse of $L_n(u)$ \cite{Fogg}.
\end{defn}
We now address how to determine the left special sequence in a Sturmian system.

Since every Sturmian sequence $u$ is irrational mechanical, there is a line with irrational slope $\alpha$ associated to $u$. We use this $\alpha$ to determine the left special sequence of $X_u^+$.

Let $(d_1, d_2, ..., d_n,...)$ be a sequence of integers with $d_1 \geq 0 $ and $d_n > 0 $ for $n>1$. We associate a sequence $(s_n)_{n\geq -1}$ of blocks to this sequence by
\begin{center}
	$s_{-1} = 1$, \hspace{3mm} $s_0 = 0$, \hspace{3mm} $s_n = s^{d_n}_{n-1}s_{n-2}. $
\end{center}
The sequence $(s_n)_{n\geq -1}$ is a \emph{standard sequence}, and  $(d_1, d_2, ..., d_n,...)$  is its \emph{directive sequence}. We can then determine the left special sequence of  $X_u^+$ with the following proposition stated in \cite{Lothaire}.
\begin{prop}
	Let $\alpha = [0, 1+ d_1, d_2, ....]$ be the continued fraction expansion of an irrational $\alpha$ with $0< \alpha < 1$, and let $(s_n)$ be the standard sequence associated to $(d_1, d_2,...)$. Then every $s_n$, $n \geq 1$, is a prefix of $l$ and
	$$l = \lim_{n\to \infty} s_n.$$
	\label{lss}
\end{prop}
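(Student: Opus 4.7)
The plan is to establish the proposition in two stages: first verify that the sequence $(s_n)_{n \geq 1}$ is nested, in the sense that $s_n$ is a prefix of $s_{n+1}$, so $\lim_n s_n$ exists as an infinite word; then show that each $s_n$ is a prefix of $l$, which forces the limit to equal $l$. The nesting follows immediately from the defining recurrence $s_{n+1} = s_n^{d_{n+1}} s_{n-2}$ together with $d_{n+1} \geq 1$ for $n \geq 1$. Setting $q_n := |s_n|$, the same recurrence gives $q_{n+1} = d_{n+1} q_n + q_{n-1}$, which, starting from $q_0 = 1$ and $q_1 = 1 + d_1$, is exactly the recurrence for the denominators of the convergents of the continued fraction $[0; 1+d_1, d_2, d_3, \ldots]$. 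In particular $q_n \to \infty$, so $s_\infty := \lim_n s_n$ is a well-defined infinite word on $\{0,1\}$.

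For the second stage, I would identify $l$ with a canonical symbolic coding of the rotation $R_\alpha : x \mapsto x + \alpha \pmod 1$ of the circle with respect to the partition $\{[0, 1-\alpha),\, [1-\alpha, 1)\}$. The defining property of $l$---that its length-$n$ prefix $L_n(u)$ is the unique $n$-block in $\cl(X_u^+)$ admitting two left extensions---forces the associated intercept to lie precisely at the discontinuity of the partition, so $l$ coincides with the characteristic coding of the $R_\alpha$-orbit of that discontinuity. I would then prove by induction on $n$ that the length-$q_n$ prefix of $l$ equals $s_n$. The inductive step rests on the three-distance theorem: the first $q_n$ iterates of $R_\alpha$ partition the circle into arcs of at most three lengths closely controlled by the convergents $p_n/q_n$, and this geometric decomposition forces the length-$q_{n+1}$ prefix of $l$ to factor as the concatenation $(\text{length-}q_n \text{ prefix})^{d_{n+1}} \cdot (\text{length-}q_{n-1} \text{ prefix})$. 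By the induction hypothesis, this factorization reads $s_n^{d_{n+1}} s_{n-1} = s_{n+1}$, completing the step.

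The main obstacle is executing this inductive step cleanly, since one must relate the arithmetic structure of the continued fraction to a purely combinatorial factorization of prefixes. A tidy way to organize the argument is to introduce the two elementary Sturmian morphisms $E_0: 0 \mapsto 0,\, 1 \mapsto 10$ and $E_1: 0 \mapsto 01,\, 1 \mapsto 1$, prove by induction that $s_n = E_{i_1} E_{i_2} \cdots E_{i_n}(a)$ for an appropriate starting letter $a$ and indices $i_k$ determined by the partial quotients $d_k$, and simultaneously verify that $l$ is the (unique) fixed point of the corresponding infinite composition. Once this substitutive description is in place, the factorization of prefixes of $l$ becomes immediate from the action of the morphisms on individual letters, and the identification of $s_n$ with the length-$q_n$ prefix of $l$ follows by a direct induction. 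Combining this with the nesting and growth from the first stage yields $l = \lim_{n \to \infty} s_n$, as claimed.
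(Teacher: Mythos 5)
The paper does not actually prove Proposition \ref{lss}; it is quoted from \cite{Lothaire}, so there is no in-text argument to compare yours against. Measured against the standard proof (the one in Lothaire, Chapter 2), your architecture is the right one, and your first stage is fine: $s_{n+1}=s_n^{d_{n+1}}s_{n-1}$ (note your first display has a typo, $s_{n-2}$ where $s_{n-1}$ is meant) with $d_{n+1}\geq 1$ gives the nesting for $n\geq 1$, and $|s_n|$ satisfies the convergent-denominator recurrence for $[0;1+d_1,d_2,\dots]$, so the limit word exists.

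The gap is that both of your proposed routes for the second stage defer the one statement that carries all the content. Saying that $l$ ``is the (unique) fixed point of the corresponding infinite composition'' of Sturmian morphisms indexed by the $d_k$ is essentially a restatement of the proposition: to use it you must prove the compatibility lemma linking the arithmetic of $\alpha$ to the combinatorics of $l$, namely that the characteristic words transform correctly under one step of the continued-fraction algorithm (if $c_\beta$ denotes the characteristic word of slope $\beta$ and $h_d\colon 0\mapsto 0^{d}1,\ 1\mapsto 0$, then $h_{d}(c_{\beta})$ is, up to the standard adjustment, $c_\alpha$ for $\alpha$ with Gauss-map image governed by $\beta$; equivalently, the prefix of $l$ of length $q_{n+1}$ factors as $(\text{prefix of length }q_n)^{d_{n+1}}(\text{prefix of length }q_{n-1})$). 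Your three-distance alternative needs exactly the same lemma in geometric clothing, and you explicitly flag it as ``the main obstacle'' without supplying it. A second, smaller, omission: the identification of the left special sequence $l$ with the mechanical word $x_{\alpha,\alpha}$ is asserted rather than argued; the short proof is that a factor $w$ of length $n$ corresponds to one of the $n+1$ arcs cut out by $\{-k\alpha\}$, $0\leq k\leq n$, and $w$ is left special precisely when its arc straddles the point $\alpha$, so $L_n(u)$ is the length-$n$ coding of the orbit of $\alpha$ itself. With that lemma stated and proved (by induction on the continued-fraction expansion, checking $h_{d_1}\cdots h_{d_n}(0)=s_n$ and $h_{d_1}\cdots h_{d_n}(1)=s_{n-1}$), your outline closes up into the standard proof; as written, it is a correct plan with its central step missing.
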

This is illustrated in the following two examples.

\begin{example}
	\label{fibex2}
	Let  $\alpha = 1/\tau^2$, where $\tau = (1+ \sqrt{5})/2$ is the golden mean. The continued fraction expansion of $1/\tau^2$ is $[ 0,2,\overline{1} ]$. By the above proposition $d_1 = 1, d_2= 1, \\ d_3 = 1, d_4 = 1, ... $. The standard sequence associated to  $(d_1, d_2,...)$ is constructed as follows:
	\begin{center}
		\begin{align*}
		s_1 =  & s_0^{d_1}s_{-1} =  01
		\\ s_2 = & s_1^{d_2}s_0 =   010
		\\ s_3 = & s_2^{d_3}s_1 =   01001
		\\ s_4 = & s_3^{d_4}s_2 =   01001010
		\\ \vdots
		\end{align*}
		
	\end{center}
	Continuing this process, the left special sequence of $X_u^+$, where $u$ is a coding of a line with slope $1/\tau^2$, is
	$$l = 010010100100101001 ... = f.$$ It follows that the left special sequence of $X_f^+$ is $f$.
\end{example}

\section{Episturmian sequences}

Sturmian sequences have many equivalent definitions and characterizations as well as many amazing properties. Some of the properties allow generalization to sequences on alphabets of more than two symbols. {\em Episturmian sequences} were defined in \cite{DroubayJustinPirillo2001} as follows.
The {\em right palindrome closure} of a block $B$ is the unique shortest palindrome which has $B$ as a prefix.

\begin{defn}
	A one-sided sequence $u$ is {\em standard episturmian} if the right palindrome closure of every prefix of $u$ is also a prefix of $u$. A sequence $v$ is {\em episturmian} if there is a standard episturmian sequence $u$ such that $\cl(v)=\cl(u)$ .
\end{defn}

\begin{exer}
	Prove that every Sturmian sequence is episturmian.
\end{exer}

\begin{exer}
	Prove that every episturmian sequence is syndetically recurrent.
\end{exer}

We say that an infinite sequence $u$ is {\em closed under reversals} if for each subblock $B=b_1\dots b_n$ of $u$, its {\em reversal} $B'=b_n \dots b_1$ is also a subbblock of $u$. Recall that a {\em right-special} subblock of an infinite sequence $u$ is a subblock $B$ of $u$ that has two distinct continuations: there are symbols $b,c \in \ca$ with $b \neq c$ and both $Bb$ and $Bc$ appearing in $u$ as subblocks.

\begin{thm}
	\textnormal{\cite{DroubayJustinPirillo2001}}A one-sided sequence $u$ is episturmian if and only if it is closed under reversals and for each $n \geq 1$ it has at most one right-special subbblock of length $n$.
\end{thm}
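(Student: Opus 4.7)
The plan is to prove both directions, with the forward direction following fairly directly from the iterative palindrome closure structure, and the backward direction requiring a careful inductive construction of a directive sequence.

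\textbf{Forward direction.} Suppose $u$ is episturmian, so $\cl(u) = \cl(v)$ for some standard episturmian sequence $v$. Let $P_n$ denote the right palindrome closure of $v_1 \dots v_n$. The defining property of $v$ forces each $P_n$ to be a prefix of $v$, and since $|P_n| \geq n$ we obtain an infinite nested chain of palindromic prefixes of $v$. Every factor of $v$ occurs inside some $P_N$, and $P_N$ is invariant under reversal, so the reversal of every factor is again a factor; hence $\cl(v) = \cl(u)$ is closed under reversals. For the bound on right-special factors, closure under reversals induces a length-preserving bijection between right-special and left-special factors, so it suffices to prove $v$ has at most one left-special factor of each length. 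I would establish this by showing that every left-special factor of length $n$ must appear as a suffix of some palindromic prefix $P_N$; the nested structure of the $P_n$ then forces uniqueness at each length.

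\textbf{Backward direction.} Given $u$ closed under reversals with at most one right-special factor of each length, I would construct a standard episturmian $v$ with $\cl(v) = \cl(u)$ by recursively specifying a directive sequence $(a_n) \subset \ca$. Start with $P_0 = \epsilon$; at step $n$, take $a_{n+1}$ to be the unique letter in $\ca$ such that $P_n a_{n+1} \in \cl(u)$ in a way compatible with the right-special structure, and set $P_{n+1}$ to be the right palindromic closure of $P_n a_{n+1}$. Closure of $\cl(u)$ under reversals guarantees that each $P_{n+1}$ remains a factor of $u$ (the palindromic closure operator produces a block whose factors are reversals of factors of $P_n a_{n+1}$). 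The limit $v = \lim P_n$ is standard episturmian by construction, and to finish I would show that every factor of $u$ lies inside some $P_N$, using the unique right-special factor hypothesis to control how factors extend on the right and must therefore match the unique choice of $a_{n+1}$.

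\textbf{Main obstacle.} The hard part is the backward direction: (i) verifying that the choice of $a_{n+1}$ is correctly forced at every stage, and (ii) proving no factor of $u$ is missed by the $P_N$. Both hinge on a rigid bookkeeping of right-special factors, and I expect the argument to mirror the classical Sturmian case (where the characteristic word is read off a continued fraction expansion) but adapted to alphabets of arbitrary size, with the palindromic closure operator and the unique right-special factor playing the roles previously occupied by slope and the binary left-special factor.
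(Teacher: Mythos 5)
The paper states this result only as a citation to Droubay--Justin--Pirillo and gives no proof of its own, so your proposal has to be judged on its own terms. The forward direction is essentially sound, modulo a left/right slip: the left-special factors of a standard episturmian word are its \emph{prefixes}; it is the right-special factors that arise as suffixes of the palindromic prefixes $P_N$. The backward direction, however, has a genuine gap at its central step. You assert that closure of $\cl(u)$ under reversal ``guarantees that each $P_{n+1}$ remains a factor of $u$'' because ``the palindromic closure operator produces a block whose factors are reversals of factors of $P_n a_{n+1}$.'' That parenthetical is false: writing $w=pq$ with $q$ the longest palindromic suffix of $w$, the closure $w^{(+)}=pq\tilde p$ has factors straddling the junction of $q$ and $\tilde p$ that are neither factors of $w$ nor reversals of such. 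The conclusion itself also fails without the special-factor hypothesis: the Prouhet--Thue--Morse word is closed under reversal and contains $001$, yet $(001)^{(+)}=00100$ is not among its (twelve) factors of length five. So the fact that the iterated palindromic closures stay inside $\cl(u)$ is exactly where the hypothesis ``at most one right-special factor of each length'' must be used --- in Droubay--Justin--Pirillo it enters through the lemma that the longest palindromic suffix of any factor occurs in that factor exactly once --- and it cannot be obtained from reversal-closure alone.

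A second, smaller problem in the same direction: your rule for choosing $a_{n+1}$ (``the unique letter such that $P_na_{n+1}\in\cl(u)$'') is not well defined precisely when $P_n$ is right-special, which is the typical case (the palindromic prefixes of the Fibonacci word, for instance, are all bispecial). The standard repair is to build the candidate word first: since every suffix of a right-special factor is right-special, uniqueness per length makes the right-special factors a chain under the suffix relation; their reversals form a chain of left-special factors under the prefix relation, and the limit of that chain is the word $v$ you want, with $a_{n+1}$ read off as the letter following the prefix $P_n$ in $v$. One then proves $\cl(v)=\cl(u)$ using recurrence of $u$ (itself a consequence of reversal-closure) and only afterwards verifies that $v$ is standard episturmian via the unioccurrence lemma. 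Your outline has the right overall shape, but the two lemmas that carry all the weight are missing, and the justification offered in place of the first one is incorrect.
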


For a review of properties of episturmian sequences, including their complexity, see \cite{GlenJustin2009}.

\section{The Morse sequence}
The Morse sequence, more properly called the Prouhet-Thue-Morse sequence, is the fixed point 
\begin{center}
	$\omega  = 0110100110010110....$
\end{center}
of the substitution $0 \to 01, 1 \to 10$. 
The complexity function of the Morse sequence is more complicated than that of the Fibonacci sequence. For the Morse sequence, $p_\omega(1) =2$, $p_\omega(2) =4$, and, for $n \geq 3$, if $n=2^r+q+1$, $r \geq 0$, $0<q\leq 2^r$, then
\begin{center}
	$p_\omega(n) =
	\begin{cases}
	6(2^{r-1})+4q& \text{if } 0 < q \leq 2^{r-1} \\
	8(2^{r-1})+2q & \text{if } 2^{r-1}<q \leq 2^r.\\
	\end{cases}$
\end{center}
The complexity function of the Morse sequence is discussed in more detail in \cite[Chapter 5]{Fogg}

\section{In higher dimensions, tilings, groups, etc.}

The complexity of configurations and tilings in higher-dimensional spaces and even groups is an area of  active investigation. A central question has been the possibility of generalizing the observation of Hedlund and Morse (Exercise \ref{exer:evper}) to higher dimensions: any configuration of low enough complexity, in some sense, should be eventually periodic, in some sense. A definite conjecture in this direction was stated in 1997 by M. Nivat in a lecture in Bologna (see \cite{EpifanioKoskasMignosi2003}):
For a $d$-dimensional configuration $x: \Bz^d \to \ca$ on a finite alphabet $\ca$, define its {\em rectangular complexity function} to be the function $P_x(m_1,\dots,m_d)$ which counts the number of different $m_1 \times \dots \times m_d$ box configurations seen in $x$. The {\em Nivat Conjecture} posits that 
 if $x$ is a two-dimensional configuration on a finite alphabet for which there exist $m_1,m_2 \geq 1$ such that $P_x(m_1,m_2)\leq m_1m_2$, then $x$ is periodic: there is a nonzero vector $w \in \Bz^2$ such that $x(v+w)=x(v)$ for all $v \in \Bz^2$. 
 
 Cassaigne \cite{Cassaigne1999} characterized all two-dimensional configurations with complexity function $P_x(m_1,m_2)=m_1m_2+1$. 
 
Vuillon \cite{Vuillon1998} considered tilings of the plane generated by a cut-and-project scheme. Recall (see \cite{ArnouxBertheEiIto2001}) that Sturmian sequences code (according to the two possible tile=interval lengths) tilings of a line obtained by projecting onto it the points in the integer lattice that are closest to it along another, transverse, line. 
Vuillon formed tilings of the plane with three types of diamonds by projecting onto a plane points of the cubical lattice in $\Bz^3$ and proved that the number of different $m \times n$ parallelograms is $mn+m+n$.

Berth\'{e} and Vuillon \cite{BertheVuillon2000a} showed that these two-dimensional configurations code the $\Bz^2$ action of two translations on the circle. By applying a one-block code from the three-letter alphabet to a two-letter alphabet, they produced for each $m$ and $n$ a two-dimensional configuration  which is syndetically recurrent and is not periodic in any rational direction but has the relatively low 
 rectangular complexity function $P(m,n)=mn+n$. Two-dimensional configurations with this complexity function were characterized in \cite{BertheVuillon2000b}.

Sander and Tijdeman \cite{SanderTijdeman2000a,SanderTijdeman2000b,SanderTijdeman2002} studied the complexities of configurations $x: \Bz^d \to \{0,1\}$ in terms of the number of distinct finite configurations seen under a sampling window. Let $A
=\{a_1,\dots,a_n\}$, each $a_i \in \Bz^d$, be a fixed non-empty {\em sampling window}, and define
\be
P_x(A)=\card\{(x(v+a_1),\dots,x(v+a_n)):v \in \Bz^d\}
\en
to be the number of distinct {\em $A$-patterns} in $x$ (written here as ordered $|A|$-tuples). A natural extension of (\ref{exer:evper}) might be that if there is a nonempty set $A \subset \Bz^d$ for which $P_x(A) \leq |A|$, then $x$ must be periodic: there is a $w \in \Bz \setminus \{0\}$ such that $x(v+w)=x(v)$ for all $v \in \Bz$. 
Sander and Tijdeman proved the following.

\begin{enumerate}
	\item If $P_x(A) \leq |A|$ for some $A \subset \Bz^d$ with $|A| \leq 3$, then $x$ is periodic.
	\item In dimension 1, the observation of Hedlund and Morse generalizes from sampling windows that are intervals to arbitrary sampling windows: If $x \in \{0,1\}^\Bz$ satisfies $P_x(A) \leq |A|$ for some (non-empty) sampling window $A$, then $x$ is periodic.
	\item There are a non-periodic two-dimensional configuration $x: \Bz^2 \to \{0,1\}$ and a sampling window $A \subset\Bz^2$ of size $|A|=4$ such that $P_x(A)=4=|A|$.
	\item {\em Conjecture}: If $A \subset \Bz^2$ is a (non-empty) sampling window that is the restriction to $\Bz^2$ of a {\em convex} subset of $\Br^2$ and $x: \Bz^2 \to\{0,1\}$ satisfies $P_x(A) \leq |A|$, then $x$ is periodic.
	\item If there is a sampling window $A \subset \Bz^2$ that consists of all points in  a rectangle (with both sides parallel to the coordinate axes) with one side of length $2$, and $P_x(A) \leq |A|$, then $x$ is periodic. 
	\item In dimension 3 and above, there are rectangular box sampling windows $A$ with sides parallel to the coordinate axes and nonperiodic configurations $x$ with $P_x(A) \leq |A|$.
	\end{enumerate}
	
	The last statement above was recently improved by Cyr and Kra \cite{CyrKra2013}:  If there is a sampling window $A$ that consists of all points in  a rectangle (with both sides parallel to the coordinate axes) with one side of length $3$, and $P_x(A) \leq |A|$, then $x$ is periodic.
	
	Kari and Szabados \cite{KariSzabados2015} (see also \cite{KariSzabados2015b}) represented configurations in $\Bz^d$ as formal power series in $d$ variables with coefficients from $\ca$ and used results from algebraic geometry to study configurations in $\Bz^d$ which have low complexity in the sense that for some sampling windows $A$ they satisfy $P_x(A) \leq |A|$. They proved that in dimension two any non-periodic configuration $x$ can satisfy such an estimate for only finitely many {\em rectangular} sampling windows $A$.
	
	Epifanio, Koskas, and Mignosi \cite{EpifanioKoskasMignosi2003} made some progress on the Nivat Conjecture by showing that if $x$ is a configuration on $\Bz^2$
 for which there exist $m,n \geq 1$ such that $P_x(m,n)< mn/144$, then $x$ is periodic. The statement was improved by Quas and Zamboni \cite{QuasZamboni2004} by combinatorial and geometrical arguments to replace $1/144$ by $1/16$, and by Cyr and Kra \cite{CyrKra2015} by arguments involving subdynamics  
to replace it by $1/2$. In \cite{CyrKra2016x2x3}, Cyr and Kra give an application to Furstenberg's $ \times 2 \times 3$ problem: If $\mu$ is a probability measure on $[0,1)$ that is invariant under multiplication mod $1$ by two multiplicatively independent integers and ergodic under the joint action, then any Jewett-Krieger model for the natural extension of the joint action (as a $\mathbb Z^2$ subshift) either has complexity function $P_x(n,n)$ bounded in $n$ (for all $x$) and the subshift is finite and $\mu$ is atomic, or else 
\be
\liminf_{n \to \infty} \frac{P_x(n,n)}{n^2} \geq \frac{1}{2}.
\en

We do not give definitions of all the terminology associated with tilings and tiling dynamical systems---see for example \cite{Frank2008,Robinson2004,Solomyak1997} for background. For a tiling $x$ of $\Br^d$ that has finite local complexity, one may define its complexity function $P_x(r)$ to be the number of different patches (identical up to translation, or perhaps translation and rotation) seen in $x$ within spheres of radius $r$. In analogy with Exercise \ref{exer:compequiv} for subshifts, Frank and Sadun \cite{FrankSadun2014a} and A. Julien \cite{Julien2012} (see also \cite{Julien2010}) showed that if two minimal tiling dynamical systems are aperiodic and have finite local complexity, then their complexity functions are equivalent---within bounded multiples of each other up to bounded translations (or dilations---see the cited papers for precise statements). 

The investigation of the complexity function and the calculation or even estimation of entropy are extending to subshifts on groups (see for example \cite{Piantadosi2008,Aubrun2014}) and even on trees \cite{AubrunBeal2012,AubrunBeal2013,BanChang2015}. 

Analogues of the Nivat Conjecture for general Delaunay sets in $\Br^d$ were proved by Lagarias and Pleasants \cite{LagariasPleasants2002,LagariasPleasants2003}. Huck and Richard \cite{HuckRichard2015} estimate the pattern entropy of ``model sets" (certain point sets that result from cut and project schemes) in terms of the size of the defining window.

Durand and Rigo \cite{DurandRigo2013} proved a reformulation of Nivat's Conjecture by redefining periodicity and using a different complexity function: A subset $E \subset \Bz^d$ is ``definable in Presburg arithmetic $(\Bz;<,+)$" if and only if the number $R_E(n)$ of different blocks that appear infinitely many times in $E$ is $O(n^{d-1})$  and ``every section is definable in $(\Bz;<,+)$". We do not attempt to explain the terminology here, but just note that the subsets of $\Bn$ definable in $(\Bn;<,+)$ correspond exactly to the eventually periodic sequences, so the theorem of Durand and Rigo may be regarded as an extension of the Hedlund-Morse theorem to all dimensions.

\section{Topological complexity}\label{sec:topologicalcomplexity}
Let $(X,T)$ be a topological dynamical system. If $(X,T)$ is a subshift and $\cu$ is the time-$0$ cover (also partition) consisting of the cylinder sets determined by fixing a symbol at the origin,  
then the complexity function $p_X(n)$ (which by definition is the number of distinct $n$-blocks in all sequences in the system)
 is the minimal possible cardinality of any subcover of $\cu_0^{n-1}=\cu \vee T^{-1}\cu \vee \dots \vee T^{-n+1}\cu$;
  i.e., in this case $p_X(n)$ equals the $N(\cu_0^{n-1})$ of the definition of topological entropy (see Sections \ref{sec:complexityfunction} and \ref{sec:topent}).
Blanchard, Host, and Maass \cite{BlanchardHostMaass2000} took this as the definition of the {\em topological complexity function}: $p_\cu (n)=N(\cu_0^{n-1})=$ the minimum possible cardinality of a subcover of $\cu_0^{n-1}$.

\begin{thm}\label{th:BHM}
	\textnormal{\cite{BlanchardHostMaass2000}} A topological dynamical system is equicontinuous if and only if every finite open cover has bounded complexity function. (Cf. Sections \ref{sec:complexityfunction} and \ref{sec:mtcomplexity}.)
\end{thm}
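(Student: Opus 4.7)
The plan is to prove the forward direction by a Lebesgue-number argument and the reverse direction by compactness and pigeonhole, producing a finite closed cover of $X$ by sets of small orbit-diameter and then obtaining a contradiction via the triangle inequality.

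For the forward direction, given a finite open cover $\mathcal U$ with Lebesgue number $\eta>0$, equicontinuity yields $\delta>0$ such that $d(x,y)<\delta$ forces $d(T^nx,T^ny)<\eta/2$ for all $n\geq 0$. Cover $X$ by finitely many open balls $B_1,\ldots,B_K$ of radius $\delta/2$; then $\diam T^n(B_i)\leq\eta/2<\eta$, so $T^n(B_i)\subset U_{a_n^i}$ for some $U_{a_n^i}\in\mathcal U$. Hence $B_i\subset \bigcap_{n=0}^{N-1}T^{-n}U_{a_n^i}\in\mathcal U_0^{N-1}$ for every $N$, giving a subcover of cardinality at most $K$ and so $p_{\mathcal U}(N)\leq K$.

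For the reverse direction I would argue the contrapositive. If $(X,T)$ is not equicontinuous, there exist $\epsilon>0$, indices $n_k$, and sequences $x_k,y_k\in X$ with $d(x_k,y_k)\to 0$ and $d(T^{n_k}x_k,T^{n_k}y_k)\geq\epsilon$; by compactness pass to a subsequence with $x_k\to x_*$ (hence $y_k\to x_*$). Fix a finite open cover $\mathcal U=\{U_1,\ldots,U_m\}$ whose elements all have diameter less than $\epsilon/2$, and suppose for contradiction $p_{\mathcal U}(n)\leq K$ for every $n$. For each $n$, choose a subcover $\{V_i^n\}_{i=1}^{K}$ of $\mathcal U_0^{n-1}$, each cell of the form $V_i^n=\bigcap_{\ell=0}^{n-1}T^{-\ell}U_{a_{i,\ell}^n}$, encoded by an itinerary word $w_i^n\in\{1,\ldots,m\}^n$. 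A diagonal extraction produces a subsequence $(n_j)$ along which, for every fixed $i$ and $\ell$, the symbol $(w_i^{n_j})_\ell$ stabilizes to some $w_\ell^i$. Setting $B_i=\bigcap_{\ell\geq 0}T^{-\ell}\overline{U_{w_\ell^i}}$ gives closed sets; a pigeonhole argument (for each $x$, some $i$ catches it in $V_i^{n_j}$ for infinitely many $j$) shows $X=\bigcup_{i=1}^K B_i$, and by construction any two points of a common $B_i$ satisfy $d(T^\ell x,T^\ell y)\leq\diam\overline{U_{w_\ell^i}}<\epsilon/2$ for all $\ell\geq 0$.

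Applying pigeonhole once more to the non-equicontinuous sequences, after refining I may assume $x_k\in B_j$ and $y_k\in B_{j'}$ for fixed indices $j,j'$; closedness forces $x_*\in B_j\cap B_{j'}$. Then for every $k$ and every $n\geq 0$,
\begin{equation*}
d(T^nx_k,T^ny_k)\leq d(T^nx_k,T^nx_*)+d(T^nx_*,T^ny_k)<\tfrac{\epsilon}{2}+\tfrac{\epsilon}{2}=\epsilon,
\end{equation*}
contradicting $d(T^{n_k}x_k,T^{n_k}y_k)\geq\epsilon$. The main technical obstacle is the construction of the limit cover $\{B_i\}$: one must take closures $\overline{U_{w_\ell^i}}$ so that the $B_i$ remain closed under the diagonal limit, since the triangle-inequality step absolutely requires the common limit point $x_*$ to lie in every $B_i$ from which a subsequence converges to it.
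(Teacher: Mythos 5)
The paper does not prove this statement; it is quoted from Blanchard--Host--Maass \cite{BlanchardHostMaass2000} without argument, so there is no internal proof to compare against. Your proof is correct and complete. The forward direction is the standard Lebesgue-number argument and is exactly right: equicontinuity gives a single finite cover by $\delta/2$-balls whose elements sit inside a cell of $\mathcal U_0^{N-1}$ for every $N$, so $N(\mathcal U_0^{N-1})\leq K$ uniformly. The reverse direction is the genuinely delicate half, and you handle the one real trap correctly: a bounded sequence of subcovers yields, after diagonal extraction, a finite \emph{closed} cover $\{B_i\}$ of $X$ by sets of uniformly small orbit-diameter, but this alone does not give equicontinuity (nearby points can land in different $B_i$'s). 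Arguing the contrapositive and exploiting that the non-equicontinuity witnesses $x_k,y_k$ share a limit $x_*$, which by closedness lies in \emph{both} $B_j$ and $B_{j'}$, is precisely what makes the triangle inequality close the argument; your remark that the closures $\overline{U_{w_\ell^i}}$ are needed exactly for this membership of $x_*$ identifies the key technical point. Two cosmetic remarks: pad the minimal subcovers to exactly $K$ elements so the indexing in the diagonal argument is uniform, and note that $\diam \overline{U}=\diam U<\epsilon/2$ so the strict inequality survives taking closures; neither affects correctness.
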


\begin{exer}
	Discuss this theorem in relation to a Sturmian subshift and the irrational translation on $[0,1]$ of which it is an almost one-to-one extension.
\end{exer}

They also related the complexity function to concepts of mixing and chaos.

\begin{defn}
	A topological dynamical system is {\em scattering} if every covering by non-dense open sets has unbounded complexity function. It is {\em 2-scattering} if every covering by two non-dense open sets has unbounded complexity function.
\end{defn}
The following results are from \cite{BlanchardHostMaass2000}.
\begin{enumerate}
	\item Every topologically weakly mixing system is scattering.
	\item For minimal systems, 2-scattering, scattering, and topological weak mixing are equivalent.
	\item If every non-trivial closed cover $\cu$ of $X$ has complexity function satisfying $p_\cu (n) \geq n+2$ for all $n$, then $(X,T)$ is topologically weakly mixing.
  \item If $(X,T)$ has a point of equicontinuity, then there exists an open cover $\cu$ of $X$ with 
   $ p_\cu (n) \leq n+1$ for all $n$.
   \item A system is scattering if and only if its Cartesian product with every minimal system is transitive.
   \item Every scattering system is disjoint from every minimal distal system. (Recall that $(X,T)$ and $(Y,S)$ are {\em disjoint} if the only closed invariant subset of their Cartesian product that projects onto both $X$ and $Y$ is all of $X \times Y$.)
  \end{enumerate}
  
  The topological complexity of nilsystems has been studied for example in \cite{DongDonosoMaassSongYe2013,HostKraMaass2014,Qiao2015}.

\section{Low complexity, the number of ergodic measures, automorphisms}
Irrational translations on the unit interval and their generalizations to interval exchanges have natural codings as subshifts on finite alphabets which have low complexity and a small number of ergodic measures. 
A subshift $(X,\sigma)$ is said to have {\em minimal block growth} if there is a constant $c<\infty$ such that its complexity function $p_X(n)=|\cl_n(X)|$ satisfies $p_X(n)-n<c$ for all $n$. Such subshifts were studied and characterized by Coven and Hedlund \cite{CovenHedlund1973}, Coven \cite{Coven1975}, and Paul \cite{Paul1975}.

Cassaigne \cite{Cassaigne1996,Cassaigne1997} proved that a sequence $u$ satisfies $p_u(n) \leq Kn$ for some constant $K$ and all $n$ if and only if the sequence $p_u(n+1)-p_u(n)$ is bounded. Rote \cite{Rote1994} constructed sequences with $p_u(n)=2n$ for all $n$ and proved that a sequence $u$ on $\{0,1\}$ has complexity function $p_u(n)=2n$ and language closed under switching $0$ and $1$ if and only if its difference sequence $u_{n+1}-u_n \mod 2$ is Sturmian.

The coding of an exchange of $r$ intervals satisfies $p_X(n)=(r-1)n+1$ for all $n$ (generalizing the Sturmian case, when $r=2$). Veech \cite{Veech1978} and Katok  \cite{Katok1973} showed independently that an exchange of $r$ intervals has at most $\floor{r/2}$ ergodic measures. Boshernitzan \cite{Boshernitzan1985} showed that a general minimal subshift with {\em linear block growth} has a bounded number of ergodic measures. Denoting by $\mathscr E(X,\sigma)$ the set of ergodic invariant Borel probability measures on $(X,\sigma)$, his main results are as follows:
\begin{enumerate}
	\item If $\displaystyle{\liminf \frac{p_X(n)}{n}=\alpha}$, then $|\mathscr E(X,\sigma)| \leq \alpha$.
		\item If there is an integer $K \geq 3$ such that $\displaystyle{\limsup \frac{p_X(n)}{n}<K}$, then $|\mathscr E(X,\sigma)| \leq K-2$.
		\item If there is a real $\alpha \geq 2$ such that $\displaystyle{\limsup \frac{p_X(n)}{n}=\alpha}$, then $|\mathscr E(X,\sigma)| \leq \floor{\alpha} -1$.
\end{enumerate}

\begin{exer}
Show that (2) implies (3).
\end{exer}

\begin{exer}
	Show that for a (primitive, proper) substitution $\theta: A \to A^*$ ($A^*$ is the set of all blocks on the alphabet $A$) that defines a minimal subshift $(X_\theta,\sigma)$ as the orbit closure of a sequence fixed under $\theta$, the complexity function $p_X$ satisfies $p_X(n) \leq Kn$ for all $n$, where $K=\max\{|K(a)| :a \in A\}$.
\end{exer}

Cyr and Kra \cite{CyrKra2015GenericMeasures} showed that these results of Boshernitzan are sharp by constructing for each integer $d \geq 3$ a minimal subshift $(X,\sigma)$ for which
\be
\liminf \frac{p_X(n)}{n}=d, \quad \limsup \frac{p_X(n)}{n}=d+1, \text{ and } |\mathscr E(X,\sigma)|=d.
\en

Damron and Fickenscher \cite{DamronFickenscher2015} considered minimal subshifts with {\em eventually constant block growth}: there are constant $K$ and $n_0$ such that
\be
p_X(n+1) - p_X(n) =K \text{ for all } n \geq n_0, \text{ equivalently } p_X(n) = Kn + C
\text{ for all } n \geq n_0.
\en
This condition is satisfied by codings of interval exchanges but is stronger than linear block growth. They improved Boshernitzan's bound (of $K-1$) for this smaller class of subshifts by showing that if a minimal subshift $(X,\sigma)$ has eventually constant block growth with constant $K \geq 4$, then $|\mathscr E(X,\sigma)|\leq K-2$.
The proof involved careful study of the frequencies of blocks and the relation between Rauzy graphs for $n$-blocks and for $(n+1)$-blocks.

There is recent activity showing that if the complexity function $p_X(n)$ of a minimal subshift $(X,\sigma)$ grows slowly, then there are not very many {\em automorphisms} (shift-commuting homeomorphisms) of the system. This is in contrast to nontrivial mixing shifts of finite type, for which the automorphism group is huge \cite{Hedlund1969,BoyleLindRudolph1988}. 
The papers of Coven \cite{Coven1972}, Olli \cite{Olli2013}, Salo-T\"{o}rm\"{a} \cite{SaloTorma2014},  Coven-Quas-Yassawi \cite{CovenQuasYassawi}, Donoso-Durand-Maass-Petite \cite{DonosoDurandMaassPetite2015}, and Cyr-Kra \cite{CyrKra2014Lin} contain such results. In the latter two papers it is proved that if 
\be
\liminf _{n \to \infty} \frac{p_X(n)}{n} < \infty,
\en
then the automorphism group of $(X,\sigma)$ is {\em virtually $\Bz$}, meaning that its quotient by the subgoup generated by $\sigma$ is a finite group.

The recent paper of Cyr and Kra \cite{CyrKra2015auts} shows that if $p_X$ has {\em stretched exponential growth} in the sense that there is $\beta < 1/2$ such that
\be
\limsup_{n \to \infty} \frac{\log p_X(n)}{n^\beta} = 0,
\en
then the automorphism group of $(X,\sigma)$ is amenable. (The linear and quadratic complexity cases are discussed in \cite{CyrKra2014Lin, CyrKra2014Quad}).

The recent paper of Salo \cite{Salo2014} gives an example of a minimal subshift $(X,\sigma)$ for which 
$p_X(n)=O(n^{1.757})$ (subquadratic complexity), and the automorphism group is {\em not finitely generated}.

\section{Palindrome complexity}
  A {\em palindrome} is a block $B=b_1 \dots b_n$ which reads the same forwards as backwards: denoting by $B'=b_n \dots b_1$ the {\em reversal} of $B$, we have $B'=B$. For a (one or two-sided) sequence $u$ and each $n\geq 0$ we denote by $\Pal_u(n)$ the number of $n$-palindromes (palindromes of length $n$) in $u$. (We may define $\Pal_u(0)=\Pal_u(1)=1$.) For a language $\cl$, $\Pal_{\cl}(n)$ denotes the number of $n$-palindromes in $\cl$, and for a subshift $(X,\sigma)$, $\Pal_X(n)$ denotes the number of $n$-palindromes in $\cl(X)$, i.e. the number of palindromes of length $n$ found among all sequences in $X$. For any $\alpha$, denote by $\Pal_\alpha$ the sum of $\Pal_\alpha(n)$ over all $n$.

  Besides the inherent combinatorial interest of finding palindromes and estimating how many occur, such knowledge may have applications elsewhere, for example in mathematical physics, as explained in \cite{AlloucheBaakeCassaigneDamanik2003}. Let $A$ be a finite alphabet and $f:A \to \Br$ a one-to-one ``potential function". Let $u$ be a sequence on $A$. Then the {\em one-dimensional discrete Schr\"{o}dinger operator} $H_u$ acting on $l^2(\Bz$) is defined by
  \be
  (H_u \phi)(n)  = \phi(n-1)+\phi(n+1) + f(u_n)\phi(n) \text{ for all } n \in \Bz.
  \en
  It is thought that the nature of the spectrum of $H_u$ can indicate behavior of a material (perhaps a quasicrystal) described by the sequence $u$. Absolutely continuous spectrum (with respect to Lebesgue measure) might indicate behavior similar to a conductor, pure point spectrum might indicate insulating behavior, and singular continuous spectrum might be more interesting than either of the others.

  \begin{thm}\textnormal{\cite{HofKnillSimon1995}}
 Let $u$ be a sequence on a finite alphabet whose orbit closure $(\overline{\mathscr O(u)}, \sigma)$ is strictly ergodic and infinite and which contains arbitrarily long palindromes, i.e. $\Pal_u(n)$ is not eventually $0$. Then for uncountably many $x \in \overline{\mathscr O(u)}$, the operator $H_x$ has purely singular continuous spectrum.
    \end{thm}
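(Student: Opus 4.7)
The plan is to combine two ingredients: a palindromic version of the Gordon lemma to rule out point spectrum, and Kotani theory to rule out absolutely continuous spectrum. What is left is purely singular continuous. First I would exploit strict ergodicity: since $(\overline{\mathscr O(u)},\sigma)$ is strictly ergodic and infinite, it is minimal and contains no periodic point, so every $x$ in the orbit closure has the same language as $u$. In particular every such $x$ contains arbitrarily long palindromes, and the subshift is aperiodic. I would also note that since $u$ itself contains arbitrarily long palindromes $B_k$, one may shift $u$ so that $B_k$ is centered at (or near) the origin. This produces a sequence of points in $\overline{\mathscr O(u)}$; by compactness one can pass to a limit $x^{(0)}$ and, by a diagonal construction using the freedom to choose subsequences, obtain an uncountable set $\Omega\subset\overline{\mathscr O(u)}$ of points each of which has palindromes of unbounded length centered within a bounded distance of the origin.

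Next, for each $x\in\Omega$ I would invoke the palindromic Gordon-type lemma. The key observation is that if $x_{n-j}=x_{n+j}$ for $|j|\leq l$, then the transfer matrix $M_l(E,x)=A(x_{n+l})\cdots A(x_{n-l})$ over the palindrome centered at $n$ (where $A(a)=\bigl(\begin{smallmatrix}E-f(a) & -1\\ 1 & 0\end{smallmatrix}\bigr)$) has a precise reflection symmetry that forces $\lvert\operatorname{tr} M_l(E,x)\rvert$ to remain bounded as $l\to\infty$ for a dense $G_\delta$ set of energies $E$, and more importantly forces any formal solution $\phi$ of $H_x\phi=E\phi$ to have $\lvert\phi(n+l)\rvert+\lvert\phi(n-l)\rvert$ bounded below by a positive multiple of $\lvert\phi(n)\rvert+\lvert\phi(n-1)\rvert$. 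Because this happens at infinitely many translated centers $n_k$ with $l_k\to\infty$ (which is where having palindromes near the origin with arbitrarily large length is used), no nonzero solution can lie in $\ell^2(\Bz)$, and hence $H_x$ has empty point spectrum.

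For the absence of absolutely continuous spectrum I would appeal to Kotani's theorem for subshifts: the potential takes only finitely many values and the underlying subshift is aperiodic, so the set of energies at which the Lyapunov exponent vanishes has Lebesgue measure zero, and consequently the absolutely continuous spectrum of $H_x$ is empty for $\mu$-almost every $x$, where $\mu$ is the unique invariant measure. Intersecting this full-measure set with $\Omega$ still leaves an uncountable set (since $\mu$ is nonatomic, being supported on an infinite minimal set), and for any $x$ in the intersection the spectrum of $H_x$ is nonempty, has no eigenvalues, and has no absolutely continuous part, hence is purely singular continuous.

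The main obstacle is the quantitative palindrome Gordon estimate used in the second paragraph: one must show that the reflection symmetry of the transfer matrix across the palindrome center, together with the Cayley--Hamilton identity $M^2-(\operatorname{tr} M)M+I=0$ for unimodular $M$, really does exclude $\ell^2$ solutions \emph{uniformly in the energy}, rather than merely at a generic energy. This is the technical heart of \cite{HofKnillSimon1995}; everything else (minimality, aperiodicity, Kotani, extracting uncountably many suitable $x$'s) is comparatively soft.
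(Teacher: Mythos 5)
First, note that the paper does not actually prove this theorem: it is quoted from \cite{HofKnillSimon1995} as a cited result, so there is no internal proof to compare against. Measured against the argument in \cite{HofKnillSimon1995}, your overall architecture is right (reflection symmetry of palindromes to kill point spectrum, Kotani theory to kill absolutely continuous spectrum, hence purely singular continuous), but two of your steps have genuine gaps. The first is the construction of the uncountable set $\Omega$. If a point $x$ has palindromes of unbounded length all centered within a bounded distance of the origin, then by pigeonhole infinitely many share a single center $m$, so $x$ is \emph{globally} reflection-symmetric about $m$; such a point is determined by its one-sided half together with the finitely many choices of center, and the set of such points can easily be countable or even finite --- for a Sturmian subshift it is countable, since global mirror symmetry of a coding of an irrational rotation forces the intercept into a countable set. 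So your compactness-plus-diagonalization step does not produce uncountably many points. This is exactly why \cite{HofKnillSimon1995} introduce \emph{strongly palindromic} sequences: palindromes of lengths $l_i$ centered at positions $m_i\to\infty$ with $l_i$ growing super-exponentially in $|m_i|$ (so that $B^{|m_i|}/l_i\to 0$ for every $B>1$); the branching freedom in choosing the occurrence at each scale is what yields a Cantor set of such points. The price is that the no-eigenvalue argument must then be quantitative: one compares the at-most-exponential growth (in $|m_i|$) of the transfer matrices from the origin to the center against the smallness forced by the palindrome of length $l_i$, and the super-exponential growth of $l_i$ is needed to win. Your ``uniformly in the energy'' worry is not the real difficulty --- one fixes a putative eigenvalue $E$ of the fixed operator $H_x$ and argues at that single energy; the real difficulty is the competition between $|m_i|$ and $l_i$ just described, which your bounded-center setup was designed to avoid but cannot, for the reason above.

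The second gap is the final intersection. Kotani theory gives $\sigma_{ac}(H_x)=\emptyset$ only for $\mu$-almost every $x$, while the set of strongly palindromic points is typically a $\mu$-null set; intersecting a null uncountable set with a co-null set can perfectly well be empty, and nonatomicity of $\mu$ is irrelevant here. To close this one needs absence of absolutely continuous spectrum for \emph{every} point of the minimal aperiodic finitely-valued subshift, not merely almost every point (this is how the issue is resolved in the literature, via the constancy of the absolutely continuous spectrum on minimal sets), or else an argument that the specific uncountable family constructed lies inside the Kotani-good set. As written, your last paragraph asserts the conclusion without a valid justification. The remaining ingredients --- minimality and aperiodicity from strict ergodicity plus infiniteness, the reflection symmetry of solutions across a palindrome center, and the fact that a nonzero solution of a second-order difference equation cannot be small at two consecutive sites --- are correctly identified.
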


    \begin{exer}
    	Show that the fixed points of the Fibonacci substitution $0 \to 01, 1 \to 0$ and Morse substitution $
 0 \to 01, 1 \to 10$ contain arbitrarily long palindromes.
    \end{exer}

It is interesting that Sturmian sequences can be characterized not just by their complexity function (it is as small as possible for a sequence that is not eventually periodic: $p_u(n)=n+1$ for all $n$),  but also by their palindrome complexity functions. Droubay \cite{Droubay1995} showed that the fixed point of the Fibonacci substitution satisfies the conclusion of the following theorem of Droubay and Pirillo \cite{DroubayPirillo1999}. A generalization to two dimensions was given in \cite{BertheVuillon2001}.

\begin{thm}\textnormal{\cite{DroubayPirillo1999}}
	A one-sided sequence $u$ on a finite alphabet is Sturmian if and only if $\Pal_u(n)=1$ for all even $n \geq 2$ and $\Pal_u(n)=2$ for all odd $n$.
\end{thm}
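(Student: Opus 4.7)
The plan is to establish both directions using a fundamental inequality of Allouche, Baake, Cassaigne, and Damanik relating palindrome complexity to the first difference of the factor complexity: for every infinite non-eventually-periodic sequence $u$ over a finite alphabet,
\[
\Pal_u(n) + \Pal_u(n+1) \leq p_u(n+1) - p_u(n) + 2
\]
for all $n \geq 1$. The proof of this inequality is a pigeonhole-type argument on the different ways to complete $n$-palindromes to $(n+1)$- or $(n+2)$-palindromes in $\cl(u)$. Once it is in hand, both directions of the theorem become quantitative.

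For the forward direction, suppose $u$ is Sturmian, so $p_u(n) = n+1$ for all $n$ and $u$ is not eventually periodic. The displayed inequality then gives $\Pal_u(n) + \Pal_u(n+1) \leq 3$ for every $n \geq 1$. For the matching lower bound I would use the standard/characteristic structure from Proposition \ref{lss}: each prefix of the characteristic word $l$, when closed on the right to a palindrome, yields a palindrome of $\cl(u)$, producing palindromes of every length. A parity argument on the lengths of the iterated right palindromic closures then shows that at each even length exactly one palindrome is forced (the ``central word'' of even length), while at each odd length two palindromes are forced, one centered at each of the two letters of the alphabet. Combined with the upper bound, this gives equality in both parities.

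For the converse, assume $\Pal_u(n)=1$ for even $n \geq 2$ and $\Pal_u(n)=2$ for odd $n \geq 1$. Then $\Pal_u(1)=2$ forces $u$ to use exactly two letters, and the existence of palindromes of every length rules out eventual periodicity (an eventually periodic sequence admits palindromes of only finitely many lengths beyond a bounded threshold). Applying the inequality yields $p_u(n+1)-p_u(n)\geq 1$ for all $n$, and the remaining step is to upgrade this to equality. I would exploit the equality case of the inequality: the condition $\Pal_u(n)+\Pal_u(n+1)=3$, combined with the assumed exact counts, forces every length to have a unique bispecial factor which is a palindrome, and this uniqueness of special factors in turn forces $p_u(n+1)-p_u(n)=1$ for all $n$, giving $p_u(n)=n+1$ and hence $u$ Sturmian.

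The main obstacle I anticipate is the converse, specifically the upgrade from $p_u(n+1)-p_u(n)\geq 1$ to equality. The ABCD bound is not tight in general for aperiodic sequences, so one must rule out hypothetical two-letter aperiodic sequences with the prescribed palindromic profile but strictly larger complexity. The cleanest route seems to be through the equality-case analysis of the ABCD inequality, which identifies the obstruction to tightness with the existence of extra right-special factors incompatible with the palindromic constraints; carefully tracking the bispecial factor of each length and its forced palindromic extensions on both sides should close this gap.
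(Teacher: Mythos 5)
Your argument has genuine gaps, concentrated in the converse direction. (For context: the paper states this result only with a citation to \cite{DroubayPirillo1999} and gives no proof, so your attempt must stand on its own.) First, the inequality you lean on, $\Pal_u(n)+\Pal_u(n+1)\leq p_u(n+1)-p_u(n)+2$, is the one this paper attributes to \cite{BalaziMasakovaPelantova2007} (the Allouche--Baake--Cassaigne--Damanik bound is the much weaker $\Pal_u(n)<\tfrac{16}{n}\,p_u(n+\floor{n/4})$), and it is proved under the hypotheses that $u$ is syndetically recurrent and that $\cl(u)$ is closed under reversal. Those hypotheses are available in the forward direction, where $u$ is Sturmian, but in the converse you know neither a priori, so the inequality cannot be invoked there without first establishing them. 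Second, your stated reason for aperiodicity in the converse is false: an eventually periodic sequence can contain palindromes of infinitely many lengths --- $(01)^\infty$ has two palindromic factors of every odd length. What actually rules out eventual periodicity is the simultaneous demand of one even-length and two odd-length palindromes of every length, which is incompatible with the structure of symmetry axes of a periodic word (two axes at distance $d$ force period $2d$, so per period the axes are either two of the same parity type or one of each, never one even-type and two odd-type); this must be argued, not asserted.

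The deepest gap is the one you flag but do not close: passing from $p_u(n+1)-p_u(n)\geq 1$ (which is automatic for any aperiodic sequence by Morse--Hedlund and needs no inequality) to $p_u(n+1)-p_u(n)=1$. The inequality is one-directional: knowing $\Pal_u(n)+\Pal_u(n+1)=3$ forces the right-hand side to be \emph{at least} $3$, not equal to $3$, so there is no ``equality case'' to exploit. You would need a reverse inequality bounding $p_u(n+1)-p_u(n)$ above by palindromic data; in modern language, equality for all $n$ is equivalent to richness of $u$, and richness does not follow from the prescribed palindrome counts without further argument. As written, the converse assumes what it must prove. The forward direction is in better shape: the upper bound $\Pal_u(n)+\Pal_u(n+1)\leq 3$ is correct once syndetic recurrence and reversal-closure of Sturmian languages are cited, and the lower bound can be finished by computing $\Pal_u(1)=2$ and $\Pal_u(2)=1$ (balance forbids both $00$ and $11$, aperiodicity forces one of them) and propagating the equality $\Pal_u(n)+\Pal_u(n+1)=3$, which does hold for Sturmian sequences because they are rich --- but that, too, is a fact you must prove or cite rather than extract from the inequality alone.
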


Damanik and Zare \cite{DamanikZare2000} studied the palindorome complexity of fixed points of primitive substitutions. Recall that the orbit closure of any such sequence is strictly ergodic, so that every block $B$ has a uniform frequency of occurrence $\Freq_u(B)$ in $u$, which equals the measure of the cylinder set consisting of all sequences that have $B$ occurring at the origin.

\begin{thm}\textnormal{\cite{DamanikZare2000}}
	Let $u$ be a one-sided sequence that is a fixed point of a primitive substitution on a finite alphabet. Then
	\begin{enumerate}
		\item  $\Pal_u(n)$ is bounded;
		\item There are constants $c_1$ and $c_2$ such that if $B$ is a palindrome of length $n$ in $u$, then
		\be
		\frac{c_1}{n} \leq \Freq_u(B) \leq \frac{c_2}{n} \text{ for all } n.
		\en
			\end{enumerate}
\end{thm}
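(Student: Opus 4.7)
The plan is to treat (2) first as a consequence of general structural properties of primitive substitution subshifts, and then to tackle (1), which is the main content of the theorem.

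For (2), I would invoke the fact that the subshift $(\overline{\mathscr O(u)},\sigma)$ generated by a primitive substitution $\theta$ is \emph{linearly recurrent}: there is a constant $R=R(\theta)$ such that every factor of length $n$ in $\cl(u)$ occurs inside every factor of length $Rn$. Linear recurrence gives immediately $\Freq_u(w) \geq 1/(Rn)$ for every factor $w$ of length $n$, yielding the lower bound $c_1/n$. For the upper bound, in a linearly recurrent subshift every return word of a factor $w$ of length $n$ has length at least $\alpha n$ for some $\alpha = \alpha(R) > 0$, whence $\Freq_u(w) \leq 1/(\alpha n)$. Applying these bounds to palindromes specifically yields (2), with no palindrome-specific argument required.

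For (1), the principal tool is the recognizability theorem of Moss\'e for primitive aperiodic substitutions: there is a constant $L=L(\theta)$ such that every factor $w \in \cl(u)$ with $|w| \geq L$ has a unique decomposition
\be
w = s \, \theta(v) \, p,
\en
where $v \in \cl(u)$, $s$ is a proper suffix of some $\theta(a)$, and $p$ is a proper prefix of some $\theta(b)$. Applied to a palindrome $B$ of length $n \geq L$, the decomposition $B = s_B \, \theta(v_B) \, p_B$ compared with the unique decomposition of the reversal $B'=B$ forces $s_B$ to be the reversal of $p_B$ and $\theta(v_B)$ itself to be a palindrome, which in turn constrains $v_B$ to be either a palindrome or a ``palindrome up to an involution'' $\iota$ on $\ca$ (where $\iota(a)$ is determined by $\theta(a)' = \theta(\iota(a))$, when such $\iota$ exists; otherwise $v_B$ must be a literal palindrome).

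Iterating this descent, each palindrome of length $n$ in $u$ produces a chain of palindromes (possibly alternating with $\iota$-palindromes) of lengths $n, n/\lambda, n/\lambda^2, \dots$, where $\lambda>1$ is controlled by $\min_a |\theta(a)|$; after $O(\log n)$ steps the length drops below $L$, leaving only the finite collection of seed palindromes in $\cl(u)$ of length $<L$. The crucial and most delicate step---the main obstacle---is to prove that this descent is essentially injective: at each stage the boundary pieces $(s_B, p_B)$ must be \emph{forced} by the palindromic symmetry, rather than representing independent choices from the finite lists of suffixes and prefixes supplied by recognizability. This rigidity reduces to a finite case analysis: for each admissible center configuration of $B$ (the letter or pair of letters at its middle) and each choice of $v_B$, one checks that at most a bounded number of compatible $(s_B, p_B)$ can occur, controlled by how the center of $B$ aligns within $\theta(v_B)$. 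Granting this, $\Pal_u(n)$ is bounded above by the finite number of seed palindromes times the bounded number of boundary configurations, uniformly in $n$, proving (1).
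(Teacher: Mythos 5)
First, a remark: the paper states this result as a citation to Damanik--Zare and gives no proof of its own, so there is nothing internal to compare against; what follows assesses your sketch on its own terms. Your treatment of (2) is essentially right: linear recurrence of primitive substitution subshifts, together with the lower bound on return-word lengths, gives $\Freq_u(B)\asymp 1/n$ for \emph{every} factor of length $n$, palindrome or not. (Note that the upper bound $c_2/n$ fails for a periodic fixed point, so aperiodicity is implicitly assumed throughout; your argument assumes it too.)

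The gap is in (1), and it sits exactly where you flag ``the main obstacle.'' The claim that the Moss\'e decomposition $B=s\,\theta(v)\,p$ of a palindrome $B$, compared with the decomposition of $B'=B$, forces $s=p'$ and forces $\theta(v)$ to be a palindrome is unjustified and in general false: recognizability determines how an occurrence of $B$ sits over the cutting bars of $u=\theta(u)$, but reversal does not commute with $\theta$, so the reversed word carries a decomposition with respect to the \emph{reversed} substitution $a\mapsto\theta(a)'$, not with respect to $\theta$, and there is no reason the cutting pattern of $B$ should be symmetric about its center. Without that, the descent does not produce a palindrome (or $\iota$-palindrome) at the next level and the induction never starts. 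Moreover, even granting a descent in which each level is only \emph{boundedly}-to-one rather than injective, the final accounting is wrong: the chain has $O(\log n)$ levels, so $M$ choices per level yields $M^{O(\log n)}=n^{O(1)}$, a polynomial bound rather than a constant; you would need exact forcing at every level, which is precisely what is not proved. The standard route avoids all of this: (i) by linear recurrence all palindromes of length $n$ occur in a single window of length $Cn$; (ii) a position can be the center of at most one palindrome of length $n$, so it suffices to bound the number of centers; (iii) two palindromes of length $n$ whose centers are at distance $d\le n/2$ overlap so that the composition of the two reflections forces a word of length at least $n$ with period $2d$, so many centers crowded into a subwindow of length $n/4$ would produce a high power of a short word, contradicting the boundedness of powers in an aperiodic linearly recurrent sequence; hence centers are at least $\delta n$ apart and $\Pal_u(n)\le C/\delta$. (When $\cl(u)$ is not closed under reversal, $\Pal_u(n)=0$ for all large $n$ and there is nothing to prove.)
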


The palindrome complexity function is studied for many examples in \cite{AlloucheBaakeCassaigneDamanik2003}, where it is also proved that if a sequence $u$ has linear complexity function (there is a constant $C$ such that $p_u(n)\leq Cn$ for all $n$), then $\Pal_u$ is bounded.
\begin{thm}\textnormal{\cite{AlloucheBaakeCassaigneDamanik2003}}
	Let $u$ be a one-sided sequence on a finite alphabet that is not eventually periodic. Then for all $n  \geq 1$,
	\be
	\Pal_u(n) < \frac{1}{n} 16 p_u\left( n+\floor{\frac{n}{4}}\right).
	\en
\end{thm}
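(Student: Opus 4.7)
The plan is a double-counting argument on pairs $(w,v)$ where $w$ is a palindromic factor of length $n$ in $u$ and $v\in\cl_m(u)$ is an $m$-block of $u$ containing $w$ as a factor; here $k=\lfloor n/4\rfloor$ and $m=n+k$. Writing
\[
N_w=|\{v\in\cl_m(u):w\text{ is a factor of }v\}|\quad\text{and}\quad M_v=|\{w\in\Pal_u(n):w\text{ is a factor of }v\}|,
\]
one has $\sum_w N_w=\sum_v M_v$. I would aim to establish the lower bound $\sum_w N_w\ge(k+1)\Pal_u(n)$ and the upper bound $\sum_v M_v\le 4p_u(m)$; these combine to give $\Pal_u(n)\le 4p_u(m)/(k+1)$, and since $4(k+1)=4\lfloor n/4\rfloor+4>n$ for every $n\ge 1$, the claimed strict inequality $\Pal_u(n)<(16/n)p_u(m)$ follows at once.

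For the lower bound, fix for each palindrome $w$ an occurrence $i_w\ge k$ of $w$ in $u$ and observe that the $k+1$ positional $m$-blocks of $u$ starting at positions $i_w-k,i_w-k+1,\ldots,i_w$ all contain $w$ as a factor. If two of these positional blocks coincide as words, then $u$ has period at most $k$ on an interval of length $m=n+k$; the non-eventually-periodic hypothesis prevents this behaviour from being pervasive, and after absorbing finitely many boundary exceptions into the strict inequality one recovers $N_w\ge k+1$ for the relevant palindromes. The heart of the proof is the combinatorial upper-bound lemma: any word $v$ of length $m=n+k$ contains at most $4$ distinct palindromic factors of length $n$. Suppose such palindromes occur at positions $p_1<\cdots<p_r$ in $[0,k]$. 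The two palindromic symmetries at $p_i$ and $p_j$ combine to force $v_b=v_{b+2(p_j-p_i)}$ on the interval $[p_i,p_j+n-1]$, of length $n+(p_j-p_i)$. Applying the Fine--Wilf theorem to overlapping periodic intervals coming from consecutive pairs (whose periods sum to at most $2k\le n/2$, well inside the Fine--Wilf regime) extracts a common period $P$ on a subword long enough to contain all starting positions $p_1,\ldots,p_r$. Distinct palindromes of length $n$ then correspond to distinct residues of $\{p_1,\ldots,p_r\}$ modulo $P$, and the inequalities $P\le 2(p_r-p_1)/(r-1)\le 2k/(r-1)$ together with $r\le P$ squeeze out $r\le 4$.

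The main obstacle is extracting this sharp constant $4$: a trivial count gives only $M_v\le k+1\sim n/4$, and whittling it down to a fixed $4$ independent of $n$ is exactly what produces the factor $16/n$ rather than a mere $p_u(m)$. The palindromic, rather than merely repetitive, structure at nearby positions is what powers the Fine--Wilf iteration, since each pair of palindromic factors contributes not just one period but a reflective symmetry that, when composed with another, yields a translation period. The hypothesis that $u$ is not eventually periodic enters crucially in the lower-bound step: without it the inequality can fail---for instance a constant sequence has $\Pal_u(n)=p_u(m)=1$, which violates $\Pal_u(n)<16p_u(m)/n$ as soon as $n>16$---so the hypothesis is genuinely needed to prevent the positional $m$-blocks surrounding an occurrence of $w$ from collapsing into a single word.
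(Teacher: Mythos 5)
Your double-counting architecture is reasonable and the arithmetic at the end is fine ($4(\lfloor n/4\rfloor+1)>n$, so the two claimed bounds would indeed yield the theorem), but both of the bounds it rests on have genuine gaps. The more serious one is the lower bound $N_w\ge k+1$. The $k+1$ sliding windows around a fixed occurrence of $w$ need not be distinct words, and your dismissal of this --- that aperiodicity of $u$ ``prevents this behaviour from being pervasive'' and that the exceptions are ``finitely many boundary exceptions'' --- is not correct. Aperiodicity is a global property and does not exclude long local periodicities: an aperiodic $u$ may contain $0^{100n}$ as a factor for every $n$, and then for the palindrome $w=0^{n}$ every occurrence can sit deep inside such a run, so that all $k+1$ windows coincide and $N_w$ collapses to something as small as $1$. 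This has nothing to do with the left boundary of $u$, it can happen for every $n$, and there can be on the order of $k$ such ``buried'' palindromes for a given $n$ (one or two for each primitive period $g\le k$ realized by a long run in $u$), so they cannot be absorbed into the strict inequality. Handling these palindromes requires a separate counting argument (roughly: each long $g$-periodic run contributes at most two palindromes of length $n$ but at least $g$ distinct factors of length $n+k$, plus the factors that witness where the periodicity breaks), and this is precisely where the real difficulty of the theorem and the loss in the constant $16$ live.

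The upper bound also has a gap, though of a different kind: the lemma you want (a word $v$ of length $n+\lfloor n/4\rfloor$ has a bounded number of distinct palindromic factors of length $n$) is true --- in fact with bound $2$ --- but your derivation of it is a non sequitur. From $P\le 2(p_r-p_1)/(r-1)\le 2k/(r-1)$ together with $r\le P$ you get only $r(r-1)\le 2k\le n/2$, i.e.\ $r=O(\sqrt{n})$, not $r\le 4$; fed back into the double count this yields $\Pal_u(n)=O(n^{-1/2})\,p_u(n+\lfloor n/4\rfloor)$, which is weaker than the statement. The missing ingredient is that once Fine--Wilf has forced the whole stretch $[p_1,p_r+n-1]$ to be periodic with least period $P\le 2k\le n/2$, the two reflection symmetries compose to a translation that must be a multiple of $P$, so the centers of the palindromes lie in at most two residue classes modulo $P$; equivalently, a bi-infinite word of least period $P$ has at most two distinct palindromic factors of any length $\ge 2P$. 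With that observation the constant bound follows, but as written your step from ``$r\le P$ and $P\le 2k/(r-1)$'' to ``$r\le 4$'' does not.
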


These results were generalized in \cite{BalaziMasakovaPelantova2007} as follows. Recall that for a sequence $u$, $\cl(u)$ denotes the family of all subblocks of $u$. A formal language $\cl$ is called {\em closed under reversal} if $\cl' \subset \cl$.
\begin{thm}\textnormal{\cite{BalaziMasakovaPelantova2007}}
Let $u$ be a syndetically recurrent sequence on a finite alphabet.
\begin{enumerate}
\item If $\cl(u)$ is not closed under reversal, then $u$ does not contain arbitrarily long palindromes: $\Pal_u(n)=0$ for all sufficiently large $n$.
\item If $\cl(u)$ is closed under reversal, then
\be\label{eq:palineq}
\Pal_u(n) + \Pal_u(n+1) \leq p_u(n+1) - p_u(n) + 2 \text{ for all } n.
\en
\item Suppose that $u$ is the natural coding of a ``nondegenerate" exchange of $r$ intervals, so that $p_u(n)=n(r-1)+1$ for all $n$. If $\cl(u)$ is closed under reversal, then equality holds in the preceding estimate, and in fact
\be\label{eq:paleq}
\Pal_u(n)=\begin{cases}
1 \text{ if } n \text{ is even}\\
r \text{ if } n \text{ is odd}.
\end{cases}
\en
\end{enumerate}
\end{thm}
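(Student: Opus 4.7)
For part (1), the plan is a short contradiction argument from syndetic recurrence. Assume $\cl(u)$ is not closed under reversal and choose $B\in\cl(u)$ with $B'\notin\cl(u)$. Syndetic recurrence supplies an $N$ such that every $N$-block occurring in $u$ contains $B$ as a subblock. If $P\in\cl_n(u)$ is a palindrome with $n\geq N+|B|$, then its $N$-prefix contains $B$ starting at some position $i\leq N$; palindromic symmetry $P_j=P_{n+1-j}$ then forces $B'$ to appear at position $n+2-i-|B|\geq 1$ inside $P$, so $B'\in\cl(u)$, contradicting the choice of $B$. Hence $\Pal_u(n)=0$ for all $n\geq N+|B|$.

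For part (2), denote by $\mathcal{P}_n$ the set of palindromes in $\cl_n(u)$. The plan is to build an injection from $\mathcal{P}_n\sqcup\mathcal{P}_{n+1}$, minus at most two palindromes, into a set of ``excess'' right extensions of cardinality $p_u(n+1)-p_u(n)$. Consider the Rauzy graph $G_n$ with vertex set $\cl_n(u)$ and edge set $\cl_{n+1}(u)$, with $v\in\cl_{n+1}(u)$ directed from its $n$-prefix to its $n$-suffix. Writing $r(w)$ for the out-degree of $w$, one has
\[
p_u(n+1)-p_u(n)=\sum_{w\in\cl_n(u)}(r(w)-1),
\]
so after distinguishing one outgoing edge per vertex, exactly $p_u(n+1)-p_u(n)$ ``excess'' outgoing edges remain. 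Reversal closure of $\cl(u)$ supplies an involution $w\mapsto w'$ on vertices and $v\mapsto v'$ on edges that reverses orientation, and its fixed sets are precisely $\mathcal{P}_n$ and $\mathcal{P}_{n+1}$.

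The injection is constructed as follows. Each $(n+1)$-palindrome $v=v_1\cdots v_{n+1}$ is sent to the pair $(v_{[1,n]},v_{n+1})$, the right-extension datum of its $n$-prefix; combining $v_i=v_{n+2-i}$ with $v_i=v_{n+1-i}$ (the condition for $v_{[1,n]}$ to be a palindrome) forces all letters of $v$ equal, so unless $v$ is constant, $v_{[1,n]}$ is \emph{not} a palindrome. Each $n$-palindrome $w$ with $r(w)\geq 2$ is sent to $(w,b)$ for a suitably chosen non-distinguished right extension $b$; since these images have palindromic first coordinate, they are automatically disjoint from the images of the $\mathcal{P}_{n+1}$ map. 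The technical heart of the argument, and the main obstacle, is arranging the distinguished outgoing edges so that both families land among the excess edges. The failure cases are (a) an $n$-palindrome $w$ with $r(w)=1$ (no excess edge at $w$ at all) and (b) an $(n+1)$-palindrome $v$ whose prefix $v_{[1,n]}$ has a unique right extension, forcing its image to be the distinguished one. I expect the crucial step to be a symmetry argument, exploiting the orientation-reversing involution on $G_n$, showing that at most two palindromes fall into these exceptional cases, which is exactly what the $+2$ slack absorbs.

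For part (3), the input $p_u(n)=(r-1)n+1$ reduces part (2) to $\Pal_u(n)+\Pal_u(n+1)\leq r+1$, so the task is to establish equality and the stated parity split. The plan is to invoke the well-understood combinatorial structure of natural codings of nondegenerate $r$-IETs: the Rauzy graph has a single bispecial factor of each length, reversal closure forces the underlying IET to be symmetric, and the resulting subshift is ``rich'' in palindromes. One then analyzes the fixed points of the reversal involution length by length: even-length palindromes correspond to the unique configuration invariant under the symmetric exchange of the two halves, giving $\Pal_u(n)=1$ for even $n$, while odd-length palindromes correspond to the $r$ possible central letters compatible with the symmetric dynamics, giving $\Pal_u(n)=r$ for odd $n$. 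A direct count then yields $\Pal_u(n)+\Pal_u(n+1)=r+1$, matching equality in part (2).
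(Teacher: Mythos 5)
The paper states this result only as a citation to \cite{BalaziMasakovaPelantova2007} and contains no proof of its own, so there is nothing internal to compare against; I am judging your argument on its merits. Part (1) is correct and complete: syndetic recurrence forces every long factor (in particular every long palindrome) to contain a fixed $B$, and the mirror symmetry of a palindrome then produces an occurrence of $B'$, contradicting $B'\notin\cl(u)$. That is exactly the right argument.

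Part (2) has a genuine gap precisely at what you yourself call the ``technical heart.'' Your entire bound rests on the claim that at most two palindromes are exceptional (an $n$-palindrome whose vertex has out-degree $1$, or an $(n+1)$-palindrome whose $n$-prefix has out-degree $1$), and you only say you \emph{expect} a symmetry argument to give this. But nothing local forces it: the reversal involution sends right extensions of $w$ to left extensions of $w'$, so it gives $d^+(w)=d^-(w)$ for a palindromic vertex $w$, and there is no reason a priori that palindromic vertices must be right-special; a configuration with, say, four non-right-special palindromic vertices of length $n$ would give four exceptions and only the bound $p_u(n+1)-p_u(n)+4$. The constant $2$ does not come from ``two bad palindromes'' but from a global connectivity constraint, and any proof must use syndetic recurrence here (your part (2) never invokes it). The efficient way to close the gap: the Rauzy graph $\Gamma_n$ is connected because $u$ is syndetically recurrent; the reversal involution $R$ acts on it reversing orientation, with fixed vertices the $n$-palindromes and flipped edges the $(n+1)$-palindromes. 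Subdividing each flipped edge at its midpoint and passing to the quotient graph $\Gamma_n/R$, a direct count gives
\begin{equation*}
\chi(\Gamma_n/R)=\tfrac{1}{2}\bigl(\Pal_u(n)+\Pal_u(n+1)-(p_u(n+1)-p_u(n))\bigr),
\end{equation*}
and connectedness of the quotient forces $\chi(\Gamma_n/R)\le 1$, which is exactly the inequality. Your injection scheme cannot reach this without importing that global information. Part (3) as written is a list of assertions to be proved (uniqueness of bispecial factors, symmetry of the IET, the parity count of central letters), not a proof; each requires the detailed combinatorics of interval-exchange codings.
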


\begin{exer}
	Check that Equation \ref{eq:paleq} implies that equality holds in Equation \ref{eq:palineq}
\end{exer}

The paper \cite{GlenJustinWidmerZamboni2009} studies blocks and sequences that contain a lot of palindromes. The following observation was made in \cite{DroubayJustinPirillo2001}.

\begin{exer}
	Let $B$ be a block of length $|B|=n$. Show that the number of different palindromes that are subblocks of $B$ (including the empty block) is at most $n+1$. ({\em Hint}: Define a block $B$ to have {\em Property J} if there is a suffix of $B$ that is a palindrome and appears only once in $B$. For $x \in \ca$, what are the possibilities for $\Pal_{Bx}$, depending on whether $B$ has property $J$ or not? Then use induction on the lengths of prefixes of $B$ to show that $\Pal_B$ is the cardinality of the set of prefixes of $B$ that have property $J$.)
\end{exer}

In view of the preceding observation, the authors of \cite{GlenJustinWidmerZamboni2009} define a finite block $B$ to be {\em rich} if it contains the maximum possible number, $|B|+1$, of palindromes as subblocks. An infinite sequence is defined to be rich if every one of its subblocks is rich. (See also \cite{AmbrozMasakovePelangtovaFrougny2006} and \cite{BrlekHamelNivatReutenauer2004}.)

\begin{exer}
		 Prove that a block $B$ is rich if and only if each of its prefixes has a suffix that is a palindrome and appears exactly once in $B$.
		\end{exer}
		\begin{exer}
		 If $B$ is rich then each of its subblocks is also rich.
\end{exer}

\begin{thm}\textnormal{\cite{GlenJustinWidmerZamboni2009}} A finite block or infinite sequence $w$ is rich if and only if for every subblock $B$ of $w$, if $B$ contains exactly two occurrences of a palindrome as a prefix and a suffix, then $B$ is itself a palindrome.
\end{thm}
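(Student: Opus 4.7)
The plan is to use the characterization of richness suggested by the preceding exercises: a finite block $C$ is rich if and only if for every prefix $C'$ of $C$, the longest palindromic suffix of $C'$ is unioccurrent in $C'$. Equivalently (taking $C' = C$ and applying to every subblock), a block or sequence $w$ is rich if and only if the longest palindromic suffix of every subblock $B$ of $w$ occurs exactly once in $B$. I would phrase the rest of the argument entirely in terms of longest palindromic suffixes.

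\emph{Forward direction.} Assume $w$ is rich and $B$ is a subblock containing a palindrome $P$ exactly twice, as a prefix and as a suffix. Let $Q$ be the longest palindromic suffix of $B$. Then $|Q| \geq |P|$, and if $|Q| = |P|$ the suffix $Q$ coincides with the suffix $P$, contradicting richness (which says $Q$ is unioccurrent while $P$ occurs twice). So $|Q| > |P|$. Now $P$ is a suffix of $Q$, and because $Q$ is a palindrome, $P = P'$ is also a prefix of $Q$. This places an occurrence of $P$ in $B$ starting at position $|B| - |Q|$. Since $P$ has only two occurrences in $B$ (at positions $0$ and $|B| - |P|$) and $0 < |B| - |P|$, the only way this new occurrence can coincide with one of them is $|B| - |Q| = 0$, i.e. $Q = B$. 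Hence $B$ is a palindrome.

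\emph{Reverse direction.} Assume $w$ satisfies the condition, and let $B$ be any subblock of $w$. Let $Q$ be the longest palindromic suffix of $B$; I claim $Q$ is unioccurrent in $B$. Suppose for contradiction that $Q$ has at least two occurrences in $B$, and pick the occurrence ending at the largest position $j$ strictly less than $|B|$. Set $B' = B[j - |Q| + 1 \,..\, |B|]$. Then $B'$ is a subblock of $w$ which has $Q$ as a prefix (ending at $j$) and as a suffix (ending at $|B|$); any intermediate occurrence of $Q$ in $B'$ would correspond to an occurrence in $B$ ending strictly between $j$ and $|B|$, contradicting the maximality of $j$. So $Q$ occurs exactly twice in $B'$, and the hypothesis forces $B'$ to be a palindrome. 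But $B'$ is a suffix of $B$ with $|B'| > |Q|$, contradicting the choice of $Q$ as the longest palindromic suffix of $B$. Hence $Q$ is unioccurrent in $B$, and this holds for every subblock $B$ of $w$, so $w$ is rich.

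\emph{Main obstacle and remarks.} The nontrivial step is the reverse direction, and the whole weight of the argument rests on choosing $B'$ by taking the latest non-suffix occurrence of $Q$: this is what guarantees ``exactly two'' occurrences, which is exactly what the hypothesis requires. The forward direction is really just the observation that a palindromic suffix longer than $P$ would have to carry $P$ as a prefix (because palindromes are closed under reversal), producing a third occurrence of $P$ unless the suffix exhausts all of $B$. Once one sets up the ``longest palindromic suffix is unioccurrent'' equivalence, both directions are short; the delicate point is simply to keep careful track of positions and lengths so that the two occurrences of $Q$ (or of $P$) land where they must.
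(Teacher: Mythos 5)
Your proof is correct, and it proceeds exactly along the route the paper sets up in the two exercises immediately preceding the theorem (richness $\Leftrightarrow$ every prefix has a unioccurrent palindromic suffix, together with heredity of richness to subblocks); the paper itself states the theorem without proof, citing \cite{GlenJustinWidmerZamboni2009}, and your argument is the standard one from that source. Both directions check out, including the key step of selecting the latest non-suffix occurrence of $Q$ to force ``exactly two'' occurrences in $B'$.
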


In \cite{DroubayJustinPirillo2001} it is proved that every episturmian sequence is rich.

\begin{exer}
	Let $B$ be a block of length $n=|B|$ on an alphabet $\ca$ of cardinality $|\ca|=r$. What is the maximum possible cardinality of the family $\cl_B$ of all (different) subblocks of $B$?	
	\end{exer}
	
	\begin{exer}
		What is the minimum length $L(k,r)$ of a block on an alphabet $\ca$ of $r$ symbols that contains all $k$-blocks on $\ca$? What about configurations on $r$ symbols in $\Bz^2$?
	\end{exer}
	
	\begin{exer}
	How many rich words of length $5$ are there on an alphabet of $3$ letters? How many are there of length $n$ on an alphabet of $r$ letters?
	\end{exer}
	
	\begin{exer}
		Formulate a definition of palindrome in $\Bz^2$. Does the support of a ``palindrome" in $\Bz^2$ have to be a rectangle? How about a definition in $\Bz^d$?
	\end{exer}

\section{Nonrepetitive complexity and Eulerian entropy}
T. K. S. Moothathu \cite{moothathu2012eulerian} defined the {\em nonrepetitive complexity function} $P_u^N$ of a sequence $u$ on a finite alphabet $\ca$ in terms of how long an initial block of $u$ could be before it contained a repeat of a subblock of length $n$:
\be
P_u^N(n) = \max \{m \in \Bn: u_i^{i+n-1} \neq u_j^{j+n-1} \text { for all } 0 \leq i < j \leq m-1\}.
\en
The exponential growth rate may be called the {\em Eulerian entropy} of $u$, because there is a connection with Eulerian circuits in directed graphs, and denoted by $h_E(u)$:
\be
h_E(u)=\limsup_{n \to \infty}\frac{\log P_u^N(n)}{n}.
\en

Referring to Bowen's definition of topological entropy in terms of separated sets (\ref{def:bowenent}), the concept of Eulerian entropy extends to points in arbitrary topological dynamical systems $(X,T)$ when $X$ is a metric space. For a point $x \in X$, denote by $\beta(x,n,\epsilon)$ the maximum $m \in \Bn$ for which the initial orbit segment of $x$ of length $m$ is an $(n,\epsilon)$-separated set, so that
\be
\beta(x,n,\epsilon) = \max \{m \in \Bn : \{x,Tx,\dots ,T^{m-1}x\} \text{ is } (n,\epsilon)\text{-separated}\},
\en
define the {\em Eulerian entropy at $x$} to be
\be
h_E(X,T,x) = \lim_{\epsilon \to 0} \limsup_{n \to \infty} \frac{\log \beta(x,n,\epsilon)}{n},
\en
and define the {\em Eulerian entropy of the system} to be
\be
h_E(X,T)= \sup_{x \in X} h_E(X,T,x).
\en
The exact relationships among $h_E(X,T,x), h_E(X,T)$, and $\htop(X,T)$ are not clear, but here are some results from \cite{moothathu2012eulerian}.
\begin{enumerate}
\item $h_E(X,T)$ is an invariant of topological conjugacy.
\item Let $(X,\sigma)$ be a one-step mixing shift of finite type on a finite alphabet $\ca$ which has a {\em safe symbol}, a symbol $s \in \ca$ such that for all $a \in \ca$ both blocks $as,sa \in \cl(X)$. Then there is a sequence $x \in X$ such that $h_E(X,\sigma,x)=\htop (X,\sigma)$.
\item If $(X,\sigma)$ is a mixing shift of finite type such that for all large enough $n$ the de Bruijn graph of $(X,\sigma)$  is irreducible, then $h_E(X,\sigma,x)=\htop(X,\sigma)$ for a residual set of $x \in X$. 
(Recall that the de Bruijn graph of $(X,\sigma)$ has for its vertices the set of $(n-1)$-blocks, and there is an edge from $B$ to $C$ if and only if there is an $n$-block that has prefix $B$ and suffix $C$.)
\item There is a (nontransitive) homeomorphism $T$ on the Cantor set $X$ such that $h_E(X,T,x) < \htop (X,T)$ for all $x \in X$. 
\end{enumerate}
\begin{exer}
	Construct some de Bruijn graphs for several mixing shifts of finite type. How can one tell whether they will be irreducible for all large enough $n$?
\end{exer}
\begin{exer}
	Determine $P_u^N(n)$ and $h_E(u)$ to the extent possible when $u$ is the fixed point of the Fibonacci substitution, the Prouhet-Thue-Morse sequence, the Champernowne sequence, etc.
\end{exer}

\section{Mean topological dimension}

The complexity function $p_X$ and most of its variants are especially useful in the case of systems that have topological entropy zero. In a study of holomorphic and harmonic maps, M. Gromov \cite{Gromov1999} proposed an invariant, which he called {\em mean topological dimension}, that is especially useful in the study of systems with infinite entropy, such as the shift on $[0,1]^\Bz$. 
This invariant was studied in \cite{LindenstraussWeiss2000} (based on Lindenstrauss' Ph.D. dissertation) and used there and in subsequent papers \cite{Lindenstrauss1999,LindenstraussTsukamoto2014,GutmanLT2015} to settle questions about the possibility of embedding a given topological dynamical system $(X,T)$ in a standard target such as $(([0,1])^d)^\Bz,\sigma)$.

We deal with a compact metric space $X$ and finite open covers $\cu,\cv$ of $X$. Recall that $\cv$ {\em refines} $\cu$, written $\cv \succ \cu$, if every member of $\cv$ is contained in some member of $\cu$, and the {\em join} of $\cu$ and $\cv$ is $\cu \vee \cv= \{U \cap V: U \in \cu, V \in \cv\}$. 

The {\em dimension} of a finite open cover $\cu$ is defined to be
\be
D(\cu) = \min_{\cv \succ \cu}\left( \max_{x \in X} \sum_{V \in \cv}1_V(x)\right)-1,
\en
the smallest $n$ such that for every refinement $\cv$ of $\cu$ no point of $X$ is in more than $n+1$ elements of $\cv$.
The {\em cover dimension} of the compact metric space $X$ is $D(X) = \sup_\cu D(\cu)$. 
One can show that $D(\cu)$ is subadditive, $D(\cu \vee \cv) \leq D(\cu) + D(\cv)$, and therefore the limit
\be
D(\cu,T)= \lim_{n \to \infty} \frac{1}{n} D\left(\cu_0^{n-1}\right)=\lim_{n \to \infty}\frac{1}{n}D(\cu \vee T^{-1}\cu \vee \dots \vee T^{-n+1}\cu)
\en
exists. The {\em mean topological dimension} of $(X,T)$ is defined to be
\be
\mdim(X,T) = \sup_\cu D(\cu,T)
\en
and is an invariant under topological conjugacy.

Here are a few results from \cite{LindenstraussWeiss2000,Lindenstrauss1999}.
\begin{enumerate}
	\item If $D(X)<\infty$ or $\htop(X,T)<\infty$, then $\mdim(X,T)=0$.
	\item $\mdim([0,1]^\Bz,\sigma) =D([0,1])=1$.
	\item If $Y \subset X$ is closed and $T$-invariant ($TY=Y$), then $\mdim(Y,T)\leq \mdim(X,T)$.
	\item It was an open question of J. Auslander whether every minimal topological dynamical system $(X,T)$ can be embedded in $([0,1]^\Bz,\sigma)$ (in analogy with Beboutov's theorem on the embedding of $\Br$ actions in $C(\Br)$ with the translation action---see \cite{Kakutani1968}). Because of the preceding result the answer is negative, since for any $r\in[0,\infty]$ there is a minimal system $(X,T)$ with $\mdim(X,T)=r$.
	\item If $d \in \Bn$ and $(X,T)$ is a topological dynamical system with $\mdim(X,T)<d/36$, then $(X,T)$ embeds in $(([0,1]^d)^\Bz,\sigma)$.
\end{enumerate}

\begin{exer}
	Prove that $D([0,1])=1$.
\end{exer}

\section{Amorphic complexity via asymptotic separation numbers}\label{sec:amorphic} 
To measure the complexity of zero-entropy systems, in \cite{FuhrmannGrogerJager2015} the authors propose the concepts of {\em separation numbers} and {\em amorphic complexity}. Let $(X,T)$ be a topological dynamical system, with $X$ a compact metric space.  For $\delta > 0$, $n \in \Bn$, and points $x,y  \in X$, 
let
\be
S_n(x,y,\delta)=\card \{k=0,1,\dots,n-1: d(T^kx,T^ky)>\delta\}
\en
be the number of times that the first $n$ iterates of $x$ and $y$ are separated by a distance greater than $\delta$; and, for $\nu \in (0,1]$, define $x$ and $y$ to be {\em $(\delta,\nu)$-separated} if 
\be
\limsup_{n \to \infty} \frac{S_n(x,y,\delta)}{n} \geq \nu.
\en
A subset $E \subset X$ is called {\em $(\delta,\nu)$-separated} if every pair $x,y \in E$ with $x \neq y$ is $(\delta,\nu)$-separated. The maximum cardinality of a $(\delta,\nu)$-separated subset of $X$ is called the {\em (asymptotic) separation number for distance $\delta$ and frequency $\nu$} and is denoted by $\Sep(\delta,\nu)$. (See also Section \ref{sec:slow}.)

\begin{exer}
Prove that if $(X,T)$ is equicontinuous, then $\Sep(\delta,\nu)$ is uniformly bounded in $\delta$ and $\nu$.
\end{exer}

For a fixed $\delta > 0$, the upper and lower scaling rates of the separation numbers as the frequency tends to $0$ are defined to be
\be
A^+(\delta)=\limsup_{\nu \to 0}\frac{\log\Sep(\delta,\nu)}{-\log\nu} \text{ and } A^-(\delta)=\liminf_{\nu \to 0}\frac{\log\Sep(\delta,\nu)}{-\log\nu}.
\en
The polynomial scaling rate has been chosen as possibly the most interesting one to apply to zero-entropy examples. And notice also that the rate is in terms of shrinking frequency instead of increasingly long time interval as usual, since the time asymptotics were already included before.

The {\em upper and lower amorhic complexities} are defined to be
\be
A^+(X,T) = \sup_{\delta >0} A^+(\delta) \text{ and } A^-(X,T)= \sup_{\delta > 0} A^-(\delta).
\en
If $A^+(X,T)=A^-(X,T)$, then their common value is called the {\em amorphic complexity} of $(X,T)$ and is denoted by $A(X,T)$.

Here are some of the results in \cite{FuhrmannGrogerJager2015}.
\begin{enumerate}
\item If $(X,T)$ has positive topological entropy or is weakly mixing for some invariant measure with nontrivial support, then $\Sep(\delta,\nu)$ is infinite for some values of $\delta$, $\nu$.
\item Recall that a minimal system $(X,T)$ is called {\em almost automorphic} if it is an almost one-to-one extension (meaning that there exists a singleton fiber) of a minimal equicontinuous system \cite{Veech1970}. If $(X,T)$ is almost automorphic but not equicontinuous, then  $\Sep(\delta,\nu)$ is not uniformly bounded in $\delta$ and $\nu$.
\item If $(X,T)$ is an extension of an equicontinuous system $(Y,S)$ with respect to a factor map $\pi: (X,T) \to (Y,S)$ such that $\mu \{y \in Y: \card\pi^{-1}\{y\}>1\} =0$ for every invariant measure $\mu$ on $Y$, then  $\Sep(\delta,\nu)<\infty$ for all $\delta,\nu$. (Regular Toeplitz systems provide such examples.)
\item If $(X,T)$ is a Sturmian subshift, then $A(X,T)=1$.
\item The set of values of $A(X,T)$ for regular Toeplitz systems $(X,T)$ is dense in $[1,\infty)$.
\end{enumerate}

\section{Inconstancy}
Inspired by nineteenth-century ideas of Crofton and Cauchy on the fluctuations of curves and of M. Mend\`{e}s France \cite{MendesFrance1981/1982} on the entropy or temperature of a curve, Allouche and Maillard-Teyssier \cite{AlloucheMaillard-Teyssier2011} define the {\em inconstancy} of a plane curve to be twice its length divided by the perimeter of its convex hull. Mend\`{e}s-France had suggested the logarithm of this quantity as the {\em entropy} of the curve. (According to Fechner's law in psychophysics, the magnitude of a sensation is proportional to the logarithm of the intensity of the stimulus.)
The inconstancy $\ci (u)$ of a sequence $u$ of real numbers is defined to be the inconstancy of the piecewise linear curve obtained by connecting the points on its graph. 
See \cite{AlloucheMaillard-Teyssier2011} for the references, interesting examples, and discussion of possible applications of inconstancy.  

For sequences $u_0u_1\dots$ on the alphabet of two symbols $0$ and $h>0$ for which the frequencies $\mu[ab]$ exist for all 2-blocks $ab$, the authors provide the following results.
\begin{enumerate}
	\item $\ci(u) = 1+ (\sqrt{h^2+1}-1)(\mu[0h]+\mu[h0])$.
	\item If $d \in \Bn$, the inconstancy of the periodic sequence $(0^d1)^\infty$ is
	\be
	\ci((0^d1)^\infty)= \frac{d-1+2\sqrt{2}}{d+1}.
	\en
	This is largest ($\sqrt{2}$) for $d=1$ and tends to $1$ as $d \to \infty$ and the curve becomes flat.
	\item\label{item:randomseq} For a random binary sequence, the inconstancy is $(1+\sqrt{2})/2=1.207\dots$.
	\item\label{item:PTM} For the Prouhet-Thue-Morse sequence, the inconstancy is $(1+2\sqrt{2})/3=1.276\dots$. This follows from the fact that $\mu[00]=\mu[11]=1/6, \mu[01]=\mu[10]=1/3$. The inconstancy of this sequence is relatively high because it does not contain long strings (length more than two) of $0$ or $1$.
  \item If $u$ is a Sturmian sequence for which the frequency of $1$ is $\alpha$ and which does not contain the block $11$, then
  \be
  \ci(u)=1 + 2(\sqrt{2}-1) \alpha .
  \en
\end{enumerate}

  \begin{exer}
  	Show that no Sturmian sequence contains both blocks $00$ and $11$.
  \end{exer}

\begin{exer}
	Prove statements (\ref{item:randomseq}) and (\ref{item:PTM}) above.
\end{exer}

\section{Measure-theoretic complexity}\label{sec:mtcomplexity}

Ferenczi \cite{Ferenczi1997} has proposed upper and lower measure-theoretic complexities, $P_T^+$ and $P_T^-$, as asymptotic growth rates of the number of $\overline{d}\text{-}n\text{-}\epsilon$ balls needed to cover $(1-\epsilon)$ of the space. A similar idea is used in \cite{KatokThouvenot1997}
 for the construction of some $\Bz^2$ actions and in \cite{Ratner1981}, with the $\overline{f}$ rather than $\overline{d}$ metric, to show that different Cartesian powers of the horocycle flow are not Kakutani equivalent.

Let $T:X \to X$ be an ergodic measure-preserving transformation on a probability space $(X,\cb,\mu)$. For a finite (or sometimes countable) measuable partition of $X$ and a point $x \in X$, denote by $\alpha(x)$ the cell of $\alpha$ to which $x$ belongs. For integers $i \leq j$ we denote by $\alpha_i^j$ the partition $T^{-i}\alpha \vee \dots T^{-j}\alpha$. The $\overline{d}$, or Hamming, distance between two blocks $B=b_1 \dots b_n$ and $C=c_1 \dots c_n$ is defined to be
\be
\overline{d}(B,C)= \frac{1}{n} \card\{i=1,\dots,n:b_i \neq c_i\} .
\en
For $x \in X, n \in \Bn$, and $\epsilon >0$, the {\em $\overline{d}\text{-}n\text{-}\epsilon$ ball centered at $x$ is defined to be
	\be
	B(\alpha,x,n,\epsilon)=\{y \in X: \overline{d}(\alpha_0^{n-1}(x),\alpha_0^{n-1}(y))<\epsilon\} .
	\en}
Define
\be
\begin{gathered}
K(\alpha,n,\epsilon,T)= \min\{K: \text{there are } x_1,\dots,x_K \in X
\text{ such that } \\ 
\mu\left(\cup_{i=1}^KB(\alpha,x_i,n,\epsilon)\right) \geq 1-\epsilon \}.
\end{gathered}
\en
Let $g: \Bn \to \Bn$ be an increasing function. Define
\be
\begin{gathered}
P_{\alpha,T}^+ \prec g \text{ to mean } \lim_{\epsilon \to 0} \limsup_{n \to \infty} \frac{K(\alpha,n,\epsilon,T)}{g(n)} \leq 1 \text{  and}\\
P_{\alpha,T}^+\succ g \text{ to mean } \lim_{\epsilon \to 0} \limsup_{n \to \infty} \frac{K(\alpha,n,\epsilon,T)}{g(n)} \geq 1 .
\end{gathered}\en
If both relations hold, we write $P_{\alpha,T}^+ \sim g$. Similar notation defines $P_{\alpha,T}^-$ when the $\limsup$s are replaced by $\liminf$s.

The suprema over all partitions $\alpha$ are defined as follows.
For an increasing function $g:\Bn \to \Bn$, $P_T^+ \prec g$ means that $P_{\alpha,T}^+ \prec g$ for every partition $\alpha$. And $P_T^+ \succ g$ means that for every increasing $h:\Bn \to \Bn$ such that $h(n) \leq g(n)$ for all large enough $n$ and with $\limsup h(n)/g(n)<1$ there is a partition $\alpha$ such that $P_{\alpha,T}^+ \succ h$. Similar careful defining produces $P_T^-$. These asymptotic growth rate equivalence classes, $P_T^+$ and $P_T^-$, are called the {\em upper and lower measure-theoretic complexities of $(X,\cb,\mu,T)$}, respectively.

Ferenczi establishes the following properties of these measure-theoretic complexity ``functions".
\begin{enumerate}
\item If $\alpha$ is a generating partition, then $P_T^+ \sim P_{\alpha,T}^+$ and $P_T^- \sim P_{\alpha,T}^-$.
\item $P_T^+$ and $P_T^-$ are invariants of measure-theoretic isomorphism.
\item $\lim \log P_T^+(n)/n=\lim \log P_T^-(n)/n = h_\mu (T)$, in the sense that $P_T^+$ and $P_T^-$ are dominated by $e^{n(h+\epsilon)}$ for every $\epsilon >0$ and dominate $e^{n(h-\epsilon)}$ for every $\epsilon >0$.
\item $(X,\cb,\mu,T)$ is measure-theoretically isomorphic to a translation on a compact group if and only if $P_T^+ \prec g$ for every unbounded increasing $g: \Bn \to \Bn$. (Same for $P_T^-$.) This is to be compared with the characterization of topological dynamical systems with bounded {\em topological complexity functions} as the translations on compact groups---see Sections \ref{sec:complexityfunction} and \ref{sec:topologicalcomplexity}.
\item For the Morse system, $P_T^+  \sim 10n/3$ and $P_T^- \sim 3n$.
\item For rank one systems, $P_T^- \prec an^2$ for every $a>0$.
\item For the Chacon system (the orbit closure of the fixed point of the substitution $0 \to 0010, 1 \to 1$), $P_T^+ \sim P_T^-\sim 2n$.
\end{enumerate}

\section{Pattern complexity}
Kamae and Zamboni \cite{KamaeZamboni2002a,KamaeZamboni2002b}
have defined and studied {\em maximal pattern complexity} for infinite sequences, which shares some features with sequence entropy (Sections \ref{sec:mtseqent} and \ref{sec:topseqent}) and average sample complexity (Section \ref{sec:averagesamplecomplexity}).
A {\em pattern} is defined to be a finite increasing sequence $\tau=\tau(0)\tau(1)\tau(2)\dots\tau(k-1)$ of integers with $\tau(0)=0$.
If $u=u_0u_1\dots$ is an infinite one-sided sequence on a finite alphabet, onee considers the set of words seen at places in $u$ along the pattern $\tau$, namely
\be
F_\tau(u)=\{u_iu_{i+\tau(1)}\dots u_{i+\tau(k-1)}: i=0,1,2,\dots\}.
\en
The {\em maximal pattern complexity function } of $u$ is defined to be
\be
P_u^*(k)=\sup\{|F_\tau(u)|: |\tau|=k\}, k=1,2,\dots .
\en
 The authors establish the following results.
 \begin{enumerate}
 	\item Let $u$ be an infinite sequence on a finite alphabet, and suppose that the weak* limit of $\sum_{i=0}^{n-1}\delta_{\sigma^iu}/n$ exists and so defines a shift-invariant measure $\mu$ on the orbit closure $X$ of $u$. Then for any sequence $S=(t_i)$, the sequence entropy (see Section \ref{sec:mtseqent}) is bounded in terms of the maximal pattern complexity: with $\alpha$ the usual time-0 partition of $X$,
 	\be
 	h_\mu^S(\alpha,\sigma) \leq \limsup_{k \to \infty} \frac{\log P_u^*(k)}{k}.
 	\en
 	Hence, if $(X,\sigma,\mu)$ does not have purely discrete spectrum, the maximal pattern complexity finction $P_u^*(k)$ has exponential growth rate.
 	\item If there exists $k$ such that $P_u^*(k) < 2k$, then $u$ is eventually periodic (cf. Exercise \ref{exer:evper}).
 	\item For Sturmian (and generalized Sturmian) sequences, $P_u^*(k)=2k$ for all $k$.
 	\item For the Prouhet-Thue-Morse sequence, $P_u^*(k)=2^k$ for all $k$.
 \end{enumerate}
 
 Further developments concerning pattern complexity, including possible applications to image recognition, can be found in \cite{Kamae2012, FerencziHubert2012,XueKamae2012} and their references.
 
 \begin{exer}
 	The {\em window complexity function} $P_u^w(n)$ of an infinite sequence $u$ on a finite alphabet $\ca$, studied in \cite{CassaigneKaboreTapsoba2010}, counts the number of different blocks of length $n$ seen in the sequence $u$ beginning at coordinates that are multiples of $n$:
 	\be
 	P_u^w(n) = \card \{u_{kn}u_{kn+1}\dots u_{(k+1)n-1}: k \geq 0\}.
 	\en
 	 In the spirit of pattern complexity, one could also consider counting blocks of length $n$ that appear in a sequence $u$ along an arithmetic progression of coordinates:
 \be
 P_u^a(n) = \card \{u_iu_{i+k}u_{i+2k}\dots u_{i+(n-1)k}: i \geq 0, k \geq 1\}.
 \en
 Discuss the relationships among the ordinary complexity function $p_u$ (Section \ref{sec:complexityfunction}), $P_u^w$, $P_u^a$, and $P_u^*$. Compute them for Sturmian sequences and the Prouhet-Thue-Morse sequence. 
\end{exer}

\begin{exer}
	Consider the {\em Oldenburger-Kolakoski sequence} 
	\be
	\omega = 122112122122112 \dots ,
	\en
	which coincides with its sequence of run lengths. Can you say anything about any measure of complexity for this sequence? Anything about its orbit closure as a subshift? (It seems to be an open problem whether the frequency of $1$ exists, although numerical studies show that it seems to be $1/2$. So finding any invariant measure that it defines could be difficult... )
	
	What about a sequence on the alphabet $\{2,3\}$ that has the same self-similarity property? 
	
	Or consider all such sequences on $\{1,2,3\}$? Since now some choices seem possible, the orbit closure should have positive topological entropy?
\end{exer}

\chapter{Balancing freedom and interdependence}

\section{Neurological intricacy}

In any system that consists of individual elements there is a tension between freedom of action of the individuals and coherent action of the entire system. There is maximum complexity, information, disorder, or randomness when all elements act independently of one another. At the other extreme, when the elements of the system are strongly linked and the system acts essentially as a single unit, there is maximum order. In the first situation, it seems that there is little advantage to the individual elements in being part of a larger system, and the system does not benefit from possible concerted action by its constituents. In the second situation, most individual elements could be superfluous, and the system does not benefit from any diversity possibly available from its parts. This tension between the one and the many, the individual and the state, the soloist and the orchestra, the part and the whole, is ancient and well known.

It is natural to think that evolving organisms or societies may seek a balance in which the benefits of diversity and coherence are balanced against their disadvantages. 
Abrams and Panaggio \cite{AbramsPanaggio2011} constructed a differential equations model to describe the balance between competitive and cooperative pressures to attempt to explain the prevalence of right-handedness in human populations. (Left-handers may have an advantage in physical competitions, where opponents are accustomed to face mostly right-handers, and the population as a whole may benefit from diversity. But left-handers will be at a disadvantage when faced with objects and situations designed for the comfort of the prevalent right-handers.) 
Blei \cite{Blei2011} defined measures of {\em interdependence} among families of functions in terms of functional dependence among subfamilies using combinatorics, functional analysis, and probability. 

Neuroscientists G. Edelman, O. Sporns, G. Tononi \cite{tononi1994measure} proposed a measure, which they called ``neural complexity", of the balance between specific and mass action in the brains of higher vertebrates.
		High values of this quantity are associated with non-trivial organization of the network; when this is the case, segregation coexists with integration.
			 Low values are associated with systems that are either completely independent (segregated, disordered) or completely dependent (integrated, ordered).
			 We will see that beneath this concept of intricacy there is another (new) basic notion of complexity, that we call {\em average sample complexity}.
			 The definitions and study of intricacy and average sample complexity in dynamics were initiated in \cite{Wilson2015,PetersenWilson2015}.
			
			 One considers a model that consists of a family
	 	 $X=\{X_i:i=0,1,\dots,n-1\}$ of random variables representing an isolated neural system with $n$ elementary components (maybe groups of neurons), each $X_i$ taking values in a discrete (finite or countable) set $E$.
	 	 For each $n \in \Bn$ we define $n^*=\{0,1,\dots,n-1\}$.
	 	 The set $n^*$ represents the set of {\em sites}, and $E$ represents the set of {\em states}. It may seem that we are assuming that the brain is one-dimensional, but not so: the sites may be arranged in some geometrically important way, but at this stage we only number them and will take the geometry, distances, connections, etc. into account maybe at some later stage.
	 	 The elements of the set $E$ (often $E=\{0,1\}$) may encode (probably quantized) levels of excitation or something analogous.
		 For $S\subset n^*$, $X_S=\{X_i:i\in S\}$,
	 ${S^c}=n^*\setminus S$. Neural complexity measures the level of interdependence between action at the sites in $S$ and those in $S^c$, averaged over all subset $S$ of the set of sites, with some choice of weights.

	The	\emph{entropy} of a random variable $X$ taking values in a discrete set $E$ is
		\[
		H(X)=-\sum_{x\in E}Pr\{X=x\}\log Pr\{X=x\}.
		\]
		The {\em mutual information} between two random variables $X$ and $Y$ over the same
		probability space $(\Omega, \cf, P)$ is
		\[
		\begin{aligned}
		MI(X,Y)&=H(X)+H(Y)-H(X,Y).\\
		&=H(X)-H(X|Y)=H(Y)-H(Y|X)
		\end{aligned}
		\]
 Here $(X,Y)$ is the random variable on $\Omega$ taking values in $E \times E$ defined by $(X,Y)(\omega)=(X(\omega), Y(\omega))$.
	 $MI(X,Y)$ is a measure of how much $Y$ tells about $X$ (equivalently, how much $X$ tells about $Y$).
	$MI(X,Y)=0$ if and only if $X$ and $Y$ are independent.

The
{\em neural complexity, $C_N$}, of the family  $X=\{X_i:i=0,1,\dots,n-1\}$ is defined to be the following
		average, over all subfamilies  $X_S=\{X_i:i \in S\}$, of the mutual information between $X_S$ and $X_{S^c}$:
		\[
		C_N(X)=\frac{1}{n+1}\sum_{S\subset n^*}\frac{1}{\binom{n}{|S|}}MI(X_S,X_{{S^c}}).
		\]
		The weights are chosen to be uniform over all subsets of the same size, and then uniform over all sizes.

J. Buzzi and L. Zambotti \cite{BZ12} studied neural complexity in a general probabilistic setting, considering it as one of a family of functionals on processes that they called {\em intricacies}, allowing more general systems of weights for the averaging of mutual informations.	
They define a
		 \emph{system of coefficients}, $c_S^n$, to be a family of numbers satisfying for all $n\in\mathbb{N}$ and $S\subset n^*$
		\begin{enumerate}[1.]
\item			 $c_S^n\ge 0$;
	\item	 	 $\sum_{S\subset n^*}c_S^n=1$;
		 \item	 $c_{{S^c}}^n=c_S^n$.
		\end{enumerate}

			 For a fixed $n\in\mathbb{N}$ let $X=\{X_i:i\in n^*\}$ be a collection of random variables all taking values in the same finite set.
			 Given a system of coefficients, $c_S^n$, the corresponding \emph{mutual information functional}, $\mathcal{I}^c(X)$, is defined by
			\[
			\mathcal{I}^c(X)=\sum_{S\subset n^*}c_S^n MI(X_S,X_{{S^c}}).
			\]

\begin{defn}
			An \emph{intricacy} is a mutual information functional satisfying:
		\begin{enumerate}[1.]
	\item		  Exchangeability: invariance under permutations of $n^*$;
		\item	 Weak additivity: $\mathcal{I}^c(X,Y)=\mathcal{I}^c(X)+\mathcal{I}^c(Y)$ for any two  independent systems $X=\{X_i:i\in n^*\}$ and $Y=\{Y_j:j\in m^*\}$.
		\end{enumerate}
	\end{defn}

	\begin{thm}[Buzzi-Zambotti]\label{thm:BZ}
		Let $c_S^n$ be a system of coefficients and $\mathcal{I}^c$ the associated mutual information functional. $\mathcal{I}^c$ is an intricacy if and only if there exists a symmetric probability measure $\lambda_c$ on $[0,1]$ such that
		\[
		c_S^n=\int_{[0,1]}x^{|S|}(1-x)^{n-|S|}\lambda_c(dx)
		\]
	\end{thm}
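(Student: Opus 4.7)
The plan is to verify the two directions separately: the ``if'' direction by direct computation using the binomial theorem, and the ``only if'' direction by reducing the two intricacy axioms to a projective consistency on the coefficients $\{c_S^n\}$ and then invoking de Finetti's theorem for binary exchangeable sequences.

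For the ``if'' direction, fix a symmetric Borel probability measure $\lambda_c$ on $[0,1]$ and define $c_S^n$ by the integral formula. Nonnegativity is immediate, the symmetry $c_{S^c}^n = c_S^n$ follows by the substitution $x \mapsto 1-x$ inside the integral together with symmetry of $\lambda_c$, and the normalization $\sum_S c_S^n = 1$ follows by applying $\sum_{k=0}^n \binom{n}{k} x^k(1-x)^{n-k} = 1$ inside the integral. Exchangeability is immediate since $c_S^n$ depends only on $|S|$. For weak additivity, given independent $X,Y$ of sizes $n,m$, each $T \subset (n+m)^*$ splits uniquely as $T = S \sqcup R$ with $S \subset n^*$ and $R$ indexing the $Y$-coordinates; independence yields
\[
MI((X,Y)_T,(X,Y)_{T^c}) = MI(X_S,X_{S^c}) + MI(Y_R,Y_{R^c}).
\]
Grouping $\mathcal{I}^c(X,Y)$ by $S$ and by $R$ separately reduces weak additivity to the two coefficient identities $\sum_{R\subset m^*} c_{S\cup R}^{n+m} = c_S^n$ and $\sum_{S\subset n^*} c_{S\cup R}^{n+m} = c_R^m$, each of which follows from the integral formula by summing out the free variable with the binomial theorem.

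For the ``only if'' direction, exchangeability forces $c_S^n = \gamma_{|S|}^n$ to depend only on cardinality. The crux is to extract from weak additivity the pair-consistency
\[
\gamma_k^n = \gamma_k^{n+1} + \gamma_{k+1}^{n+1}, \qquad 0 \leq k \leq n.
\]
To this end, apply weak additivity to an arbitrary $X$ of size $n$ and a single variable $Y = Y_0$ independent of $X$. Since $\mathcal{I}^c(Y_0) = 0$ (both one-variable mutual informations involve the trivial factor), the identity $\mathcal{I}^c(X,Y_0) = \mathcal{I}^c(X)$ must hold. Independence of $Y_0$ from $X$ implies $MI((X,Y_0)_T,(X,Y_0)_{T^c}) = MI(X_S,X_{S^c})$ whether $T = S$ or $T = S \cup \{n\}$, so the identity becomes
\[
\sum_{S \subset n^*} \bigl[\gamma_{|S|}^{n+1} + \gamma_{|S|+1}^{n+1} - \gamma_{|S|}^n\bigr]\, MI(X_S, X_{S^c}) = 0
\]
for every choice of $X$. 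The main obstacle is upgrading this to a bracket-by-bracket equality: one must produce a sufficiently rich family of $X$'s whose profiles $k \mapsto \sum_{|S|=k} MI(X_S,X_{S^c})$ span an $(n+1)$-dimensional space. A convenient source is to partition $n^*$ into blocks of prescribed sizes and assign each block an independent fair bit; varying the block-size composition produces an invertible linear system in the brackets, forcing each to vanish.

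Once the pair-consistency is established, the triangle $(\gamma_k^n)$, together with the normalization $\sum_k \binom{n}{k}\gamma_k^n = 1$ and nonnegativity, records the pattern probabilities of a (necessarily exchangeable) infinite $\{0,1\}$-valued sequence; de Finetti's theorem yields a unique Borel probability measure $\lambda_c$ on $[0,1]$ with
\[
\gamma_k^n = \int_{[0,1]} x^k (1-x)^{n-k}\, \lambda_c(dx).
\]
Equivalently, iterating the pair-consistency expresses $\gamma_k^n$ as a finite difference of the diagonal $\mu_k = \gamma_k^k$; nonnegativity of the triangle then translates into complete monotonicity of $(\mu_k)$, and Hausdorff's moment theorem supplies $\lambda_c$. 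Finally, $\gamma_k^n = \gamma_{n-k}^n$ (from $c_{S^c}^n = c_S^n$) together with the uniqueness in the moment problem forces $\lambda_c$ to be invariant under $x \mapsto 1-x$, completing the proof.
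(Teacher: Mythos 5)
The paper states this theorem without proof (it is quoted from Buzzi--Zambotti \cite{BZ12}), so there is no in-text argument to compare against; I can only assess your proposal on its own terms. Your ``if'' direction is correct, and the overall architecture of the ``only if'' direction --- reduce weak additivity to the consistency relation $\gamma_k^n=\gamma_k^{n+1}+\gamma_{k+1}^{n+1}$ and then invoke de Finetti or Hausdorff --- is the right one. But the step where you extract that relation fails as written. You ask for a family of processes $X$ whose profiles $m_k(X)=\sum_{|S|=k}MI(X_S,X_{S^c})$ span an $(n+1)$-dimensional space, so that $\sum_k\beta_k m_k(X)=0$ forces each bracket $\beta_k=\gamma_k^{n+1}+\gamma_{k+1}^{n+1}-\gamma_k^n$ to vanish. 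No such family exists: for \emph{every} $X$ one has $m_0(X)=m_n(X)=0$ (the mutual information of $X_\emptyset$ with anything is zero) and $m_k(X)=m_{n-k}(X)$ (complementation is a bijection between $k$-sets and $(n-k)$-sets, and $MI$ is symmetric in its two arguments). So the profiles lie in a subspace of dimension roughly $n/2$, your linear system is never invertible, and the most any choice of test processes can yield is $\beta_k+\beta_{n-k}=0$ for $1\le k\le n-1$, with no information at all about $\beta_0$ and $\beta_n$.

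The conclusion can be rescued, but only by feeding in the coefficient axioms that your argument does not use at this stage. Axiom 3 ($c_{S^c}^n=c_S^n$) gives $\gamma_k^n=\gamma_{n-k}^n$ and hence $\beta_{n-k}=\beta_k$, which upgrades $\beta_k+\beta_{n-k}=0$ to $\beta_k=0$ for $1\le k\le n-1$. For the boundary terms, sum against binomial weights: $\sum_{k=0}^n\binom{n}{k}\beta_k=\sum_{j=0}^{n+1}\binom{n+1}{j}\gamma_j^{n+1}-\sum_{k=0}^n\binom{n}{k}\gamma_k^n=1-1=0$ by the normalization axiom, and since the interior brackets vanish and $\beta_0=\beta_n$, this forces $\beta_0=\beta_n=0$. (A version of the same caveat applies to your opening assertion that exchangeability ``forces'' $c_S^n$ to depend only on $|S|$: the functional cannot see $c_\emptyset^n$ or $c_{n^*}^n$ at all, and for other $S$ it only sees the sums $c_S^n+c_{S^c}^n$, so here too you need axiom 3 plus an explicit separating family of test processes, not exchangeability alone.) With these repairs, your de Finetti/Hausdorff endgame and the deduction that $\lambda_c$ is symmetric go through as you describe.
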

	
	\begin{example}
		\begin{enumerate}[1.]
\item			 $\displaystyle c_S^n=\frac{1}{(n+1)}\frac{1}{\binom{n}{|S|}}$ (Edelman-Sporns-Tononi);
\item			 For $0<p<1$,
			\[
			c_S^n=\frac{1}{2}(p^{|S|}(1-p)^{n-|S|}+(1-p)^{|S|}p^{n-|S|}) \text{ (}p\text{-symmetric)};
			\]
\item			 For $p=1/2$, $c_S^n=2^{-n}$ (uniform).
		\end{enumerate}
	\end{example}
	
\begin{exer}
	Prove that the neural (Edelman-Sporns-Tononi) weights correspond to $\lambda$ being Lebesgue measure on $[0,1]$.
\end{exer}

\section{Topological intricacy and average sample complexity}

%
%

	Let $(X,T)$ be a topological dynamical system and $\mathscr{U}$ an open cover of $X$. Given $n\in\mathbb{N}$ and a subset $S\subset n^*$ define
	\[
	\mathscr{U}_S=\bigvee_{i\in S}T^{-i}\mathscr{U}.
	\]
	\begin{defn}\cite{Wilson2015,PetersenWilson2015}
		Let $c_S^n$ be a system of coefficients. Define the \emph{topological intricacy of $(X,T)$ with respect to the open cover} $\mathscr{U}$ to be
		\[
		\Int(X,\mathscr{U},T):=\lim_{n\rightarrow\infty}\frac{1}{n}\sum_{S\subset n^* }c_S^n\log\left( \frac{N(\mathscr{U}_S)N(\mathscr{U}_{{S^c}})}{N(\mathscr{U}_{n^*})}\right).
		\]
	\end{defn}

Applying the laws of logarithms and noting the symmetry of the sum with respect to sets $S$ and their complements leads one to define the following quantity.

	\begin{defn}\cite{Wilson2015,PetersenWilson2015}
		The \emph{topological average sample complexity of $T$ with respect to the open cover $\mathscr{U}$} is defined to be
		\[
		\Asc(X,\mathscr{U},T):=\lim_{n\rightarrow\infty}\frac{1}{n}\sum_{S\subset n^* } c_S^n\log N(\mathscr{U}_S).
		\]
	\end{defn}
	
	\begin{prop}
	$\displaystyle{
	\Int(X,\mathscr{U},T)=2\Asc(X,\mathscr{U},T)-\htop(X,\mathscr{U},T)}$.
		\end{prop}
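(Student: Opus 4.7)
The plan is to manipulate the logarithm inside the definition of $\Int(X,\mathscr U,T)$ using elementary log laws and then exploit two symmetries built into the system of coefficients $c_S^n$: the normalization $\sum_{S\subset n^*}c_S^n=1$ and the reflection symmetry $c_{S^c}^n=c_S^n$. These are exactly the structural features guaranteed by the Buzzi--Zambotti framework that makes $\mathcal{I}^c$ an intricacy.

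First I would expand
\[
\log\!\left(\frac{N(\mathscr U_S)N(\mathscr U_{S^c})}{N(\mathscr U_{n^*})}\right)=\log N(\mathscr U_S)+\log N(\mathscr U_{S^c})-\log N(\mathscr U_{n^*})
\]
and split the sum defining $\Int(X,\mathscr U,T)$ into three pieces. The third piece factors as $\log N(\mathscr U_{n^*})\cdot\sum_{S\subset n^*}c_S^n=\log N(\mathscr U_{n^*})$; dividing by $n$ and passing to the limit yields $\htop(X,\mathscr U,T)$ via the standard cover definition of topological entropy (Section~\ref{sec:topent}), since $\mathscr U_{n^*}=\mathscr U\vee T^{-1}\mathscr U\vee\dots\vee T^{-(n-1)}\mathscr U$.

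Next I would treat the first two pieces together. The first piece is, by definition, $\Asc(X,\mathscr U,T)$ after dividing by $n$ and taking the limit. For the second piece, apply the change of summation variable $S\mapsto S^c$ and invoke the symmetry $c_{S^c}^n=c_S^n$ together with the trivial equality $\mathscr U_{S^c}=\mathscr U_{S^c}$ to rewrite
\[
\sum_{S\subset n^*}c_S^n\log N(\mathscr U_{S^c})=\sum_{S\subset n^*}c_{S^c}^n\log N(\mathscr U_{S^c})=\sum_{S\subset n^*}c_S^n\log N(\mathscr U_S),
\]
so that the second piece also contributes $\Asc(X,\mathscr U,T)$ in the limit. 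Assembling the three contributions gives the identity $\Int=2\Asc-\htop$.

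The main routine obstacle is justifying the existence of the limits; but $\Asc$ and $\htop$ are defined as limits at this point in the text (and $\htop(X,\mathscr U,T)$ exists by the usual subadditivity argument for $\log N(\mathscr U_{0}^{n-1})$), so each term on the right side of the displayed decomposition converges individually, and the identity follows by linearity of limits. There is no deeper obstruction: the proposition is essentially a bookkeeping consequence of the defining properties of the coefficient system.
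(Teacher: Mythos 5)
Your argument is correct and is exactly the computation the paper intends: the text introduces $\Asc$ precisely by ``applying the laws of logarithms and noting the symmetry of the sum with respect to sets $S$ and their complements,'' and then states the proposition without further proof. Your decomposition into three sums, using $\sum_{S\subset n^*}c_S^n=1$ for the $\htop$ term and the symmetry $c_{S^c}^n=c_S^n$ under the substitution $S\mapsto S^c$ to double the $\Asc$ term, is that same bookkeeping made explicit.
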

	
	
	%

	\begin{thm}
		The limits in the definitions of $\Int(X,\mathscr{U},T)$ and $\Asc(X,\mathscr{U},T)$ exist .
	\end{thm}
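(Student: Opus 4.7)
My strategy is to use the Buzzi-Zambotti integral representation (Theorem \ref{thm:BZ}) to reduce the existence of the $\Asc$-limit to a pointwise subadditivity argument via Fekete's lemma, and then deduce the $\Int$-limit from the stated identity $\Int=2\Asc-\htop$ together with the standard existence of topological entropy.

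\textbf{Step 1 (Integral reformulation).} Write $c_S^n=\int_0^1 x^{|S|}(1-x)^{n-|S|}\,d\lambda_c(x)$ as guaranteed by Theorem \ref{thm:BZ}, and set
\[
a_n(x)=\sum_{S\subset n^*}x^{|S|}(1-x)^{n-|S|}\log N(\mathscr{U}_S), \qquad x\in[0,1].
\]
Then by Fubini
\[
\frac{1}{n}\sum_{S\subset n^*}c_S^n\log N(\mathscr{U}_S)=\int_0^1\frac{a_n(x)}{n}\,d\lambda_c(x),
\]
so it suffices to show $a_n(x)/n$ converges for every $x$ and that the integrands are uniformly bounded.

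\textbf{Step 2 (Subadditivity of $a_n(x)$).} I claim that $a_{n+m}(x)\le a_n(x)+a_m(x)$. Every $S\subset(n+m)^*$ decomposes uniquely as $S=S_1\sqcup(n+S_2')$ with $S_1\subset n^*$ and $S_2'\subset m^*$. The standard submultiplicativity of minimal subcover cardinality under joins gives
\[
\log N(\mathscr{U}_S)\le \log N(\mathscr{U}_{S_1})+\log N(\mathscr{U}_{n+S_2'})=\log N(\mathscr{U}_{S_1})+\log N(\mathscr{U}_{S_2'}),
\]
where the equality uses the invariance $N(T^{-n}\mathscr{V})=N(\mathscr{V})$ for any open cover $\mathscr{V}$ (valid under the usual surjectivity assumption on $T$). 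The Bernoulli weight $x^{|S|}(1-x)^{n+m-|S|}$ factors as $x^{|S_1|}(1-x)^{n-|S_1|}\cdot x^{|S_2'|}(1-x)^{m-|S_2'|}$; summing the two terms against this product weight and using that each Bernoulli factor sums to $1$ yields the claimed inequality.

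\textbf{Step 3 (Fekete and dominated convergence).} By Fekete's subadditive lemma, $\alpha(x):=\lim_{n\to\infty}a_n(x)/n$ exists (and equals $\inf_n a_n(x)/n$) for each $x\in[0,1]$. Since $\mathscr{U}_S$ is coarsened by $\mathscr{U}_0^{n-1}$ for every $S\subset n^*$, we have $N(\mathscr{U}_S)\le N(\mathscr{U}_0^{n-1})\le N(\mathscr{U})^n$, hence $0\le a_n(x)/n\le \log N(\mathscr{U})$ uniformly in $n$ and $x$. Dominated convergence then gives
\[
\Asc(X,\mathscr{U},T)=\int_0^1\alpha(x)\,d\lambda_c(x),
\]
establishing existence. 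The existence of $\Int(X,\mathscr{U},T)$ follows immediately from the already-noted identity $\Int=2\Asc-\htop$, since $\htop(X,\mathscr{U},T)=\lim_{n\to\infty}\frac{1}{n}\log N(\mathscr{U}_0^{n-1})$ exists by the classical Fekete argument applied to the subadditive sequence $\log N(\mathscr{U}_0^{n-1})$.

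The single substantive step is Step 2: isolating, within the weighted average over all subsets $S$, a genuine subadditive recursion in the parameter $n$. Once the Buzzi-Zambotti representation is invoked to pass to a Bernoulli-random subset, the decomposition $S=S_1\sqcup(n+S_2')$ matches exactly the product structure of the Bernoulli weights, and the combinatorial subadditivity of $\log N$ together with shift invariance do the rest. The remaining ingredients (Fekete, dominated convergence, and existence of $\htop$) are standard.
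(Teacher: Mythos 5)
Your proof is correct, and its skeleton --- subadditivity plus Fekete's lemma, with $\Int$ then obtained from the identity $\Int=2\Asc-\htop$ --- is exactly the paper's. The paper, however, simply asserts that $b_n=\sum_{S\subset n^*}c_S^n\log N(\mathscr{U}_S)$ is subadditive and stops there; you supply the justification by passing through the Buzzi--Zambotti representation, where the factorization of the Bernoulli weight $x^{|S|}(1-x)^{n+m-|S|}$ under the splitting $S=S_1\sqcup(n+S_2')$ makes the marginalization come out right. Two remarks. First, once you have $a_{n+m}(x)\le a_n(x)+a_m(x)$ pointwise, you can integrate that inequality against $\lambda_c$ to get $b_{n+m}\le b_n+b_m$ directly and apply Fekete once to $b_n$; this bypasses the dominated convergence step entirely (though your version is also fine, since you do verify the uniform bound $a_n(x)/n\le\log N(\mathscr{U})$). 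Second, your argument genuinely needs the coefficients to be of Buzzi--Zambotti form, i.e.\ to come from an intricacy, whereas the theorem is stated for an arbitrary ``system of coefficients'' satisfying only nonnegativity, normalization, and $c_{S^c}^n=c_S^n$. For such general coefficients there is no consistency relation between $c^{n+m}$ and $c^n,c^m$, so the subadditivity of $b_n$ is not automatic --- this is a gap in the paper's one-line proof as much as a limitation of yours, and your route makes visible exactly what structural hypothesis is being used.
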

	As usual this follows from subadditivity of the sequence
	\be
	b_n:=\sum_{S\subset n^*}c_S^n\log N(\mathscr{U}_S)
	\en
	and Fekete's Subadditive Lemma: For every subadditive sequence $a_n$, $\lim_{n\rightarrow\infty} a_n/n$ exists and is equal to $\inf_n a_n/n$.
	
	\begin{exer}
		Prove Fekete's Lemma.
	\end{exer}
	
	\begin{prop}
		For each open cover $\mathscr{U}$, $\Asc(X,\mathscr{U},T)\le \htop(X,\mathscr{U},T)\le \htop(X,T)$, and hence $\displaystyle \Int(X,\mathscr{U},T)\le \htop(X,\mathscr{U},T)\le \htop(X,T).$
	\end{prop}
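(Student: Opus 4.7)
The plan is to reduce everything to a single monotonicity observation: for $S \subset n^*$ the cover $\mathscr U_{n^*}$ refines $\mathscr U_S$, so $N(\mathscr U_S) \leq N(\mathscr U_{n^*})$. Once this is in hand, the first chain of inequalities is a weighted average using that $\sum_{S \subset n^*} c_S^n = 1$, and the inequalities for $\Int$ follow formally from the identity $\Int(X,\mathscr U,T) = 2\Asc(X,\mathscr U,T) - \htop(X,\mathscr U,T)$ stated in the preceding proposition.

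First I would verify the monotonicity. Each element of $\mathscr U_{n^*}$ has the form $\bigcap_{i \in n^*} T^{-i} U_i$ with $U_i \in \mathscr U$, and is therefore contained in $\bigcap_{i \in S} T^{-i} U_i \in \mathscr U_S$. Given a minimal subcover $\mathscr V \subset \mathscr U_{n^*}$ with $|\mathscr V| = N(\mathscr U_{n^*})$, assigning to each $V \in \mathscr V$ a member of $\mathscr U_S$ containing it produces a cover of $X$ by at most $N(\mathscr U_{n^*})$ members of $\mathscr U_S$, so $N(\mathscr U_S) \leq N(\mathscr U_{n^*})$. Multiplying by $c_S^n \geq 0$, summing over $S \subset n^*$, and using $\sum_S c_S^n = 1$ yields
\[
\sum_{S \subset n^*} c_S^n \log N(\mathscr U_S) \;\leq\; \log N(\mathscr U_{n^*}).
\]
Dividing by $n$ and letting $n \to \infty$ (both limits exist by the preceding theorem and the existence of $\htop(X,\mathscr U,T)$) gives $\Asc(X,\mathscr U,T) \leq \htop(X,\mathscr U,T)$. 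The bound $\htop(X,\mathscr U,T) \leq \htop(X,T)$ is immediate from the first definition of topological entropy as a supremum over open covers.

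For the statement about intricacy, substitute the inequality just obtained into the identity from the preceding proposition:
\[
\Int(X,\mathscr U,T) \;=\; 2\Asc(X,\mathscr U,T) - \htop(X,\mathscr U,T) \;\leq\; 2\htop(X,\mathscr U,T) - \htop(X,\mathscr U,T) \;=\; \htop(X,\mathscr U,T),
\]
and then chain with $\htop(X,\mathscr U,T) \leq \htop(X,T)$. There is no real obstacle in the argument; the content of the proposition is the conceptual point that observing the orbit only at times in $S \subset n^*$ cannot distinguish more pieces of $X$ than observing it at all times in $n^*$, and averaging this pointwise estimate against any probability weights $c_S^n$ preserves it.
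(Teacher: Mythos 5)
Your argument is correct and is exactly the standard one the paper leaves implicit (the proposition is stated without proof): the refinement monotonicity $N(\mathscr U_S)\le N(\mathscr U_{n^*})$ for $S\subset n^*$, the normalization $\sum_S c_S^n=1$, and the identity $\Int = 2\Asc - \htop(X,\mathscr U,T)$ together give everything. No gaps; the appeal to the preceding theorem for existence of the limits and to the first definition of topological entropy for $\htop(X,\mathscr U,T)\le\htop(X,T)$ are both legitimate.
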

	
	In particular, a dynamical system with zero (or relatively low) topological entropy (one that is coherent or ordered) has zero (or relatively low) topological intricacy.

  The intricacy of a subshift $(X,\sigma)$ with respect to the ``time zero open cover" $\mathscr U_0$ by cylinder sets defined by the first (or central) coordinate is determined by counting the numbers of different blocks that can be seen along specified sets of coordinates:

	\[
		\Int(X,\mathscr{U}_0,\sigma)=\lim_{n\rightarrow\infty}\frac{1}{n}\sum_{S\subset n^*}c_S^n\log\left(\frac{|\mathscr{L}_S(X)||\mathscr{L}_{{S^c}}(X)|}{|\mathscr{L}_{n^*}(X)|}\right)
		\]

	\begin{example}[Computing $|\mathscr{L}_S(X)|$ for the golden mean SFT]
		Let $n=3$, so that $n^*=\{0,1,2\}$. The following figure shows how different numbers of blocks can appear along different sets of coordinates of the same cardinality:  if $S=\{0,1\}$ then $N(S)=3$, whereas if $S=\{0,2\}$, $N(S)=4$.
		
		\begin{center}
			\begin{tikzpicture}
			\node at (.8,2.5) {$S=\{0,1\}$};
			\draw (0,1.8)--(.5,1.8) [line width=.6mm,red];
			\draw (.7,1.8)--(1.2,1.8) [line width=.6mm,red];
			\draw (1.4,1.8)--(1.9,1.8) [line width=.6mm];
			\node at (.25,1.4) [red] {$0$};\node at (.95,1.4)[red] {$0$};
			\node at (.25,1) [red] {$0$};\node at (.95,1)[red] {$1$};
			\node at (.25,.6) [red] {$1$};\node at (.95,.6)[red] {$0$};
			\node at (5.8,2.5) {$S=\{0,2\}$};
			\draw (5,1.8)--(5.5,1.8) [line width=.6mm,red];
			\draw (5.7,1.8)--(6.2,1.8) [line width=.6mm];
			\draw (6.4,1.8)--(6.9,1.8) [line width=.6mm, red];
			\node at (5.25,1.4) [red] {$0$};\node at (6.65,1.4) [red]{$0$};
			\node at (5.25,1) [red] {$0$};\node at (6.65,1) [red]{$1$};
			\node at (5.25,.6) [red] {$1$};\node at (6.65,.6) [red]{$0$};
			\node at (5.25,.2) [red] {$1$};\node at (6.65,.2) [red]{$1$};
			\node at (1,-.5) {$|\mathscr{L}_S(X)|=3$};
			\node at (6,-.5) {$|\mathscr{L}_S(X)|=4$};
			\end{tikzpicture}
		\end{center}
	\end{example}

When we average over all subsets $S \subset n^*$, we get an approximation (from above) to $\Int(X,\mathscr{U}_0,\sigma)$:

	\begin{example}[Computing $|\mathscr{L}_S(X)|$ for the golden mean SFT]
		\begin{table}[h]
			\[
			\begin{array}{cccc}
			\toprule
			S&{S}&|\mathscr{L}_S(X)|&|\mathscr{L}_{S^c}(X)|\\
			\midrule
			\emptyset&\{0,1,2\}&1&5\\
			\{0\}&\{1,2\}&2&3\\
			\{1\}&\{0,2\}&2&4\\
			\{2\}&\{0,1\}&2&3\\
			\{0,1\}&\{2\}&3&2\\
			\{0,2\}&\{1\}&4&2\\
			\{1,2\}&\{0\}&3&2\\
			\{0,1,2\}&\emptyset&5&1\\
			\bottomrule\\
			\end{array}
			\]
		\end{table}

		\begin{center}
			$\displaystyle \frac{1}{3\cdot2^3}\sum_{S\subset 3^*}\log\left(\frac{|\mathscr{L}_S(X)||\mathscr{L}_{S^c}(X)|}{|\mathscr{L}_{n^*}(X)|}\right)=\frac{1}{24}\log\left(\frac{6^4\cdot 8^2}{5^6}\right)\approx0.070$.
		\end{center}
	\end{example}

Apparently one needs better formulas for $\Int$ and $\Asc$ than the definitions, which involve exponentially many calculations as $n$ grows. Here is a formula  that applies to many SFT's and permits accurate numerical estimates.

	\begin{thm}\label{AscForSFT}
		Let $X$ be a shift of finite type with adjacency matrix $M$ such that $M^2>0$. Let $c_S^n=2^{-n}$ for all $S$. Then
		\[
		\Asc(X,\mathscr{U}_0,\sigma)=\frac{1}{4}\sum_{k=1}^\infty\frac{\log |\mathscr{L}_{k^*}(X)|}{2^k}.
		\]
	\end{thm}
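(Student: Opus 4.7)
The plan is to exploit the hypothesis $M^2>0$ to show that $\log |\mathscr L_S(X)|$ depends on $S\subset n^*$ only through the sequence of lengths of its maximal runs of consecutive integers, to reorganize the sum by these runs, and then to pass to the limit.

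First I would decompose $S=R_1\sqcup\dots\sqcup R_m$ into its maximal intervals of consecutive integers with $|R_j|=k_j$, and show that
\[
\log|\mathscr L_S(X)|=\sum_{j=1}^m a_{k_j},\qquad a_k:=\log|\mathscr L_{k^*}(X)|.
\]
The hypothesis $M^2>0$ implies $M^j>0$ for every $j\ge 2$: for $j\ge 3$ one has $(M^j)_{xy}\ge \sum_z M_{zy}\ge 1$, since $M^2>0$ forces every column of $M$ to be nonzero. Therefore any gap of size $g\ge 1$ between two consecutive runs can be filled by a word of length $g$ joining any two prescribed boundary symbols (the content of $(M^{g+1})_{xy}>0$), and the fills to the left of $R_1$ and to the right of $R_m$ are unobstructed for the same reason. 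Consequently, independently chosen allowed $k_j$-blocks on the runs $R_j$ always extend through these fills to a block in $\mathscr L_{n^*}(X)$, so the restriction map $\mathscr L_S(X)\to\prod_j\mathscr L_{k_j^*}(X)$ is bijective.

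Next I would organize the sum by run length. For $1\le k\le n$ and each starting position $i\in\{0,\dots,n-k\}$, the number of $S\subset n^*$ in which $\{i,\dots,i+k-1\}$ appears as a maximal run is $2^{n-k-\mathbf 1\{i>0\}-\mathbf 1\{i+k<n\}}$, since the $k$ positions of the run are forced into $S$ and its at most two external neighbors are forced out. Summing over $i$ gives $(n-k+3)\,2^{n-k-2}$ for $1\le k<n$, and $1$ for $k=n$. Combined with the first step this yields
\[
2^{-n}\sum_{S\subset n^*}\log|\mathscr L_S(X)|=\frac{a_n}{2^n}+\frac{1}{4}\sum_{k=1}^{n-1}\frac{(n-k+3)\,a_k}{2^k}.
\]

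Finally, dividing by $n$ and writing $(n-k+3)/n=1+(3-k)/n$, the trivial bound $a_k\le k\log|\ca|$ makes both $\sum_{k\ge 1}a_k/2^k$ and $\sum_{k\ge 1}k\,a_k/2^k$ absolutely convergent, so the leading piece tends to $\tfrac{1}{4}\sum_{k=1}^\infty a_k/2^k$, while the $(3-k)/n$ correction and the term $a_n/(n\,2^n)$ are both $O(1/n)$ and vanish, giving
\[
\Asc(X,\mathscr U_0,\sigma)=\frac{1}{4}\sum_{k=1}^\infty\frac{\log|\mathscr L_{k^*}(X)|}{2^k}.
\]
The heart of the argument is the factorization in the first step: $M^2>0$ must be leveraged not only to bridge symbols across a single missing position (its direct content) but also across longer gaps and at the external boundary of $n^*$, so that the choices of allowed words on different runs are truly independent; once that is in hand, the rest is routine binomial bookkeeping and a dominated passage to the limit.
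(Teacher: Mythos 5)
Your proof is correct and follows essentially the same route as the paper: both hinge on decomposing $S$ into its maximal runs separated by gaps of length at least one and on the factorization $\log|\mathscr{L}_S(X)|=\sum_j\log|\mathscr{L}_{k_j^*}(X)|$, which is exactly where $M^2>0$ (hence $M^j>0$ for all $j\ge 2$) is used. The only difference is in the final bookkeeping: where the paper's sketch finishes by induction on whether $n-1\in S$, you count directly how many $S\subset n^*$ admit a given interval as a maximal run, which yields an exact finite-$n$ identity and makes the passage to the limit immediate.
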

	
	This formula shows that, as expected,
	$\Asc$ is sensitive to word counts of all lengths and thus is a finer measurement than $\htop$, which just gives the asymptotic exponential growth rate. Below we will see examples of systems that can be distinguished by $\Asc$ and $\Int$ but not by their entropies, or even by their symbolic complexity functions.
	
 The main ideas of the proof are:
 \begin{enumerate}
 	\item Each $S \subset n^*$ decomposes into a union of disjoint intervals $I_j$ separated by gaps of length at least $1$.
 	\item Because $M^2>0$, $\log N(S) = \sum_j \log N(I_j)$.
 	\item We may consider the $S \subset n^*$ that do not contain $n-1$, then those that do, and use induction.
 \end{enumerate} 	
	
	\begin{cor}
		For the full $r$-shift with $c_S^n=2^{-n}$ for all $S$,
		\[
		\Asc(\Sigma_r,\mathscr{U}_0,\sigma)=\frac{\log r}{2}\quad\text{and}\quad \Int(\Sigma_r,\mathscr{U}_0,\sigma)=0.
		\]
	\end{cor}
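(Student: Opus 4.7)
The plan is to apply Theorem \ref{AscForSFT} directly. The full $r$-shift $\Sigma_r$ is a shift of finite type whose adjacency matrix $M$ is the $r \times r$ all-ones matrix; since $M$ itself is strictly positive, so is $M^2$, and the hypothesis of the theorem is satisfied. Moreover, since every $k$-block over an alphabet of size $r$ is allowed, one has $|\mathscr{L}_{k^*}(\Sigma_r)| = r^k$, and hence $\log |\mathscr{L}_{k^*}(\Sigma_r)| = k \log r$.

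Substituting into the formula from Theorem \ref{AscForSFT} gives
\[
\Asc(\Sigma_r,\mathscr{U}_0,\sigma) = \frac{1}{4}\sum_{k=1}^\infty \frac{k\log r}{2^k} = \frac{\log r}{4}\sum_{k=1}^\infty \frac{k}{2^k}.
\]
The remaining step is the elementary evaluation $\sum_{k=1}^\infty k/2^k = 2$ (for example, differentiate the geometric series $\sum_{k \geq 0} x^k = 1/(1-x)$ and set $x = 1/2$, then multiply by $x$). This yields $\Asc(\Sigma_r,\mathscr{U}_0,\sigma) = \log r / 2$, as claimed.

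For the intricacy, I would use the identity
\[
\Int(X,\mathscr{U},T) = 2\Asc(X,\mathscr{U},T) - \htop(X,\mathscr{U},T)
\]
established earlier in the excerpt. For the full $r$-shift with the time-zero cover $\mathscr{U}_0$, one has $N((\mathscr{U}_0)_{n^*}) = r^n$, so $\htop(\Sigma_r,\mathscr{U}_0,\sigma) = \log r$. Combining this with the computation above gives
\[
\Int(\Sigma_r,\mathscr{U}_0,\sigma) = 2 \cdot \frac{\log r}{2} - \log r = 0.
\]

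There is really no substantial obstacle here: everything reduces to plugging $|\mathscr{L}_{k^*}(\Sigma_r)| = r^k$ into the series formula and recognizing the standard sum $\sum k/2^k = 2$. The conceptual content is that the full shift is the extreme case where the block counts $|\mathscr{L}_S|$ depend only on $|S|$ (they are multiplicative across the disjoint intervals making up $S$), so the averaging in $\Int$ exactly cancels the entropy contribution --- reflecting that the full shift is maximally ``disordered'' in the sense that the sites act completely independently, leaving zero interdependence between $S$ and $S^c$.
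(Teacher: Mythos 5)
Your proof is correct and is exactly the derivation the paper intends: the corollary follows by substituting $|\mathscr{L}_{k^*}(\Sigma_r)|=r^k$ into the series of Theorem \ref{AscForSFT}, summing $\sum_{k\ge 1}k/2^k=2$, and then applying the identity $\Int=2\Asc-\htop$ with $\htop(\Sigma_r,\mathscr{U}_0,\sigma)=\log r$. No gaps.
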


In the following table we compare $\htop, \Int$, and $\Asc$ for the full 2-shift, the golden mean SFT, and the subshift consisting of a single periodic orbit of period two. The first is totally disordered, while the third is completely deterministic, so each of these has intricacy zero, while the golden mean SFT has some balance between freedom and discipline.

	\[
	\begin{tabular}{@{}m{.14\textwidth}m{.38\textwidth} m{.11\textwidth}m{.10\textwidth}m{.08\textwidth}@{}}
	\toprule
	&\hspace{.3in} Adjacency Graph&Entropy&$\Asc$&$\Int$\\
	\midrule
	Disordered&\includegraphics[width=1.75in]{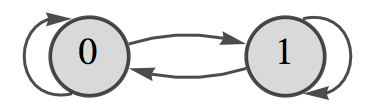}&$0.693$&$0.347$&$0$\\
	&\includegraphics[width=1.575in]{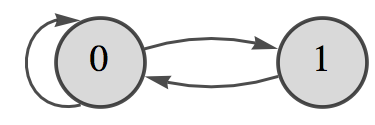}&$0.481$&$0.286$&$0.090$\\
	Ordered&\hspace{.175in}\includegraphics[width=1.4in]{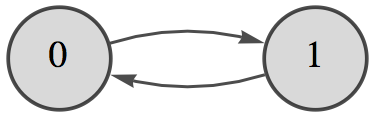}&$0$&$0$&$0$\\
	\bottomrule
	\end{tabular}
	\]

As with the definitions of topological and measure-theoretic entropies, one may seek to define an isomorphism invariant by taking the supremum over all open covers (or partitions). But this will lead to
 nothing new.

	\begin{thm}\label{thm:PWTop}
		Let $(X,T)$ be a topological dynamical system. Then
		\[
		\sup_{\mathscr{U}}\Asc(X,\mathscr{U},T)=\htop(X,T).
		\]
	\end{thm}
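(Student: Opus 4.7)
The plan is to prove both inequalities. The upper bound $\sup_{\mathscr{U}}\Asc(X,\mathscr{U},T) \le \htop(X,T)$ follows immediately from the preceding proposition, which already states $\Asc(X,\mathscr{U},T) \le h(\mathscr{U},T) \le \htop(X,T)$ for every open cover $\mathscr{U}$; we simply take the supremum.

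For the matching lower bound I plan to show that the iterated refinement $\mathscr{V}_k = \mathscr{U} \vee T^{-1}\mathscr{U} \vee \cdots \vee T^{-(k-1)}\mathscr{U}$ of an arbitrary open cover $\mathscr{U}$ satisfies $\Asc(X,\mathscr{V}_k,T) \to h(\mathscr{U},T)$ as $k\to\infty$. Once this is established, taking the supremum first over $k$ and then over $\mathscr{U}$ will give $\sup_\mathscr{U} \Asc(X,\mathscr{U},T) \ge \sup_\mathscr{U} h(\mathscr{U},T) = \htop(X,T)$. The key algebraic unpacking is that for $S\subset n^*=\{0,\dots,n-1\}$,
\[
(\mathscr{V}_k)_S = \bigvee_{i\in S}T^{-i}\mathscr{U}_0^{k-1} = \mathscr{U}_{\tilde S}, \qquad \tilde S := S + [0,k-1],
\]
so $\Asc(X,\mathscr{V}_k,T)=\lim_n\tfrac{1}{n}\sum_S c_S^n\log N(\mathscr{U}_{\tilde S})$, and $\tilde S$ is the ``thickening'' of $S$ by $k$.

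The argument then rests on two estimates. First, the fact that $h(\mathscr{U},T) = \inf_m\frac{1}{m}\log N(\mathscr{U}_0^{m-1})$ gives $\log N(\mathscr{U}_J) \ge |J|\,h(\mathscr{U},T)$ for every interval $J$ of consecutive integers; decomposing $\tilde S$ into its maximal runs $J_1,\dots,J_r$ (each of length at least $k$, since each $s\in S$ contributes the full block $[s,s+k-1]$) and combining the interval bound with an approximate additivity $\log N(\mathscr{U}_{\tilde S})\gtrsim \sum_j\log N(\mathscr{U}_{J_j})$ should yield $\log N(\mathscr{U}_{\tilde S})\ge|\tilde S|\,h(\mathscr{U},T)-o(n)$. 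Second, averaging: for the uniform weights $c_S^n=2^{-n}$, an index $i$ fails to lie in $\tilde S$ only when all $k$ of its predecessors are missing from $S$, which yields $\sum_S c_S^n|\tilde S| = n(1-2^{-k})+O(k)$. Combining the two pieces and passing to the limit,
\[
\Asc(X,\mathscr{V}_k,T) \ge (1-2^{-k})\,h(\mathscr{U},T),
\]
which exceeds $h(\mathscr{U},T)-\epsilon$ once $k$ is large, and the Edelman--Sporns--Tononi and $p$-symmetric coefficient systems admit entirely analogous averaging with $(1-2^{-k})$ replaced by the appropriate probability that a random block of length $k$ meets $S$.

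The hardest step will be justifying the approximate additivity $\log N(\mathscr{U}_{\tilde S})\gtrsim\sum_j\log N(\mathscr{U}_{J_j})$ across the gaps separating consecutive runs. In the mixing SFT setting of Theorem~\ref{AscForSFT} the assumption $M^2>0$ makes this an exact equality, but for a general open cover on an arbitrary topological system no such combinatorial mixing is automatic. I would resolve this by invoking the variational principle to pass to a measure-theoretic setting: select an ergodic $T$-invariant measure $\mu$ with $h_\mu(T)$ close to $h(\mathscr{U},T)$ and a measurable partition $\alpha$ finer than $\mathscr{U}$, use the conditional entropy formula $H_\mu(\alpha_{A\cup B})=H_\mu(\alpha_A)+H_\mu(\alpha_B\mid\alpha_A)$ to get an exact additivity modulo conditional information terms, and apply a Shannon--McMillan--Breiman style estimate to control the errors uniformly in $n$. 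This coupling of the combinatorial thickening estimate on $\tilde S$ with the variational principle is the technical core of the proof.
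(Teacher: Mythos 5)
Your skeleton is the right one and is essentially the paper's: the upper bound is immediate from the preceding proposition, and for the lower bound you refine $\mathscr{U}$ to $\mathscr{V}_k=\mathscr{U}_0^{k-1}$, identify $(\mathscr{V}_k)_S=\mathscr{U}_{\tilde S}$ with $\tilde S=S+k^*$, and exploit the fact that the average of $|\tilde S|$ over $S\subset n^*$ is $n(1-2^{-k})+O(k)$. The paper's (admittedly sketchy) proof is exactly this thickening argument, carried out by discarding the asymptotically negligible family $\mathfrak{B}$ of sets $S$ that miss too many of the length-$k/2$ subintervals of $n^*$, so that for the remaining $S$ the set $\tilde S$ is a union of long runs with small total gap; it stays entirely in the topological category and never passes to invariant measures.

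The genuine gap is in your resolution of the step you correctly single out as the crux, namely bounding $\log N(\mathscr{U}_{\tilde S})$ from below across the gaps. The variational-principle detour does not close as described: for a partition $\alpha$ finer than $\mathscr{U}$ the comparison runs the wrong way---one only gets $N(\mathscr{U}_S)\le\card(\alpha_S)$, and both $\log N(\mathscr{U}_S)$ and $H_\mu(\alpha_S)$ sit below $\log\card(\alpha_S)$ with no inequality between them (take $\mathscr{U}=\{X\}$ to see this fail badly); the correct Walters-type comparison, which constructs $\mathscr{U}$ \emph{from} $\alpha$, costs $\log 2$ per site of $S$, an error that does not vanish; and Shannon--McMillan--Breiman controls information along initial orbit segments, not along scattered coordinate sets $\tilde S$. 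More importantly, no superadditivity across gaps is needed at all. Submultiplicativity applied to the \emph{complement} of $\tilde S$ gives $N(\mathscr{U}_{(n+k-1)^*})\le N(\mathscr{U}_{\tilde S})\cdot N(\mathscr{U})^{\,|(n+k-1)^*\setminus\tilde S|}$, and Fekete's lemma gives $\log N(\mathscr{U}_{(n+k-1)^*})\ge n\,h(\mathscr{U},T)$, hence
\[
\log N(\mathscr{U}_{\tilde S})\ \ge\ n\,h(\mathscr{U},T)-\bigl|(n+k-1)^*\setminus\tilde S\bigr|\,\log N(\mathscr{U}).
\]
Since the mean of $|(n+k-1)^*\setminus\tilde S|$ is at most $2^{-k}n+O(k)$, averaging yields $\Asc(X,\mathscr{V}_k,T)\ge h(\mathscr{U},T)-2^{-k}\log N(\mathscr{U})$, and letting $k\to\infty$ and then taking the supremum over $\mathscr{U}$ finishes the proof. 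Replace the measure-theoretic detour with this elementary complement estimate (or with the paper's direct treatment of the sets $S\notin\mathfrak{B}$); as written, the technical core of your argument would not go through.
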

	
	\begin{proof}
			The proof depends on the structure of average subsets of $n^*=\{ 0, 1, \dots ,n-1\}$:
			most $S \subset n^*$ have size about $n/2$, so are not too sparse. 
			
			When computing the ordinary topological entropy of a subshift, to get at the supremum over open covers it is enough to start with the (generating) time-0 partition (or open cover) $\alpha$, then iterate and refine, replacing $\alpha$ by $\alpha_{k^*}=\alpha_0^{k-1}$. Then for fixed $k$, when we count numbers of blocks (configurations), we are looking at $\alpha_{(n+k )^*}$ instead of  $\alpha_{n^*}$; and when $k$ is fixed, as $n$ grows the result is the same.
		
		When computing $\Asc$ and $\Int$, start with the time-0 partition, and code by $k$-blocks. Then $S \subset n^*$ is replaced by $S+k^*$, and the effect on $\alpha_{S+k^*}$ as compared to $\alpha_S$ is similar, since it acts similarly on each of the long subintervals comprising $S$.

	\begin{tikzpicture}[xscale=8]
	\draw[-][draw, very thick] (.2,0) -- (1.2,0);
	
	\draw [thick] (.2,-.1)  -- (.2,0.1);
	\draw [thick] (.4,-.1)  -- (.4,0.1);
	\draw [thick] (.6,-.1)  -- (.6,0.1);
	\draw [thick] (.8,-.1)  -- (.8,0.1);
	\draw [thick] (1,-.1)  -- (1,0.1);
	\draw [thick] (1.2,-.1)  -- (1.2,0.1);
	\node at (.3,-.3) {$k/2$};
	\node at (.38,.6) {$s_1$};
	<7->{
		\draw [very thick] (.4,.6) -- (.8,.6);}
	<8->{
		\node at (.62,.3) {$s_2$};}
	<9->{
		\draw [very thick] (.64, .3) -- (1.04,.3);}
	\end{tikzpicture}

	Here is a still sketchy but slightly more detailed indication of the idea.		
		Fix a $k$ for coding by $k$-blocks (or looking at $N((\mathscr{U}_k)_S)$ or $H((\alpha_k)_S)$).
				Cut $n^*$ into consecutive blocks of length $k/2$.
				When $s \in S$ is in one of these intervals of length $k/2$, then $s+k^*$ {\em covers} the next interval of length $k/2$.
				So if $S$ hits many of the intervals of length $k/2$, then $S+k^*$ starts to look like a union of long intervals, say each with $|E_j|>k$.
				By shaving a little off each of these relatively long intervals, we can assume that also the gaps have length at least $k$.

		Given $\epsilon >0$, we may assume that $k$ is large enough that for every interval $I \subset \mathbb N$ with $|I| \geq k/2$,
		\be\label{happrox}
		0 \leq \frac{\log N(I)}{\card (I)} - h_{\topo} (X,\sigma) < \epsilon.
		\en
		
		We let $\mathfrak{B}$ denote the set of $S \subset n^*$ which miss at least $2n\epsilon/k$ of the intervals of length $k/2$
				and show that
				\[\displaystyle{\lim_{n \to \infty} \frac{\card (\mathfrak{B})}{2^n} = 0}.\]
				If $S \notin \mathfrak{B}$, then $S$ hits many of the intervals of length $k/2$,
				and hence $S+k^*$ is the union of intervals of length at least $k$, and we can arrange that the gaps are also long enough to satisfy the estimate in {\ref{happrox}} comparing (average of log of) number of blocks to $h_{\topo} (X,\sigma)$.
	\end{proof}

\begin{exer}
	Prove that 	$\sup_{\mathscr{U}}\Int (X,\mathscr{U},T)=\htop(X,T)$.
\end{exer}

\section{Ergodic-theoretic intricacy and average sample complexity}

%
%

We turn now to the formulation and study of the measure-theoretic versions of intricacy and average sample complexity.
	For a partition $\alpha$ of $X$ and a subset $S\subset n^*$ define
	\[ 
	\alpha_S=\bigvee_{i\in S}T^{-i}\alpha.
	\]
	
	\begin{defn}\cite{Wilson2015,PetersenWilson2015}
		Let $(X,\mathscr{B},\mu,T)$ be a measure-preserving system, $\alpha=\{A_1,\dots,A_n\}$ a finite measurable partition of $X$, and $c_S^n$ a system of coefficients.
		The \emph{measure-theoretic intricacy of $T$ with respect to the partition $\alpha$} is
		\[
		\Int_\mu(X,\alpha,T)=\lim_{n\rightarrow \infty}\frac{1}{n}\sum_{S\subset n^*}c_S^n\left[H_\mu(\alpha_S)+H_\mu(\alpha_{{S^c}})-H_\mu(\alpha_{n^*})\right].
		\]
		The \emph{measure-theoretic average sample complexity of $T$ with respect to the partition $\alpha$} is
		\[
		\Asc_\mu(X,\alpha,T)=\lim_{n\rightarrow\infty}\frac{1}{n}\sum_{S\subset n^*}c_S^nH_\mu(\alpha_S).
		\]
	\end{defn}

	\begin{thm}
		The limits in the definitions of measure-theoretic intricacy and measure-theoretic average sample complexity exist.
	\end{thm}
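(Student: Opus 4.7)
The plan is to mirror the strategy indicated just above for the topological case: establish subadditivity of the sequence
\[
a_n = \sum_{S \subset n^*} c_S^n\, H_\mu(\alpha_S)
\]
and then invoke Fekete's Subadditive Lemma to conclude that $\Asc_\mu(X,\alpha,T) = \lim a_n/n$ exists. First I would split any $S \subset (m+n)^*$ uniquely as $S = S_1 \cup (S_2 + m)$ with $S_1 \subset m^*$ and $S_2 \subset n^*$. Since $T$ is measure-preserving, $\alpha_S = \alpha_{S_1} \vee T^{-m}\alpha_{S_2}$ and $H_\mu(T^{-m}\alpha_{S_2}) = H_\mu(\alpha_{S_2})$, so subadditivity of Shannon entropy gives
\[
H_\mu(\alpha_S) \le H_\mu(\alpha_{S_1}) + H_\mu(\alpha_{S_2}).
\]

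The key step — and I expect this to be the main (though not deep) obstacle — is showing that the coefficients aggregate correctly, namely that for every $S_1 \subset m^*$,
\[
\sum_{S_2 \subset n^*} c_{S_1 \cup (S_2+m)}^{m+n} = c_{S_1}^m,
\]
and symmetrically with the roles of $m$ and $n$ swapped. This is where the Buzzi--Zambotti representation (Theorem \ref{thm:BZ}) is essential: writing $c_T^N = \int_{[0,1]} x^{|T|}(1-x)^{N-|T|}\,\lambda_c(dx)$ reduces the identity to the binomial computation $\sum_{k=0}^n \binom{n}{k} x^k(1-x)^{n-k} = 1$ under the integral. Combined with the entropy bound above, summing over $S \subset (m+n)^*$ then yields $a_{m+n} \le a_m + a_n$, and Fekete delivers the limit.

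For the intricacy, I would reduce to what has already been done. Using the symmetry $c_S^n = c_{S^c}^n$ and reindexing $S' = S^c$,
\[
\sum_{S \subset n^*} c_S^n H_\mu(\alpha_{S^c}) = \sum_{S' \subset n^*} c_{S'}^n H_\mu(\alpha_{S'}) = a_n,
\]
while $\sum_{S \subset n^*} c_S^n\, H_\mu(\alpha_{n^*}) = H_\mu(\alpha_{n^*})$ because the coefficients sum to $1$. Consequently
\[
\frac{1}{n}\sum_{S \subset n^*} c_S^n\bigl[H_\mu(\alpha_S) + H_\mu(\alpha_{S^c}) - H_\mu(\alpha_{n^*})\bigr] = \frac{2a_n}{n} - \frac{H_\mu(\alpha_{n^*})}{n}.
\]
The first term converges to $2\,\Asc_\mu(X,\alpha,T)$ by the first part, and the second to $h_\mu(\alpha,T)$ by the standard existence theorem for the Kolmogorov--Sinai entropy. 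Therefore $\Int_\mu(X,\alpha,T) = 2\,\Asc_\mu(X,\alpha,T) - h_\mu(\alpha,T)$ exists, which simultaneously proves convergence and records the measure-theoretic analogue of the relation $\Int = 2\Asc - \htop$ noted for open covers.
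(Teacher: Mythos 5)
Your proposal is correct and follows essentially the same route the paper indicates: subadditivity of $a_n=\sum_{S\subset n^*}c_S^n H_\mu(\alpha_S)$ plus Fekete's Subadditive Lemma for $\Asc_\mu$, and then the identity $\Int_\mu = 2\Asc_\mu - h_\mu(\alpha,T)$ (using $c_S^n=c_{S^c}^n$ and $\sum_S c_S^n=1$) for the intricacy limit. You have in fact supplied a detail the paper leaves implicit, namely the coefficient aggregation $\sum_{S_2\subset n^*}c_{S_1\cup(S_2+m)}^{m+n}=c_{S_1}^m$ via the Buzzi--Zambotti integral representation, which is exactly what makes the subadditivity argument go through.
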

	
	\begin{thm}\label{thm:PWErg}
		Let $(X,\mathscr{B},\mu,T)$ be a measure-preserving system. Then
		\[
		\sup_{\alpha}\Asc_{\mu}(X,\alpha,T)=\sup_{\alpha}\Int_{\mu}(X,\alpha,T)=h_\mu(X,T).
		\]
	\end{thm}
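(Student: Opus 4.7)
The plan mirrors that of Theorem~\ref{thm:PWTop}, with Shannon entropy playing the role of $\log N$. The upper bound is immediate: since $\alpha_S \subset \alpha_{n^*}$ as $\sigma$-algebras, $H_\mu(\alpha_S) \leq H_\mu(\alpha_{n^*})$, and averaging against $c_S^n$ (which sums to $1$) yields $\Asc_\mu(X,\alpha,T) \leq h_\mu(\alpha,T) \leq h_\mu(T)$. Exploiting the symmetry $c_{S^c}^n = c_S^n$ to collapse the three sums in the definition of $\Int_\mu$ gives the measure-theoretic analogue of the earlier topological identity,
\[
\Int_\mu(X,\alpha,T) = 2\,\Asc_\mu(X,\alpha,T) - h_\mu(\alpha,T),
\]
from which $\Int_\mu(X,\alpha,T) \leq h_\mu(\alpha,T) \leq h_\mu(T)$ follows at once.

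For the matching lower bound I would fix a finite measurable partition $\alpha$ and replace it by the refinement $\beta_k = \alpha_0^{k-1}$ for large $k$. A direct unpacking of joins shows $(\beta_k)_S = \alpha_{S+k^*}$, where $S + k^* := \bigcup_{i \in S}\{i,i+1,\dots,i+k-1\} \subseteq [0,n+k-2]$. From the decomposition $\alpha_{[0,n+k-2]} = \alpha_{S+k^*} \vee \alpha_{[0,n+k-2]\setminus(S+k^*)}$, subadditivity of $H_\mu$, the crude bound $H_\mu(\alpha_E) \leq |E|\log|\alpha|$, and the monotone inequality $H_\mu(\alpha_0^{m-1}) \geq m\,h_\mu(\alpha,T)$, one obtains the pointwise estimate
\[
H_\mu(\alpha_{S+k^*}) \geq (n+k-1)\,h_\mu(\alpha,T) - \big|[0,n+k-2] \setminus (S+k^*)\big|\cdot \log|\alpha|.
\]
Averaging this against $c_S^n$ and sending $n \to \infty$ reduces the argument to controlling the coefficient-weighted expected size of the complement $[0,n+k-2]\setminus(S+k^*)$.

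The crux is precisely this expected-complement estimate. For each bulk index $j \in [k-1, n-1]$, the event $j \notin S+k^*$ is exactly $S \cap [j-k+1, j] = \emptyset$, and its $c_S^n$-weight equals $c(k) := \int_0^1 (1-x)^k \, d\lambda_c(x)$ by Theorem~\ref{thm:BZ}; this tends to $0$ as $k \to \infty$ provided $\lambda_c$ places no mass at $\{0,1\}$, which covers the cases of interest (uniform, Edelman-Sporns-Tononi, and $p$-symmetric weights with $0 < p < 1$). Summing over $j$ bounds the expected complement size by $n\,c(k) + O(k)$, and hence
\[
\Asc_\mu(X, \beta_k, T) \geq h_\mu(\alpha, T) - c(k)\log|\alpha|.
\]
Letting $k \to \infty$ and then taking $\sup_\alpha$ gives $\sup_\alpha \Asc_\mu(X,\alpha,T) \geq h_\mu(T)$. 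Since $h_\mu(\beta_k, T) = h_\mu(\alpha, T)$, the identity from the first paragraph upgrades this at once to $\Int_\mu(X,\beta_k,T) \geq h_\mu(\alpha, T) - 2c(k)\log|\alpha|$, finishing the proof for intricacy. The main obstacle is exactly the tail estimate $c(k) \to 0$: in the topological case one used Chebyshev concentration to show most $S$ hit every length-$k/2$ subinterval, and the same philosophy works here once one replaces pointwise domination of $\log N$ by the $L^1$-type control of $H_\mu$ through complement cardinality supplied by the conditional-entropy inequality above.
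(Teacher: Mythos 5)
Your proposal is correct, and it reaches the crucial lower bound by a genuinely different route from the paper's. The paper disposes of Theorem~\ref{thm:PWErg} with the remark that the proof is ``similar to'' that of Theorem~\ref{thm:PWTop}, whose mechanism is a concentration argument: one isolates the exceptional family $\mathfrak{B}$ of sets $S$ that miss at least $2n\epsilon/k$ of the length-$k/2$ subintervals of $n^*$, shows $\card(\mathfrak{B})/2^n \to 0$, and for the remaining $S$ decomposes $S+k^*$ into long intervals separated by long gaps, on each of which the block count per coordinate is within $\epsilon$ of the entropy. You instead run a first-moment computation: the inequality $H_\mu(\alpha_E) \geq H_\mu(\alpha_{[0,m]}) - |[0,m]\setminus E|\log|\alpha|$, available because Shannon entropy is subadditive, converts the problem into bounding the $c_S^n$-expected cardinality of $[0,n+k-2]\setminus(S+k^*)$, which linearity of expectation and the integral representation of Theorem~\ref{thm:BZ} reduce to the single scalar $c(k)=\int_0^1(1-x)^k\,d\lambda_c(x)$. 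This buys three things: no case split into good and bad $S$, no need to control the interval-and-gap structure of $S+k^*$, and an explicit identification of the hypothesis on the weight system under which the theorem actually holds --- namely $c(k)\to 0$, i.e.\ $\lambda_c$ has no atom at $0$ or $1$. That condition is left implicit in the paper (whose sketch tacitly uses the uniform weights $c_S^n=2^{-n}$) but is genuinely necessary: for $\lambda_c=\tfrac12(\delta_0+\delta_1)$ one gets $\sup_\alpha\Asc_\mu=\tfrac12 h_\mu(T)$ and the statement fails. The remaining pieces --- the upper bound via monotonicity of $H_\mu$ under refinement, the identity $\Int_\mu=2\Asc_\mu-h_\mu(\alpha,T)$ from the symmetry $c_{S^c}^n=c_S^n$, and the transfer of the $\Asc_\mu$ bound to $\Int_\mu$ using $h_\mu(\beta_k,T)=h_\mu(\alpha,T)$ --- agree with the paper's intent and are handled correctly.
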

	
	The proofs are similar to those for the corresponding theorems in topological setting.
	These observations indicate that there may be a topological analogue of the following result.

	\begin{thm}[Ornstein-Weiss, 2007]\label{thm:OW}
		If $J$ is a finitely observable functional defined for ergodic finite-valued processes that is an isomorphism invariant, then $J$ is a continuous function of the measure-theoretic entropy.
	\end{thm}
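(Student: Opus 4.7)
The plan is to reduce the identification of $J$ to its restriction to Bernoulli shifts by means of Ornstein's isomorphism theorem, then use the finitely-observable hypothesis to propagate this to all ergodic finite-valued processes and to obtain continuity in entropy.

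First I would unpack ``finitely observable'': by hypothesis there is a family of Borel functions $S_n$ on blocks of length $n$ such that for every ergodic process $\mu$ the random variable $S_n(x_0,\dots,x_{n-1})$ converges $\mu$-almost surely, and hence in $\mu$-probability, to the constant $J(\mu)$. Ornstein's isomorphism theorem says Bernoulli shifts are classified up to measure-theoretic isomorphism by their entropy, so isomorphism invariance of $J$ forces $J(B)$ to depend only on $h_\mu(B)$; accordingly define $\phi:[0,\infty)\to\Br$ by $\phi(t)=J(B_t)$ for any Bernoulli shift of entropy $t$. The goal is then to prove $J(\mu)=\phi(h_\mu(T))$ for every ergodic $\mu$ and that $\phi$ is continuous.

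For arbitrary ergodic $\mu$ of entropy $h$, the strategy is to build for each $\epsilon>0$ a Bernoulli process $\tau_\epsilon$ of entropy $h'_\epsilon$ with $h'_\epsilon\to h$ as $\epsilon\to 0$ such that, in some joining, typical $\tau_\epsilon$-samples and typical $\mu$-samples agree on blocks of any prescribed length $n$ with probability at least $1-\epsilon$. This is the heart of the argument and uses cutting-and-stacking constructions of Ornstein: concatenate long typical $\mu$-blocks (of which by Shannon--McMillan there are roughly $e^{nh}$ of almost equal probability) according to an auxiliary Bernoulli source, and invoke Ornstein's finite-determination criterion to conclude that the resulting process is Bernoulli. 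Combined with the in-probability convergence of $S_n$ under both $\mu$ and $\tau_\epsilon$, the joining forces $\phi(h'_\epsilon)=J(\tau_\epsilon)$ to be arbitrarily close to $J(\mu)$; a diagonal argument across $n$ and $\epsilon$ together with continuity of $\phi$ (established next) yields $J(\mu)=\phi(h)$. Continuity of $\phi$ is obtained by the parallel argument on Bernoullis alone: if $h_n\to h$, couple $B_{h_n}$ and $B_h$, possible because the Bernoulli family is $\bar d$-continuous in its entropy parameter, so that coupled samples agree on length-$k$ blocks with probability tending to $1$; in-probability convergence of $S_k$ under each $B_{h_n}$ then forces $\phi(h_n)\to\phi(h)$ by a diagonal argument on $n$ and $k$.

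The main obstacle is the construction in the second step: producing a Bernoulli process whose short-block statistics match those of an essentially arbitrary $\mu$. This is the technical heart of Ornstein-style approximation and rests on the finite-determination characterization of Bernoulli shifts and on the theory of very weakly Bernoulli processes. A secondary subtlety is that matching samples under a joining does not automatically transfer the values of arbitrary measurable functionals across measures; it is the combination with convergence of $S_n$ in $\mu$-probability (an upgrade of the a.s.\ convergence supplied by finite observability) that permits marginal closeness to be converted into closeness of the limiting values $J(\mu)$ and $J(\tau_\epsilon)$. Thus the interplay between Ornstein's approximation machinery and the probability-level convergence implicit in finite observability is what collapses the functional $J$ onto a continuous function of the single isomorphism invariant $h_\mu(T)$.
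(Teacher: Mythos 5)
The paper does not prove this statement: it is quoted from Ornstein and Weiss (2007), with only the definition of ``finitely observable'' supplied, so your proposal has to be judged against the published argument rather than against anything in these notes.

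Your first reduction (Ornstein's isomorphism theorem forces $J$ to depend only on entropy on the Bernoulli class, defining $\phi(t)=J(B_t)$) is correct and is indeed where Ornstein--Weiss begin. The genuine gap is in the step where you pass from the weak approximation of $\mu$ by Bernoulli processes $\tau_\epsilon$ to the conclusion that $J(\tau_\epsilon)$ is close to $J(\mu)$. Finite observability gives, for each fixed process, almost-sure (hence in-probability) convergence of $S_n$ to the value of $J$, but with a rate that may depend on the process in a completely uncontrolled way. To compare $J(\mu)$ with $J(\tau_\epsilon)$ you need a single $n$ that is simultaneously large enough for $S_n$ to be concentrated near $J(\mu)$ under $\mu$ and near $J(\tau_\epsilon)$ under $\tau_\epsilon$; but $\tau_\epsilon$ is constructed to match $\mu$ only on blocks of that same length $n$, and nothing prevents the convergence of $S_m(\tau_\epsilon)$ to $J(\tau_\epsilon)$ from setting in only at scales $m$ far beyond $n$, where the block statistics of $\tau_\epsilon$ and $\mu$ no longer agree. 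Enlarging $n$ changes the approximating process and restarts the problem. The same circularity infects your coupling argument for the continuity of $\phi$. This non-uniformity is exactly what makes the theorem nontrivial, and it is why Ornstein and Weiss do not argue process-by-process: the heart of their proof is the construction of a \emph{single} ergodic process $\mathcal{Z}$, built by an infinite cutting-and-stacking procedure, whose finite-block distributions along one subsequence of scales mimic those of $\mu$ and along another subsequence mimic those of a Bernoulli process of entropy near $h_\mu(T)$, with $\mathcal{Z}$ arranged so that isomorphism invariance pins down $J(\mathcal{Z})$. Since $S_n(\mathcal{Z})$ must converge almost surely to the one number $J(\mathcal{Z})$, while along the two subsequences it is forced near $J(\mu)$ and near $\phi(h)$ respectively --- and here only the convergence rates of the three \emph{fixed} processes $\mu$, $B$, and $\mathcal{Z}$ are ever used --- one concludes $J(\mu)=\phi(h)$, and a variant of the same interpolation gives continuity of $\phi$. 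Your proposal is missing this interpolating construction, and without it the reduction does not close.
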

	
	Here $X=(x_1,x_2,\dots)$ is a finite-state ergodic stochastic process, and a ``finitely observable functional" is the a.s. limit $F(X)$ of a sequence of functions $f_n(x_1,x_2,\dots,x_n)$ taking values in some metric space which for every such process converges almost surely. The integral of $x_1$ and the entropy of the process are examples of finitely observable functionals.

			\section{The average sample complexity function}\label{sec:averagesamplecomplexity}

		The observations in the preceding situation suggest that one should examine these $\Asc$ and $\Int$ {\em locally}. For example, for a fixed open cover $\mathscr U$, fix a $k$ and find the topological average sample complexity $Asc(X, \mathscr{U}_k,\sigma)=\lim_{n \to \infty} \frac{1}{n}\sum_{S\subset n^*}c_S^n\log N((\mathscr{U}_k)_S)$.
				Or, do not take the limit on $n$, and study the quantity as a function of $n$,
				analogously to the symbolic or topological complexity functions.
				Similarly for the measure-theoretic version: fix a partition $\alpha$ and study the limit, or the function of $n$.
		\[\Asc_\mu(X,T,\alpha)=\lim_{n\rightarrow\infty}\frac{1}{n}\sum_{S\subset n^*}c_S^n H_\mu(\alpha_S).
		\]

		So we begin study of the Asc of a fixed open cover as a function of $n$,
	\[
	\Asc(X,\sigma,\mathscr{U}_k,n)=\frac{1}{n}\sum_{S\subset n^*}c_S^n\log N(S),
	\]
	especially for SFT's and $\mathscr U = \mathscr U_0$, the natural time-0 cover (and partition).  
	
	Figure \ref{DifferentAscs} shows two SFT's that have the same number of $n$-blocks for every $n$ but different $\Asc$ functions.
	
	\begin{example}
		\begin{figure}\label{DifferentAscs}
			\[
			\begin{array}{cc}
			\text{I}&\text{II}\\
			&\\
			\xymatrix{
				&*+[o][F-]{0}\ar@(ur,ul)[]\ar@/_/[dl]&\\
				*+[o][F-]{1}\ar@/^/[rr]&&*+[o][F-]{2}\ar@/^/[ll]\ar@/_/[ul]
			}&
			\xymatrix{
				&*+[o][F-]{0}\ar@/^/[dr]\ar@/^/[dl]&\\
				*+[o][F-]{1}\ar@/^/[ur]\ar@/_/[rr]&&*+[o][F-]{2}\ar@/^/[ul]
			}\\
			&\\
			(\text{M}_I)^3>0&(\text{M}_{II})^4>0
			\end{array}
			\]
			\caption{Graphs of two subshifts with the same complexity function but different average sample complexity functions.}
		\end{figure}
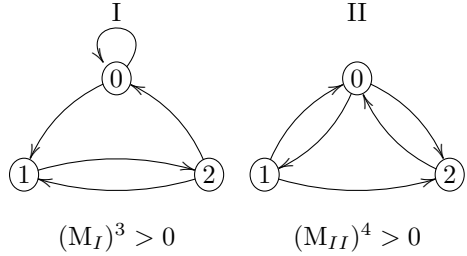
	\end{example}

	\begin{equation*}
	\Asc(n)=\frac{1}{n}\frac{1}{2^n}\sum_{S\subset n^*}\log N(S)
	\end{equation*}
			\begin{figure}\label{AscGraphs}
			\begin{center}
		\includegraphics[width=4.5in]{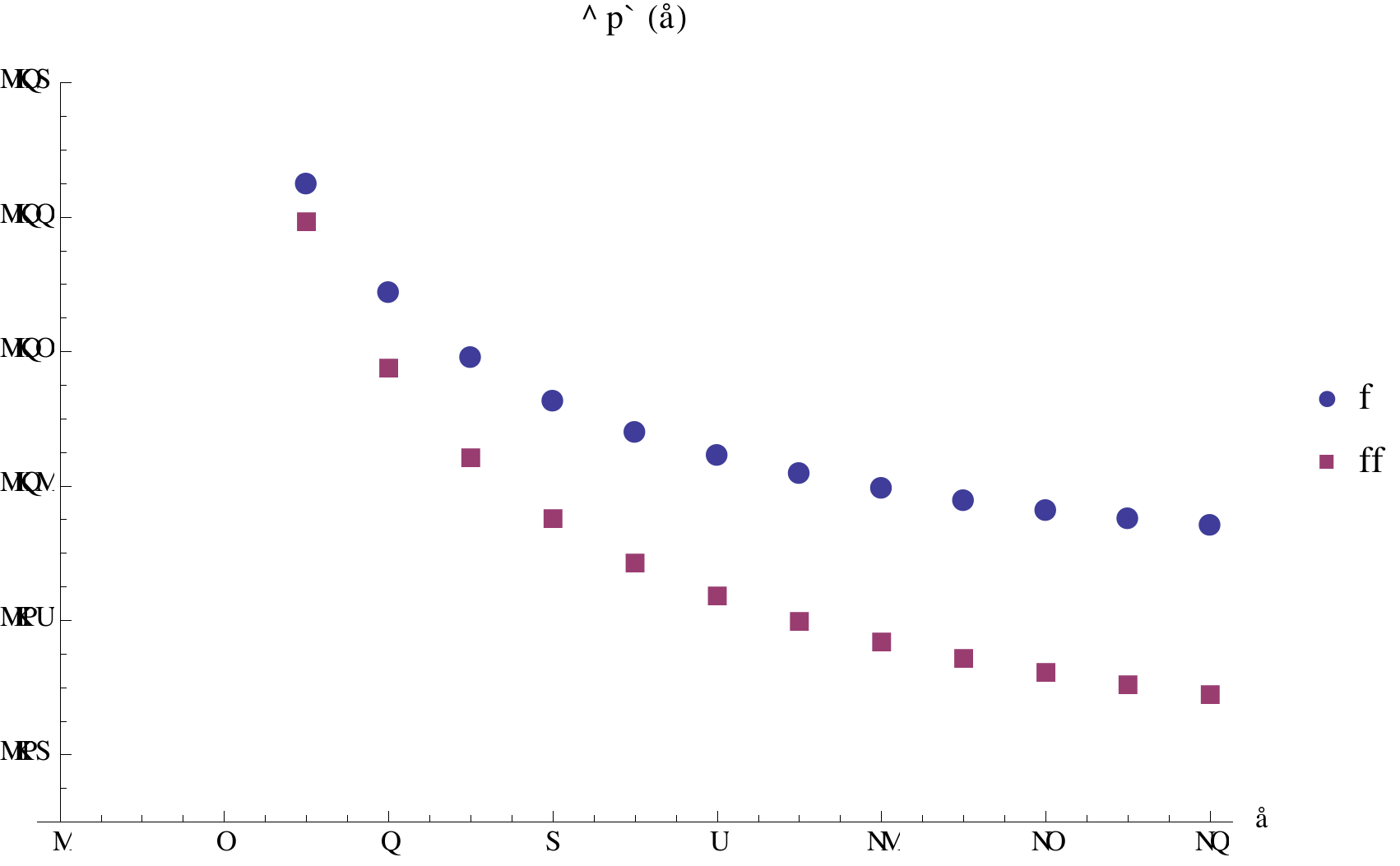}
			\end{center}
\end{figure}

Numerical evidence (up to $n=10$) indicates that these two SFT's have different values of $\Asc$ and $\Int$, although they have identical complexity functions and hence the same topological entropy..

		\[
	\begin{tabular}{@{}m{.25\textwidth} m{.12\textwidth}m{.12\textwidth}m{.12\textwidth}@{}}
	\toprule
	Adjacency Graph&$h_{\text{top}}$&$\Asc(10)$&$\Int(10)$\\
	\midrule
	\includegraphics[width=.9in,keepaspectratio]{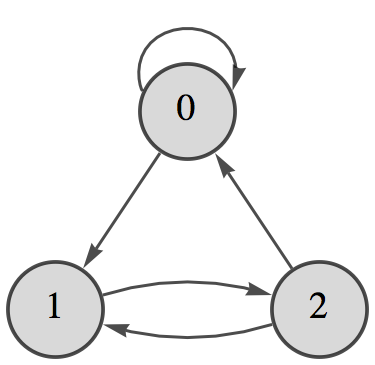}&$ 0.481$&$0.399$&$0.254$\\
	\includegraphics[width=.9in,keepaspectratio]{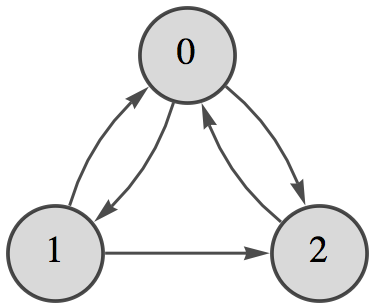}&$0.481$&$0.377$&$0.208$\\
	\bottomrule
	\end{tabular}
	\]

\section{Computing measure-theoretic average sample complexity}

%
%

 We identify $\Asc_\mu(X,\alpha,T)$ as ($1/2$ of) a conditional entropy, or fiber entropy, of a skew product of $X$ with the $2$-shift. 
 For a 1-step Markov process with respect to a fixed generating partition $\alpha$, 
 then $\Asc_\mu(X,\alpha,T)$ is given by 
  a series which  serves as a computational tool analogous to the series in Theorem \ref{AscForSFT}. 
  The next two results extend to general weights by applying Theorem \ref{thm:BZ}.

The idea is to view a subset $S\subset n^*$ as corresponding to a random binary string of length $n$ generated by the Bernoulli $(1/2,1/2)$ measure $P$ on the full $2$-shift $(\Sigma_2^+,\sigma)$.
For example $\{0,2,3\}\subset 5^*\leftrightarrow 10110 $.
The average of the entropy $H_\mu(\alpha_S)$, over all $S\subset n^*$, is then an integral and can be interpreted in terms of the entropy of a first-return map to the cylinder $A=[1]$ in a cross product of our system $X$ and the full $2$-shift.

Denote by $T_{X \times A}$ the first-return map to $X \times [1] \subset X \times A$ and by $\sigma_A$ the first-return map to $A$ under the shift $\sigma$. 
Let $P_A=P/P(A)$ denote Bernoulli $(1/2,1/2)$ measure normalized on $A$.  
For $\xi \in A$ define $n_A(\xi)=\inf\{n \geq 1: \sigma^n \xi \in A\}$.
The system $(X \times A, T_{X \times A}, \mu \times P_A)$ is a skew product over $(A, \sigma_A,P_A)$ with $T_{X \times A}(x,\xi)=(T^{n_A(\xi)}x, \sigma_A\xi)$. (Here the base system is written in the second coordinate.)
For each $n=1,2,\dots$ let $A_n=\{\xi \in A: n_A(\xi)=n\}$, and let $\mathcal A$ denote the $\sigma$-algebra generated by the sets $X \times A_n, n\geq 1$. 
Denote by $h_{\sigma_A}(X,T,\mu)$ the fiber entropy of the skew product $(X \times A,T_{X\times A},\mu \times P_A)$.

\begin{thm}\label{AscForMPT}
	Let $(X,\mathscr{B},\mu,T)$ be a measure-preserving system and $\alpha$ a finite measurable partition of $X$. Let $A=[1]=\{\xi\in\Sigma_2^+:\xi_0=1\}$ and $\beta=\alpha\times A$ the related finite partition of $X\times A$. Denote by $T_{X\times A}$ the first-return map on $X\times A$ and let $P_A=P/P[1]$ denote the measure $P$ restricted to $A$ and normalized. Let $c_S^n=2^{-n}$ for all $S\subset n^*$. Then
	\be\begin{aligned}
	\Asc_\mu(X,\alpha,T)&=\frac{1}{2} \lim_{n \to \infty} \frac{1}{n} H_{\mu \times P_A}\left(\bigvee_{k=0}^{n-1}T_{X \times A}^{-k}\beta \Big\vert \bigvee_{k=0}^{n-1}T_{X \times A}^{-k}\mathcal A\right)\\
	&=\frac{1}{2} h_{\sigma_A}(X,T,\mu) 
	\leq \frac{1}{2}h_{\mu\times P_A}(X\times A,\beta,T_{X\times A}).
\end{aligned}\en
\end{thm}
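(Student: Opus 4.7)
The central idea is to encode each subset $S \subset n^*$ as a binary string $\xi \in \{0,1\}^n$ via $\xi_i = \mathbf{1}_S(i)$, so that the uniform weight $c_S^n = 2^{-n}$ is exactly the Bernoulli $(1/2,1/2)$ probability. Under this encoding the purely combinatorial average defining $\Asc_\mu$ becomes an integral, and the sets $S$ become the successive positions of returns of the shift $\sigma$ on $\Sigma_2^+$ to the cylinder $A = [1]$. That observation turns the middle expression of the theorem into the natural fiber entropy of the skew product $T_{X \times A}$ over the return-time base.

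First, I would rewrite the weighted sum as a Bernoulli integral:
\[
\sum_{S \subset n^*} 2^{-n} H_\mu(\alpha_S) \;=\; \int_{\Sigma_2^+} H_\mu\bigl(\alpha_{S_n(\xi)}\bigr)\, dP(\xi), \qquad S_n(\xi) = \{\,i \in n^* : \xi_i = 1\,\}.
\]
Using the translation invariance $H_\mu(\alpha_{S+c}) = H_\mu(\alpha_S)$ (true because $T$ preserves $\mu$), I would then condition on the position of the first $1$ in $\xi$ to reduce the integral to one over $A = [1]$; the contribution from sequences beginning with many zeros is controlled by the geometric distribution of that position and, in view of the bound $H_\mu(\alpha_S) \leq |S|\log |\alpha|$, disappears after division by $n$. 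For $\xi \in A$, the positions of $1$s in $\xi$ are exactly the successive return times $0 = r_0(\xi) < r_1(\xi) < \cdots$ of $\sigma$ to $A$, with $r_k(\xi) = \sum_{j=0}^{k-1} n_A(\sigma_A^j \xi)$, so letting $K_n(\xi)$ denote the number of $1$s in $\xi\restriction_{[0,n-1]}$,
\[
\alpha_{S_n(\xi)} \;=\; \bigvee_{k=0}^{K_n(\xi)-1} T^{-r_k(\xi)}\alpha .
\]

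Second, I would identify the skew-product refinement fiberwise. Since $T_{X \times A}^k(x,\xi) = (T^{r_k(\xi)} x,\, \sigma_A^k \xi)$, the partition $\bigvee_{k=0}^{n-1} T_{X\times A}^{-k}\beta$ intersects each fiber $X \times \{\xi\}$ in $\bigvee_{k=0}^{n-1} T^{-r_k(\xi)}\alpha$, while the sub-$\sigma$-algebra $\bigvee_{k=0}^{n-1} T_{X\times A}^{-k}\mathcal A$, being generated by the sets $X \times A_{j_1} \cap T_{X\times A}^{-1}(X \times A_{j_2}) \cap \cdots$, records precisely the return-time vector $(n_A(\xi),\, n_A(\sigma_A\xi),\, \dots,\, n_A(\sigma_A^{n-1}\xi))$, i.e.\ the restriction of $\xi$ to $[0, r_{n-1}(\xi)]$. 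Disintegrating $\mu \times P_A$ along this $\sigma$-algebra therefore gives
\[
H_{\mu \times P_A}\!\left(\bigvee_{k=0}^{n-1} T_{X\times A}^{-k}\beta \,\Big|\, \bigvee_{k=0}^{n-1} T_{X\times A}^{-k}\mathcal A\right) \;=\; \int_A H_\mu\!\left(\bigvee_{k=0}^{n-1} T^{-r_k(\xi)}\alpha\right) dP_A(\xi).
\]
The $n \to \infty$ limit of $1/n$ times the right-hand side is by definition the fiber entropy $h_{\sigma_A}(X,T,\mu)$ of the skew product over its base $(A,\sigma_A,P_A)$, yielding the middle equality, and the final inequality is the pointwise bound $H(\gamma \mid \mathcal F) \leq H(\gamma)$ applied uniformly in $n$.

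To finish the first equality, I would match the two normalizations. The strong law of large numbers for Bernoulli $(1/2,1/2)$ gives $r_K / K \to 2$ almost surely under $P_A$ and $K_n / n \to 1/2$ almost surely under $P$. Together with the uniform bound $0 \leq H_\mu(\alpha_S) \leq |S| \log|\alpha|$ and dominated convergence, this shows that a block of $n$ consecutive return times, spanning a window of length $\approx 2n$, contributes on average the same as a uniformly chosen $S \subset (2n)^*$ of typical size $\approx n$. Dividing by $n$ on the return-count side versus by $2n$ on the window-length side produces precisely the factor $\tfrac{1}{2}$ in the statement. The main obstacle is exactly this index-matching step: one must justify interchanging the two parameterisations (window length $n$ versus number of returns $K \approx n/2$) despite their random fluctuation, and this is where the factor $1/2$ enters; it is handled by the strong law together with the linear-in-$|S|$ entropy bound, which suppresses atypical contributions.
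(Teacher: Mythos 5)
Your proposal is correct and follows essentially the same route the paper indicates: encoding $S\subset n^*$ as a Bernoulli$(1/2,1/2)$ binary string, turning the weighted sum into an integral over $\Sigma_2^+$, identifying the positions of $1$s with return times to $A=[1]$ so that the conditional entropy of the skew product disintegrates fiberwise into $\int_A H_\mu\bigl(\bigvee_{k=0}^{n-1}T^{-r_k(\xi)}\alpha\bigr)\,dP_A(\xi)$, and extracting the factor $\tfrac12$ from the a.s.\ density of $1$s via the strong law and the bound $H_\mu(\alpha_S)\le |S|\log|\alpha|$. The paper only sketches this idea, and your write-up supplies the missing details (translation invariance, conditioning on the first $1$, the index-matching between window length and return count) correctly.
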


Applying the definition of the entropy of a transformation with respect to a fixed partition as the integral of the corresponding information function and breaking up the integral into a sum of integrals over sets where the first-return time to $X \times A$ takes a fixed value produces the following result.

\begin{thm}\label{Ascseries}
	Let $(X,\mathscr{B},\mu,T)$ be a measure-preserving system, $\alpha$ a finite measurable partition of $X$, and $\alpha_i=T^{-i}\alpha$ for $i \geq 1$. 
	Let $c_S^n=2^{-n}$ for all $S\subset n^*$. Then
	\[
	\Asc_\mu(X,\alpha,T) = \frac{1}{2}\sum_{i=1}^\infty\frac{1}{2^i}H_\mu\left(\alpha\mid\alpha_{i}\right).
	\]
\end{thm}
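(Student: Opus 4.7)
The plan is to combine Theorem~\ref{AscForMPT}, which states $\Asc_\mu(X,\alpha,T) = \tfrac12 h_{\sigma_A}(X,T,\mu)$, with an explicit evaluation of the fiber entropy using the 1-step Markov hypothesis on $\alpha$ recorded in the preamble.

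Unpacking the skew product $T_{X\times A}(x,\xi) = (T^{n_A(\xi)}x,\sigma_A\xi)$, its $k$-th iterate sends the fiber coordinate through $T^{t_k(\xi)}$, where $t_k(\xi) = n_A(\xi) + n_A(\sigma_A\xi) + \cdots + n_A(\sigma_A^{k-1}\xi)$ is the location of the $k$-th symbol $1$ strictly past position $0$ in $\xi$. The fiber-entropy formula, applied to the partition $\alpha$ of $X$, therefore reads
\[
h_{\sigma_A}(X,T,\mu) = \int_A H_\mu\bigl(\alpha \,\bigm|\, \alpha_{t_1(\xi)} \vee \alpha_{t_2(\xi)} \vee \cdots\bigr)\, dP_A(\xi).
\]

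The key step exploits the 1-step Markov hypothesis. Since the $\alpha$-process under $T$ is stationary Markov, every subsampling of it is again Markov; in particular, for each fixed $\xi$ the chain $(\alpha,\alpha_{t_1(\xi)},\alpha_{t_2(\xi)},\dots)$ is Markov, and the conditioning of its first entry on all subsequent entries collapses to conditioning on the nearest one:
\[
H_\mu\bigl(\alpha \,\bigm|\, \alpha_{t_1(\xi)} \vee \alpha_{t_2(\xi)} \vee \cdots\bigr) = H_\mu\bigl(\alpha \,\bigm|\, \alpha_{n_A(\xi)}\bigr).
\]
This collapse is essentially the whole difficulty; without the Markov hypothesis the integrand would depend in a complicated way on the entire return-time sequence of $\xi$ and no clean closed form would emerge.

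Finally, under $P_A$---Bernoulli $(1/2,1/2)$ conditioned on $\xi_0=1$---the first-return time $n_A$ is geometric with $P_A\{n_A = i\} = 2^{-i}$ for $i \geq 1$, as suggested by the hint of partitioning the integral by the value of the return time. Splitting $A$ accordingly yields
\[
h_{\sigma_A}(X,T,\mu) = \sum_{i=1}^\infty P_A\{n_A = i\}\, H_\mu(\alpha \mid \alpha_i) = \sum_{i=1}^\infty \frac{1}{2^i}\, H_\mu(\alpha \mid \alpha_i),
\]
and multiplying by $\tfrac12$ via Theorem~\ref{AscForMPT} produces the claimed identity.
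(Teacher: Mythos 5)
Your proposal is correct and follows essentially the same route as the paper's own (very compressed) argument: invoke Theorem~\ref{AscForMPT}, write the fiber entropy as the integral over $A$ of the conditional entropy of $\alpha$ given its images along the return-time sequence, and break that integral into the sets $A_i=\{n_A=i\}$, which carry $P_A$-mass $2^{-i}$. You also rightly make explicit the $1$-step Markov hypothesis on the $\alpha$-process (stated in the paper's preamble though omitted from the formal theorem statement); without it the collapse $H_\mu(\alpha\mid\alpha_{t_1}\vee\alpha_{t_2}\vee\cdots)=H_\mu(\alpha\mid\alpha_{t_1})$ fails and the series gives only an upper bound for $\Asc_\mu(X,\alpha,T)$.
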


\section{The search for maximizing measures on subshifts}
Given a topological dynamical system $(X,T)$, one would like to find the measures that maximize $\Asc$ and $\Int$, since the nature of these measures might tell us a lot about the balance between freedom and determinism within the system. For ordinary topological entropy and topological pressure with respect to a given potential, maximizing measures (measures of maximal entropy, equilibrium states) are of great importance and are regarded as natural measures on the system. We hope that Theorem \ref{Ascseries} might be helpful in the identification of these extremal measures.
In the topological case the first-return map $T_{X \times A}$ is not continuous nor expansive nor even defined on all of $X \times A$ in general, so known results about measures of maximal entropy and equilibrium states do not apply.
To maximize $\Int$, there is the added problem of the minus sign in
\[
\Int(X,\mathscr{U},T)=2\Asc(X,\mathscr{U},T)-\htop(X,\mathscr{U},T).
\]
Maybe some modern work on local or relative variational principles, almost subadditive potentials, equilibrium states for shifts with infinite alphabets, etc. could apply? (See \cite{Falconer1988, Barreira1996, Barreira2010, Mummert2006, CaoFengHuang2008, Yayama2011, IommiYayama2012, HuangYeZhang2006, HuangMaassRomagnoliYe2004, ChengZhaoCao2012} etc.)

\medskip
\begin{prop}
	When $T: X \to X$ is a homeomorphism on a compact metric space (e.g., $(X,T)$ is a subshift on  finite alphabet) and $\alpha$ is a Borel measurable finite partition of $X$, 
	$\Asc_\mu (X,T,\alpha)$ is an affine upper semicontinuous (in the weak* topology) function of $\mu$, so the set of maximal measures for $\Asc_\mu (X,T,\alpha)$ is nonempty, compact, and convex and contains some ergodic measures  (see \cite[p. 198 ff.]{waltersergodic}).
\end{prop}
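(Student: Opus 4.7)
The plan is to verify two properties of $\Asc_\mu(X,\alpha,T)$ on the compact convex set $M(X,T)$ (weak-$*$ topology): that it is (a) upper semicontinuous in $\mu$ and (b) affine in $\mu$. Once these are in hand the proposition follows directly from the standard fact (cited in the statement) that any affine upper semicontinuous functional on a metrizable compact convex set attains its supremum on a nonempty compact convex face, combined with Krein--Milman applied to that face: its extreme points coincide with extreme points of the ambient $M(X,T)$, which are exactly the ergodic measures.

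For upper semicontinuity, let $b_n(\mu)=\sum_{S\subset n^*}c_S^n H_\mu(\alpha_S)$. In the subshift case with $\alpha$ the clopen time-$0$ partition (more generally whenever each atom of $\alpha$ is a $\mu$-continuity set for every invariant $\mu$), each atom of each $\alpha_S$ is clopen, so by the Portmanteau theorem $\nu\mapsto\nu(A)$ is weak-$*$ continuous, and hence $\mu\mapsto H_\mu(\alpha_S)$, and so $b_n$, is continuous. A short calculation splitting $S\subset(n+m)^*$ into its parts inside $n^*$ and $\{n,\dots,n+m-1\}$, using $T$-invariance and $H_\mu(\beta\vee\gamma)\le H_\mu(\beta)+H_\mu(\gamma)$, shows $b_{n+m}\le b_n+b_m$ for the uniform weights $c_S^n=2^{-n}$; general Buzzi--Zambotti coefficients inherit subadditivity via the representation in Theorem~\ref{thm:BZ}. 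Fekete's lemma then gives $\Asc_\mu=\inf_n b_n(\mu)/n$, which is an infimum of continuous functions and so is upper semicontinuous on $M(X,T)$.

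For affinity, the natural route is through Theorem~\ref{AscForMPT}, which for uniform weights identifies
\[
\Asc_\mu(X,\alpha,T)=\tfrac{1}{2}\,h_{\sigma_A}(X,T,\mu),
\]
the fiber entropy of the skew product $(X\times A,T_{X\times A},\mu\times P_A)$ over the base $(A,\sigma_A,P_A)$. By the Abramov--Rokhlin formula this equals $\tfrac{1}{2}\bigl(h_{\mu\times P_A}(T_{X\times A})-h_{P_A}(\sigma_A)\bigr)$. The second term does not depend on $\mu$; the map $\mu\mapsto\mu\times P_A$ is affine from $M(X,T)$ into the invariant measures for $T_{X\times A}$; and the measure-theoretic entropy of a measure-preserving system is an affine function of the invariant measure (a consequence of the ergodic decomposition $h_\nu=\int h_{\nu_x}\,d\nu(x)$). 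Composing, $\mu\mapsto\Asc_\mu$ is affine. For general Buzzi--Zambotti coefficients, integrating against $\lambda_c$ from Theorem~\ref{thm:BZ} and invoking Fubini preserves affinity.

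The main obstacle is the affinity step, because affinity is invisible term-by-term: each $H_\mu(\alpha_S)$ is only concave in $\mu$, not affine. The skew-product realization of Theorem~\ref{AscForMPT} is therefore doing essential work, expressing the limit as an honest measure-theoretic entropy that inherits affinity from the ergodic decomposition. If one wished to avoid the skew product, an alternative is to combine ergodic decomposition with Fekete's formula directly: concavity of $H_\mu(\alpha_S)$ in $\mu$ yields one inequality $\int\Asc_{\mu_x}\,d\mu(x)\le\Asc_\mu$, and the reverse inequality can be produced from the infimum characterization together with Shannon--McMillan--Breiman applied along the coefficient-weighted averages.
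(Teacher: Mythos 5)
The paper itself offers no proof of this proposition beyond the pointer to Walters, and the argument that citation intends is exactly the skeleton you chose: show that $\Asc_\mu(X,\alpha,T)$ is affine and upper semicontinuous on the compact convex metrizable set of invariant measures, then invoke the standard facts about affine u.s.c.\ functionals (attainment of the supremum, the maximizing set is a closed face, extreme points of a face are extreme points of the whole set, i.e.\ ergodic measures). Your upper semicontinuity half is correct: the subadditivity $b_{n+m}\le b_n+b_m$ holds by the splitting of $S\subset(n+m)^*$ that you describe (and for general Buzzi--Zambotti weights by integrating the product form of the coefficients against $\lambda_c$, since $\sum_{S_2\subset m^*}x^{|S_2|}(1-x)^{m-|S_2|}=1$); Fekete then gives $\Asc_\mu=\inf_n b_n(\mu)/n$, an infimum of weak* continuous functions provided the atoms of $\alpha$ are clopen or at least have $\mu(\partial A)=0$ for every invariant $\mu$ --- a hypothesis you are right to flag, since for a genuinely arbitrary Borel partition the continuity of $b_n$, hence the u.s.c.\ claim, can fail.

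The affinity half is where your proposal goes astray. The ``obstacle'' you identify is illusory: exactly the same issue (terms that are concave but not affine) arises in Walters' proof that $\mu\mapsto h_\mu(T)$ is affine, and the same device resolves it. For any finite partition $\beta$ and $\mu=p\mu_1+(1-p)\mu_2$ one has
\[
0\le H_\mu(\beta)-pH_{\mu_1}(\beta)-(1-p)H_{\mu_2}(\beta)\le H(p,1-p)\le\log 2 .
\]
Summing over $S\subset n^*$ with weights $c_S^n$ that sum to $1$ shows that the defect of $b_n$ from affinity is at most $\log 2$ uniformly in $n$; dividing by $n$ and letting $n\to\infty$ gives affinity of $\Asc_\mu$ for every coefficient system, with no skew product needed. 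By contrast, the route you propose as primary has a real gap: Theorem \ref{AscForMPT} identifies $\Asc_\mu$ with a fiber entropy computed relative to the \emph{fixed} partition $\beta=\alpha\times A$, which is in general not relatively generating for the skew product, whereas the Abramov--Rokhlin formula relates the base entropy to the \emph{full} relative entropy (the supremum over partitions); you cannot extract the fixed-$\beta$ quantity from it, and that theorem is in any case stated only for the uniform weights $c_S^n=2^{-n}$, so the ``integrate against $\lambda_c$'' extension does not apply to it. Your fallback via ergodic decomposition plus Shannon--McMillan--Breiman is likewise only sketched in the direction that actually needs proof. Replace the affinity paragraph with the two-line bounded-defect estimate above and the proof is complete and matches what the Walters citation is pointing at.
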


We try now to find measures of maximal $\Asc$ or $\Int$ on SFT's, or at least maximal measures among all Markov measures of a fixed memory. Recall that a measure of maximal entropy on an SFT is unique and is a Markov measure, called the Shannon-Parry measure, denoted here by $\mu_{\max}$. Further, given a potential function $\phi$ that is a function of just two coordinates, again there is a unique {\em equilibrium} measure that maximizes
\be
P_\mu(\phi)=h_\mu(\sigma) + \int_X \phi \, d\mu.
\en
See \cite{ParryTuncel1982}.

%
%

			 A 1-step Markov measure on the full shift space $(\Sigma_n,\sigma)$ is given by s stochastic matrix $P=(P_{ij})$ and fixed probability vector $p=\left(\begin{array}{cccc}p_0&p_1&\dots&p_{n-1}\end{array}\right)$, i.e. $\sum p_i=1$ and $pP=p$.
			 The measure $\mu_{P,p}$ is defined as usual on cylinder sets by
			\[
			\mu_{p,P}[i_0i_1\dots i_k]=p_{i_0}P_{i_0i_1}\cdots P_{i_{k-1}i_k}.
			\]

	\begin{example}[$1$-step Markov measure on the golden mean shift]
		Denote by $P_{00}\in[0,1]$ the probability of going from $0$ to $0$ in a  sequence of $X_{\{11\}}\subset\Sigma_2$. Then
		\[
		P=\left(\begin{array}{cc}
		P_{00}&1-P_{00}\\
		1&0\end{array}\right),\quad
		p=\left(\begin{array}{cc}\frac{1}{2-P_{00}}&\frac{1-P_{00}}{2-P_{00}}\end{array}\right)
		\]
	\end{example}
	
	Using the series formula in Theorem \ref{Ascseries} and known equations for conditional entropy, we can approximate $\Asc_\mu$ and $\Int_\mu$ for Markov measures on SFTs.
  Let's look first at 1-step Markov measures.

	\begin{center}
			\begin{tabular}{c}
			\includegraphics[width=.65\textwidth,keepaspectratio]{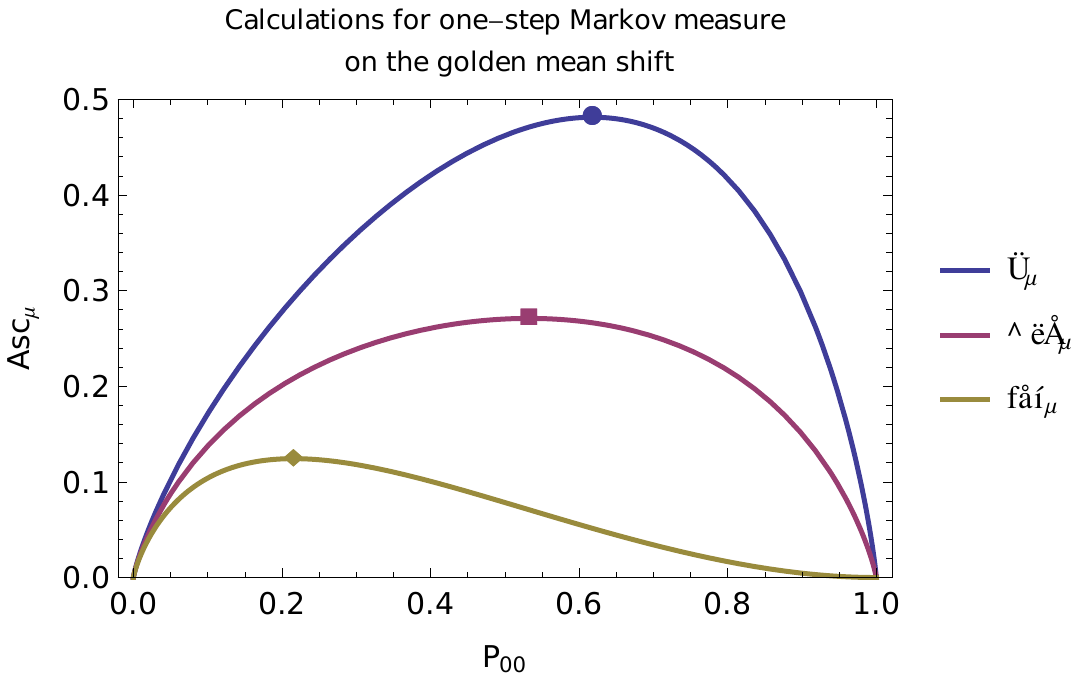}\\
			\begin{tabular}{@{}llll@{} }
				\toprule
				$P_{00}$&$h_\mu$&$\Asc_\mu$&$\Int_\mu$\\
				\midrule
				0.618&{ 0.481}&0.266&0.051\\
				0.533&0.471&{ 0.271}&0.071\\
				0.216&0.292&0.208&{ 0.124}\\
				\bottomrule\\
			\end{tabular}
		\end{tabular}
	\end{center}
	
		 Note that the maximum value of $h_\mu=\htop=\log\phi$ occurs when $P_{00}=1/\phi$;		
		 there are unique maxima among $1$-step Markov measures for $\Asc_\mu$ and $\Int_\mu$;
		 and the
		 maxima for $\Asc_\mu$, $\Int_\mu$, and $h_\mu$ are achieved by different measures.

Now let's calculate $\Asc$ and $\Int$ for various 2-step Markov measures on the golden mean SFT.

	\begin{center}
		$2$-step Markov measures on the golden mean shift
		\begin{tabular}{c}
			\includegraphics[width=.45\textwidth,keepaspectratio]{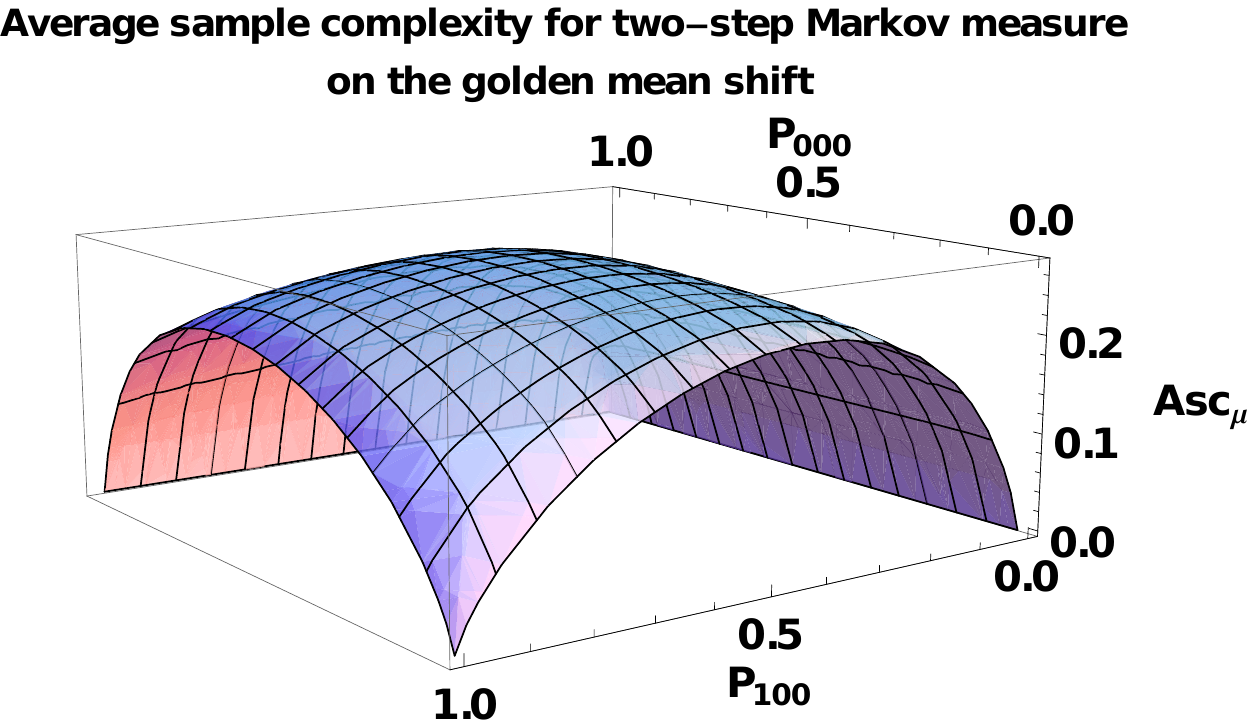}\quad
			\includegraphics[width=.45\textwidth,keepaspectratio]{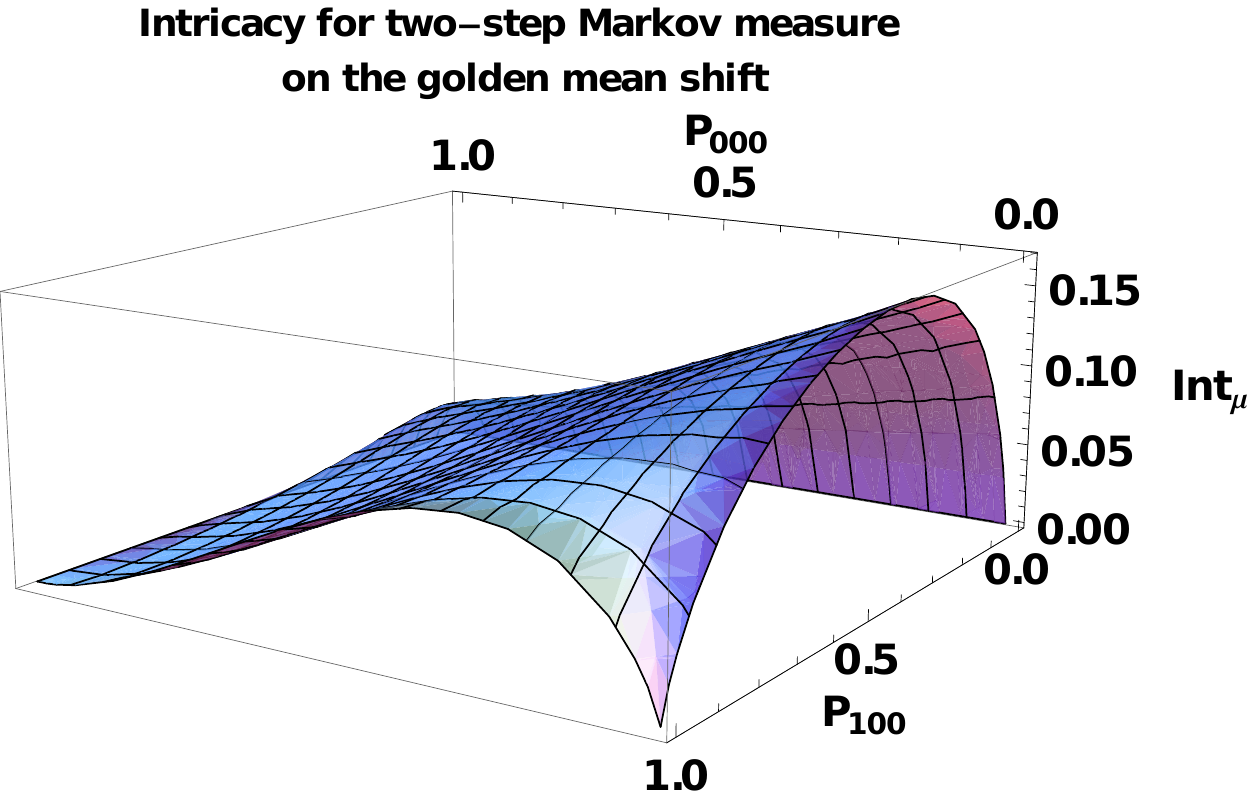}\\
			\begin{tabular}{@{}lllll@{} }
				\toprule
				$P_{000}$&$P_{100}$&$h_\mu$&$\Asc_\mu$&$\Int_\mu$\\
				\midrule
				0.618&0.618&{ 0.481}&0.266&0.051\\
				0.483&0.569&0.466&{ 0.272}&0.078\\
				0&0.275&0.344&0.221&{ 0.167}\\
				\bottomrule\\
			\end{tabular}
		\end{tabular}
	\end{center}
	
		 $\Asc_\mu$ appears to be strictly convex, so it would have a unique maximum among $2$-step Markov measures.
		 $\Int_\mu$ appears to have a unique maximum among $2$-step Markov measures on a proper subshift ($P_{000}=0$).
		 The maxima for $\Asc_\mu$, $\Int_\mu$, and $h_\mu$ are achieved by different measures, and are different from the measures that are maximal among 1-step Markov measures.

%
%
%
 Let's move from the golden mean SFT to the full 2-shift.

	\begin{center}
		$1$-step Markov measures on the full $2$-shift
		\begin{tabular}{c}
			\includegraphics[width=.45\textwidth,keepaspectratio]{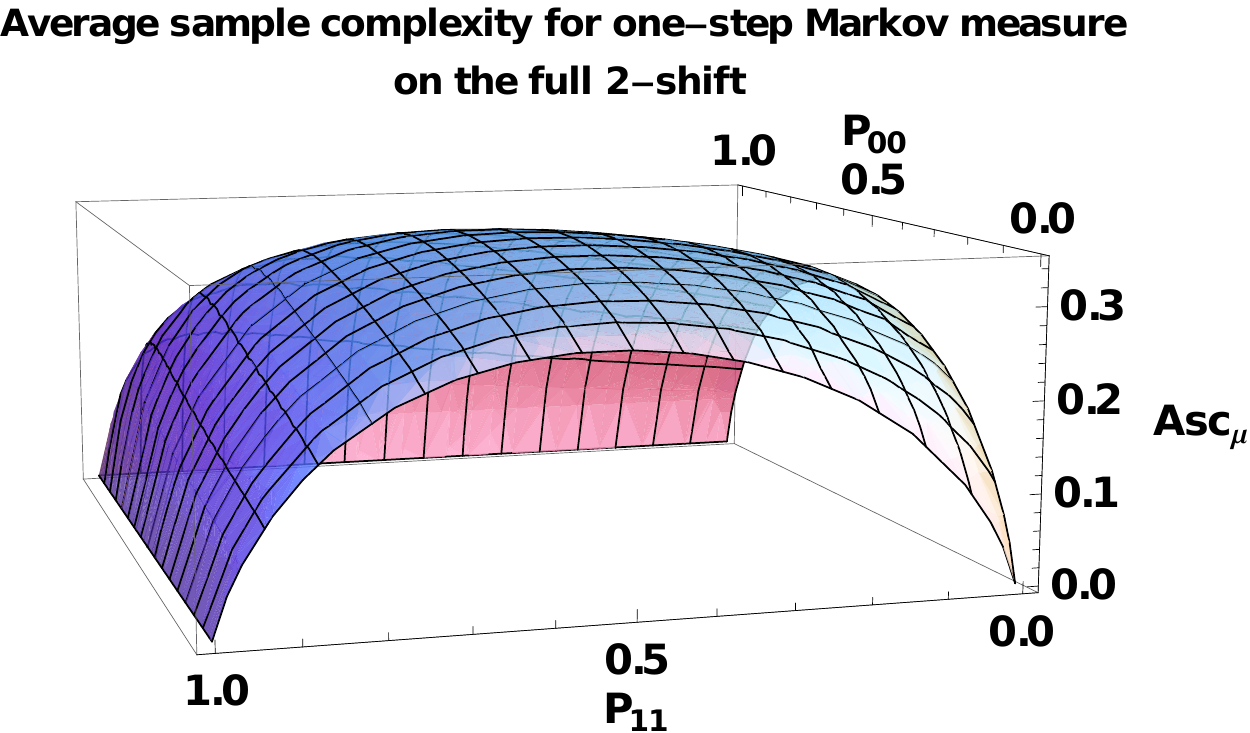}\quad\qquad
			\includegraphics[width=.45\textwidth,keepaspectratio]{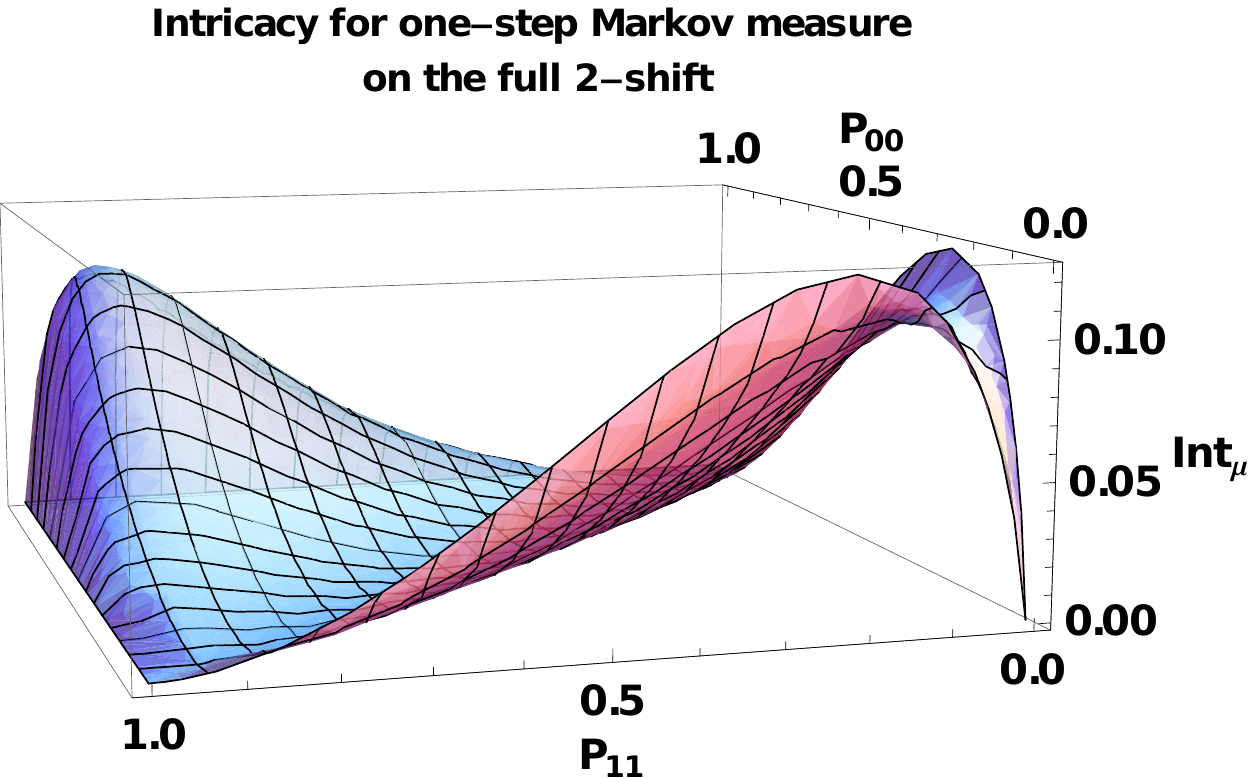}\\
			\begin{tabular}{@{}lllll@{} }
				\toprule
				$P_{00}$&$P_{11}$&$h_\mu$&$\Asc_\mu$&$\Int_\mu$\\
				\midrule
				0.5&0.5&{ 0.693}&{0.347}&0\\
				0.216&0&0.292&0.208&{0.124}\\
				0&0.216&0.292&0.208&{ 0.124}\\
				0.905&0.905&0.315&0.209&0.104\\
				\bottomrule\\
			\end{tabular}
					\end{tabular}
			\end{center}

		 $\Asc_\mu$ appears to be strictly convex, so it would have a unique maximum among $1$-step Markov measures.
		 $\Int_\mu$ appears to have {\em two} maxima among $1$-step Markov measures on proper subshifts ($P_{00}=0$ and $P_{11}=0$).
		 There seems to be a $1$-step Markov measure that is {\em fully supported} and is a {\em local maximum} for $\Int_\mu$ among all $1$-step Markov measures.
		 The maxima for $\Asc_\mu$, $\Int_\mu$, and $h_\mu$ are achieved by different measures.

%
%
%
%

	We summarize some of the questions generated above.

	\begin{conj}
		 On the golden mean SFT, for each $r$ there is a unique $r$-step Markov measure $\mu_r$ that maximizes $\Asc_\mu (X, \sigma, \alpha)$ among all $r$-step Markov measures.
		\begin{figure}
			\begin{center}
		\includegraphics[width=.65\textwidth,keepaspectratio]{1stepMarkovGMScombined}
	\end{center}
	\end{figure}
	\end{conj}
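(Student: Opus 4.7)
The natural starting point is the series representation from Theorem \ref{Ascseries},
\[
\Asc_\mu(X,\sigma,\alpha) = \frac{1}{2}\sum_{i=1}^\infty \frac{1}{2^i}H_\mu(\alpha\mid\alpha_i),
\]
where $\alpha$ is the time-$0$ partition of $X$. First I would parameterize the space $\mathcal{M}_r$ of $r$-step Markov measures on the golden mean SFT by the transition probabilities $\{P(\omega\to a)\}$ indexed by admissible $r$-blocks $\omega$ ending in $0$ and $a\in\{0,1\}$; together with the Markov constraint these determine a unique stationary measure on $r$-blocks and hence a unique shift-invariant measure on $X$. The set $\mathcal{M}_r$ is a compact convex subset of the shift-invariant Borel probability measures on $X$, and it contains $\mathcal{M}_s$ for every $s\le r$.

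Second, since each map $\mu\mapsto H_\mu(\alpha\mid\alpha_i)$ is concave and bounded by $\log 2$, the series converges uniformly and defines a concave, upper semicontinuous function on $\mathcal{M}_r$, so a maximizer exists. Uniqueness will follow once strict concavity is established. For $i\ge r$ the $(i{+}1)$-block joint distribution $\mu([b_0b_1\cdots b_i])$ is an affine function of the parameters of $\mathcal{M}_r$ and, in turn, determines $\mu\in\mathcal{M}_r$ completely. The strict concavity of $t\mapsto -t\log t$ on $[0,1]$ then makes the full joint entropy $H_\mu(\alpha\vee\alpha_1\vee\cdots\vee\alpha_i)$ a strictly concave function of these block probabilities. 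A careful bookkeeping via the chain rule $H_\mu(\alpha\mid\alpha_i) = H_\mu(\alpha\vee\alpha_i)-H_\mu(\alpha_i)$ together with summation over $i\ge r$ with positive weights $2^{-i-1}$ should give strict concavity of $\Asc_\mu$ on $\mathcal{M}_r$, from which uniqueness of the maximizing $\mu_r$ is immediate.

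\textbf{Main obstacle.} The difficulty is that conditional entropy is a difference of concave functions and so is not individually strictly concave in $\mu$; no single term of the series visibly rules out a direction along which $\Asc_\mu$ is merely affine. The technical heart of the proof is therefore to show that, collectively, the terms with $i\ge r$ separate all directions in $\mathcal{M}_r$, so that strict concavity accumulates in the sum. I would approach this in one of two ways: (a) compute the Hessian of $\Asc_\mu$ at a candidate critical point in the transition-probability coordinates and verify negative definiteness directly, using that perturbations of the $r$-step transition probabilities produce linearly independent perturbations of the $(i{+}1)$-block distributions for $i\ge r$; or (b) pass through the skew-product reformulation of Theorem \ref{AscForMPT}, which identifies $\Asc_\mu$ with half a fiber entropy, and appeal to strict concavity results for (relative) entropy functions over mixing shifts of finite type. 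Approach (a) seems the most concrete; the real-analyticity of $\Asc_\mu$ in the parameters of $\mathcal{M}_r$ will also be useful, since strict concavity in each coordinate direction, combined with analyticity and compactness, would upgrade to full strict concavity.
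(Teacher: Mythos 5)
First, be aware that the statement you are trying to prove is stated in the paper as a \emph{conjecture}: the author offers no proof, only numerical evidence (the plots and tables of $h_\mu$, $\Asc_\mu$, $\Int_\mu$ for one- and two-step Markov measures on the golden mean shift). So there is no argument in the paper to compare yours against, and your task is to supply a proof where none exists. Your proposal, as written, does not yet do that: the step you yourself flag as ``the technical heart'' --- establishing strict concavity --- is exactly the open content of the conjecture, and it is left at the level of ``should give'' and two candidate strategies.

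Beyond the acknowledged gap, there is a structural error in the setup that you need to repair before either strategy can work. You assert that $\mathcal{M}_r$ is a compact \emph{convex} subset of the shift-invariant Borel probability measures; it is not. A convex combination (in the space of measures) of two $r$-step Markov measures is in general not Markov of any finite order, so $\mathcal{M}_r$ is not convex in that affine structure. Worse, the paper's own proposition states that $\mu\mapsto\Asc_\mu(X,T,\alpha)$ is an \emph{affine} function of $\mu$; if $\mathcal{M}_r$ were convex in the space of measures, an affine functional could not be strictly concave on it, and uniqueness of a maximizer would typically fail. The concavity you need must therefore be formulated entirely in the transition-probability coordinates, where the parameterization $\{P(\omega\to a)\}\mapsto\mu_{p,P}$ is nonlinear (the cylinder measures are \emph{products} of transition probabilities, not affine functions of them, contrary to what you claim for the $(i{+}1)$-block distributions). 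This is precisely how one proves uniqueness of the measure of maximal entropy among Markov measures, and your approach (a) --- a Hessian computation for the series of Theorem \ref{Ascseries} in these coordinates --- is the right place to start; but the claimed affine dependence of block probabilities on the parameters, and the convexity of $\mathcal{M}_r$, must both be discarded, and the negative definiteness of the Hessian remains to be demonstrated rather than asserted. Until that computation (or an argument via the fiber-entropy identification of Theorem \ref{AscForMPT}) is actually carried out, the conjecture remains open.
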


	\begin{conj} $\mu_2 \neq \mu_1$
				\begin{table}
					\begin{center}
		\begin{tabular}{@{}llll@{} }
					\toprule
			$P_{00}$&$h_\mu$&$\Asc_\mu$&$\Int_\mu$\\
			\midrule
			0.618&{ 0.481}&0.266&0.051\\
			0.533&0.471&{ 0.271}&0.071\\
			0.216&0.292&0.208&{ 0.124}\\
			\bottomrule\\
		\end{tabular}
		\end{center}
		\caption{Calculations for one-step Markov measures on the golden mean shift. Numbers in bold are maxima for the given categories. }
	\end{table}
		\begin{table}
			\begin{center}
		\begin{tabular}{@{}lllll@{} }
			\toprule
			$P_{000}$&$P_{100}$&$h_\mu$&$\Asc_\mu$&$\Int_\mu$\\
			\midrule
			0.618&0.618&{ 0.481}&0.266&0.051\\
			0.483&0.569&0.466&{ 0.272}&0.078\\
			0&0.275&0.344&0.221&{ 0.167}\\
			\bottomrule\\
		\end{tabular}
		\end{center}
		\caption{Calculations for two-step Markov measures on the golden mean shift.}
	\end{table}
	\end{conj}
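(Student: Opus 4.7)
The plan is to show that the 1-step Markov maximizer $\mu_1$ (viewed as a 2-step Markov measure with $P_{000}=P_{100}$) is not a critical point of $\Asc_\mu$ on the 2-step Markov family, and therefore cannot equal the 2-step maximizer $\mu_2$. Parameterize 2-step Markov measures on the golden mean SFT by $(a,b)=(P_{000},P_{100})\in[0,1]^2$; the other transition probabilities are forced by the adjacency constraint ($P_{010}=1$, and blocks containing $11$ are excluded). The 1-step subfamily is the diagonal $\{a=b\}$, and $\mu_1$ corresponds to $(p^*,p^*)$ with $p^*\approx 0.533$ maximizing $F(p,p)$, where $F(a,b):=\Asc_\mu(X,\sigma,\alpha)$.

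By Theorem \ref{Ascseries},
\be
F(a,b)=\frac{1}{2}\sum_{i=1}^\infty 2^{-i}\,H(X_0\mid X_{-i}),
\en
where each conditional entropy depends on $(a,b)$ through the stationary distribution of the 2-block Markov chain---the balance equations give $\pi_{00}=b/D$, $\pi_{01}=\pi_{10}=(1-a)/D$ with $D=b+2(1-a)$---and through the $i$-step transition matrix. The first-order condition along the diagonal reads $(F_a+F_b)(p^*,p^*)=0$, so if $\mu_1=\mu_2$ then additionally $(F_a-F_b)(p^*,p^*)=0$. I would disprove this equation.

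For the $i=1$ term, using $H(X_0\mid X_{-1})=\frac{b+1-a}{D}\,H_2\!\bigl(\frac{b}{b+1-a}\bigr)$ with $H_2$ the binary entropy, a direct differentiation yields
\be
(\partial_a-\partial_b)H(X_0\mid X_{-1})\Big|_{(p,p)}=\frac{(2p-1)\bigl[H_2(p)+(2-p)H_2'(p)\bigr]}{(2-p)^2}.
\en
The equation $H_2(p)+(2-p)H_2'(p)=0$ reduces to $1-p=p^2$, so the bracket vanishes only at $p=1/\phi\approx 0.618$ (the Shannon-Parry parameter) and is strictly positive on $(0,1/\phi)$. Since $p^*\approx 0.533\in(1/2,1/\phi)$, both factors are positive and the $i=1$ contribution to $(F_a-F_b)(p^*,p^*)$ is strictly positive. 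The main obstacle is ensuring that the tail $\sum_{i\ge 2}2^{-i}(\partial_a-\partial_b)H(X_0\mid X_{-i})$ does not cancel this contribution: because $M^2>0$ for the 2-block transition matrix, Perron-Frobenius furnishes a subdominant eigenvalue modulus $\rho<1$ with $|H(X_0\mid X_{-i})-h_\mu|\le C\rho^i$, and a parallel spectral argument bounds the derivative terms by $C'\rho^i$, so truncating at a finite $i_0$ and bounding the remaining tail by $C'(\rho/2)^{i_0}/(1-\rho/2)$ suffices. As an independent check, the table preceding the conjecture already exhibits an explicit 2-step measure with $\Asc_\mu\approx 0.272$ strictly exceeding the 1-step value $\approx 0.271$, which, once the numerics are refined to provably distinguish the two values, immediately yields $\mu_2\neq\mu_1$.
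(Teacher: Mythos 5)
First, note that the paper offers no proof of this statement at all: it is presented as a conjecture, supported only by the numerical tables (the 1-step maximizer at $P_{00}\approx 0.533$ with $\Asc_\mu\approx 0.271$ versus a 2-step measure at $(P_{000},P_{100})\approx(0.483,0.569)$ with $\Asc_\mu\approx 0.272$). So your proposal is an attempt to go beyond the paper, and the right question is whether it would actually work. Your framework is sound: parameterize by $(a,b)=(P_{000},P_{100})$, observe that if $\mu_1=\mu_2$ then the diagonal point $(p^*,p^*)$ would be an interior critical point of $F=\Asc_\mu$ on the square, and refute this by showing $(F_a-F_b)(p^*,p^*)\neq 0$. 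Your computation of the $i=1$ term is correct, including the reduction of $H_2(p)+(2-p)H_2'(p)=0$ to $1-p=p^2$.

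The execution fails, however, at exactly the point you flag as "the main obstacle": the tail is not a small perturbation of the head term, and in fact it reverses its sign. For $i=2$ one computes directly that
\[
H(X_0\mid X_{-2})=\frac{u}{D}\,H_2\!\left(\frac{ab+1-a}{u}\right)+\frac{1-a}{D}\,H_2(b),\qquad u=b+1-a,
\]
and on the diagonal $(\partial_a-\partial_b)$ of the argument of the first $H_2$ equals $p^2+(1-p)^2\approx 0.5$, while that argument sits at $1-p(1-p)\approx 0.75$ where $H_2'\approx -1.1$. Evaluating at $p=0.533$ gives $(\partial_a-\partial_b)H(X_0\mid X_{-2})\approx -0.34$, which after the weight $2^{-3}$ contributes about $-0.043$ to $F_a-F_b$, an order of magnitude larger than, and opposite in sign to, the $i=1$ contribution $\approx +0.004$. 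This is consistent with the table itself: the tabulated 2-step maximizer has $a<b$, so the transverse gradient at the diagonal critical point should satisfy $F_a-F_b<0$, the opposite of what your head term predicts. So "positivity of the $i=1$ term plus a tail bound" cannot close the argument; you would have to sum enough terms to capture the correct (negative) sign and then rigorously bound the remainder, and you would also need a rigorous enclosure of $p^*$ (itself defined only implicitly as the maximizer of a series) rather than the numerical value $0.533$. Two smaller points: $H(X_0\mid X_{-i})$ converges to $H(X_0)$, not to $h_\mu$, so the stated form of the spectral bound is off; and the closing "independent check" that $0.272>0.271$ is circular, since making that inequality rigorous is precisely the unproved numerical content of the conjecture.
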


	\begin{conj} On the golden mean SFT there is a unique measure that maximizes $\Asc_\mu (X,T,\alpha)$. It is not Markov of any order (and of course is not the same as $\mu_{\max}$).
	\end{conj}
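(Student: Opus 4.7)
The plan is to exploit the series representation
\[
\Asc_\mu(X,\alpha,\sigma) \;=\; \frac{1}{2}\sum_{i=1}^\infty 2^{-i}\, H_\mu(\alpha \mid T^{-i}\alpha)
\]
of Theorem \ref{Ascseries}, where $\alpha$ is the time-$0$ partition, together with the skew-product identity $\Asc_\mu = \tfrac{1}{2} h_{\sigma_A}(X,T,\mu)$ of Theorem \ref{AscForMPT}. Existence of a maximizer is immediate from the proposition just above the conjecture: the map $\mu \mapsto \Asc_\mu$ is affine and upper semicontinuous on the weak* compact simplex of shift-invariant Borel probability measures, so the maximizer set is a nonempty compact convex face that contains an ergodic element.

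For uniqueness I would aim to establish strict concavity of $\mu \mapsto \Asc_\mu$. The $i$-th summand $H_\mu(\alpha \mid T^{-i}\alpha)$ depends on $\mu$ only through its two-point marginal on coordinates $\{0,i\}$, and classical facts about conditional entropy give it concavity in that marginal, with strict concavity on any subsimplex of joint distributions having a prescribed marginal on the second coordinate. Since shift-invariance forces all invariant measures to agree on their single-coordinate marginals, two candidate maximizers $\mu_1 \neq \mu_2$ with distinct two-point marginals at some $i_0$ lie in the same such subsimplex and contradict maximality through the $i_0$-th summand. The subtler case of two distinct invariant measures agreeing on every two-point marginal needs a separate argument; I would attempt it by passing to the skew-product system $(X\times A, T_{X\times A}, \mu\times P_A)$ and using strict concavity of fiber entropy in the skew-product measure, which is the first delicate technical point.

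For the non-Markov claim, suppose for contradiction that the unique maximizer $\mu^*$ is $r$-step Markov on $X$ for some $r \geq 1$, determined by transition probabilities $\{p_{a\mid w} : w \in \cl_r(X),\ wa \in \cl_{r+1}(X)\}$. I would perturb $\mu^*$ within the class of $(r+1)$-step Markov measures on $X$, producing a one-parameter family $\mu_t$ whose conditional distribution of the next symbol given the past $r+1$ symbols genuinely depends on the $(r+1)$-st past symbol, but whose $(r+1)$-block marginal at $t=0$ coincides with that of $\mu^*$. By construction the derivatives at $t=0$ of the first $r$ summands $H_{\mu_t}(\alpha \mid T^{-i}\alpha)$ vanish, while the remaining ones can be differentiated explicitly by writing the two-point marginal of $\mu_t$ on $\{0,i\}$ in terms of the $i$-th power of the transition matrix of $\mu_t$. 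The aim is to exhibit a perturbation direction for which $\frac{d}{dt}\big|_{t=0}\sum_{i>r} 2^{-i} H_{\mu_t}(\alpha \mid T^{-i}\alpha) > 0$, contradicting the first-order optimality of $\mu^*$.

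The hard part will be this derivative computation: ruling out accidental cancellation caused by the very low-rank structure of the golden mean transfer matrix, whose spectrum consists of the algebraic numbers $1/\phi$ and $-1/\phi^2$. The tail series should admit a closed form via a resolvent of the perturbed transition matrix, reducing nonvanishing to a rational identity in $\phi$ that can be checked directly. The numerical evidence in the tables above, showing the optimal $r$-step Markov value of $\Asc_\mu$ strictly increasing from $r=1$ to $r=2$, is consistent with this strategy and supports the required nonvanishing. Combining such a perturbation with the uniqueness established earlier would complete the proof.
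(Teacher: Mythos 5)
This statement is labelled a \emph{conjecture} in the paper: no proof is given there, only numerical evidence from one- and two-step Markov measures, so there is no argument of the author's to compare yours against. Your proposal is, by its own admission, a research plan rather than a proof (``the first delicate technical point,'' ``the hard part will be,'' ``the aim is to exhibit''), and it contains a structural flaw that would sink it even if every computation were carried out.

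The flaw is the uniqueness step. You correctly invoke the paper's proposition that $\mu \mapsto \Asc_\mu(X,T,\alpha)$ is \emph{affine} and upper semicontinuous on the weak* compact simplex of invariant measures to get existence, and then you propose to prove uniqueness by establishing \emph{strict concavity} of the very same map. These are mutually exclusive: an affine functional cannot be strictly concave on any set with more than one point, and if it attains its maximum at two distinct measures then the entire segment between them consists of maximizers. The strict concavity you detect comes from treating the series $\tfrac12\sum_i 2^{-i} H_\mu(\alpha\mid T^{-i}\alpha)$ of Theorem \ref{Ascseries} as an identity for \emph{all} invariant $\mu$ and summing the (genuinely strict) concavity of each conditional-entropy term. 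But that identity is derived for $1$-step Markov processes (see the sentence introducing Theorem \ref{Ascseries}): in the fiber-entropy computation one conditions on the full random past along the return times, and this collapses to conditioning on a single coordinate only under the Markov property. For general invariant $\mu$ the series is only an upper bound for $\Asc_\mu$ --- indeed, were it an identity everywhere it would contradict affineness, as one checks on the $2$-shift with $\mu = \tfrac12(\text{Bernoulli}(\tfrac12,\tfrac12)) + \tfrac12\,\delta_{0^\infty}$, where the series strictly exceeds $\tfrac14\log 2 = \Asc_\mu$. Any genuine uniqueness proof must instead rule out ties among ergodic measures (as Parry's Perron--Frobenius argument does for the measure of maximal entropy), which is an entirely different mechanism.

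The same misidentification undermines the non-Markovianity step: your perturbation family $\mu_t$ consists of $(r+1)$-step Markov measures with $r+1 \ge 2$, for which the series is again not equal to $\Asc_{\mu_t}$, so the derivative $\frac{d}{dt}\sum_{i>r}2^{-i}H_{\mu_t}(\alpha\mid T^{-i}\alpha)$ is the derivative of the wrong functional. (A secondary issue: matching the $(r+1)$-block marginal of $\mu^*$ at $t=0$ does not by itself force the $t$-derivatives of the low-order summands to vanish; the lag-$i$ two-point marginals of $\mu_t$ can still move to first order.) To make this step honest you would need to differentiate $\Asc_{\mu_t}$ itself, e.g.\ via the fiber-entropy representation of Theorem \ref{AscForMPT} with the full conditioning retained, and then the promised closed form in $\phi$ is no longer a finite-rank resolvent computation. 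As it stands, the proposal establishes existence (which the paper already has) and leaves both uniqueness and non-Markovianity open, so the conjecture remains a conjecture.
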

	
\begin{conj} On the golden mean SFT for each $r$ there is a unique $r$-step Markov measure that maximizes $\Int_\mu (X,T,\alpha)$ among all $r$-step Markov measures.
		\begin{figure}
		\begin{center}
		\includegraphics[width=.65\textwidth,keepaspectratio]{2stepGMSint.pdf}
	\end{center}
	\end{figure}
	\begin{figure}
		\begin{center}
		\includegraphics[width=.65\textwidth,keepaspectratio]{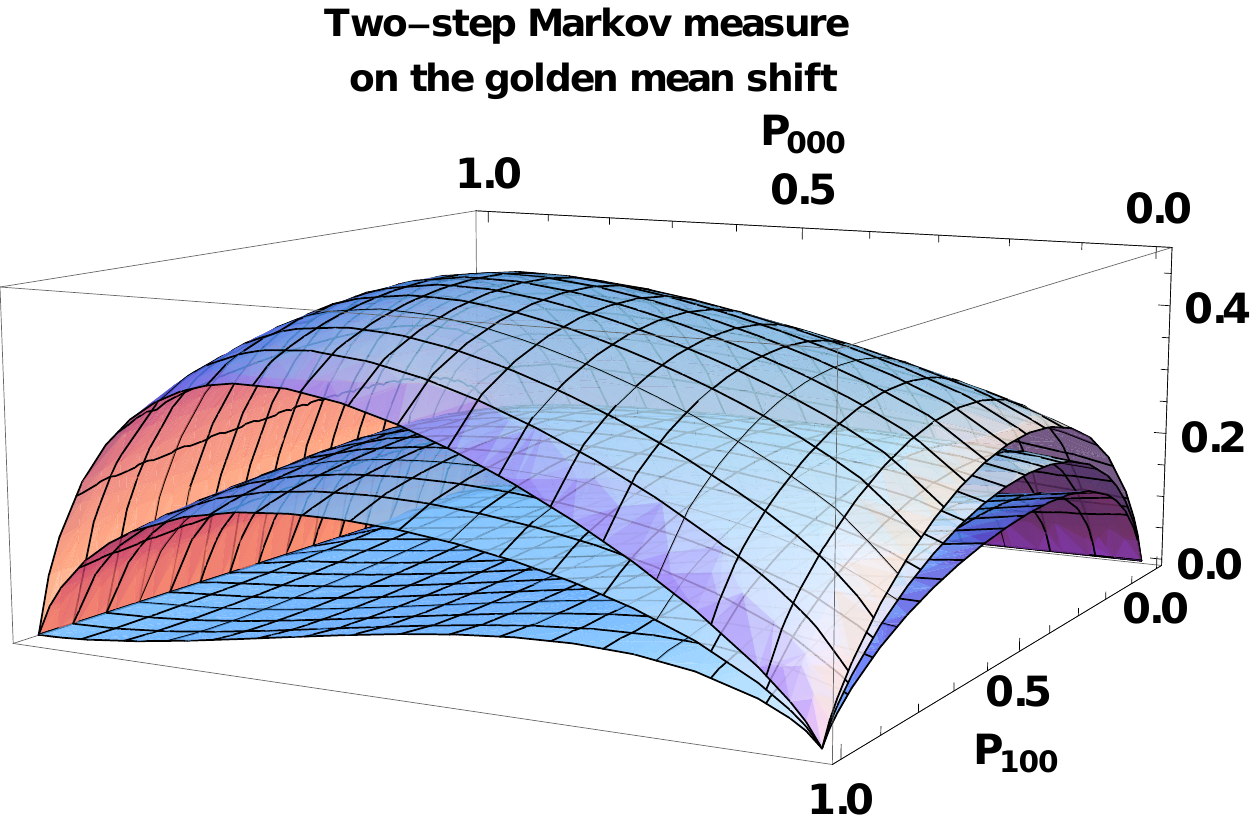}
		\caption{Combination of the plots of $h_\mu$, $\Asc_\mu$, and $\Int_\mu$ for two-step Markov measures on the golden mean shift.}
	\end{center}
	\end{figure}
\end{conj}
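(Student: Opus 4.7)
The plan is to reduce the conjecture to a strict concavity claim for $\Int_\mu$ on a finite-dimensional parameter polytope.  First I would parametrize the space $M_r$ of $r$-step Markov measures on the golden mean SFT by the conditional transition probabilities $P(a\mid w)$, where $w$ ranges over admissible $r$-blocks ending in $0$; the forbidden word $11$ forces $P(0\mid w)=1$ whenever $w$ ends in $1$.  Subject to the stochasticity constraints, $M_r$ is a compact convex polytope, and the unique stationary distribution depends real-analytically on these coordinates on the interior.

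Next I would express $\Int_\mu$ as an explicit real-analytic function on that interior.  Theorem \ref{Ascseries} combined with $\sum_{i\ge 1}2^{-i}=1$ gives
\be
\Int_\mu(X,\alpha,T)\;=\;\sum_{i=1}^\infty \frac{1}{2^i}\bigl(H_\mu(\alpha\mid T^{-i}\alpha)-h_\mu(\alpha,T)\bigr),
\en
and for Markov measures each summand becomes an explicit polynomial in the entries of the transition matrix $P$, its $i$-th power $P^i$, and the stationary vector.  Each difference is nonnegative and bounded by $\log 2$ uniformly in $i$ and in $\mu\in M_r$, so the series converges absolutely and uniformly on $M_r$ and $\mu\mapsto\Int_\mu$ is continuous on this compact convex set.

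The core of the argument is strict concavity of $\Int_\mu$ on $M_r$, from which uniqueness of the maximizer follows at once.  Each term $H_\mu(\alpha\mid T^{-i}\alpha)$ is concave in $\mu$ by concavity of Shannon entropy, but the subtracted $h_\mu$ is itself concave, so the sign of the Hessian of the difference is not automatic.  I would argue that along every admissible tangent direction at each point of $M_r$, the second-order contribution of a single well-chosen term (typically $i=r+1$) strictly dominates the combined contribution of all other terms, exploiting both the geometric decay $2^{-i}$ and the spectral representation of $P^i$.  The hard part will be exactly this domination estimate: the joint distribution of $(\alpha_0,T^{-i}\alpha)$ depends nonlinearly on the Markov parameters through $P^i$, so one cannot reduce to pointwise strict concavity of each term and must control the interaction between terms of different orders.

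A fallback, should global strict concavity prove too strong to establish directly, is to analyze the Euler--Lagrange equations for $\Int_\mu$ face by face on $M_r$.  Boundary analysis is necessary in any case, as the empirical data indicate that the two-step maximizer lies on the face $P_{000}=0$.  Within each face the critical equations form a semialgebraic system whose solutions can in principle be enumerated via resultants or by exploiting the renewal structure of paths in the golden mean de Bruijn graph; bounding the number of critical points on each face by one and comparing the resulting values across faces would still yield uniqueness of the global maximizer.
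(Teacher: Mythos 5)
The statement you are addressing is labeled a \emph{conjecture} in the paper: the author offers no proof, only numerical evidence (tables and plots of $\Int_\mu$ over one- and two-step Markov parameters), so there is no argument of the paper's to compare yours against. Your preliminary reductions are sound: the identity
\be
\Int_\mu(X,\alpha,T)=\sum_{i=1}^\infty 2^{-i}\bigl(H_\mu(\alpha\mid T^{-i}\alpha)-h_\mu(\alpha,T)\bigr)
\en
does follow from Theorem \ref{Ascseries} together with $\Int_\mu=2\Asc_\mu-h_\mu(\alpha,T)$ and $\sum_i 2^{-i}=1$; each summand is nonnegative because $h_\mu(\alpha,T)=H_\mu(\alpha\mid\alpha_1^\infty)\le H_\mu(\alpha\mid T^{-i}\alpha)$; and the compactness and continuity argument gives \emph{existence} of a maximizer on the polytope $M_r$. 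But existence is not at issue; uniqueness is the entire content of the conjecture, and that is exactly the step you leave open.

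The gap is that strict concavity of $\Int_\mu$ on $M_r$ is not only unproved but doubtful as a mechanism. Conditional entropy $H_\mu(\alpha\mid T^{-i}\alpha)$ is concave in the \emph{joint distribution} of the pair, but the parametrization by $r$-step transition probabilities passes through $P\mapsto P^i$ and the stationary vector, a rational nonlinear map under which concavity is not preserved even termwise; and the subtraction of $h_\mu(\alpha,T)$ makes the sign of the Hessian genuinely indeterminate. More pointedly, the paper's own numerics for the full $2$-shift show \emph{two} distinct maximizers of $\Int_\mu$ among one-step Markov measures (supported on the golden mean SFT and its image under $0\leftrightarrow 1$) together with a fully supported local maximum, so the same functional on the ambient Markov polytope is certainly not concave; and on the golden mean SFT itself the apparent two-step maximizer sits on the boundary face $P_{000}=0$. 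Nothing in your proposed domination estimate distinguishes the golden mean polytope from the full-shift polytope on which concavity demonstrably fails, so the ``well-chosen term dominates'' heuristic cannot be the whole story. Your fallback --- bounding the number of critical points of a semialgebraic system on each face by one --- is a restatement of the uniqueness problem rather than a method for solving it. As it stands the proposal establishes existence and a useful series representation, but the conjecture remains open.
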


	\begin{conj} On the 2-shift there are {\em two} 1-step Markov measures that maximize $\Int_\mu (X,T,\alpha)$ among all 1-step Markov measures. They are supported on the golden mean SFT and its image under the dualizing map $0 \leftrightarrow 1$.
	\end{conj}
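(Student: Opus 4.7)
The plan is to parametrize the $1$-step Markov measures on $(\Sigma_2,\sigma)$ by $(a,b) = (P_{00}, P_{11}) \in [0,1]^2$, with $(P_{01},P_{10}) = (1-a,1-b)$ and stationary vector $p_0 = (1-b)/(2-a-b)$, $p_1 = (1-a)/(2-a-b)$.  Since the transition matrix $P$ has eigenvalues $1$ and $\lambda = a+b-1$, the spectral decomposition $P^i = \pi + \lambda^i(P-\pi)$ (with $\pi$ the rank-one stationary projection) yields a closed-form expression for every conditional entropy $H_\mu(\alpha \mid \alpha_i)$.  Plugging into the series formula of Theorem \ref{Ascseries} and using that the $i=1$ term vanishes in the $1$-step Markov case, one obtains
\[
G(a,b) \;:=\; \Int_\mu(a,b) \;=\; \sum_{i=2}^{\infty} 2^{-i}\bigl[H_\mu(\alpha\mid\alpha_i) - h_\mu\bigr],
\]
which is continuous on the closed square and invariant under the duality $(a,b)\mapsto(b,a)$ implementing $0\leftrightarrow 1$.

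The next step is to analyze the edge $\{b=0\}$, whose interior parametrizes the fully supported Markov measures on the golden mean SFT; the symmetric edge $\{a=0\}$ covers its image under the dualizing map.  On this edge $\lambda = a-1$, and the series for $G(a,0)$ reduces to a function of one variable.  I would show that $a\mapsto G(a,0)$ has a unique interior critical point $a^*$ characterized by the vanishing of the differentiated series, and identify it as the tabulated $a^*\approx 0.216$ with $G(a^*,0)\approx 0.124$.  Combined with the behavior at the corners $(0,0)$ and $(1,0)$, where the measure degenerates to a period-two orbit or a fixed point and $G$ vanishes, this pins down the two candidate boundary maximizers.

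The heart of the proof --- and the main obstacle --- is to show that no point of the open square $(0,1)^2$ yields a larger value of $G$.  I would proceed in two stages.  First, compute $\partial_b G(a,b)$ at $b=0$ by term-by-term differentiation of the series and show it is strictly negative in a neighborhood of $a^*$; this rules out improvement by moving off the boundary near the optimizer.  Second, classify the interior critical points: the reflection symmetry forces them either to come in pairs or to lie on the diagonal $\{a=b\}$, and the numerical evidence indicates a single diagonal critical point near $(0.905,0.905)$ with $G \approx 0.104 < 0.124$.  I would confirm uniqueness by reducing $\nabla G = 0$ via the spectral structure, then use a second-derivative test together with a global Lipschitz estimate on $G$ to verify that the interior critical value strictly undershoots the boundary value.

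The delicate step is the second one: because $\lambda^i$ alternates in sign, the series defining $G$ has mixed-sign contributions and off-the-shelf convexity arguments are unavailable.  My fallback is to truncate the series at some level $N$, bound the tail uniformly by $O(2^{-N})$, and reduce the comparison to a finite-dimensional inequality verifiable on a sufficiently fine grid using the explicit Lipschitz bound.  A complementary route is to exploit the skew-product representation of Theorem \ref{AscForMPT}, which writes $\Asc_\mu$ as half the fiber entropy of the first-return map of a Bernoulli $(1/2,1/2)$ skew over $X$; this opens the possibility of applying equilibrium-state and variational-principle techniques directly to the joint optimization of $\Asc_\mu$ against $-h_\mu$ encoded in $\Int_\mu$.
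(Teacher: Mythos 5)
The statement you are trying to prove is presented in the paper as a \emph{conjecture}: the author offers no proof, only numerical tables of $h_\mu$, $\Asc_\mu$, and $\Int_\mu$ at a handful of parameter values (the two boundary points near $P_{00}=0.216,\ P_{11}=0$ and its dual, and a fully supported local maximum near $P_{00}=P_{11}=0.905$). So there is no argument in the paper to compare yours against, and the question is whether your proposal actually closes the gap. It does not. Your setup is sound --- parametrizing by $(a,b)=(P_{00},P_{11})$, using the spectral form $P^i=\pi+\lambda^i(P-\pi)$ with $\lambda=a+b-1$ to get closed forms for $H_\mu(\alpha\mid\alpha_i)$, noting that the $i=1$ term of $\sum_i 2^{-i}\bigl[H_\mu(\alpha\mid\alpha_i)-h_\mu\bigr]$ vanishes for $1$-step Markov measures, and exploiting the $(a,b)\mapsto(b,a)$ symmetry --- but every step that would turn the numerical picture into a theorem is deferred: uniqueness of the critical point on the edge $b=0$, classification of interior critical points, and the global comparison of critical values are all asserted or handed off to a grid computation.

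Two of the deferred steps have concrete obstructions as you have framed them. First, the ``global Lipschitz estimate on $G$'' that your grid-verification fallback requires does not exist: $h_\mu$ and the joint entropies contain $x\log x$ terms in the transition probabilities, so $\nabla G$ blows up as $(a,b)$ approaches the edges of the square --- precisely where the conjectured maximizers sit. (A rigorous computer-assisted argument would need H\"older rather than Lipschitz control near the boundary, or a change of variables absorbing the logarithmic singularity.) Second, ``compute $\partial_b G(a,b)$ at $b=0$ by term-by-term differentiation'' is not literally available, because $\partial_b h_\mu=+\infty$ there; what actually saves you is that for $i\ge 2$ the entries $(P^i)_{11}$ stay bounded away from $0$ as $b\to 0^+$ (one can pass $1\to 0\to 1$), so writing $G=\sum_{i\ge 2}2^{-i}H_\mu(\alpha\mid\alpha_i)-\tfrac12 h_\mu$ shows the singular part is $-\tfrac12\,(-p_1 b\log b)$ and hence $\partial_b G\to-\infty$, which is the right sign --- but this is a boundary-layer asymptotic you must carry out, not a term-by-term derivative, and it only controls a neighborhood of the edge. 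The remaining, and largest, gap is ruling out a larger value of $G$ deep in the interior: the symmetry argument reduces nothing beyond pairing off-diagonal critical points, and ``confirm uniqueness by reducing $\nabla G=0$ via the spectral structure'' is the entire difficulty restated, not a method. As written, your proposal is a credible research plan that reproduces the evidence behind the conjecture; it is not a proof.
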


	\begin{conj} On the 2-shift there is a 1-step Markov measure that is {\em fully supported} and is a local maximum point for $\Int_\mu (X,T,\alpha)$ among all 1-step Markov measures.
		\end{conj}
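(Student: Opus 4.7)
The plan is to parametrize the family of $1$-step Markov measures on $(\Sigma_2,\sigma)$ by $(a,b)=(P_{00},P_{11})\in[0,1]^2$, with the fully supported measures corresponding to the open square $(0,1)^2$, and to exhibit an interior critical point of $F(a,b):=\Int_{\mu_{a,b}}(X,\sigma,\alpha)$ at which the Hessian is negative definite. Since $\mu_{a,b}$ is $1$-step Markov, $h_{\mu_{a,b}}=H_{\mu_{a,b}}(\alpha\mid T^{-1}\alpha)$, and combining with Theorem \ref{Ascseries} gives
\[
F(a,b)=-\tfrac{1}{2}H_1(a,b)+\sum_{i=2}^{\infty}\frac{H_i(a,b)}{2^i},\qquad H_i(a,b):=H_{\mu_{a,b}}(\alpha\mid T^{-i}\alpha).
\]
The transition matrix $P(a,b)=\bigl(\begin{smallmatrix}a&1-a\\1-b&b\end{smallmatrix}\bigr)$ has eigenvalues $1$ and $\lambda=a+b-1$ with $|\lambda|<1$ on $(0,1)^2$, so $P^i$ is given explicitly as its rank-one stationary limit plus a $\lambda^i$-perturbation, each $H_i$ is real-analytic in $(a,b)$, and the series above converges uniformly on compact subsets of $(0,1)^2$. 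Hence $F$ is real-analytic on the open square.

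Next I would exploit the symmetry $F(a,b)=F(b,a)$ induced by the alphabet bijection $0\leftrightarrow 1$ to restrict the search to the diagonal $a=b=p$. On the diagonal the chain is reversible with stationary $(1/2,1/2)$, $\lambda=2p-1$, and a direct computation yields
\[
H_i(p,p)=\log 2-\tfrac{1}{2}\bigl[(1+\lambda^i)\log(1+\lambda^i)+(1-\lambda^i)\log(1-\lambda^i)\bigr],
\]
so $F(p,p)=G(\lambda)$ for an explicit real-analytic function $G$ on $(-1,1)$. The numerical evidence in the excerpt (an interior local maximum near $p\approx 0.905$, i.e.\ $\lambda\approx 0.81$) strongly suggests a zero of $G'$ in $(0,1)$. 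I would rigorously isolate such a root $\lambda^*$ by detecting a sign change of $G'$ on a short explicit subinterval, and verify $G''(\lambda^*)<0$ by truncated evaluation of the series with explicit geometric tail bounds.

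With $p^*=(1+\lambda^*)/2$ in hand, the symmetry $F(a,b)=F(b,a)$ forces $(p^*,p^*)$ to be a critical point of $F$ on the open square, and it forces the Hessian there to have the form $\bigl(\begin{smallmatrix}A&B\\B&A\end{smallmatrix}\bigr)$ with eigenvalues $A+B$ (diagonal direction) and $A-B$ (transverse). The chain rule applied to $F(p,p)=G(\lambda(p))$ gives $A+B=2G''(\lambda^*)<0$ immediately, handling the diagonal direction. To conclude it remains to show $A-B<0$, i.e.\ concavity transverse to the diagonal at $(p^*,p^*)$; this requires computing $B=\partial_a\partial_bF(p^*,p^*)$ by term-by-term differentiation of the series for $F$, which is legitimate thanks to the uniform convergence on compacta established above, and comparing with $A=\partial_a^2F(p^*,p^*)$ (equal to $\partial_b^2F$ by symmetry). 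Both quantities are expressible through the entries of $P^i$ and their partial derivatives in $(a,b)$, the latter coming from explicit differentiation of $P^i=\pi\mathbf{1}^T+\lambda^iQ(a,b)$.

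The hard part will be the rigorous sign verification of the transverse eigenvalue $A-B$. Because $\lambda^*$ is moderately close to $1$, the geometric tails decay only like $(|\lambda^*|/2)^i$, so controlled interval arithmetic on the leading $20$--$30$ terms together with explicit tail bounds will be needed. The dualizing symmetry does not by itself force the sign of $A-B$, so excluding the saddle case is the delicate step. If it turns out that the diagonal candidate $\lambda^*$ satisfies $A\ge B$, the real-analyticity of $F$ on $(0,1)^2$ together with the numerical presence of an interior local maximum still guarantees that some fully supported critical point with negative-definite Hessian exists in $(0,1)^2$, to be located off the diagonal by the same series analysis applied to the full two-variable function $F(a,b)$.
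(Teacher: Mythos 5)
The statement you are addressing is presented in the paper as a \emph{conjecture}: the paper offers no proof, only numerical evidence (the table of $1$-step Markov measures on the full $2$-shift, where the symmetric measure with $P_{00}=P_{11}=0.905$ appears to be a fully supported local maximizer of $\Int_\mu$). So there is no argument in the paper to compare yours against, and the question is simply whether your proposal settles the conjecture. Your reductions are sound and consistent with the paper's numerics: the identity $\Int_\mu=2\Asc_\mu-H_1$ combined with Theorem \ref{Ascseries} gives the series you write; the spectral decomposition of $P(a,b)$ does make each $H_i$ real-analytic and the series locally uniformly convergent on $(0,1)^2$; the $0\leftrightarrow1$ symmetry does force the gradient at $(p,p)$ to point along the diagonal and the Hessian to have the form $\bigl(\begin{smallmatrix}A&B\\B&A\end{smallmatrix}\bigr)$; and your closed form for $H_i(p,p)$ is correct.

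The genuine gap is that the two decisive facts are never established. First, you do not actually produce a $\lambda^*\in(0,1)$ with $G'(\lambda^*)=0$ and $G''(\lambda^*)<0$; you only describe how one \emph{would} isolate it by a sign change and interval arithmetic. Second, and more seriously, the transverse eigenvalue $A-B$ --- which is exactly what distinguishes a local maximum from a saddle along the diagonal --- is left entirely open; you acknowledge that the symmetry gives no information about its sign and defer it to an unexecuted computation with slowly decaying tails. Until both computations are carried out with rigorous error bounds, nothing is proved. The fallback in your last sentence does not repair this: ``the numerical presence of an interior local maximum'' is not a hypothesis you can invoke in a proof, real-analyticity of $F$ does not convert approximate computations into the existence of a critical point, and even a genuine interior local maximum need not have a negative-definite Hessian, so the saddle/degenerate cases are not excluded. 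What you have is a credible and well-organized program for verifying the conjecture (essentially a rigorous-numerics project reducing it to finitely many sign checks of an explicit analytic function and its Hessian), not a proof of it.
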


		 The conjectures extend to arbitrary shifts of finite type and other dynamical systems. 	Many other natural questions suggested by the definitions and properties established so far of intricacy and average sample complexity can be found in the dissertation of Ben Wilson \cite{Wilson2015}:
				 		 \begin{enumerate}[1.]
			\item We do not know whether a variational principle $\sup_\mu \Asc_\mu (X,T,\alpha)=\Asc_{\topo}(X,\alpha,T)$ holds.
					\item Analogous definitions, results, and conjectures exist when entropy is generalized to pressure, by including a potential function which measures the energy or cost associated with each configuration.
				 First one can consider a function of just a single coordinate that gives the value of each symbol.
				Maximum intricacy may be useful for finding areas of high information activity, such as working regions in a brain (Edelman-Sporns-Tononi) or coding regions in genetic material (Koslicki-Thompson).
					\item Higher-dimensional versions, where subsets $S$ of coordinates are replaced by patterns, are naturally defined and waiting to be studied.
					\item One can define and then study average sample complexity of individual finite blocks.
				\item We need formulas for $\Asc$ and $\Int$ for more subshifts and systems.
				\item Find the subsets or patterns $S$ that maximize $\log N(\mathscr U_S)$ or
			$\log [ {N(\mathscr{U}_S)N(\mathscr{U}_{S^c})}]/{N(\mathscr{U}_{n^*}})$, and similarly for the measure-preserving case.
			\item In the topological case, what are the natures of the quantities that arise if one changes the definitions of $\Alt$ and $\Int$ by omitting the logarithms?
			\item Consider not just subsets $S$ and their complements, but partitions of $n^*$ into a finite number of subsets. For the measure-preserving case, there is a natural definition of the mutual information among a finite family of random variables on which one could base the definition of intricacy.
			\end{enumerate}
				We welcome help in resolving such questions and exploring further the ideas of intricacy, average sample complexity, and complexity in general!

\bibliographystyle{amsplain}
\bibliography{KEPSaltaBib}

\noindent{Department of Mathematics\\
		CB 3250 Phillips Hall\\
		University of North Carolina\\
		Chapel Hill, NC 27599 USA\\
	    {petersen{@}math.unc.edu}}

\end{document}